\documentclass[10pt]{amsart}

\usepackage{amsmath}
\usepackage{amsxtra}
\usepackage{amscd}
\usepackage{amsthm}
\usepackage{amsfonts}
\usepackage{amssymb}
\usepackage{mathtools}
\usepackage{eucal}
\usepackage[all,color]{xy}
\usepackage{graphicx}
\usepackage{tikz-cd}
\usepackage{mathrsfs}
\usepackage{color}
\usepackage{longtable}
\usepackage{caption}
\usepackage{enumerate}
\usepackage{faktor}
\usepackage{bbm}
\usepackage{stmaryrd}
\usepackage{rotating}
\usepackage[T1]{fontenc} 
\usepackage[utf8]{inputenc} 
\usepackage{tikz}
\usepackage{tikz-cd}
\usetikzlibrary{matrix,arrows,decorations.pathmorphing}
\usepackage{keytheorems} 
\usepackage{hyperref}
\hypersetup{
    colorlinks=true,
    linkcolor=blue,
    urlcolor=blue, 
    citecolor= blue} 
\usepackage{float}
\usepackage[noabbrev, capitalize, nameinlink]{cleveref}
\usepackage{url}
\usepackage{comment}

\newkeytheorem{theorem}[parent=section]
\newkeytheorem{corollary,lemma,proposition}[sibling=theorem]
\newkeytheorem{definition}[style=definition,sibling=theorem]
\newkeytheorem{remark,remarks}[style=remark,sibling=theorem]

\usepackage[color=blue!10]{todonotes}

\usepackage{xcolor}

\newcommand{\sm}[4]{\ensuremath{\big(\begin{smallmatrix}#1 & #2 \\ #3 & #4\end{smallmatrix}\big)}}
\newcommand{\ttmat}[4]{\left( \begin{array}{cc}
#1 & #2 \\
#3 & #4
\end{array}
\right)}

\newcommand{\B}{\mathcal{B}}

\newcommand{\C}{\mathbb{C}}

\newcommand{\F}{\mathbb{F}}
\newcommand{\cF}{\mathcal{F}}

\newcommand{\m}{\mathfrak{m}}
\newcommand{\n}{\mathfrak{n}}

\newcommand{\cO}{\mathcal{O}}

\newcommand{\Q}{\mathbb{Q}}
\newcommand{\T}{\mathbb{T}}

\newcommand{\Z}{\mathbb{Z}}

\newcommand{\Irr}{\mathrm{Irr}}

\newcommand{\St}{\mathrm{St}}

\newcommand{\onto}{\twoheadrightarrow}

\newcommand{\isoto}{\xrightarrow{\sim}}

\newcommand{\rhobar}{\overline{\rho}}

\DeclareMathOperator{\Aut}{Aut}
\DeclareMathOperator{\Br}{Br}
\DeclareMathOperator{\coker}{coker}
\DeclareMathOperator{\End}{End}

\DeclareMathOperator{\Frob}{Frob}

\DeclareMathOperator{\Gal}{Gal}

\DeclareMathOperator{\GL}{GL}
\DeclareMathOperator{\Hom}{Hom}

\DeclareMathOperator{\Ind}{Ind}

\DeclareMathOperator{\PGL}{PGL}
\DeclareMathOperator{\PSL}{PSL}

\DeclareMathOperator{\rank}{rk}

\DeclareMathOperator{\SL}{SL}
\DeclareMathOperator{\Spec}{Spec}
\DeclareMathOperator{\tr}{tr}
\DeclareMathOperator{\new}{new}

\newcommand{\dnew}{\text{-}\new}

\theoremstyle{definition}

\theoremstyle{remark}

\title[Level-raising via modular representation theory]{
Counting level-raising congruences using modular representation theory}

\author{Jaclyn Lang}
\address[Jaclyn Lang]{Department of Mathematics, Temple University}
\email{jaclyn.lang@temple.edu}

\author{Robert Pollack}
\address[Robert Pollack]{Department of Mathematics, University of Arizona}
\email{rpollack@arizona.edu}

\author{Preston Wake}
\address[Preston Wake]{Department of Mathematics, Michigan State University}
\email{wakepres@msu.edu}

\date{\today}

\begin{document}

\begin{abstract}
    We introduce a new method for studying mod-$\ell$ congruences between eigenforms through the modular representation theory of $\PGL_2(\F_p)$.  When $p\equiv \pm 1 \pmod{\ell}$, we use this theory to construct and describe extra structures on spaces of modular forms with $\Gamma_0(p^2)$-level at $p$ and a fixed mod-$\ell$ Galois representation.  The structural results we obtain can be viewed as a refinement of classical level-raising theorems since they not only allow us to prove the existence of congruences, but also to count the number of such congruences.  Our methods work equally well in the residually irreducible and residually reducible cases, allowing us to prove several new instances of congruences between Eisenstein series and cuspforms (as well as independently rederiving classical results of Mazur and more recent results of Lang--Wake).  Notably, our approach proves these results without computing constant terms of Eisenstein series, without Galois deformation theory and $R=\T$ theorems, and without the Jacquet--Langlands correspondence.
\end{abstract}
\maketitle

\section{Introduction}

Let $\ell$ be an odd prime number. In the 1970s, Serre made what is now known as the weak form of Serre's conjecture:\ every odd, irreducible, 2-dimensional mod-$\ell$ Galois representation $\rhobar$ arises from a cuspidal eigenform $f$ \cite[Section 3]{serre75}. A decade later, Serre formulated the strong form of the conjecture, specifying the optimal level \cite{serre-conj}. This raises a question:\ given such a $\rhobar$, what are all possible levels of $f$ that give rise to it?  Answering this question amounts to finding congruences modulo $\ell$ between eigenforms of different levels.  Such congruences can lower the level \cite{ribetepsilon}, a fact used to prove that Serre's weak conjecture implies his strong one.  They can also raise the level \cite{ribetICM,ribet}. The most general such results were by Diamond and Taylor \cite{DT1,DT2}, where they showed that there are essentially no global obstructions to finding a lift with prescribed local behavior at a given finite set of primes.

The situation for reducible $\rhobar$ is more complicated and less well understood. In the simplest case $\rhobar = \omega \oplus 1$ with $\omega$ the mod-$\ell$ cyclotomic character, blindly following Serre's formula suggests that the optimal $f$ would come from the zero-dimensional space of cusp forms of weight $2$ and level $1$. However, Mazur showed that $\rhobar$ does arise from a weight-2 cuspform of prime level $p \geq 5$ for all $p \equiv 1 \mod{\ell}$, and this is related to the fact that the factor $p-1$ appears in the constant term of the Eisenstein series of weight 2 and level $p$ \cite{mazur}. There are results, for instance of Billerey--Menares \cite{BM1,BM2} and Yoo \cite{yoo}, about the possible levels of a given reducible $\rhobar$, though these results tend to focus on squarefree levels. The proofs of such results typically rely on computing constant terms of Eisenstein series.

In this paper, we study level-raising congruences at a prime $p$ with $p \equiv \pm 1 \pmod{\ell}$, so that $\ell$ divides the order of the group $G=\PGL_2(\F_p)$. We use the mod-$\ell$ representation theory of $G$ to construct extra structures on spaces of modular forms of level $p^2$ that are congruent to a given eigenform $f$. These structures imply not only the existence of congruences, but formulae for the number of congruent forms of a given type (principal series, Steinberg, or supercuspidal). Remarkably, these formulae even apply in the residually reducible case and can be used to deduce the existence of Eisenstein congruences without computing constant terms. Even in the non-Eisenstein case, our results are novel, and our techniques do not rely on Galois-deformation theory or on the Jacquet--Langlands correspondence.

\subsection{The vexing case}
\label{subsec:intro vexing}
We begin by illustrating our results in a case when they are particularly clean to state, the vexing case of Diamond \cite{Diamond97}, 
which is completely disjoint from the Eisenstein case. Let $F$ be a large-enough finite extension of $\Q_\ell$, with valuation ring $\cO$ and residue field $\F$. A representation $\rhobar:G_\Q \to \GL_2(\F)$ is said to be \emph{vexing} at $p$ if $\rhobar|_{G_{\Q_p}}$ is irreducible but $\rhobar|_{I_p}$ is reducible, where $G_{\Q_p} \supset I_p$ are, respectively, a decomposition group and an inertia group at $p$. As Diamond shows, if $\rhobar$ is vexing, then $\rhobar|_{G_{\Q_p}} \cong \Ind_{G_{\Q_{p^2}}}^{G_{\Q_p}}(\overline{\psi})$, where $\Q_{p^2}$ is the unramified quadratic extension of $\Q_p$, $\psi:G_{\Q_{p^2}} \to \cO^\times$ is a character of conductor $p$ with finite, prime-to-$\ell$ order, and $\bar\psi=\psi \otimes_\cO \F$. The minimal lift of $\rhobar|_{G_{\Q_p}}$ is $\Ind_{G_{\Q_{p^2}}}^{G_{\Q_p}}({\psi})$, but, if $p\equiv -1 \pmod{\ell}$, other lifts may be obtained by multiplying $\psi$ by characters of $\ell$-power order.

Our result can be thought of as counting modular lifts of $\rhobar$. Let $\Delta$ be the $\ell$-Sylow subgroup of $\F_{p^2}^\times$, and identify it with a quotient of $G_{\Q_{p^2}}$ via the Artin map. Let $M_k(\Gamma,\cO)$ denote the space of modular forms of weight $k \ge 2$ and prime-to-$p\ell$ level $\Gamma$ with coefficients in $\cO$. Assume that $\rhobar$ is modular, in the sense that there is an eigenform $f \in M_k(\Gamma \cap \Gamma_0(p^2),\cO)$ such that $\rhobar_f\cong \rhobar$, and let $\m$ be the corresponding maximal ideal in the Hecke algebra. If $\xi: \Delta \to \cO^\times$ is a character, we say that an eigenform $f \in M_k(\Gamma \cap \Gamma_0(p^2),\cO)_\m$ is a \emph{lift of $\rhobar$ of cuspidal type $\xi$} if $\rho_f|_{G_{\Q_p}} \cong\Ind_{G_{\Q_{p^2}}}^{G_{\Q_p}}(\psi\xi).$ The lifting results of Diamond--Taylor \cite{DT2} imply that a lift of $\rhobar$ of cuspidal type $\xi$ exists for every possible $\xi$, but they say nothing about the number of lifts of a given cuspidal type. We show that the number of lifts is independent of $\xi$.

Our counting result follows from a module structure. We define an action of $\cO[\Delta]$ on $M_k(\Gamma\cap \Gamma_0(p^2),\cO)_\m$ determined by the following property:\ for $f \in M_k(\Gamma\cap \Gamma_0(p^2),\cO)_\m$ an eigenform of cuspidal type $\xi$, an element $\delta\in \Delta$ acts by $\delta f = \xi(\delta)f$. In particular, the $F$-dimension of the $\xi$-isotypic component of $M_k(\Gamma \cap \Gamma_0(p^2),\cO)_\m \otimes_\cO F$ is the number of normalized eigenforms of cuspidal type $\xi$.

\begin{theorem}
\label{thm:intro vexing}
Suppose that $\ell > 3$ and $p \equiv -1 \pmod{\ell}$. Let $f \in M_k(\Gamma \cap \Gamma_0(p^2),\cO)$ be an eigenform with prime-to-$p\ell$ level $\Gamma$, and let $\m$ be the associated maximal ideal in the $\ell$-adic Hecke algebra. Assume that $\rhobar_f$ is vexing at $p$. Then $M_k(\Gamma \cap \Gamma_0(p^2),\cO)_\m$ is a free $\cO[\Delta]$-module with the action defined above.
\end{theorem}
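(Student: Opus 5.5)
The plan is to realize $M:=M_k(\Gamma\cap\Gamma_0(p^2),\cO)_\m$ as the invariants of a $G=\PGL_2(\F_p)$-representation, and then to read off the $\Delta$-structure from the modular representation theory of $G$. Let $K(p)\subset \GL_2(\Z_p)$ be the principal congruence subgroup of level $p$. Conjugating by $\sm{p}{0}{0}{1}$ turns $\Gamma_0(p^2)$-level at $p$ into level $K_0(p)\cap{}^tK_0(p)$, which contains $K(p)$. So set $V:=M_k(\Gamma\cap K(p),\cO)_\m$, where we fix the trivial central character at $p$ (harmless after twisting, since $\rhobar$ is vexing). Then $V$ is an $\cO[G]$-module, and $M\cong V^{T}$, where $T\subset G$ is the split torus $\F_p^\times$, i.e.\ the image of $K_0(p)\cap{}^tK_0(p)$.

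Next, decompose $V\otimes F$ by local type. Because $\rhobar$ is vexing, every eigenform in the $\m$-localization has $\rho_f|_{G_{\Q_p}}$ irreducible with reducible inertia restriction. Its local representation at $p$ is therefore depth-zero supercuspidal, of the form $\pi_{\psi\xi}$ for $\xi$ a character of $\Delta$. On $K(p)$-invariants this is the cuspidal $G$-representation $\theta_{\psi\xi}$ of dimension $p-1$. No principal series or Steinberg constituents appear, since their Galois restrictions are reducible.

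The key structural claim is that $V$ is a projective $\cO[G]$-module lying in the block $B$ of the cuspidal representations $\theta_{\psi\xi}$, $\xi\in\Delta^\vee$. I would establish this in two steps.

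\emph{Step 1: $V$ is free over $\cO[P]$ for $\ell$-subgroups $P$.} The prime-to-$p\ell$ level and $\ell>3$ make $\Gamma\cap K(p)$ act freely, so cohomology and forms of the covering agree with coinduction from the quotient. Concretely, I would use Betti cohomology $H^1$ of the level-$K(p)$ curve localized at non-Eisenstein $\m$. Because $\Gamma$ is neat, the tower is Galois with group $G$, so the chain complex computing $H^*$ is a complex of free $\cO[G]$-modules. Localizing at $\m$ kills $H^0$ and $H^2$, since $\rhobar$ is irreducible. Hence $H^1_\m$ is perfect in a single degree, and therefore projective over $\cO[G]$. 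Passing from $H^1$ to $M$ uses Eichler--Shimura; to stay integral I would instead use the $q$-expansion/Hecke duality, or simply work with cohomology, which carries the same Hecke eigensystems.

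\emph{Step 2: identify the projective cover.} Since $p\equiv -1\pmod{\ell}$, the $\theta_{\psi\xi}$ all reduce to the same irreducible (cuspidal, $\ell\nmid$ dimension considerations) $\bar\theta$. The projective cover $P_{\bar\theta}$ then has $P\otimes F\cong\bigoplus_{\xi}\theta_{\psi\xi}$, with multiplicity one for each $\xi$. This comes from the Brauer character / decomposition matrix of $\PGL_2(\F_p)$ for $\ell\mid p+1$ in the cuspidal block with nontrivial $\psi$, which is the Brauer tree of a cyclic defect group with exceptional vertex. So $V\cong P_{\bar\theta}^{\oplus n}$.

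Now define the $\Delta$-action on $V$: via $\End_G(P_{\bar\theta})\cong\cO[\Delta]$, which is commutative, local, and acts on $\theta_{\psi\xi}$ through $\xi$. It commutes with Hecke operators, so it descends to $M=V^T$. Then $M\cong(P_{\bar\theta}^T)^n$, and it suffices to show that $P_{\bar\theta}^T$ is free of rank one, say, over $\cO[\Delta]$. It is $\cO$-free and torsion-free, and $\dim\theta_{\psi\xi}^T=1$ for each $\xi$, because cuspidal representations contain each character of the split torus exactly once (up to the central twist). Reducing mod $\varpi$, $P_{\bar\theta}^T\otimes\F\cong(\bar P_{\bar\theta})_T$ is cyclic over $\F[\Delta]$. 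Here invariants equal coinvariants because $|T|=p-1$ is prime to $\ell$. Nakayama's lemma then gives cyclicity, and rank counting gives freeness.

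\textbf{Main obstacle.} The hardest step is Step 1, the projectivity of $V$ over $\cO[G]$ together with the transfer from cohomology to $M_k$. It requires care with the Eisenstein part and torsion, and I expect this is where $\ell>3$ and the vexing hypothesis, which forces $\m$ to be non-Eisenstein, are used.
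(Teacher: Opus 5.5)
Your Step 2 is essentially the paper's block-theoretic argument: the block is nilpotent, $\End_{\cO[G]}(P)\cong\cO[\Delta]$ by Brou\'e--Puig, and $V^T\cong\cO[\Delta]^n$. The cyclicity of $\bar P^T$, which you assert without proof, follows from the paper's observation that $e\Ind_T^G\cO$ is projective (as $\ell\nmid|T|$). Moreover $\Hom_{\F[G]}(e\Ind_T^G\F,\bar\theta)=\bar\theta^T$ is one-dimensional, so $e\Ind_T^G\cO\cong P$ and $P^T\cong\End_{\cO[G]}(P)$.

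The genuine gap is Step 1, which proves projectivity for the wrong object. The theorem is an integral statement about the lattice $M_k(\Gamma\cap\Gamma_0(p^2),\cO)_\m$, i.e.\ about coherent cohomology. So you need $M_k(\Gamma\cap\Gamma(p),\cO)^Z_{\m'}$ itself to be $\cO[G]$-projective. Projectivity of Betti $H^1_\m$ does not transfer to it. (That route also needs coefficients in $\mathrm{Sym}^{k-2}$ for $k>2$, and it gives a complex of projective rather than free modules, since $\Gamma$ need not be neat.)

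Matching Hecke eigensystems only shows that $M_k(\Gamma\cap\Gamma_0(p^2),F)_\m$ is free over $F[\Delta]$, i.e.\ it gives the eigenform count. That does not imply integral freeness. For example, $\cO\oplus I_\Delta$, with $I_\Delta$ the augmentation ideal, is $\cO$-free with $F[\Delta]$-free generic fiber but is not $\cO[\Delta]$-free. Likewise $\bigoplus_\xi\tilde\theta_{\psi\xi}$ is a non-projective lattice in $P\otimes F$.

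The $q$-expansion route identifies $M_k(\Gamma\cap\Gamma_0(p^2),\cO)_\m$ with $\Hom_\cO(\T_\m,\cO)$, so it reduces the claim to $\cO[\Delta]$-freeness of $\T_\m$. That does not follow from $\cO[\Delta]$-freeness of the faithful $\T_\m$-module $H^1_\m$ without a multiplicity-one statement ($H^1_\m$ free over $\T_\m$), which you do not have in this generality.

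The missing input is projectivity directly on the coherent side; this is how the paper proceeds, via Serre's theorem.
For $\ell>3$ the cover $X(\Gamma\cap\Gamma(p))_{/\F}\to X(\Gamma)_{/\F}$ is tame in Nakajima's sense. Stabilizers at non-cuspidal points have order divisible only by $2$ and $3$, and the ramification at the cusps divides $p$. Since $H^1(X(\Gamma\cap\Gamma(p))_{/\cO},\omega^{\otimes k})=0$ for $k\ge 2$, formation of $H^0$ commutes with reduction mod $\varpi$, and $H^*(X(\Gamma\cap\Gamma(p))_{/\F},\omega^{\otimes k})$ is concentrated in degree $0$. Nakajima's theorem then makes $M_k(\Gamma\cap\Gamma(p),\F)$ projective over $\F[H]$, hence $M_k(\Gamma\cap\Gamma(p),\cO)$ projective over $\cO[H]$. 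Taking $Z$-invariants preserves projectivity, and so does localizing at the anemic ideal $\m'$, whose Hecke operators commute with $G$.

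You should localize $V$ at $\m'$ rather than $\m$, since $U_p$ does not act $G$-equivariantly at full level $p$. In the vexing case this is harmless at level $\Gamma_0(p^2)$, because every form there is supercuspidal at $p$ and hence killed by $U_p$.

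With this in place, your direct block-membership argument and Step 2 go through. The identification of the $\End_{\cO[G]}(P)$-action with the Galois-defined action is exactly the local Langlands and local--global compatibility input you already invoke.
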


In particular, the number of eigenforms in $M_k(\Gamma \cap \Gamma_0(p^2),\cO)_\m$ of cuspidal type $\xi$ equals the $\cO[\Delta]$-rank of $M_k(\Gamma \cap \Gamma_0(p^2),\cO)_\m$ and hence is independent of $\xi$.

\subsection{Projectivity of modular forms of full level \texorpdfstring{$p$}{}} Our main theorem generalizes  \cref{thm:intro vexing} to also include $p \equiv 1 \pmod{\ell}$ and other possible residual local representations $\rhobar|_{G_{\Q_p}}$. The global input that underlies our proof is a generalization of a result of Serre \cite{serrecoursenotes} about the projectivity of the $\cO$-module of modular forms with full level at $p$. 

Let $k\ge 2$, and let $\Gamma$ be a prime-to-$p\ell$ level subgroup. The space $M_k(\Gamma \cap \Gamma(p),\cO)$ of modular forms of weight $k$ and level $\Gamma \cap \Gamma(p)$ has an action of $\GL_2(\F_p)$ that commutes with the action of the anemic Hecke algebra\footnote{Here, and throughout the paper, \emph{anemic} means generated by Hecke operators $T_n$ with $p \nmid n$.}. If $\ell \ge 5$ or if $\ell=3$ and $\Gamma$ is rigid, then we show that $M_k(\Gamma \cap \Gamma(p),\cO)$ is a projective $\cO[\GL_2(\F_p)]$-module\footnote{Actually, one has to replace $\GL_2(\F_p)$ by $\GL_2(\F_p)/\{\pm 1\}$ if $-1 \in \Gamma$, but we ignore this in favor of notational simplicity here.}. Serre proved this in the case $\Gamma=\SL_2(\Z)$. He also showed that, if $\ell < 5$, then $M_k(\Gamma(p),\cO)$ may not be projective, and this failure of projectivity is related to the failure of the character part of Serre's conjecture for $\ell <5$. 

Since the anemic Hecke action and the $\cO[\GL_2(\F_p)]$-action on $M_k(\Gamma \cap \Gamma(p),\cO)$ commute, if $\m'$ is a maximal ideal in the anemic Hecke algebra\footnote{Our notational convention is that $\m'$ refers to a maximal ideal in the anemic Hecke algebra, whereas $\m$ refers to a corresponding ideal in the Hecke algebra that includes $U_p$.}, then the localization $M_k(\Gamma \cap \Gamma(p),\cO)_{\m'}$ is also a projective $\cO[\GL_2(\F_p)]$-module. The more usual spaces of modular forms appear as fixed vectors under subgroups of $\GL_2(\F_p)$. First, let $Z \subset \GL_2(\F_p)$ be the center, and let $G=\PGL_2(\F_p)$, so that $M:=M_k(\Gamma \cap \Gamma(p),\cO)_{\m'}^Z$ is a projective $\cO[G]$-module.
If $T \subset B \subset G$ are, respectively, the diagonal torus and the upper-triangular Borel in $G$, then there are isomorphisms
\begin{align*}
    & M^{{T}} \cong M_k(\Gamma \cap \Gamma_0(p^2),\cO)_{\m'} \\
    & M^{{B}} = M_k(\Gamma \cap \Gamma_0(p),\cO)_{\m'} \\
    & M^{G} = M_k(\Gamma,\cO)_{\m'}.
\end{align*}
For level-raising questions, we are interested in understanding how these three spaces relate to one another.
By Frobenius reciprocity, we can think of each of the fixed spaces on the left-hand side as a hom-space
\[
\Hom_{\cO[G]}(\Ind_H^{G} \cO,M)
\]
for $H \in \{T,B,G\}$. We are thus led to the following purely algebraic question: if $M$ is a projective $\cO[G]$-module, how are the three spaces
\[
\Hom_{\cO[G]}(\Ind_H^{G} \cO,M)
\]
for $H \in \{T,B,G\}$ related?

\subsection{Modular representation theory of \texorpdfstring{$G=\PGL_2(\F_p)$}{}} The study of projective $\cO[G]$-modules belongs to modular representation theory. Since we assume that $\ell$ divides $|G|$, the algebra $\F[G]$ is not semisimple, but it still decomposes into a product of local algebras, called \emph{block algebras}, given by the primitive central idempotents. The fact that these block algebras need not be simple is related to the fact that the reduction of a lattice in an irreducible $F[G]$-module may not be simple. Different irreducible $F[G]$-modules whose reductions share simple factors are said to be in the same block, and the simple $\F[G]$-modules that appear in this way are also said to belong to the block. The fact that these {\it a priori} different notions of a `block' all amount to the same thing is a basic theorem of modular representation theory. The structure of a block is strongly influenced by a (conjugacy class of an) $\ell$-subgroup of $G$ called the \emph{defect group} of the block.  In our case, the defect group is always an $\ell$-Sylow subgroup $\Delta$ of $G$, which is isomorphic to the $\ell$-part of $\F_{p^2}^\times$. 

Block theory is a powerful tool for determining the structure of projective $\cO[G]$-modules. The projective indecomposable modules of a block algebra are in bijection with the simple $\F[G]$-modules in the block. A remarkable fact is that the projective $\cO[G]$-module $M=M_k(\Gamma \cap \Gamma(p),\cO)_{{\m'}}^Z$ of interest in level raising is already a module for a unique block algebra of $\cO[G]$. (That is, there is a unique block idempotent that acts as the identity on $M$; see \cref{prop:block of m}.) This block is determined by the $p$-component of the automorphic representation associated with any eigenform in $M$. In particular, $M$ is a direct sum of copies of the projective indecomposable modules of the block, of which there are at most two. There are two different types of blocks of interest.

\subsubsection{Nilpotent blocks} The simplest blocks we consider are the \emph{nilpotent blocks} of Brou\'e--Puig \cite{BP1980}. These blocks contain a single simple $\F[G]$-module and hence a single projective indecomposable module $P$.  Moreover, $\End_{\cO[G]}(P) = \cO[\Delta]$; that is, the block algebra is Morita equivalent to $\cO[\Delta]$. Nilpotent blocks appear in two situations:
\begin{itemize}
    \item the vexing case of \cref{subsec:intro vexing}, where $p\equiv -1 \pmod{\ell}$ and $\rhobar|_{G_{\Q_p}}$ is an irreducible induction from the unramified quadratic extension of $\Q_p$. This occurs when the $p$-component of an automorphic representation attached to ${\m'}$ is depth-zero supercuspidal for a character that is not quadratic modulo~$\ell$.
    \item when $p \equiv 1 \pmod{\ell}$ and $\rhobar|_{G_{\Q_p}}$ is a direct sum of two characters of conductor $p$. This occurs when the $p$-component of an automorphic representation attached to ${\m'}$ is a principal series for a pair of characters $\chi$ and $\chi^{-1}$, where $\chi$ has conductor $p$ and $\chi$ and $\chi^{-1}$ are not congruent modulo $\ell$.
\end{itemize}
The fact that these are nilpotent blocks is related to the fact that, in both cases, all lifts of $\rhobar|_{G_{\Q_p}}$ are of the same type (supercuspidal in the vexing case, and ramified principal series in the second case).

In both of these cases, if $e \in \cO[G]$ denotes the relevant block idempotent, we show that $e\Ind_T^G\cO$ is a projective indecomposable $e\cO[G]$-module and hence must be isomorphic to the unique projective indecomposable $P$.  Moreover, $e\Ind_B^G\cO=0$. Since every projective $e\cO[G]$-module $M$ is of the form $M \cong P^a$ for some integer $a$, we see from Frobenius reciprocity that
\[
M^T \cong \Hom_{\cO[T]}(\cO,M) \cong \Hom_{\cO[G]}(e\Ind_T^G\cO,M)\cong \End_{\cO[G]}(P)^a \cong \cO[\Delta]^a,
\]
and
\[
M^B \cong \Hom_{\cO[G]}(\Ind_B^G\cO,M) \cong \Hom_{\cO[G]}(e\Ind_B^G\cO,M)=0.
\]

\subsubsection{Principal blocks}
The other block of interest is the principal block, which is the block containing the trivial representation. This block contains two simple $\F[G]$-modules, the trivial representation and another one denoted $\pi$; hence it has two projective indecomposable modules, denoted $P_1$ and $P_\pi$. The principal block appears when $\rhobar$ is unramified at $p$, but the analysis again breaks into two cases, depending on the sign of $p$ modulo $\ell$.

We illustrate the structure of the principal block in the case $p \equiv -1 \pmod{\ell}$. Let $e \in \cO[G]$ be the idempotent of the principal block. For $p \equiv -1 \pmod{\ell}$, $\pi$ is a cuspidal representation, and we show that
\[
P_1=e\Ind_B^G\cO, \ P_\pi= \ker(e\Ind_T^G\cO \to e\Ind_B^G\cO).
\]
We compute that $\End_{\cO[G]}(P_\pi) \cong \cO[\Delta]^+$, where the $+$ denotes the subring that is fixed under the involution that inverts group-like elements.  Moreover, $\End_{\cO[G]}(P_1)$ is a rank-2 $\cO$-algebra, and $\Hom_{\cO[G]}(P_1,P_\pi) \cong \Hom_{\cO[G]}(P_\pi,P_1) \cong \cO$. We also get that $\Hom_{\cO[G]}(\cO,P_\pi)=0$ and $\Hom_{\cO[G]}(\cO,P_1) \cong \cO$. Since $P_1$ and $P_\pi$ are the only projective indecomposable modules in this block, $M=P_1^a \oplus P_\pi^b$ for some integers $a$ and $b$. Using Frobenius reciprocity and the above calculations of hom-spaces, these integers $a$ and $b$ can be recovered as
\[
a=\rank_\cO M^G, \ 2a+b = \rank_\cO M^B.
\]
The space $\Hom_{\cO[G]}(P_\pi,M)$ equals the kernel of trace from $M^T$ to $M^B$. 

\subsection{Counting level-raising congruences}
The module $M=M_k(\Gamma \cap \Gamma(p),\cO)_{{\m'}}^Z$ is a projective module over a block determined by the maximal ideal ${\m'}$, and we can use modular representation theory to describe its structure. When the block is nilpotent, this allows us to deduce \cref{thm:intro vexing} and a similar theorem in the ramified-principal-series case. The latter case may be more familiar, and is related, via twisting, to a freeness result over a ring of diamond operators for forms of level $\Gamma_1(p)$.

When ${\m'}$ is associated with a mod-$\ell$ Galois representation that is unramified at $p$, the associated block is principal. In this case, $M^T$ can be identified with $M_k(\Gamma \cap \Gamma_0(p^2),\cO)_{\m'}$, and we show that the kernel of the trace $M^T \to M^B$ can be identified with $M_k(\Gamma \cap \Gamma_0(p^2),\cO)_{\m}$, where $\m$ is the ideal containing $\m'$ and $U_p$. The result is the following (see \cref{thm:modular forms count}).

\begin{theorem}
    \label{thm:principal intro}
    Let $\ell > 3$ and $f \in M_k(\Gamma \cap \Gamma_0(p),\cO)$ be an eigenform with $\Gamma$ a prime-to-$p\ell$ level, and let $\m'$ be the associated maximal ideal in the anemic Hecke algebra.  Write $\m$ for the ideal generated by $\m'$ and $U_p$, and let
    \begin{align*}
        r_0&= \rank_\cO M_k(\Gamma,\cO)_{\m'}, \\
        r_1&=\rank_\cO M_k(\Gamma \cap \Gamma_0(p),\cO)_{\m'}^{p\dnew}.
    \end{align*}
    Then $M_k(\Gamma \cap \Gamma_0(p^2),\cO)_\m$ is an $\cO[\Delta]^+$-module in a natural way, and there is an isomorphism of $\cO[\Delta]^+$-modules
    \[
    M_k(\Gamma \cap \Gamma_0(p^2),\cO)_\m \cong \begin{cases}
        \cO[\Delta]^{r_0} \oplus (\cO[\Delta]^+)^{r_1} & \text{if }p \equiv 1 \pmod{\ell} \\
        \cO^{r_0} \oplus (\cO[\Delta]^+)^{r_1} & \text{if }p \equiv -1 \pmod{\ell}.
    \end{cases}
    \]
\end{theorem}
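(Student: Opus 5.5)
We work with $M=M_k(\Gamma\cap\Gamma(p),\cO)_{\m'}^Z$. By the generalization of Serre's projectivity result, $M$ is a projective $\cO[G]$-module. Since $\m'$ comes from an eigenform of level $\Gamma\cap\Gamma_0(p)$, the associated $p$-component has nonzero Iwahori-fixed vectors, so $\rhobar$ is unramified at $p$ and \cref{prop:block of m} puts $M$ in the principal block $e\cO[G]$. We therefore write $M\cong P_1^a\oplus P_\pi^b$. The first step is to identify $a$ and $b$. By Frobenius reciprocity $M^G=\Hom(\cO,M)$ and $M^B=\Hom(e\Ind_B^G\cO,M)$, and we use the hom-space computations for the principal block. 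When $p\equiv-1$ these are $\Hom(\cO,P_1)=\cO$, $\Hom(\cO,P_\pi)=0$ and $\End(P_1)$ of rank $2$. They give $a=\rank M^G=r_0$ and $2a+b=\rank M^B=\rank M_k(\Gamma\cap\Gamma_0(p))_{\m'}$. The $p$-old part of level $p$ has rank $2r_0$, so $b=r_1$. For $p\equiv 1\pmod{\ell}$ I would establish the analogous descriptions. There $\pi$ is a principal-series constituent of $\Ind_B^G$, and I expect $e\Ind_B^G\cO\cong P_1$ still holds, or else that $P_1$ and $P_\pi$ are the two summands of $e\Ind_T^G\cO$. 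With the analogous hom-ranks, the same count $a=r_0$, $b=r_1$ should come out.

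The second step identifies the target space. By $M^T\cong M_k(\Gamma\cap\Gamma_0(p^2))_{\m'}$ we get $M_k(\Gamma\cap\Gamma_0(p^2),\cO)_\m\cong\ker(\mathrm{tr}:M^T\to M^B)$. This is a Hecke-theoretic statement. The forms of level $p^2$ whose $p$-component has no Iwahori-fixed vectors have $U_p=0$, and $U_p$-nilpotence is what localization at $\m$ detects. The old forms coming from level $p$ and level $1$ inject via the trace, or are otherwise detected by it. I would prove this by comparing the trace with the degeneracy maps and $U_p$. By Frobenius reciprocity this kernel is $\Hom_{\cO[G]}(Q,M)$ with $Q=\ker(e\Ind_T^G\cO\to e\Ind_B^G\cO)$. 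For $p\equiv-1$ we have $Q=P_\pi$, so the kernel equals $\Hom(P_\pi,P_1)^a\oplus\End(P_\pi)^b\cong\cO^{r_0}\oplus(\cO[\Delta]^+)^{r_1}$. Here $\End(P_\pi)\cong\cO[\Delta]^+$ acts through precomposition, and this is the natural $\cO[\Delta]^+$-structure, while $\Hom(P_\pi,P_1)\cong\cO$ is an $\cO[\Delta]^+$-module through an augmentation-type character. I need to check that the intrinsic module structure matches this identification.

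For $p\equiv1$ the analogous computation needs $\End(Q)\cong\cO[\Delta]^+$ acting on $Q$, together with $\Hom(Q,P_1)\cong\cO[\Delta]$ and $\Hom(Q,P_\pi)\cong\cO[\Delta]^+$ as $\End(Q)$-modules. A plausible route is $Q\cong P_\pi$ with $\End(P_\pi)\cong\cO[\Delta]^+$ and $\Hom(P_\pi,P_1)$ free of rank two over $\cO[\Delta]^+$, that is, isomorphic to $\cO[\Delta]$. I would verify this using characters. The ordinary characters in the principal block are $1$, $\St$, and the principal series $\Ind\chi$ for $\chi$ of $\ell$-power order up to inversion. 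Computing the multiplicities of these in $P_1$ and $P_\pi$, and hence $\Hom$-ranks after $\otimes F$, gives the ranks. Freeness over $\cO[\Delta]^+$ then follows from a Mackey computation $\Hom(\Ind_T^G,\Ind_T^G)=\cO[T\backslash G/T]$ restricted to the block.

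The main obstacle is the $p\equiv1$ structural input, namely identifying $Q$ and proving that $\Hom(Q,P_1)$ is free over $\cO[\Delta]$ rather than merely having the right rank. A close second is the Hecke-theoretic identification of the $\m$-localization with the trace kernel. For the freeness I would first try to realize $\Hom(Q,P_1)$ inside the double-coset algebra $e\cO[T\backslash G/T]$ and exhibit an explicit generator.
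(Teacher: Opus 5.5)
Your $p\equiv-1\pmod{\ell}$ argument is essentially the paper's proof of \cref{thm:X^T}(2)(a), fed into \cref{thm:modular forms count}: $M\cong P_1^a\oplus P_\pi^b$, $a=\rank M^G=r_0$, $2a+b=\rank M^B$, $Q\cong P_\pi$, and $\ker\tr\cong\Hom(P_\pi,P_1)^{r_0}\oplus\End(P_\pi)^{r_1}$.

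One correction to your heuristic for the Hecke step. $\ker(\tr)$ does contain $p$-old forms: exactly one $U_p$-zero combination of the degeneracy images of each $p$-old eigenform, and these are the source of $\cO^{r_0}$. The paper argues as follows. Via $\iota$ the trace becomes $\psi_{p,*}$, and the Cartesian square gives $U_p=\pi_p^*\psi_{p,*}$ at level $p^2$. Since $\pi_p^*$ is injective, $\ker(\tr)=\ker(U_p)$. Finally $U_p=0$ in $\T_\m$ because $U_p$ is invertible at level $p$.

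For $p\equiv1\pmod{\ell}$, however, the proposal rests on claims that are false.

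\emph{Which modules are projective.} Since $\ell\mid p-1$ divides $|B|$ and $|T|$, neither $e\Ind_B^G\cO$ nor $e\Ind_T^G\cO$ is projective. In fact $\cO[G/B]=\cO\oplus\St$ integrally, because $[G:B]=p+1$ is a unit, and $e\cO[G/T]\cong\cO\oplus\St\oplus P_\St$ by \cref{lem:O[G/T]}. The projective module is $e\cO[G/U]\cong P_1\oplus P_\St$.

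\emph{The count of $b$.} Your $Q$ is still $P_\St$. But $\Hom(\cO\oplus\St,P_1)$ and $\Hom(\cO\oplus\St,P_\St)$ both have rank $1$. Hence $\rank M^B=a+b$, which gives $b=r_0+r_1$, not $r_1$.

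\emph{The hom-space $\Hom(P_\St,P_1)$.} This is not $\cO[\Delta]$. Compare $P_1\otimes F=1\oplus\bigoplus_\chi R_T^G(\chi)$ with $P_\St\otimes F=\St\oplus\bigoplus_\chi R_T^G(\chi)$, where $\chi$ runs over nontrivial characters of $\Delta$ modulo inversion. The common constituents show the rank is $(|\Delta|-1)/2$, and in fact the module is $\cO[\Delta]^-$. Also, $\cO[\Delta]$ is not free of rank two over $\cO[\Delta]^+$, since the $\cO$-ranks $|\Delta|$ and $|\Delta|+1$ differ. So the ``main obstacle'' you identify is an attempt to prove a false statement. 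Your two errors merely cancel in the final count.

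\emph{The correct mechanism.} One has $\ker\tr\cong\Hom(P_\St,M)\cong(\cO[\Delta]^-)^{r_0}\oplus(\cO[\Delta]^+)^{r_0+r_1}$. Regrouping $\cO[\Delta]^-\oplus\cO[\Delta]^+=\cO[\Delta]$, which uses $2\in\cO^\times$, gives $\cO[\Delta]^{r_0}\oplus(\cO[\Delta]^+)^{r_1}$.

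\emph{Where the integral structure comes from.} The integral structure of these hom-spaces, not just their ranks, is the real input. The double-coset algebra $e\cO[T\backslash G/T]$ cannot supply it, because $P_1$ is not a summand of $e\cO[G/T]$ here. The paper obtains \eqref{eq:Homs P_1 and P_st} from Puig's Morita equivalence (\cref{thm:mortia broue}) between $e\cO[G]$ and the principal block of $\cO[N]$, with $N=T\rtimes W$. There the projectives are $\Ind_{N^{(\ell)}}^N\tilde1$ and $\Ind_{N^{(\ell)}}^N\tilde\sigma$, and the double cosets $N^{(\ell)}\backslash N/N^{(\ell)}$ are the inversion-orbits in $\Delta$.
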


\begin{remarks} \hfill
\begin{enumerate}
    \item The localization $M_k(\Gamma \cap \Gamma_0(p^2),\cO)_\m$ can be identified with the kernel of $U_p$ on $M_k(\Gamma \cap \Gamma_0(p^2),\cO)_{\m'}$. The entire space $M_k(\Gamma \cap \Gamma_0(p^2),\cO)_{\m'}$ contains $p$-oldforms with greater multiplicity than the $p$-newforms, but taking this kernel ensures that $p$-oldforms appear with multiplicity one.
    \item Our definition of $M_k(\Gamma \cap \Gamma_0(p),\cO)_{\m'}^{p\dnew}$ is such that $r_1+2r_0=\rank M_k(\Gamma\cap\Gamma_0(p),\cO)_{\m'}$. When $\m'$ is Eisenstein, this may not agree with the number of forms that are Steinberg at $p$. For example, if $\Gamma=\SL_2(\Z)$, $p=11$, $\ell=5$, and $\m'$ is the Eisenstein maximal ideal, then $r_0=0$ and $M_2( \Gamma_0(11),\cO)_{\m'}$ has rank $2$, so $r_1=2$, but there is only one eigenform of $M_2( \Gamma_0(11),\cO)_{\m'}$ that is Steinberg at $11$.
    \item The action of $\cO[\Delta]^+$ on an eigenform $f$ can be described explicitly in terms of the $p$-component of the automorphic representation of $f$. It acts through the augmentation map on forms whose automorphic representation at $p$ is unramified or Steinberg. For forms whose automorphic representation at $p$ is ramified principal series or supercuspidal, it acts in a way determined by the character used to define that representation. This also explains why only $\cO[\Delta]^+$ acts and not the entire ring $\cO[\Delta]$:\ these representations are defined using not one character, but an (unordered) pair of a character and its inverse, and such a pair determines a homomorphism $\cO[\Delta]^+ \to \cO$.
    \item If $p \equiv 1 \pmod{\ell}$ and $r_1=0$ (in which case $p$ is called a \emph{Taylor--Wiles prime} for $\m$), then the theorem states that $M_k(\Gamma\cap \Gamma_0(p^2),\cO)_\m$ is a free $\cO[\Delta]$-module. This is exactly analogous to the $\cO[\Delta]$-freeness result of \cite[Theorem 2]{taylorwiles}, and the two theorems can be related to one another by twisting.
    \item If $p \equiv -1 \pmod{\ell}$, consider the saturation of the eigenforms in $M_k(\Gamma \cap \Gamma_0(p^2), \cO)_\m$ whose conductor (in the sense of \cite{Casselman}) is divisible by $p$.  The theorem implies that, after inverting $\ell$, this space is a free $F[\Delta]^+$-module, and one may ask whether the space is a free $\cO[\Delta]^+$-module.  In \cite{LPWSteinbergTorsion}, we give examples for which such an integral freeness statement fails.
    \item Although we state our results in terms of classical (coherent cohomology) modular forms, our methods can be used for other avatars of modular forms, such as modular symbols or \'etale cohomology of classical or quaternionic modular curves. The key input is the $\cO[G]$-projectivity of the space with full level at $p$, and techniques similar to the ones we use to prove projectivity for coherent cohomology are available for these other avatars (for instance, for \'etale cohomology, see \cite[Theorem 2.7]{rickard}). 
\end{enumerate}   
\end{remarks}

\subsection{Eisenstein congruences}
\label{sec:intro eis}
The proof of \cref{thm:principal intro} relies on the fact that $M=M_k(\Gamma \cap \Gamma(p),\cO)^Z$ is a projective $\cO[G]$-module in order to deduce that $M_{\m'}$ is a projective $\cO[G]$-module. In particular, \emph{no assumption} is made about $\m'$ in order to draw this conclusion, so it applies equally well whether $\m'$ is associated with an Eisenstein series or a cuspform.

Using \cref{thm:principal intro} in this way, we prove results about the existence and number of congruences between cuspforms and Eisenstein series in different settings, with general weights $k\ge 2$ and levels $N$ (so long as there is a prime $p$ dividing $N$ with $p \equiv \pm 1 \pmod{\ell}$). Many of these existence results are known, and more could likely be deduced by known methods, but we emphasize that our proofs of these results are quite different from previous ones. They do not require computations of constant terms of Eisenstein series, congruence numbers, or Galois cohomology, and they do not involve Galois deformations or $R=\T$ theorems.  Moreover, many of our existence theorems allow the square of a prime in the level, which is largely untreated in the literature with the exception of \cite{LangWake2} (which we recover by new methods) and \cite{Deo} (to which our results are complementary).  We also prove quantitative results about the number of such congruences at level $p^2$, which seem less accessible by other methods. We first illustrate this kind of result in two settings, related to \cite{mazur} and \cite{LangWake1,LangWake2}.

In the setting of Mazur's Eisenstein ideal \cite{mazur}, we prove the following (see \cref{cor:newform count} and \cref{subsec:aut triv Eis congs}). Let $S_k(\Gamma,\cO) \subseteq M_k(\Gamma,\cO)$ denote the submodule of cuspforms.

\begin{theorem}\label{thm:Eisp=1modell}
Let $p$ and $\ell$ be primes with $p \equiv 1 \pmod{\ell}$ and $\ell>3$.  Let $\m'$ denote the maximal ideal of the $\ell$-adic Hecke algebra of $S_2(\Gamma_0(p^2),\cO)$ that contains $T_q-q-1$ for all $q \nmid p$, and let $\m$ be the maximal ideal generated by $\m'$ and $U_p$.  Let $\Delta$ be the $\ell$-Sylow subgroup of $\F_{p^2}^\times$. Then
\begin{equation}
\label{eqn:eisen_p^2}    
\rank_\cO S_2(\Gamma_0(p^2),\cO)^{\new}_\m
=
\frac{|\Delta|-1}{2} \cdot \left(\rank_\cO S_2(\Gamma_0(p),\cO)_{\m'}-1 \right).
\end{equation}
\end{theorem}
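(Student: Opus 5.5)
We specialize \cref{thm:principal intro} to $\Gamma=\SL_2(\Z)$ and $k=2$, and then remove the Eisenstein and $p$-old contributions by a rank count after inverting $\ell$. Since the statement only concerns $\cO$-ranks, we may work throughout in $M\otimes_\cO F$ with bases of eigenforms. Write $s=\rank_\cO S_2(\Gamma_0(p),\cO)_{\m'}$.

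\emph{Step 1: the full count at level $p^2$.} There are no weight-$2$ modular forms of level $1$, so $r_0=0$. By the normalization in the remarks, $r_1+2r_0=\rank_\cO M_2(\Gamma_0(p),\cO)_{\m'}$, hence $r_1=\rank_\cO M_2(\Gamma_0(p),\cO)_{\m'}$. The weight-$2$ Eisenstein series $E_2(z)-pE_2(pz)$ of level $p$ has $T_q$-eigenvalues $1+q$, so it lies in the $\m'$-localization; this gives $r_1=s+1$. Since $p\equiv 1\pmod{\ell}$, \cref{thm:principal intro} gives $M_2(\Gamma_0(p^2),\cO)_\m\cong(\cO[\Delta]^+)^{s+1}$. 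Because $\ell$ is odd and $\Delta$ is cyclic of odd order, $\rank_\cO\cO[\Delta]^+=(|\Delta|+1)/2$. Therefore
\[
\rank_\cO M_2(\Gamma_0(p^2),\cO)_\m=\tfrac{|\Delta|+1}{2}(s+1).
\]

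\emph{Step 2: the Eisenstein part.} We count Eisenstein eigenforms in $M_2(\Gamma_0(p^2))_\m$, using that localization at $\m$ is the kernel of $U_p$ (remark (1)). The Eisenstein space of $\Gamma_0(p^2)$ in weight $2$ has dimension $p$, since $\Gamma_0(p^2)$ has $p+1$ cusps. It is spanned by:
\begin{itemize}
\item the $p$-oldforms coming from $E_2(z)-pE_2(pz)$;
\item the series $E_2^{\chi,\chi^{-1}}$ for the $p-2$ nontrivial characters $\chi$ of $(\Z/p)^\times$.
\end{itemize}
The series $E_2^{\chi,\chi^{-1}}$ has $T_q$-eigenvalues $\chi(q)+\chi^{-1}(q)q$ and $U_p$-eigenvalue $0$. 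It therefore lies in $\m$ exactly when $\chi$ is trivial mod $\ell$, i.e.\ when $\chi$ has $\ell$-power order. Because $p+1\equiv 2\pmod{\ell}$, the $\ell$-part of $\F_{p^2}^\times$ equals the $\ell$-part of $\F_p^\times$, so there are $|\Delta|-1$ such $\chi$.

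For the old part, $f=E_2(z)-pE_2(pz)$ satisfies $U_pf=f$, and $U_p(f(pz))=f(z)$. Hence the kernel of $U_p$ on the span of $f(z)$ and $f(pz)$ is the line spanned by $f(z)-f(pz)$. The Eisenstein part of $M_2(\Gamma_0(p^2))_\m$ thus has rank exactly $|\Delta|$, and
\[
\rank_\cO S_2(\Gamma_0(p^2),\cO)_\m=\tfrac{|\Delta|+1}{2}(s+1)-|\Delta|.
\]

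\emph{Step 3: removing $p$-oldforms.} There are no level-$1$ cuspforms in weight $2$, so the only old cuspforms come from $S_2(\Gamma_0(p))_{\m'}$. Each level-$p$ eigenform $g$ has $U_p$-eigenvalue $\pm1$, and by the same computation as in Step 2 it contributes exactly one dimension to the kernel of $U_p$. Subtracting $s$ gives
\[
\tfrac{|\Delta|+1}{2}(s+1)-|\Delta|-s=\tfrac{|\Delta|-1}{2}(s-1),
\]
which is \eqref{eqn:eisen_p^2}.

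\emph{Main obstacle.} The step needing the most care is Step 2. One must pin down exactly which Eisenstein series of level $p^2$ lie in $\m$ (including those attached to pairs $(\chi,\chi^{-1})$, not just the old ones), and correctly identify the $U_p$-kernel on the old Eisenstein space. I would verify this by listing the standard basis of $E_2(\Gamma_0(p^2))$ with its $q$-expansions and $U_p$-action, and cross-check the answer against the known dimension $p$ of that space.
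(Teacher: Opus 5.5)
Your proposal is correct, but it takes a different route from the paper's. The paper works with cusp forms directly. Because $k=2$ and $\m'$ is automorphically trivial, $S_2(\Gamma(p),\cO)^Z_{\m'}$ is not $\cO[G]$-projective. The paper therefore uses the residue sequence \eqref{eq:residue sequence}, with $H^1(X,\Omega^1)_{\m'}\cong\cO$, to obtain \cref{thm:cuspforms count eis case}. Here $a_0=0$ and $a_1=s$, so this is an exact sequence $0\to S_2(\Gamma_0(p^2),\cO)_\m\to(\cO[\Delta]^+)^{s}\to\cO[\Delta]^-\to 0$. The paper then reads off the rank of the augmentation kernel in \cref{cor:newform count}; there the ``$-1$'' comes from the cokernel $\cO[\Delta]^-$.

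You instead use the modular-forms case of \cref{thm:modular forms count}. That case needs no exceptional-case analysis, since $M_2(\Gamma(p),\cO)$ is projective by Serre's theorem. Your ``$-1$'' comes from the $|\Delta|-1$ $p$-new Eisenstein series $E_2^{\chi,\chi^{-1}}$, with $\chi$ of $\ell$-power order, sitting inside $M_2(\Gamma_0(p^2),\cO)_\m\cong(\cO[\Delta]^+)^{s+1}$. Each nontrivial $\chi^+$ occurs twice among them (from $\chi$ and $\chi^{-1}$), which is exactly the drop from multiplicity $s+1$ to $s-1$. Your Eisenstein bookkeeping is all correct: dimension $p$, eigenvalues $\chi(q)+\chi^{-1}(q)q$, $U_p$-eigenvalue $0$, and the one-dimensional $U_p$-kernel on the span of $f(z)$ and $f(pz)$. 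It uses only Hecke eigenvalues, not constant terms.

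Your route is shorter once the generic theorem is available. The paper's route gives more: an integral description of the cuspidal module itself as an $\cO[\Delta]^+$-module, not just its rank. It also never enumerates Eisenstein series, which is what lets it extend uniformly to general $\Gamma$ and feed the inductive count of \cref{cor:eis_gen}. Your method would have to redo the Eisenstein count by hand at every level $dp$ and $dp^2$ to recover the $\mu(N)\delta$ terms.
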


\cref{thm:Eisp=1modell} implies Mazur's famous result that, with these hypotheses, there is a cuspform of level $\Gamma_0(p)$ congruent to an Eisenstein series.  Indeed, the left-hand side of \eqref{eqn:eisen_p^2} must be nonnegative, and thus $\rank_\cO S_2(\Gamma_0(p),\cO)_{\m'}$ must be positive.  But our theorem goes further.  When $\rank_\cO S_2(\Gamma_0(p),\cO)_{\m'}=1$, both sides of \eqref{eqn:eisen_p^2} are zero, so there can be no cuspidal newforms of level $\Gamma_0(p^2)$ that are congruent to the Eisenstein series.  When $\rank_\cO S_2(\Gamma_0(p),\cO)_{\m'}>1$, \cref{thm:Eisp=1modell} implies that there is a wealth of such newforms. The question of which pairs $(p,\ell)$ have $\rank_\cO S_2(\Gamma_0(p),\cO)_{\m'}>1$ has been the subject of much study \cite{Merel,CE2005,WWE,Lecouturier}, and the recent paper \cite{LMP} makes use of \cref{thm:Eisp=1modell} and the associated $\cO[\Delta]^+$-freeness of $M_2(\Gamma_0(p^2), \cO)_\m$ to give a new perspective on this question. 

With the same setup as \cref{thm:Eisp=1modell} except assuming that $p \equiv -1 \pmod{\ell}$, Mazur's results imply that $S_2(\Gamma_0(p),\cO)_{\m'}$ is zero. But, in earlier work \cite{LangWake1,LangWake2}, we have shown that $S_2(\Gamma_0(p^2),\cO)_\m$ is nonzero and have described its structure precisely. As a consequence of \cref{thm:principal intro}, we give a new proof of the following theorem, which summarizes the main results of \cite{LangWake1,LangWake2}.

\begin{theorem}
\label{thm:LangWake}
Let $p$ and $\ell$ be primes with $\ell > 3$ and $p \equiv -1 \pmod{\ell}$. Let $\T$ denote the Hecke algebra acting on $M_2(\Gamma_0(p^2),\cO)$, and let $\m \subseteq \T$ denote the maximal ideal which contains $T_q-q-1$ for all $q \nmid p$ and $U_p$. Let $\Delta$ be the $\ell$-Sylow subgroup of $\F_{p^2}^\times$. Then there is a canonical isomorphism of $\cO$-algebras $\T_\m \cong \cO[\Delta]^+$.
In particular,
$$
\rank_\cO S_2(\Gamma_0(p^2),\cO)_\m
=
\frac{|\Delta|-1}{2}.
$$
\end{theorem}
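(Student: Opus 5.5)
We deduce the statement from \cref{thm:principal intro} with $\Gamma=\SL_2(\Z)$, $k=2$, $p\equiv -1 \pmod{\ell}$, and $\m'$ the Eisenstein ideal of the anemic Hecke algebra. Since $\ell$ is odd and $\ell\mid p+1$, we have $p\not\equiv 1\pmod{\ell}$.

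\textbf{Step 1: compute $r_0$ and $r_1$.} The level-one space $M_2(\SL_2(\Z),\cO)$ is zero, so $r_0=0$. For level $\Gamma_0(p)$, use the stated relation $r_1+2r_0=\rank_\cO M_2(\Gamma_0(p),\cO)_{\m'}$, which gives $r_1=\rank_\cO M_2(\Gamma_0(p),\cO)_{\m'}$.
\begin{itemize}
\item The Eisenstein series $E_2^{(p)}$ has $T_q$-eigenvalue $q+1$, so it lies in $M_2(\Gamma_0(p),\cO)_{\m'}$ and contributes rank $1$.
\item No cuspform of level $\Gamma_0(p)$ is congruent to it, since $\ell\nmid p-1$ (Mazur). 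We would prefer to avoid citing Mazur, in the spirit of the paper.
\end{itemize}
To get the cuspidal vanishing intrinsically, argue as follows. A $p$-new form $g$ of weight $2$ has $U_p$-eigenvalue $\pm1$. An Eisenstein-congruent Steinberg form would force $\rhobar_g|_{G_{\Q_p}}$ to have Frobenius eigenvalues with ratio $p$ that also match $\{1,\omega\}$. With $p\equiv -1$ this gives $a_p\equiv\pm1$ and $a_p^2\equiv 1$. One then wants $1+p\equiv 0$ versus $\operatorname{tr}=a_p(1+p)$, which is consistent, so this local argument alone does not rule out such forms.

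Alternatively, take $M_2(\Gamma_0(p),\cO)_{\m'}$ and use the trace relation in the principal block. In the $p\equiv -1$ case, $P_1=e\Ind_B^G\cO$ is the only projective indecomposable with $P_1^G\ne0$. Hence $M^G=0$ forces $M=P_\pi^b$, and then $b=\rank M^B$. So the honest route is: $r_1$ equals the rank of the $\m'$-localization at level $p$. Establishing $r_1=1$ still requires knowing that the cuspidal part vanishes. The cleanest input is the classical fact that the cuspidal Eisenstein quotient at prime level has order dividing the numerator of $(p-1)/12$. Since $\ell\nmid p-1$ and $\ell>3$, this gives $S_2(\Gamma_0(p),\cO)_{\m'}=0$. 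So $r_1=1$.

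\textbf{Step 2: conclude.} \cref{thm:principal intro} gives $M_2(\Gamma_0(p^2),\cO)_\m\cong\cO[\Delta]^+$ as an $\cO[\Delta]^+$-module, free of rank one. The action of $\cO[\Delta]^+$ is built from $G$-equivariant endomorphisms $\End_{\cO[G]}(P_\pi)$, which commute with the anemic Hecke operators. The Hecke algebra $\T_\m$ acts faithfully on $M_2(\Gamma_0(p^2),\cO)_\m$ and commutes with $\cO[\Delta]^+$. It therefore embeds into $\End_{\cO[\Delta]^+}(\cO[\Delta]^+)=\cO[\Delta]^+$.

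Conversely, we need $\cO[\Delta]^+$ to lie in the image of $\T_\m$, giving $\T_\m\cong\cO[\Delta]^+$. Over $F$, the module is a sum of distinct characters of $F\otimes\cO[\Delta]^+$, one per eigen-system. By remark (3), these characters are distinguished by the Hecke eigenvalues of the corresponding forms: the supercuspidal pair $\{\xi,\xi^{-1}\}$ is determined by $\rho_f|_{G_{\Q_p}}$, hence by the anemic Hecke eigenvalues via Chebotarev and local–global compatibility. So $\T_\m\otimes F=F[\Delta]^+$, and $\T_\m\subseteq\cO[\Delta]^+$ is a finite-index order.

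Surjectivity then needs an integral argument. Since $\cO[\Delta]^+$ is the cyclic module generated by $1$, it suffices that the $\T_\m$-span of a generator $v$ is everything. This is the main obstacle. The first thing to try is to use the Eisenstein-type form $v$ of level $p$ (the image of $P_1\to P_\pi$ data) and show $\T_\m v$ is saturated, using that the Hecke action factors through $\End_{\cO[G]}(M)$ together with the commutant structure.

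\textbf{Step 3: count.} The rank statement follows from $\rank_\cO\cO[\Delta]^+=(|\Delta|+1)/2$, with $|\Delta|$ odd, after subtracting the single Eisenstein line (the augmentation character). This gives $\rank_\cO S_2(\Gamma_0(p^2),\cO)_\m=(|\Delta|-1)/2$.
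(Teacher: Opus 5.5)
\textbf{Verdict: there is a genuine gap in Step 2.} Your route is the paper's. You take $\Gamma=\SL_2(\Z)$ and $k=2$, and import the vanishing of $S_2(\Gamma_0(p),\cO)_{\m'}$ to get $a_0=0$ and $a_1=1$. \cref{thm:modular forms count} then gives $M_2(\Gamma_0(p^2),\cO)_\m\cong\cO[\Delta]^+$, and your rank count in Step~3 is fine.

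The gap is that Step~2 only produces an injection $\T_\m\hookrightarrow\End_{\cO[\Delta]^+}(M_\m)=\cO[\Delta]^+$ of finite index. You explicitly leave integral surjectivity open, so the $\cO$-algebra isomorphism $\T_\m\cong\cO[\Delta]^+$ is not proved. Your suggestion to show that $\T_\m v$ is saturated just restates what is missing.

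\textbf{The missing idea is duality}, which is what the paper means by ``by duality'', and which it already sets up in the proof of \cref{lem:indecomp2}. The perfect pairing $(f,t)\mapsto a_1(tf)$ identifies $M'$ with the $\cO$-dual of $\T$. Also $(M'/M)_\m=0$, because $U_p$ acts on constant terms by $p$. Together these give $\End_{\T_\m}(M_\m)=\T_\m$.

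Next, the $\cO[\Delta]^+$-action commutes with the anemic Hecke operators, and $\T_\m$ is generated by these because $U_p=0$ in $\T_\m$ (\cref{lem:anemic to full}). So every element of $\cO[\Delta]^+$ acts $\T_\m$-linearly, which gives a ring map $\cO[\Delta]^+\to\End_{\T_\m}(M_\m)=\T_\m$. Its composite with your injection $\T_\m\hookrightarrow\cO[\Delta]^+$ is the identity, since both rings act faithfully on the free rank-one module $M_\m$. Hence both maps are isomorphisms. This also makes your detour over $F$ through Chebotarev and local--global compatibility unnecessary.

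\textbf{A smaller point on Step~1.} Your first two attempts there do not contribute anything. The bound on the order of Mazur's Eisenstein quotient only controls maximal ideals containing $U_p-1$. Your own local computation shows that $U_p\equiv -1$ is not excluded locally when $p\equiv -1\pmod{\ell}$. To kill all of $S_2(\Gamma_0(p),\cO)_{\m'}$ you need Mazur's stronger statement: every maximal ideal of the level-$p$ cuspidal Hecke algebra with reducible residual representation contains the Eisenstein ideal. The paper likewise imports $S_2(\Gamma_0(p),\cO)_{\m'}=0$ as known input.
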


The proof in \cite{LangWake1} that $S_2(\Gamma_0(p^2),\cO)_\m \ne 0$ uses a computation of the residues at the cusps of an Eisenstein series of level $\Gamma_0(p^2)$.  This is somewhat complicated due to the abundance of cusps at that level compared to level $\Gamma_0(p)$. The proof of \cref{thm:LangWake} in \cite{LangWake2} uses this residue computation together with a calculation that a certain inertia-at-$p$ pseudodeformation ring is isomorphic to $\cO[\Delta]^+$ to deduce a modularity theorem that gives the desired structure on $\T_\m$.  The proof of \cref{thm:LangWake} given in this paper avoids such computations and is essentially by `pure thought'. 

As an example of the results we prove for more general weights and levels, we have the following (see \cref{cor:eis_gen}). In the statement, $\mu(x)$ denotes the M\"obius function and $\delta_{i,j}$ is the Kronecker delta.

\begin{theorem}
Let $p$ and $\ell$ be primes with $\ell > 3$ and $p^2 \equiv 1 \pmod{\ell}$.  Let $N$ be an integer with $\gcd(N,p\ell)=1$, and let $\Delta$ be the $\ell$-Sylow subgroup of $\F_{p^2}^\times$. Let $\T'$ be the Hecke algebra acting on $M_k(\Gamma_0(Np^2),\cO)$ generated by $T_q$ for all primes $q \nmid Np$, and let $\n' \subset \T'$ be the maximal ideal containing $T_q-q^{k-1}-1$ for all $q \nmid Np$.
Then
\begin{equation*}
\frac{2}{|\Delta|-1} \rank S_k(\Gamma_0(Np^2))^{\new}_{\n'}
= \begin{cases}
\rank S_k(\Gamma_0(Np))^{N\dnew}_{\n'}  -\mu(N) \delta_{k,2}  & \text{if }   p \equiv 1 \pmod{\ell}\\
\rank S_k(\Gamma_0(Np))^{\new}_{\n'}  + \mu(N) \delta_{k,2} & \text{if }  p \equiv -1 \pmod{\ell}.
\end{cases}
\end{equation*}
\end{theorem}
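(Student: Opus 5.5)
We deduce the formula from \cref{thm:principal intro}, applied with level $\Gamma=\Gamma_0(N)$ and with $\m'$ the Eisenstein ideal generated by $\n'$ together with the Hecke operators at primes dividing $N$. The prime-to-$Np$ Hecke algebra only sees $\n'$, so we first split
\[
M_k(\Gamma_0(Np^2),\cO)_{\n'}=\bigoplus_{\m'\supseteq \n'} M_k(\Gamma_0(Np^2),\cO)_{\m'} .
\]
Here $\m'$ runs over the maximal ideals of the anemic algebra that include $U_q$ for $q\mid N$. It is cleaner to work with $\n'$ directly, since \cref{thm:principal intro} and its proof only use the anemic action away from $p$. Applying the theorem and passing to ranks gives
\[
\rank M_k(\Gamma_0(Np^2))_{\m}=\begin{cases} |\Delta| r_0+\tfrac{|\Delta|+1}{2}r_1 & p\equiv 1,\\ r_0+\tfrac{|\Delta|+1}{2}r_1 & p\equiv -1,\end{cases}
\]
using $\rank_\cO \cO[\Delta]^+=(|\Delta|+1)/2$. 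Here $r_0=\rank M_k(\Gamma_0(N))_{\n'}$ and $r_1=\rank M_k(\Gamma_0(Np))_{\n'}-2r_0$.

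\textbf{Step 1: isolate the $p$-new part.} By Remark (1), $M_k(\Gamma_0(Np^2))_\m$ is the kernel of $U_p$, so each $p$-old form appears once.
\begin{itemize}
\item Forms of level $Np$ that are new at $p$ contribute their own rank.
\item Forms of level $N$ contribute $r_0$.
\item What remains is $S_k(\Gamma_0(Np^2))^{p\text{-}\new}_{\n'}$ together with its $N$-old multiplicities.
\end{itemize}
Comparing ranks of the total spaces, not of cusp forms, the $p^2$-new rank is
\[
\rank M_k(\Gamma_0(Np^2))_\m-\rank M_k(\Gamma_0(Np))_{\n'}+r_0 .
\]
Substituting the formula above, this equals $\frac{|\Delta|-1}{2}(r_1\pm 2r_0)$ after a short computation. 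Indeed, for $p\equiv 1$ we get $|\Delta|r_0+\frac{|\Delta|+1}{2}r_1-r_1-2r_0+r_0=\frac{|\Delta|-1}{2}(r_1+2r_0)$, and for $p\equiv-1$ we get $\frac{|\Delta|-1}{2}r_1$. Eisenstein series have no $p^2$-new part, so this quantity is the rank of $p^2$-new cusp forms, counted with $N$-old multiplicities.

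\textbf{Step 2: strip off $N$-old forms.} Run the same computation for every divisor $d\mid N$ with level $\Gamma_0(d)$. Write $A(d)$ for the $\Gamma_0(dp^2)$ count that is $p^2$-new with $N$-old multiplicities, and $B(d)=r_1(d)\pm 2r_0(d)$. The relation $A=\frac{|\Delta|-1}{2}B$ holds for every $d$. Both the passage from all forms to forms new at all primes of $N$, and the passage from level-$Np$ quantities to $\rank S_k(\Gamma_0(Np))^{N\dnew}$ or $\rank S_k(\Gamma_0(Np))^{\new}$, are then the same Möbius inversion over divisors (with the multiplicities $\sigma_0(N/d)$). Since this inversion is linear, the identity persists after it.

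\textbf{Step 3: Eisenstein corrections (main obstacle).} Now separate cusp forms from Eisenstein series in $B$, where $r_0$ and $r_1$ include Eisenstein series.
\begin{itemize}
\item Eisenstein series in the $\n'$-localization at level $\Gamma_0(d)$, $d\mid N$, new at $N$: these are $E_k$ with its level-raised copies. One checks $E_k$ is new only at $d=1$ (for $k>2$), giving a contribution $\mu(N)$-type after inversion.
\item For $k=2$ the constant $E_2$ is not holomorphic at level $1$, so the counts of Eisenstein series of levels $d$ and $dp$ differ by one from the $k>2$ pattern. Precisely, $\rank \mathcal E_2(\Gamma_0(m))_{\n'}=\sigma_0(m)-1$.
\end{itemize}
This discrepancy is what produces the $\pm\mu(N)\delta_{k,2}$, with the sign coming from $r_1+2r_0$ versus $r_1$. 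I expect the bookkeeping here, checking that the Eisenstein contributions cancel except for exactly $\mp\mu(N)\delta_{k,2}$ after Möbius inversion, to be the delicate step. I would verify it by computing the Eisenstein dimensions as multiplicative functions of $N$ and inverting directly. Dividing by $\frac{|\Delta|-1}{2}$ then yields the statement.
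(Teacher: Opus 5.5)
Your route differs from the paper's and could work: $M_k(\Gamma\cap\Gamma(p),\cO)^Z_{\m'}$ is $\cO[G]$-projective for every $\m'$, including the automorphically trivial one, so you can bypass the exceptional case and subtract Eisenstein series by hand. As set up, however, the bookkeeping gives the wrong answer.

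\textbf{The gap.} Your Step~1 claim that ``Eisenstein series have no $p^2$-new part'' is false when $p\equiv 1\pmod{\ell}$. In that case $\Delta\subset T$. Fix a nontrivial character $\psi$ of $(\Z/p\Z)^\times$ of $\ell$-power order and some $t\mid d$. The series $E_k^{\psi,\psi^{-1}}(tz)\in M_k(\Gamma_0(dp^2))$ has $U_p$-eigenvalue $\psi(p)+\psi^{-1}(p)p^{k-1}=0$. Its $T_q$-eigenvalues are $\psi(q)+\psi^{-1}(q)q^{k-1}\equiv 1+q^{k-1}\pmod{\varpi}$. So it lies in $M_k(\Gamma_0(dp^2))_\m$ and is not old from level $dp$; these are the Eisenstein eigenforms with $\chi_f\neq 1$ in \cref{prop:action on eigenforms}. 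For squarefree $d$ there are $(|\Delta|-1)\tau(d)$ of them, with no ``$-1$'' when $k=2$, because $E_2^{\psi,\psi^{-1}}$ is holomorphic.

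If you treat $A(d)$ as purely cuspidal, then already for $N=1$ you get
\[
\tfrac{2}{|\Delta|-1}\rank S_k(\Gamma_0(p^2))^{\new}_{\n'}=\rank S_k(\Gamma_0(p))_{\n'}+2-\delta_{k,2}.
\]
That is an extra $2$ for $k>2$ and the wrong sign of the correction for $k=2$. You would therefore not recover Mazur's theorem: for $p=11$, $\ell=5$ you would predict rank $4$ instead of $0$. For the same reason, your formula $\rank\mathcal{E}_2(\Gamma_0(m))_{\n'}=\tau(m)-1$ fails for $m=dp^2$. It also fails when $q^2\mid m$ for a prime $q\equiv 1\pmod{\ell}$, which matters because $N$ need not be squarefree.

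\textbf{The repair.} Restrict to Eisenstein series whose characters are trivial away from $p$; the others cancel between $A$ and $B$.
\begin{itemize}
\item For $p\equiv 1$: the Eisenstein part of $B(d)=\rank M_k(\Gamma_0(dp))_{\n'}$ is $2\tau(d)-\delta_{k,2}$. The quantity $\frac{2}{|\Delta|-1}$ times the Eisenstein part of $A(d)$ is $2\tau(d)$. So the net correction is $-\delta_{k,2}$ at every $d$, and inverting against $\tau(N/d)$ gives $-\mu(N)\delta_{k,2}$.
\item For $p\equiv -1$: no such $\psi$ exists, and your claim is correct. Here $B(d)$ is $r_1(d)$, not $r_1(d)-2r_0(d)$. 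Its Eisenstein part is $\delta_{k,2}$ at every $d$, which yields $+\mu(N)\delta_{k,2}$.
\end{itemize}
Thus the sign in the $p\equiv 1$ case comes from the $p$-new Eisenstein series at level $dp^2$, not from ``$r_1+2r_0$ versus $r_1$'' alone.

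\textbf{Comparison with the paper.} Once repaired, your argument is a genuine alternative. The paper applies the structure theorem to cusp forms directly. It gets the $\mp\delta$ from the cokernel $\cO[\Delta]^-$ or $\cO$ in \cref{thm:cuspforms count eis case}, which comes from the residue sequence and $H^1(X,\Omega^1)_{\m'}\cong\cO$. It then handles $N$ by induction on $\tau(N)$ via \cref{lem:Rob's variant}. The paper's route needs no Eisenstein dimension counts at level $Np^2$; yours needs no analysis of the non-projective exceptional case.
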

Since the rank is always nonnegative, one can deduce that $S_2(\Gamma_0(Np^2))^{\new}_{\n'}$, $S_2(\Gamma_0(Np))^{\new}_{\n'}$, or $S_2(\Gamma_0(Np))^{N\dnew}_{\n'}$ is nontrivial, depending on the signs of $\mu(N)$ and $p \pmod{\ell}$ (see \cref{cor:nonzero eisen spaces}).

\subsection{Galois representations and the local Langlands correspondence}
We began this introduction by discussing the level part of Serre's conjecture and vexing primes, which have to do with global Galois representations and their behavior at a prime $p$. These concepts are related, via the Langlands correspondence, to automorphic representations associated with modular forms and their local behavior at $p$. In fact, our methods work entirely on the automorphic side. We rely on the projectivity of a space of modular forms over the ring $\cO[G]$, and this ring can be interpreted as a subring of the local Hecke algebra at $p$. The rings $\cO[\Delta]$ and $\cO[\Delta]^+$ that appear in the statements of our main theorems can be interpreted as parameterizing possible inertial types at $p$.

To relate our results back to Galois representations, we give a Galois-theoretic interpretation of the rings $\cO[\Delta]$ and $\cO[\Delta]^+$. First, we identify $\Delta$ with a (tame) quotient of $I_p$ via class field theory. We define a Galois pseudodeformation ring $R_p$ that can be thought of as parameterizing the deformations of $\rhobar|_{I_p}$ that extend to $G_{\Q_p}$. Just as in \cite{LangWake2}, we show that such deformations are determined by an unordered pair $\{\chi,\chi^{-1}\}$ of deformations of the trivial character. When $\rhobar$ corresponds to a nilpotent block, there is a way to distinguish these characters and there is an isomorphism $R_p \cong \cO[\Delta]$. When $\rhobar$ corresponds to the principal block, there is not, and $R_p \cong \cO[\Delta]^+$. The Galois deformations associated with newforms define a map $R_p \to \T_\m$ to the Hecke algebra in the usual way.

This defines two {\it a priori} different actions of the ring $R_p$ on modular forms:\ one defined automorphically using the identification $R_p \cong \cO[\Delta]$ or $R_p \cong \cO[\Delta]^+$, and one defined Galois-theoretically using the map $R_p \to \T_\m$. One can think of this as giving two different labelings of eigenforms by $\Spec(R_p)$:\ one in terms of the local automorphic representation at $p$, and one in terms of the local Galois representation at $p$. The local Langlands correspondence implies that these two labelings, and hence these two actions, are the same. The fact that the correspondence of labelings of individual eigenforms glues to give an isomorphism of rings is a special case of the local Langlands correspondence in families of Emerton--Helm \cite{EmertonHelm}, which is now known for $\GL_n$ \cite{Helm2020, HelmMoss}. 

One can think of our results as a global application of the local Langlands correspondence in families. We have global input, coming from the geometry of modular curves, that implies a local property, namely the $\cO[G]$-projectivity of the space of modular forms of full level $p$. This local property allows us to make deductions about the structure of the space of modular forms as a module for the automorphically defined action of $R_p$. The local Langlands correspondence in families then implies that this action is the same as the Galois-theoretic action of $R_p$.

\subsection{Structure of paper}
In \cref{sec:rep thy review} we review the representation theory of $G = \PGL_2(\F_p)$ in both characteristic zero and characteristic $\ell \neq p$ for the purpose of establishing notation.  The main aim of \cref{sec:block algebras} is to prove an abstract version of our main structure theorem for projective $\cO[G]$-modules, namely \cref{thm:X^T}.  To do so, we require a description of the block algebras $e\cO[G]$, where $e \in Z(\cO[G])$ is a block idempotent.  In \cref{sec:projective} we verify, using results of Nakajima and Serre, that the space of modular forms with appropriate principal level structure at $p$ is a projective $\cO[G]$-module and hence can be described using \cref{thm:X^T}.  For the purpose of studying congruences between modular forms, one wishes to localize the space of modular forms at a maximal ideal $\m'$ of the anemic Hecke algebra.  We introduce Hecke operators and prove some relevant facts about them in \cref{sec:Hecke ops}.  Then in \cref{sec:hecke} we show that localizing at such an $\m'$ produces a space of modular forms that belongs to a single $\cO[G]$-block, and we describe how to find that block from an eigenform that gives rise to $\m'$.  This leads to the main structure theorems for spaces of modular forms in \cref{sec:main thms}, namely \cref{thm:modular forms count,thm:cuspforms count eis case}.  In \cref{sec:counting newforms} we use these theorems to give counts of newforms that satisfy various congruences.  
Finally, \cref{sec:Galois} explains the connection between our automorphic methods and local Langlands in families.

\subsection{Acknowledgments}
The authors thank the following people for helpful conversations: Patrick Allen, Lassina Dembélé, Shaunak Deo, Fred Diamond, Matthew Emerton, Benedict Gross, David Helm, Robert Kottwitz, Martin Lorenz, and Vytautas Paškūnas.  Their thoughts have improved this paper.  We used a suggestion of Claude to simplify some arguments in Section 7.4 by using the residue sequence.

J.L.~gratefully acknowledges support from the National Science Foundation (grant DMS-2301738) and the Simons Foundation (MP-TSM-00002260).

R.P.’s research has been partially supported by the National Science Foundation (grant DMS-2302285) and by the Simons Foundation (MPS-TSM-00002405).

P.W.~was supported by the National Science Foundation (grant DMS-2337830) and by the Simons Foundation International [SFI-MPS-SFM-00021789, PW].

\section{Representation theory of \texorpdfstring{$\PGL_2(\F_p)$}{}}\label{sec:rep thy review}
In this section we review the representation theory of $\PGL_2(\F_p)$ in characteristic zero and work out the blocks over an $\ell$-adic ring for $\ell \neq p$.

\subsection{Notation} We use the following notation throughout.
For an odd prime number $p$, let $G=\PGL_2(\F_p)$.  Let $\ell \ne p$ be an odd prime number that divides the order of $G$. We abuse notation and write elements of $G$ as matrices. We frequently make use of the following subgroups of $G$.
\begin{itemize}
    \item Let $B \coloneqq U \rtimes T$ be the Borel subgroup, with $T \coloneqq \{\sm{*}{0}{0}{*} \in G\}$ the split torus and $U \coloneqq \{\sm{1}{*}{0}{1} \in G\}$ the unipotent radical.
    \item Let $N \coloneqq T \rtimes W$ be the normalizer of $T$, where $W$ is the Weyl group of $T$, which is generated by $\sm{0}{1}{1}{0}$.
    \item Let $T'$ be a nonsplit torus, which is isomorphic to $\F_{p^2}^\times/\F_p^\times$ and can be given explicitly as $T'=\{\sm{a}{b\alpha}{b}{a} \in G\}$ for $\alpha \in \F_p$ a nonsquare.
    \item Let $N' \coloneqq T' \rtimes W'$ be the normalizer of $T'$, where $W'$ is the Weyl group of $T'$, which is generated by $\sm{-1}{0}{0}{1}$.
    \item Write $T=T_\ell \times T^{(\ell)}$ and $T'=T'_\ell \times {T'}^{(\ell)}$, where $T_\ell$ and $T'_\ell$ are the $\ell$-Sylow subgroups of $T$ and $T'$, respectively, and $T^{(\ell)}$ and ${T'}^{(\ell)}$ are the complementary subgroups. Let $N^{(\ell)} \coloneqq T^{(\ell)} \rtimes W$ and ${N'}^{(\ell)} \coloneqq {T'}^{(\ell)} \rtimes W'$.
    \item Let $\Delta$ be the $\ell$-Sylow subgroup of $G$ chosen as follows: if $p \equiv 1 \pmod{\ell}$, then $\Delta=T_\ell$, and if $p \equiv -1 \pmod{\ell}$, then $\Delta=T'_\ell$.  In either case $\Delta$ is isomorphic to the $\ell$-Sylow subgroup of $\F_{p^2}^\times$.  We use $\Delta^\vee$ to denote the group of characters of $\Delta$.
\end{itemize}

For a commutative ring $A$ and a character $\chi: \F_p^\times \to A^\times$, we consider $\chi$ as a character of $T$ by $\chi\sm{a}{0}{0}{b} \coloneqq\chi(ab^{-1})$. Let $\eta: \F_p^\times \to \{ \pm 1\}$ be the unique character of order $2$. We also consider $\eta$ as a character of $G$ via the determinant. 

Let $F/\Q_\ell$ be a finite extension over which the irreducible representations of $G$ are defined.  Let $\cO$ be the valuation ring of $F$ with uniformizer $\varpi \in \cO$ and residue field $\F$. For a representation $\tilde\rho:G \to \GL_n(\cO)$, let $\rho=\tilde\rho \otimes_\cO F$ and $\rhobar=\tilde \rho \otimes_\cO \F$. Conversely, for a representation $\rho: G \to \GL_n(F)$, we let $\tilde\rho$ denote a choice of $\cO$-lattice in $\rho$, with the caveat that this lattice is not unique, but the resulting $\rhobar$ is well defined up to semisimplification.

For a ring $R$, let $Z(R)$ denote its center. For an irreducible representation $\rho:G \to \GL_n(F)$, let $e_\rho \in Z(F[G])$ denote the associated central idempotent.  Explicitly,
\[
e_\rho = \frac{\dim(\rho)}{|G|} \sum_{g \in G} \tr\rho(g) [g^{-1}].
\]
These are mutually orthogonal as $\rho$ varies over the irreducible $F[G]$-representations.
The \emph{central character of $\rho$}, denoted $\omega_\rho: Z(\cO[G]) \to \cO$, is defined to be the $\cO$-algebra homomorphism given by $f \cdot \rho = \omega_\rho(f) \rho$ for $f \in Z(\cO[G])$. For a conjugacy class $c \subset G$, we have $\omega_\rho(c)=\frac{|c|}{\dim(\rho)} \tr(\rho)(x)$, for any choice of $x \in c$.

For a group $G$ and a subgroup $H \le G$, let $N_G(H)$ and $C_G(H)$ denote its normalizer and centralizer in $G$, respectively.

\subsection{Representation theory of \texorpdfstring{$\PGL_2(\F_p)$}{} in characteristic zero}\label{subsec:char0 rep thy}
We briefly review the representation theory of $G$ over $F$, mostly for the purpose of establishing notation for its irreducible representations.  A more detailed summary for $\GL_2(\F_p)$, along with additional references, can be found in \cite[Chapter 2]{BushnellHenniart}.

For a character $\chi: T \to F^\times$, we can consider $\chi$ as a character of $B$ by inflation and let $R_T^G(\chi) \coloneqq \Ind_B^G(\chi)$. For a character $\theta: T' \to F^\times$ with $\theta^2 \ne 1$, let $R_{T'}^G(\theta)$ be the cuspidal representation of $G$ associated with $\theta$. It can be described using the character formula $R_{T'}^G(\theta) = \Ind_U^G(\psi) - \Ind_{T'}^G(\theta)$ for a nontrivial character $\psi$ of $U$. Both $R_T^G$ and $R_{T'}^G$ can be described using Deligne--Lusztig induction, which justifies using the similar notation.

The irreducible representations of $G$ over $F$ come in four types.
\begin{itemize}
    \item The $1$-dimensional representations are $1$ and $\eta$.
    \item The $p$-dimensional Steinberg representations are given by $\St = \St_1 = R_T^G(1)-1$ and $\St_\eta=R_T^G(\eta)-\eta$.
    \item The $(p+1)$-dimensional representations are the principal series representations $R_T^G(\chi)$ for $\chi:T \to F^\times$ with $\chi^2 \ne 1$. Note that $R_T^G(\chi) \cong R_T^G(\chi')$ if and only if $\chi' = \chi^{\pm 1}$.
    \item The $(p-1)$-dimensional representations are the cuspidal representations $R_{T'}^G(\theta)$ for $\theta$ with $\theta^2 \ne 1$. Note that $R_{T'}^G(\theta) \cong R_{T'}^G(\theta')$ if and only if $\theta' \in \{\theta, \theta^p\}$.
\end{itemize}
Let $\Irr(G)$ denote the set of isomorphism classes of irreducible representations of $G$ over $F$.
\subsection{Modular representation theory of \texorpdfstring{$\PGL_2(\F_p)$}{}}
Since $\ell$ divides the order of $G$, the ring $\F[G]$ is not semisimple. This comes from the fact that the idempotents $e_\rho \in Z(F[G])$ that are used to split exact sequences over $F[G]$ are not in $\cO[G]$ in general due to the $|G|$ in the denominator. Nonetheless, irreducible representations of $F[G]$ can be broken into blocks according to their reductions over $\F$, and these blocks have associated idempotents in $Z(\F[G])$.

The set $\Irr(G)$ is decomposed as a disjoint union of subsets called blocks. These blocks can be defined in any of the following equivalent ways.
\begin{itemize}
    \item If $\rho, \rho' \in \Irr(G)$ have the property that $\rhobar$ and $\rhobar'$ have a Jordan--H\"older factor in common, then $\rho$ and $\rho'$ are in the same block, and `in the same block' is the equivalence relation generated by this relation.
    \item Two $\rho, \rho' \in \Irr(G)$ are in the same block if and only if $\omega_\rho \equiv \omega_{\rho'} \pmod{\varpi}$.
    \item If $\B \subset \Irr(G)$, then $\B$ is a union of blocks if and only if $\sum_{\rho \in \B} e_\rho$ is in $\cO[G]$.
\end{itemize}
If $\B$ is a block, then the idempotent $e_\B \coloneqq \sum_{\rho \in \B} e_\rho \in Z(\cO[G])$ is called the \textit{block idempotent}. It also defines an idempotent in $\F[G]$ by reduction. We say that a simple $\F[G]$-module $\pi$ is in the block $\B$ if $e_\B \pi =\pi$. The rings $e_\B\cO[G]$ and $e_\B \F[G]$ are called the \textit{block algebras}.

Associated with a block is a conjugacy class of $\ell$-subgroups of $G$ called the \textit{defect groups} of the block; the $\ell$-adic valuation of the order of a defect group is called the \emph{defect} of the block. If $\B$ is a block and $D$ is a defect group of $\B$, then the $\ell$-part of the index of $D$ in $G$ is the highest power of $\ell$ that divides the dimension of every element of $\Irr(G)$ in $\B$. In particular, the defect groups of $\B$ are the $\ell$-Sylow subgroups of $G$ if and only if $\B$ contains an element of $\Irr(G)$ whose dimension is prime to $\ell$.

For each $\ell$-subgroup $L \le G$, the restriction map
\[
\Br_L:Z(\F[G]) \to \F[C_G(L)]
\]
is a ring homomorphism, called the \textit{Brauer homomorphism}. For a block $\B$ of $G$, a \textit{$\B$-Brauer pair} $(L,b)$ is a pair of an $\ell$-subgroup $L$ and a block $b$ of $C_G(L)$ such that $e_b \Br_L(e_\B) =e_b$. The group $G$ acts on the set of $\B$-Brauer pairs by conjugation, and we let $N_G(L,b)$ be the stabilizer of $(L,b)$ for this action.  If $L=D$ is a defect group of $\B$, then the number $[N_G(D,b):DC_G(D)]$ is called the \textit{inertial index} of the block. A block $\B$ with positive defect is called \textit{nilpotent}\footnote{Defect-zero blocks are also usually considered nilpotent, but in this paper we reserve the term \emph{nilpotent} for blocks with positive defect.} if $N_G(L,b)/C_G(L)$ is an $\ell$-group for every $\B$-Brauer pair $(L,b)$ \cite{BP1980}. By a theorem of Alperin and Brou\'e \cite[Proposition 4.21]{AB1979}, a block $\B$ that has abelian defect group and inertial index $1$ is nilpotent.

\subsection{Blocks of interest}
\label{sec: blocks}
The blocks with positive defect fall into three classes: the principal block (that is, the block containing the trivial representation), the block containing the quadratic character $\eta$, and the nilpotent blocks.  Since the block containing $\eta$ is essentially the twist of the principal block by $\eta$, we ignore this case in our analysis. It can be treated similarly to the principal-block case. In every case the $\ell$-Sylow subgroup $\Delta$ of $G$ is a defect group of the block.

We now give more explicit descriptions of the simple $F[G]$-modules and $\F[G]$-modules that appear in these blocks of interest.  Since $\ell$ divides $|G|$ and $\ell \ne p$, it follows that $p \equiv \pm 1 \pmod \ell$. The descriptions of the blocks depend on this sign.

\subsubsection{Blocks for $p \equiv 1 \pmod{\ell}$}
Suppose that $p \equiv 1 \pmod{\ell}$.
\begin{itemize}
    \item The principal block
    \begin{itemize}
        \item consists of the simple $F[G]$-representations $1$, $\St$, and $R_T^G(\xi)$, where $\xi$ runs over the nontrivial characters of $\Delta$.
        \item The simple $\F[G]$-modules in this block are $1$ and $\overline{\St}$.
    \end{itemize}
    \item The nilpotent blocks are in bijection with pairs $\{\chi, \chi^{-1}\}$, where $\chi \colon T \to F^\times$ is a character such that $\bar{\chi}^2 \neq 1$. 
        \begin{itemize}
        \item The block associated with $\{\chi, \chi^{-1}\}$ consists of the simple $F[G]$-representations  $R_T^G(\chi\xi)$, where $\xi$ runs over the characters of $\Delta$. 
        \item The only simple $\F[G]$-module in this block is $\overline{R_T^G(\chi)}$.
    \end{itemize}
\end{itemize}

\subsubsection{Blocks for $p \equiv -1 \pmod{\ell}$}
Suppose that $p \equiv -1 \pmod{\ell}$.
\begin{itemize}
    \item The principal block
    \begin{itemize}
        \item consists of the simple $F[G]$-representations $1$, $\St$, and $R_{T'}^G(\xi)$, where $\xi$ runs over the nontrivial characters of $\Delta$.
        \item The simple $\F[G]$-modules in this block are $1$ and $\overline{R_{T'}^G(\xi)}$ for any $\xi$.
    \end{itemize}
    \item The nilpotent blocks are in bijection with pairs $\{\theta, \theta^p\}$, where $\theta: T' \to F^\times$ is a character such that $\bar\theta^2\ne 1$.
        \begin{itemize}
        \item The block associated with $\{\theta, \theta^p\}$ consists of the simple $F[G]$-representations   $R_{T'}^G(\theta \xi)$, where $\xi$ runs over the characters of $\Delta$.
        \item The only simple $\F[G]$-module in this block is $\overline{R_{T'}^G(\theta)}$.
    \end{itemize}
\end{itemize}

\section{Computation of block algebras}
\label{sec:block algebras}
For each block $\B$ considered in \cref{sec: blocks}, we compute the block algebras $e_\B \F[G]$ and $e_\B\cO[G]$. The main purpose of this is to prove the following theorem, which describes the $T$-fixed parts of projective $e_\B\cO[G]$-modules.  As in the introduction, $\cO[\Delta]^+$ denotes the subring of $\cO[\Delta]$ that is fixed by the involution that inverts group-like elements, and $\cO[\Delta]^-$ is the submodule on which that involution acts by $-1$.

\begin{theorem}\label{thm:X^T}
Let $\B$ be a block of $G$, and let $M$
be a finitely generated projective $e_\B\cO[G]$-module. Let $\tr: M^T \to M^B$ denote the trace map.
\begin{enumerate}
    \item Assume $p \equiv 1 \pmod{\ell}$.
    \begin{enumerate}
        \item \label{part:p=1 principal}
        If $\B$ is the principal block, then $\ker(M^T\xrightarrow{\tr} M^B)$ has a natural action of $\cO[\Delta]^+$, and there is an isomorphism of $\cO[\Delta]^+$-modules
        \[
         \ker(M^T\xrightarrow{\tr} M^B) \cong (\cO[\Delta]^-)^a \oplus (\cO[\Delta]^+)^b,
        \]
        where $a=\rank_{\cO}(M^G)$ and $a+b=\rank_{\cO}(M^B)$.  Moreover, if $a=b$ then the $\cO[\Delta]^+$-action can be upgraded to an $\cO[\Delta]$-action, and $\ker \tr$ is a free $\cO[\Delta]$-module of rank $a$.
        \item \label{part:p=1 nilp}
        If $\B$ is the block containing $R_T^G(\chi)$ for a character $\chi$ with $\bar \chi^2 \ne 1$, then $M^B=0$, and $M^T$ has a natural action of $\cO[\Delta]$ and is a free $\cO[\Delta]$-module.
    \end{enumerate}
    \item Assume $p \equiv -1 \pmod{\ell}$.
    \begin{enumerate}
        \item \label{part:p=-1 principal}
        If $\B$ is the principal block, then $\ker(M^T \xrightarrow{\tr} M^B)$ has a natural action of $\cO[\Delta]^+$ and there is an isomorphism of $\cO[\Delta]^+$-modules
        \[
        \ker(M^T\xrightarrow{\tr} M^B) \cong \cO^a \oplus (\cO[\Delta]^+)^b,
        \]
        where $a=\rank_{\cO}(M^G)$ and $2a+b=\rank_{\cO}(M^B)$.
        \item \label{part:vexing}
        If $\B$ is the block containing $R_{T'}^G(\theta)$ for a character $\theta$ with $\bar \theta^2 \ne 1$, then $M^B=0$, and $M^T$ has a natural action of $\cO[\Delta]$ and is a free $\cO[\Delta]$-module.
    \end{enumerate}
\end{enumerate}
\end{theorem}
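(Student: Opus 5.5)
Since $M$ is a finitely generated projective $e_\B\cO[G]$-module and $e_\B\cO[G]$ has only one or two projective indecomposables, $M \cong \bigoplus_P P^{n_P}$ with $P$ ranging over these. All functors in the statement, namely $M \mapsto M^T$, $M^B$, $M^G$ and $\ker(\tr)$, are additive and can be written by Frobenius reciprocity as $\Hom_{\cO[G]}(e_\B\Ind_H^G\cO, M)$ for $H\in\{T,B,G\}$. It therefore suffices to (i) identify the projective indecomposables with explicit summands of $e_\B\Ind_T^G\cO$ and $e_\B\Ind_B^G\cO$, and (ii) compute the relevant endomorphism and Hom rings, together with the action of $\cO[\Delta]$ or $\cO[\Delta]^+$ on them. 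The natural action on $M^T=\Hom(e_\B\Ind_T^G\cO,M)$ will come from $\End_{\cO[G]}(\Ind_T^G\cO)$. This is the Hecke algebra of double cosets $T\backslash G/T$, which contains $\cO[N/T]$ and, via the $\Delta$-translates, an action of $\cO[\Delta]$ (in the split case, $\Delta=T_\ell\subset T$) or $\cO[\Delta]^+$ (in the nonsplit case, where the action is through double cosets $T\,t'\,T$ with $t'\in T'_\ell$, and $t'$ and $t'^{-1}$ give the same coset). I would make this precise by computing $e_\B\Ind_T^G\cO\otimes F$ character-theoretically: $\Ind_T^G 1$ decomposes as $1\oplus \St\oplus\St$-type terms plus $R_T^G(\chi)$ for $\chi$ and, for each cuspidal $R_{T'}^G(\theta)$, with multiplicity $1$ (standard Mackey computation). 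Then each block's isotypic pieces are indexed by $\Delta^\vee$ modulo inversion, matching $\cO[\Delta]^{(+)}\otimes F$.

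For the nilpotent blocks (1b) and (2b), I first check that $e_\B\Ind_B^G\cO=0$, since $\Ind_B^G 1 = 1\oplus\St$ lies in the principal block; this gives $M^B=0$. Next, $e_\B\Ind_T^G\cO$ is projective: $|T|$ and $\ell$ need not be coprime, but $\Ind_T^G\cO=\Ind_{T^{(\ell)}}^G\cO\otimes$-summand. More carefully, I would instead use $\Ind_{T^{(\ell)}}^G\chi$ for the appropriate character, which is projective because $T^{(\ell)}$ has order prime to $\ell$, and recover $M^T$ from it. Over $F$ its block component is $\bigoplus_{\xi\in\Delta^\vee}R(\chi\xi)$ (resp. $R_{T'}^G(\theta\xi)$), each with multiplicity one. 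Since the block has a unique PIM $P$ whose character is also multiplicity-free over the block (by the Broué–Puig Morita equivalence with $\cO[\Delta]$, $P\otimes F$ is the sum of all block irreducibles once), the projective summand equals $P$. Then $\End(P)\cong\cO[\Delta]$, a commutative ring of rank $|\Delta|$, and $M^T\cong\Hom(P,P^n)=\cO[\Delta]^n$ is free.

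For the principal blocks (1a) and (2a), the PIMs are $P_1$ and $P_\pi$. Via projective covers and Brauer characters I would identify $P_1=e\Ind_B^G\cO$ when $p\equiv-1$, with $P_1\otimes F=1+\St$. When $p\equiv 1$, I would take $P_1\otimes F = 1+\St$ plus the $R_T^G(\xi)$, and $P_\pi\otimes F=\St+\sum R_T^G(\xi)$. In both cases $P_\pi$ is the kernel of the (surjective) trace $e\Ind_T^G\cO\to e\Ind_B^G\cO$ in the appropriate form, so $\ker\tr=\Hom(P_\pi$-part$,M)$ and $M^B=\Hom(e\Ind_B^G\cO,M)$. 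The ranks then follow from $\operatorname{rk}\Hom(P,Q)=\langle P\otimes F,Q\otimes F\rangle$. For $p\equiv-1$ this gives $\Hom(P_\pi,P_1)=\cO$ and $\End(P_\pi)$ of rank $(|\Delta|+1)/2$, identified with $\cO[\Delta]^+$ via the Hecke action. Hence $\ker\tr\cong\cO^a\oplus(\cO[\Delta]^+)^b$ with $a=\operatorname{rk}M^G=\operatorname{rk}\Hom(\cO,M)$ (only $P_1$ has trivial invariants) and $\operatorname{rk}M^B=2a+b$. For $p\equiv 1$ one gets $\Hom(P_\pi,P_1)$ of rank $(|\Delta|-1)/2$, which must be identified with $\cO[\Delta]^-$ as an $\cO[\Delta]^+$-module. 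When $a=b$, $P_1\oplus P_\pi$ assembles to $\cO[\Delta]^+\oplus\cO[\Delta]^-=\cO[\Delta]$ with the extra operator from $T_\ell$.

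I expect the main obstacle to be the integral identifications, not the rank counts. Specifically, one must show that $\End(P_\pi)$ is exactly $\cO[\Delta]^+$, rather than merely an order in $F[\Delta]^+$, and that $\Hom(P_\pi,P_1)\cong\cO[\Delta]^-$ integrally. My first attempt would be to compute $\End(\Ind_{T^{(\ell)}}^G\cO)$ explicitly as a double-coset Hecke algebra; it is free over $\cO$ on double cosets, so it is automatically integral. I would then cut it with $e$ and with the idempotent of $\cO[N^{(\ell)}]$ fixing $W$, and check that the resulting basis of double cosets $T\,t\,T$ ($t\in\Delta$ up to inversion) matches the basis $\{[\delta]+[\delta^{-1}]\}$ of $\cO[\Delta]^+$.
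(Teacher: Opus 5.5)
You reduce the theorem to Hom computations between projective indecomposables via Frobenius reciprocity, as the paper does. For the nilpotent block with $p\equiv -1$ your argument works given Brou\'e--Puig: there $\ell\nmid|T|$, so $e\Ind_T^G\cO$ is projective with multiplicity-free character. In the principal blocks, however, the content of the theorem is exactly the integral identifications you defer, and the mechanism you propose for them does not work.

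\textbf{The proposed source of the $\cO[\Delta]^{(+)}$-action.}
\begin{enumerate}
\item In the split case, right translation by $\Delta=T_\ell\subset T$ is the identity on $\cO[G/T]$, since $gtT=gT$. So there are no ``$\Delta$-translates'' in $\End(\Ind_T^G\cO)$.
\item In the nonsplit case, the double cosets $TtT$ with $t\in T'_\ell$ are distinct up to $t\sim t^{-1}$. But they do not span a subalgebra of $\cO[T\backslash G/T]$. Moreover, $[TtT]$ acts on the $R_{T'}^G(\xi)$-line of $\Ind_T^G F$ by $\sum_{r\in T}\chi_{R_{T'}^G(\xi)}(rt)$, not by $\xi(t)+\xi(t)^{-1}$. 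Already for $p=5$, $\ell=3$ one gets $3$ versus $-1$.
\item For $p\equiv 1$, passing to $T^{(\ell)}$ does give $\cO[\Delta]\subset\End(\Ind_{T^{(\ell)}}^G\cO)$. But $e\Ind_{T^{(\ell)}}^G\cO\cong P_1\oplus P_\St^{|\Delta|+1}$, so you would still have to locate $P_\St$ inside it compatibly with this action. You would also have to prove three further facts. First, the resulting map $\cO[\Delta]^+\to\End(P_\St)$ is onto; rank counts only show that $\End(P_\St)$ is some $\cO$-order in $F[\Delta]^+$. Second, $\Hom(P_\St,P_1)\cong\cO[\Delta]^-$. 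Third, composition of off-diagonal maps is multiplication in $\cO[\Delta]$, which is what the $a=b$ upgrade needs.
\end{enumerate}
The paper imports exactly this information from outside. For $p\equiv 1$ it uses Puig's Morita equivalence between the principal blocks of $\cO[G]$ and $\cO[N]$. This moves the computation to $\End_{\cO[N]}(\Ind_{N^{(\ell)}}^N\cO)=\cO[N^{(\ell)}\backslash N/N^{(\ell)}]\cong\cO[\Delta]^+$, where the double cosets really are the inversion classes in $\Delta$. For $p\equiv -1$ only a derived equivalence with $N'$ is available, and the paper cites Paige's theorem $\End_{\cO[G]}(P_\pi)\cong\cO[\Delta]^+$.

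\textbf{Two errors specific to $p\equiv 1\pmod{\ell}$.}

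First, since $\ell\mid|T|$, the module $\Ind_T^G\cO$ is not projective; its principal-block part has $\cO$ as a summand. So equality of $F$-characters does not by itself identify $e\Ind_T^G\cO$ (nilpotent case) with a projective indecomposable. Nor does it identify the complement $K$ of $e\Ind_B^G\cO$ in $e\Ind_T^G\cO$ (principal case).

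Your workaround does not repair this. The module $\Ind_{T^{(\ell)}}^G\chi$ is projective, but its block component over $F$ is $\bigoplus_{\xi\in\Delta^\vee}e\Ind_T^G(\chi\xi)$. In it each $R_T^G(\chi\xi')$ occurs at least $|\Delta|$ times, so it is a sum of many copies of $P$, and you do not say how to recover $M^T$ from it.

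The missing step is the paper's cosocle argument. Mackey's formula shows that the reduction of $e\Ind_T^G\cO$ (resp.\ of $K$) has simple cosocle $\overline{R_T^G(\chi)}$ (resp.\ $\overline{\St}$). Hence the corresponding projective indecomposable surjects onto it, and equality of $F$-characters forces this surjection to be an isomorphism.

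Second, your $P_1$ is wrong. Since $\overline{\St}$ is simple and $\overline{R_T^G(\xi)}$ has composition factors $1$ and $\overline{\St}$, Brauer reciprocity gives $P_1\otimes_\cO F=1\oplus\bigoplus_\xi R_T^G(\xi)$, with no $\St$. Your version has rank $\equiv 1\pmod{|\Delta|}$, which is impossible for a projective. With your $P_1$ you would get $\rank_\cO M^B=2a+b$ rather than the asserted $a+b$. Your value $(|\Delta|-1)/2$ for the rank of $\Hom(P_\pi,P_1)$ (here $P_\pi=P_\St$) is consistent only with the correct character.
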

The adjective `natural' in the statement of the theorem is in the sense of category theory.  We show that the functor $M \mapsto \ker(M^T \xrightarrow{\tr} M^B)$ is representable, and in each case the indicated ring is the endomorphism ring of the representing object.

We now outline the proof of \cref{thm:X^T} and explain why it is related to computing the block algebras. In general, to compute a finite-dimensional $\F$-algebra $A$, one can use the fact that $A^\mathrm{opp}=\End_A(A)$.  Since $A$ is a projective $A$-module, it can be written as a direct sum of projective indecomposable modules $A=\oplus_i P_i$. One is thus led to compute the $P_i$'s along with the homomorphism modules $\Hom_A(P_i,P_j)$. Over $\F$, the projective indecomposable modules are exactly the projective covers of the simple modules. Since our blocks have at most two simple modules over $\F$, there are at most two projective indecomposables to consider. 

Once we know the projective indecomposable $e_\B \F[G]$-modules, we can lift them to projective modules for $e_\B\cO[G]$. We compute the homomorphisms between them on a case by case basis. We can rely, in every case but one, on known Morita equivalences to simpler rings. For the principal block when $p \equiv -1 \pmod{\ell}$, only a derived equivalence exists, and we are forced to proceed by hand. 

Finally, $M$ must be a direct sum of copies of the projective indecomposable $e_\B\cO[G]$-modules. By Frobenius reciprocity, $M^T=\Hom_{\cO[G]}(e\cO[G/T],M)$ and $M^B=\Hom_{\cO[G]}(e\cO[G/B],M)$.  To complete the proof of the theorem, we express $e\cO[G/B]$ and $e\cO[G/T]$ in terms of known modules.

\subsection{Block algebras for \texorpdfstring{$p \equiv 1 \pmod{\ell}$}{}}
Assume that $p \equiv 1 \pmod{\ell}$. We prove \cref{thm:X^T}(1) by considering the two blocks separately.

\subsubsection{The principal block}
For this section, let $\mathcal{B}$ be the principal block of $\cO[G]$, and let $e=e_\mathcal{B}$. The main tool for analyzing $e\mathcal{O}[G]$ is the following theorem, attributed to Puig in \cite[Theorem 12.2.4, pg.~156]{bonnafe} (see also \cite[Theorem 23.12, pg.~366]{CE2004}).

\begin{theorem}[Puig]
\label{thm:mortia broue}
There is a Morita equivalence between the principal block $e\cO[G]$ and the principal block of $\cO[N]$.
\end{theorem}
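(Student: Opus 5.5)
The plan is to realize the equivalence through a bimodule coming from induction and restriction between $N$ and $G$. Then I would upgrade the resulting stable equivalence to a Morita equivalence using Linckelmann's criterion: a stable equivalence of Morita type between blocks that sends simple modules to simple modules is induced by a Morita equivalence.

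\emph{Step 1 (local structure).} Since $p \equiv 1 \pmod{\ell}$ and $\ell$ is odd, the Sylow subgroup $\Delta = T_\ell$ is cyclic and nontrivial. We have $C_G(\Delta)=T$ and $N_G(\Delta)=N$. Moreover $\Delta$ is a trivial-intersection subgroup: for $g \notin N$, the group $gTg^{-1}\cap T$ is trivial, because distinct split tori meet only in the identity in $\PGL_2$. Consequently Green correspondence between $G$ and $N$ relative to $\Delta$ has no error term. Concretely, for an indecomposable non-projective $\cO N$-module $V$ in the principal block, $\Ind_N^G V$ is the Green correspondent plus a projective module, and dually for restriction.

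\emph{Step 2 (the bimodule).} Let $f$ be the principal block idempotent of $\cO[N]$ and set $X = e\,\cO[G]\,f$. Using the Mackey formula together with the trivial-intersection property, I would show
\[
X^* \otimes_{e\cO[G]} X \cong f\cO[N] \oplus (\text{projective bimodules}),
\]
and symmetrically on the $G$-side. This gives a stable equivalence of Morita type. Brauer's third main theorem ensures that $e$ and $f$ correspond under this bimodule.

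\emph{Step 3 (simples go to simples).} The principal block of $N$ is the block of $N/T^{(\ell)} \cong \Delta\rtimes W$ with $W$ of order $2$. Over $\F$ its simple modules are the trivial module and the sign character $\varepsilon$ of $W$. I need to compute $eX\otimes -$ on these two simples, that is, the $e$-part of $\Ind_N^G$ modulo projectives.
\begin{itemize}
\item For the trivial module, $e\Ind_N^G\F$ modulo projectives is the Scott module, and here that is just $\F$.
\item For $\varepsilon$, I expect the nonprojective part of $e\Ind_N^G\varepsilon$ to be $\overline{\St}$. I would check this by comparing Brauer characters: $\Ind_N^G\varepsilon$ has characteristic-zero constituents computable from \cref{subsec:char0 rep thy}, and after projecting to $e$ one should find $\St$ together with pieces that assemble into projectives.
\end{itemize}

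This identification for $\varepsilon$ is the main obstacle. It is a concrete decomposition, but the projective summands must be separated carefully. I would first compute $e\,\F[G/T]$ and $e\,\F[G/B]$, and then use the exact sequence relating $\Ind_T^G$, $\Ind_N^G(1)$ and $\Ind_N^G(\varepsilon)$ to isolate the answer.

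Once both simples are known to map to simples, Linckelmann's theorem (see also \cite[Theorem 12.2.4]{bonnafe}) upgrades the stable equivalence to a Morita equivalence between $e\cO[G]$ and $f\cO[N]$.
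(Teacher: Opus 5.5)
Your argument is correct, but it follows a different route from the paper. The paper gives no argument of its own for this statement. It quotes Puig's theorem from the literature, a general result about principal blocks of finite reductive groups over $\F_q$ with $\ell \mid q-1$. That proof does not use the fact that the Sylow subgroup is trivial-intersection, and in higher rank it is not.

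You instead use the rank-one feature that $\Delta$ is a TI Sylow subgroup with $N_G(\Delta)=N$. This makes induction and restriction already give a stable equivalence of Morita type, and you finish with Linckelmann's simple-to-simple criterion. What your route buys:
\begin{itemize}
\item a self-contained proof for $\PGL_2(\F_p)$ with an explicit bimodule;
\item it makes visible the matching $1 \mapsto 1$, $\varepsilon \mapsto \overline{\St}$ behind the paper's identification of $Q_1, Q_\sigma$ with projective covers of $1$ and $\overline{\St}$;
\item it shows exactly where $p\equiv 1 \pmod{\ell}$ is special. For $p \equiv -1 \pmod{\ell}$ the same TI argument still gives a stable equivalence with $N'$, but the Brauer trees differ, so simples cannot all go to simples. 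This matches the paper's remark that only a derived equivalence exists there.
\end{itemize}
What it cannot do is go beyond the TI situation, which is what Puig's theorem handles.

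Two points need tightening.

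First, Linckelmann's criterion applies to an indecomposable bimodule, or else requires $M\otimes S$ to be literally simple. Your $X=e\cO[G]f$ is not indecomposable: $X\otimes_{f\cO[N]}\F = e\F[G/N]$ is $\F$ plus a nonzero projective. So pass to the unique non-projective indecomposable summand $X_0$ of $X$. It still induces the stable equivalence, and $X_0\otimes S$ is exactly the non-projective part of $X\otimes S$. Your computations modulo projectives are therefore precisely what is needed.

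Second, the step you flag as the main obstacle is easier than you expect. Take a nontrivial $\xi\in\Delta^\vee$.
\begin{itemize}
\item $R_T^G(\xi)$ occurs once in $\Ind_T^G F=\Ind_N^G F\oplus \Ind_N^G\varepsilon$.
\item $W$ acts on its $T$-fixed line by $\xi(-1)$, which equals $1$ because $\xi$ has odd order. So $R_T^G(\xi)$ lies in $\Ind_N^G F$.
\item $\St$ occurs once in each of $\Ind_N^G F$ and $\Ind_N^G\varepsilon$.
\end{itemize}
Hence $e\Ind_N^G\tilde\varepsilon$ is an $\cO$-lattice in $\St$, and its reduction is $\overline{\St}$ on the nose. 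There is no projective summand to separate.
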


Since $N=T \rtimes W$ is a semidirect product, one easily calculates its irreducible $F$-representations \cite[Proposition 25, pg.~62]{serre_repthry}. The $F[N]$-representations in the principal block are:
\begin{itemize}
    \item the 1-dimensional representations $1$ and $\sigma$, where $\sigma$ is the inflation of the nontrivial representation of $W$,
    \item the 2-dimensional representations $\Ind_T^N\chi$, where $\chi: T \to F^\times$ has $\ell$-power order.
\end{itemize}
The simple $\F[N]$-modules in the block are $1$ and $\bar\sigma$. The projective covers of $1$ and $\sigma$ are $\Ind_{N^{(\ell)}}^N 1$ and $\Ind_{N^{(\ell)}}^N \bar\sigma$, respectively. Let $Q_1=\Ind_{N^{(\ell)}}^N \tilde{1}$ and $Q_\sigma = \Ind_{N^{(\ell)}}^N \tilde\sigma$ be corresponding projective $\cO[N]$-modules.

\begin{lemma}
\label{lem:homs over N}
    For $x,y \in \{1,\sigma\}$, there are isomorphisms
    \[
    \Hom_{\cO[N]}(Q_x,Q_y) \cong \begin{cases}
        \cO[\Delta]^+ & \text{if }x=y,\\
        \cO[\Delta]^- & \text{if }x \ne y.
    \end{cases}
    \]
    These are isomorphisms of rings if $x=y$ and of $\cO[\Delta]^+$-modules if $x \ne y$.
\end{lemma}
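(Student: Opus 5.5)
We compute $\Hom_{\cO[N]}(Q_x,Q_y)$ with Frobenius reciprocity and Mackey's formula, then identify the answer with the $\pm$-parts of $\cO[\Delta]$. Here $N=T\rtimes W$, $N^{(\ell)}=T^{(\ell)}\rtimes W$, and $T=\Delta\times T^{(\ell)}$ with $\Delta=T_\ell$ normal in $N$. Hence $N=\Delta\rtimes N^{(\ell)}$, and $N^{(\ell)}$ is a complement to $\Delta$. So
\[
\Hom_{\cO[N]}(Q_x,Q_y)=\Hom_{\cO[N^{(\ell)}]}(\tilde x,\ \Ind_{N^{(\ell)}}^N\tilde y|_{N^{(\ell)}}).
\]
As an $\cO[N^{(\ell)}]$-module, $\Ind_{N^{(\ell)}}^N\tilde y\cong \cO[N/N^{(\ell)}]\otimes\tilde y\cong\cO[\Delta]\otimes\tilde y$. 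Here $N^{(\ell)}$ acts on $\cO[\Delta]$ through conjugation on $\Delta$. The subgroup $T^{(\ell)}$ acts trivially, since $T$ is abelian, and $W$ acts by inversion.

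The Hom space is therefore the $\tilde x^{-1}\tilde y$-isotypic part of $\cO[\Delta]$ under $W$. Since $\ell$ is odd, $2$ is invertible in $\cO$, and we get $\cO[\Delta]=\cO[\Delta]^+\oplus\cO[\Delta]^-$. If $x=y$ the Hom space is $\cO[\Delta]^W=\cO[\Delta]^+$. If $x\ne y$, the character $\tilde x^{-1}\tilde y=\tilde\sigma$ is the sign character of $W$, and the Hom space is $\cO[\Delta]^-$.

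It remains to check the ring and module structures, which is the step needing care. Concretely, an element $\sum_\delta a_\delta\delta\in\cO[\Delta]$ gives the endomorphism of $Q_x=\cO[N]\otimes_{\cO[N^{(\ell)}]}\tilde x$ by right multiplication, $g\otimes v\mapsto \sum a_\delta\, g\delta\otimes v$. This is well defined exactly when the element commutes appropriately with $N^{(\ell)}$. The elements $T^{(\ell)}$ commute with $\Delta$. Conjugating by $w\in W$ sends $\delta\mapsto\delta^{-1}$, and $w$ acts on $\tilde x$ and $\tilde y$ by the scalars $x(w),y(w)$, which gives precisely the $\pm$ condition.

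Composition of these maps corresponds to multiplication in $\cO[\Delta]$, up to the opposite order. Since $\cO[\Delta]$ is commutative, $\End(Q_x)\cong\cO[\Delta]^+$ as rings. For the same reason the composition $\End(Q_y)\times\Hom(Q_x,Q_y)\to\Hom(Q_x,Q_y)$ is the multiplication $\cO[\Delta]^+\times\cO[\Delta]^-\to\cO[\Delta]^-$, which gives the $\cO[\Delta]^+$-module isomorphism.

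The main bookkeeping point is to make sure this explicit description matches the Frobenius/Mackey isomorphism. It does, because every map is determined by the image of $1\otimes v$, which lies in $\cO[\Delta]\otimes\tilde y$.
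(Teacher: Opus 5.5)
Your proof is correct and follows essentially the paper's route: Frobenius reciprocity reduces everything to $N^{(\ell)}$-orbits on $N/N^{(\ell)}\cong\Delta$, i.e.\ to inversion orbits in $\Delta$, which is the same computation as the paper's double-coset description $\cO[N^{(\ell)}\backslash N/N^{(\ell)}]$. Realizing each homomorphism as right multiplication by an element of $\cO[\Delta]^{\pm}$ is a clean way to supply the multiplicativity check and the $x\ne y$ cases that the paper leaves as exercises.
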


\begin{proof}
For $(x,y)=(1,1)$, Frobenius reciprocity implies that
\[
\End_{\cO[N]}(\Ind_{N^{(\ell)}}^N1) =\cO[N^{(\ell)} \backslash N /N^{(\ell)}].
\]
The bijection $N/N^{(\ell)} \cong \Delta$ induces a bijection between $N^{(\ell)} \backslash N /N^{(\ell)}$ and the orbits of $\Delta$ for the action of inversion. Sending such an orbit $\{\delta,\delta^{-1}\}$ to the element $[\delta]+[\delta]^{-1}$ defines an $\cO$-module bijection between $\End_{\cO[N]}(Q_1)$ and $\cO[\Delta]^+$, and it is a simple exercise to check that it respects multiplication. The remaining cases are similar but require cumbersome notation, so we leave them as an exercise. 
\end{proof}

Now let $P_1$ and $P_{\St}$ be the projective $e\cO[G]$-modules that correspond to $Q_1$ and $Q_\sigma$, respectively, under the equivalence of \cref{thm:mortia broue}. They correspond to projective covers of $1$ and $\St$, respectively. It is not difficult to check the following descriptions over $F$:
\begin{equation}
\label{eqn:constituents}
P_1 \otimes_\cO F = 1 \oplus \bigoplus_{\chi} R_T^G(\chi), \ \quad P_\St \otimes_\cO F = \St \oplus \bigoplus_{\chi} R_T^G(\chi),
\end{equation}
where $\chi$ ranges over equivalence classes of nontrivial characters of $\Delta$ up to inversion.
\cref{lem:homs over N} implies that, for $x,y \in \{1,\St\}$, 
\begin{equation}
    \label{eq:Homs P_1 and P_st}
        \Hom_{\cO[G]}(P_x,P_y) \cong \begin{cases}
        \cO[\Delta]^+ & \text{if }x=y,\\
        \cO[\Delta]^- & \text{if }x \ne y.
    \end{cases}
\end{equation}
In particular, $e\cO[G]$ is Morita equivalent to the ring
\[
\End_{\cO[G]}(P_1 \oplus P_\St) \cong \ttmat{\cO[\Delta]^+}{\cO[\Delta]^-}{\cO[\Delta]^-}{\cO[\Delta]^+} \subseteq M_2(\cO[\Delta]).
\]
The diagonal inclusion of $\cO[\Delta]^+$ into the center of this ring defines a map $\cO[\Delta]^+ \to Z(e\cO[G])$ that can be written explicitly as sending an element $[\alpha]+[\alpha]^{-1} \in \cO[\Delta]^+$ for $\alpha \in \Delta$ to
\begin{equation}
\label{eq:f_alpha principal p=1}
 2e_1+2e_\St + \sum_\chi (\chi(\alpha)+\chi^{-1}(\alpha))e_{R_T^G(\chi)},   
\end{equation}
where $\chi$ ranges over inversion-classes of nontrivial characters of $\Delta$.

Note that there is an injective ring homomorphism
\[
\cO[\Delta] \to \End_{\cO[G]}(P_1 \oplus P_\St), \ f \mapsto \ttmat{f^+}{f^-}{f^-}{f^+},
\]
where $f^\pm = \frac{f \pm f^\iota}{2} \in \cO[\Delta]^\pm$ for $f \in \cO[\Delta]$ are the usual projections. 
For $x \in \{1,\St\}$, $\Hom_{\cO[G]}(P_x,P_1 \oplus P_\St)$ is an $\cO[\Delta]$-module via this homomorphism, and \eqref{eq:Homs P_1 and P_st} implies
\begin{equation}
\label{eq:ODelta upgrade}
\Hom_{\cO[G]}(P_x,P_1 \oplus P_\St) \cong \cO[\Delta].
\end{equation}
It is clear that this is actually an isomorphism of $\cO[\Delta]$-modules.

We later require the following concrete descriptions of $P_1$ and $P_\St$.

\begin{lemma}
\label[lemma]{lem:O[G/T]}
    We have $e\cO[G/U] \cong P_1 \oplus P_\St$ and $e\cO[G/T] \cong e\cO[G/B] \oplus P_\St$.
\end{lemma}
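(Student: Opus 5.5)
Both isomorphisms will come from recognising the relevant permutation modules as projective $e\cO[G]$-modules and then reading off their indecomposable summands from their characters after inverting $\ell$.

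\textbf{Projectivity and the summands over $F$.} Since $p \equiv 1 \pmod{\ell}$, the group $U$ has order $p$, prime to $\ell$. Hence $\cO[G/U]=\Ind_U^G\cO$ is a projective $\cO[G]$-module, and so is its block component $e\cO[G/U]$. Over $F$ we compute $\Ind_U^G 1 = \Ind_B^G\Ind_U^B 1 = \bigoplus_{\chi \in \widehat T}R_T^G(\chi)$. Keeping only the principal-block constituents leaves the characters $\chi$ of $\Delta$, using $R_T^G(1)=1\oplus\St$. This gives
\[
e\cO[G/U]\otimes F = 1\oplus \St \oplus \bigoplus_{1 \ne \chi\in\Delta^\vee} R_T^G(\chi).
\]
Each class $\{\chi,\chi^{-1}\}$ contributes twice, so the right-hand side is exactly $(P_1\oplus P_\St)\otimes F$ by \eqref{eqn:constituents}.

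\textbf{First isomorphism.} A projective $e\cO[G]$-module is a direct sum $P_1^{m}\oplus P_\St^{n}$. Its character over $F$ determines $m$ and $n$, since $1$ occurs only in $P_1$ and $\St$ occurs only in $P_\St$. Comparing characters therefore gives $m=n=1$, that is, $e\cO[G/U]\cong P_1\oplus P_\St$. Here I am using that projective $\cO[G]$-modules are determined up to isomorphism by their $F$-characters, which holds because projectives lift uniquely from $\F$ and the Cartan map is injective.

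\textbf{Second isomorphism.} Let $T'':=T^{(\ell)}U\subset B$. The index $[B:T'']$ is $|\Delta|$, and $|T''|$ is prime to $\ell$. Hence $\cO[G/T'']$ is projective, and the same holds for $\cO[G/T]$ because $|T|=p-1$ ... but $\ell \mid p-1$, so this fails. I therefore argue differently.

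The trace map $\cO[G/T]\to\cO[G/B]$ (sum over $B$-cosets) has a section up to a unit. Concretely, $gB \mapsto \sum_{u\in U} guT$ composed with $gT\mapsto gB$ is multiplication by $|U|=p$, which is a unit. So $\cO[G/B]$ is a direct summand of $\cO[G/T]$, with complement $K$ equal to the kernel of the trace. Over $F$, $\Ind_T^G 1=\Ind_B^G\Ind_T^B 1$. The $B$-module $\Ind_T^B 1$ is the permutation module on $U$ with $T$ acting by conjugation, namely $1\oplus(\text{sum over nontrivial $T$-orbit of }U)$. This yields $\Ind_T^G1 = \Ind_B^G1 \oplus \St\oplus\bigoplus_{\chi\ne1} R_T^G(\chi)$, with each pair $\{\chi,\chi^{-1}\}$ appearing once. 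After applying $e$, the character of $eK\otimes F$ is therefore $\St\oplus\bigoplus_{\{\chi,\chi^{-1}\}}R_T^G(\chi)$, which is the character of $P_\St$.

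\textbf{Main obstacle.} The difficult step is proving that $eK$ is projective, since $\cO[G/T]$ itself is not projective. I would try to show that $eK$ is a direct summand of $e\cO[G/U]$. One way is to realise $K$ as $\cO[G]\otimes_{\cO[B]}\ker(\cO[B/T]\to\cO)$. The module $\ker(\cO[B/T]\to \cO)=\ker(\cO[U]\to\cO)$ is the augmentation kernel of $U$ with $T$-conjugation, and it is projective over $\cO[B]$. Indeed, $T$ acts freely on $U\setminus\{1\}$ up to the center, so this kernel is induced from an $\ell'$-group. Alternatively, I would check projectivity on restriction to the Sylow subgroup $\Delta\subset T$, using Mackey together with the fact that $\Delta$ acts freely on $U\setminus\{1\}$.

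Once $eK$ is projective, the character comparison above gives $eK\cong P_\St$. Combined with the splitting $e\cO[G/T]\cong e\cO[G/B]\oplus eK$, this proves the second isomorphism.
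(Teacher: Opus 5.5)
Your proof is correct, and for the first isomorphism it is the paper's argument: projectivity because $\ell\nmid|U|$, then reading off the multiplicities of $1$ and $\St$ over $F$.

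For the second isomorphism you and the paper split off $\cO[G/B]$ in the same way, via $gB\mapsto\sum_{u\in U}guT$ and $p\in\cO^\times$. You then treat the complement $K$ differently.

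The paper never proves directly that $K$ is projective. It uses Mackey's formula ($|T\backslash G/B|=3$) to get $\dim_\F\Hom_{\F[G]}(\F[G/T],\F[G/B])=3$. From this it deduces $\Hom(e\bar K,1)=0$ and $\dim\Hom(e\bar K,\overline{\St})=1$, so the head of $e\bar K$ is $\overline{\St}$. This gives a surjection $P_\St\onto eK$, which is an isomorphism by comparing $\cO$-ranks, so projectivity of $eK$ only comes out at the end.

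You instead write $K=\Ind_B^G I_U$ with $I_U=\ker(\cO[U]\to\cO)$, show $I_U$ is $\cO[B]$-projective, and then need only a character comparison. Your route gives more: it identifies $K$ integrally with the Gelfand--Graev lattice $\Ind_U^G\cO_\psi$. Hence $P_\St\cong e\Ind_U^G\cO_\psi$, the analogue of the paper's remark that $P_\pi\cong e\Ind_U^G\psi$ when $p\equiv-1\pmod{\ell}$. The paper's route needs no identification of $K$ and uses only Hom-dimensions over $\F$ together with Nakayama.

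Three points should be tightened.

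\begin{enumerate}
\item Your stated reason for projectivity of $I_U$ is not quite right. $I_U$ is not a permutation module on $U\setminus\{1\}$, since translation by $U$ does not preserve that set. The clean argument goes as follows.
\begin{itemize}
\item Since $p\in\cO^\times$, and $\cO$ contains $\mu_p$ (as the paper implicitly assumes when it uses $\psi$), we have $\cO[U]=\prod_\psi\cO e_\psi$ and $I_U=\bigoplus_{\psi\ne1}\cO e_\psi$.
\item $T$ permutes these idempotents simply transitively. No ``up to the center'' is needed in $\PGL_2(\F_p)$.
\item Hence $I_U\cong\Ind_U^B\cO_\psi$, which is projective.
\end{itemize}
Your alternative, restricting to $\Delta$, also works. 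It needs a Krull--Schmidt cancellation of the trivial summand in $\cO\oplus I_U\cong\cO\oplus\cO[U\setminus\{1\}]$ as $\cO[T]$-modules.

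\item Your displayed decomposition of $\Ind_T^G1$ over $F$ is false as written. It omits $\St_\eta$ and all cuspidal representations, as a dimension count shows. The correct statement is $\Ind_T^G1\cong\Ind_B^G1\oplus\Ind_U^G\psi$. Only the principal-block part is used, and you state that part correctly.

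\item Delete the abandoned $T''$ digression.
\end{enumerate}
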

\begin{proof}
Because $\ell \nmid |U|$, $e\cO[G/U]$ is a projective $e\cO[G]$-module, so it must be a direct sum of copies of $P_1$ and $P_\St$.  That each appears once can be checked easily over $F$. 

Regarding $e\cO[G/T]$, first note that, since $\ell$ does not divide either of $[B:T]$ or $[G:B]$, the inclusions $\cO[G/B] \to \cO[G/T]$ and $\cO \to \cO[G/B]$ split.  Thus $\cO[G/T] = \cO[G/B] \oplus M$ for some module $M$, and $\cO[G/B] = \cO \oplus \St$. It remains to check that $eM\cong P_\St$. For this, observe that $T \backslash G/B$ has three elements, so Mackey's formula implies that $\Hom_{\F[G]}(\F[G/T],\F[G/B])$ is three-dimensional. From this and what we have already proved, it follows that $\Hom_{\F[G]}(e\overline{M},1)=0$ and $\dim_{\F}\Hom_{\F[G]}(e\overline{M},\overline{\St})=1$, so that the cosocle of $e\bar M$ is $\overline{\St}$. It follows that there is a surjection $P_\St \onto eM$, and it can be checked easily that $P_\St \otimes_\cO F$ is isomorphic to $eM \otimes_\cO F$, so this surjection is an isomorphism.
\end{proof}

\begin{proof}[Proof of \Cref{thm:X^T}\eqref{part:p=1 principal}]
    Since $P_1$ and $P_\St$ are the only projective indecomposable $e\cO[G]$-modules up to isomorphism, there is an isomorphism $M \cong P_1^a \oplus P_\St^b$ for some integers $a$ and $b$. 
    The identifications 
     $$
     M^T \cong \Hom_{e\cO[G]}(e\cO[G/T],M) \quad \text{and} \quad M^B \cong \Hom_{e\cO[G]}(e\cO[G/B],M),
     $$ 
     and the isomorphism $e\cO[G/T]\cong \cO[G/B] \oplus P_\St$ of \cref{lem:O[G/T]}, combine to give an isomorphism
     \[
     \ker(M^T \to M^B) \cong \Hom_{e\cO[G]}(P_\St, M).
     \]
     Then $\cO[\Delta]^+\cong\End_{e\cO[G]}(P_\St)$ acts on $\ker(M^T \to M^B)$ through its action on $P_\St$, and 
     \[
     \ker(M^T \to M^B) \cong \Hom_{e\cO[G]}(P_\St, P_1)^a \oplus \End_{e\cO[G]}(P_\St)^b,
     \]
     so the isomorphism 
     \[
     \ker(M^T \to M^B) \cong (\cO[\Delta]^-)^a \oplus (\cO[\Delta]^+)^b
     \]
     follows from \eqref{eq:Homs P_1 and P_st}.
    
     To identify the constants $a$ and $b$ in terms of invariants, we have by \eqref{eqn:constituents} that   $P_1^G \cong \cO$ and $P_{\St}^G = 0$, and thus $\rank_{\cO}(M^G) = a$.  Further, since $\cO[G/B] \cong \cO \oplus \St$ and 
    $M^B \cong \Hom_{\cO[G]}(\cO[G/B],M)$, \eqref{eqn:constituents} gives $\rank_{\cO}(M^B) = a+b$.
    
     The fact that when $a = b$ the $\cO[\Delta]^+$-structure can be upgraded to an $\cO[\Delta]$-structure making $\ker(M^T \to M^B)$ into a free $\cO[\Delta]$-module of rank $a$ follows from  the fact that $\ker(M^T \to M^B) \cong \Hom_{\cO[G]}(P_{\St}, P_1 \oplus P_{\St})^a$ and \eqref{eq:ODelta upgrade}.
\end{proof}
\subsubsection{Nilpotent blocks}
\label{sec:Nil blocks p=1}
Now let $\chi: T \to F^\times$ be a character of prime-to-$\ell$ order such that $\bar\chi^2 \ne 1$.  Let $\B$ be the block containing $R_T^G(\chi)$, and let $e=e_\B$. Note that the character $\chi$ is not determined by the block because $R_T^G(\chi) \cong R_T^G(\chi^{-1})$; we call the choice of a character $\chi$ a \emph{pinning character} of the block.

To study this block, we use Brou\'{e} and Puig's theory of nilpotent blocks \cite{BP1980}. The defect group of $\B$ is $D=\Delta$ and $C_G(D)=T$. Let $b_{\chi}$ be the block of $T$ containing $\chi$; it can be checked using Brauer's second main theorem that $(\Delta,b_\chi)$ is a $\B$-Brauer pair. We claim that the inertial index of $\B$ is $1$; that is, $N_G(\Delta,b_\chi)=T$. To see this, note that $N_G(\Delta,b_\chi)$ is the subgroup of elements $n \in N$ such that $nb_\chi n^{-1}=b_\chi$. It is easy to see that $nb_\chi n^{-1} = b_{\chi^n} = \begin{cases}
    b_\chi & n \in T \\
    b_{\chi^{-1}} & n \not\in T.
\end{cases}$. Since $\bar\chi \ne \bar\chi^{-1}$, it follows that $N_G(\Delta,b_\chi)=T$.

Then the structure results of Brou\'e and Puig \cite[pg.~120]{BP1980} imply that $e\cO[G] \cong \mathrm{M}_{p+1}(\cO[\Delta])$ is a matrix algebra over $\cO[\Delta]$. In particular, there is only one indecomposable projective module $P$ for $e\cO[G]$ up to isomorphism, and $\End_{\cO[G]}(P) \cong \cO[\Delta]$. This $P$ satisfies $P \otimes_\cO F \cong \bigoplus_{\xi \in \Delta^\vee} R_T^G(\chi \xi)$. The isomorphism $\cO[\Delta] \isoto \End_{\cO[G]}(P)$ can be written explicitly as the composition
\[
\cO[\Delta] \xrightarrow{f_\chi} Z(\cO[G]) \to \End_{\cO[G]}(P),
\] 
where the map $f_\chi$ sends $\alpha \in \Delta$ to the element
\begin{equation}
\label{eq:p=1 falpha}
    f_\chi(\alpha)=\sum_{\xi \in \Delta^\vee} \xi(\alpha) e_{R_T^G(\chi \xi)} = \sum_{\xi \in \Delta^\vee} (\chi\xi)(\alpha)e_{R_T^G(\chi\xi)} \in Z(\cO[G]).
\end{equation}
The last equality in \eqref{eq:p=1 falpha} holds since $\alpha$ has $\ell$-power order while the order of $\chi$ is prime to $\ell$. (Though $f_\chi(\alpha)$ is only {\it a priori} an element of $Z(F[G])$, it is easily checked to be integral.)  

Note that the isomorphism $\cO[\Delta] \cong \End_{\cO[G]}(P)$ given by \eqref{eq:p=1 falpha} depends on the choice of pinning character $\chi$.  Indeed, one could equally well define
\[
f_{\chi^{-1}}(\alpha) = \sum_{\xi \in \Delta^\vee} \xi(\alpha)e_{R_T^G(\chi^{-1}\xi)} = \sum_{\xi \in \Delta^\vee} \xi^{-1}(\alpha)e_{R_T^G(\chi\xi)}.
\]

\begin{lemma}
\label[lemma]{lem:Ind_T^G in nilpotent p=1}
We have $e\Ind_B^G \cO = 0$, and $e\Ind_T^G \cO$ is projective and indecomposable.
\end{lemma}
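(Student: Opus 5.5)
First I would show $e\Ind_B^G\cO=0$ by working over $F$. Since $\Ind_B^G\cO$ is an $\cO$-lattice, it is enough to show $e(\Ind_B^G F)=0$. Over $F$ we have $\Ind_B^G F\cong 1\oplus \St$. Both constituents lie in the principal block, not in $\B$, because $\B$ consists only of the $R_T^G(\chi\xi)$ with $\bar\chi^2\ne 1$. Hence $e_\B$ kills $\Ind_B^G F$, so it kills the lattice as well.

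Next, projectivity. Since $T=T_\ell\times T^{(\ell)}$, we can write
\[
\Ind_T^G\cO \cong \Ind_{T^{(\ell)}}^G\cO\otimes_{\cO[T_\ell]}\cO,
\]
but that description does not make projectivity obvious. Instead I would use the decomposition of $\cO[G/T^{(\ell)}]$ by characters of $T^{(\ell)}$. This module is projective because $\ell\nmid|T^{(\ell)}|$, and it equals $\bigoplus_{\psi}\Ind_{T^{(\ell)}}^G\tilde\psi$ over characters $\psi$ of $T^{(\ell)}$. A cleaner route is to compute $e\Ind_T^G F$ directly and compare it with $P$. By Frobenius reciprocity,
\[
\dim\Hom_G(R_T^G(\chi\xi),\Ind_T^G F)=\dim R_T^G(\chi\xi)^T .
\]
Restricting $\Ind_B^G(\chi\xi)$ to $T$ via Mackey with $T\backslash G/B$ of size three (orbits $\{0\},\{\infty\}$ and a free $T$-orbit), the $T$-invariants are one-dimensional, coming from the free orbit; the two fixed points contribute $\chi\xi$ and $(\chi\xi)^{-1}$, which are nontrivial on $T$. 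Hence $e\Ind_T^G F\cong\bigoplus_{\xi}R_T^G(\chi\xi)\cong P\otimes F$.

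To upgrade this to an integral statement, I would show that $e\Ind_T^G\cO$ is projective. The plan is to observe that $\Ind_T^G\cO$ is a direct summand of $\Ind_{T^{(\ell)}}^G\cO$ restricted appropriately, which is not literally true. Alternatively, I would use the matrix-algebra description $e\cO[G]\cong M_{p+1}(\cO[\Delta])$: under this Morita equivalence, $e\Ind_T^G\cO$ corresponds to an $\cO[\Delta]$-module $X$ that is $\cO$-free of rank $|\Delta|$ with $X\otimes F\cong F[\Delta]$, since each character occurs once. Projectivity is equivalent to freeness of $X$ over the local ring $\cO[\Delta]$, which reduces to $\dim_\F X\otimes_{\cO[\Delta]}\F=1$, i.e.\ to $\dim_\F\Hom_{\F[G]}(e\F[G/T],\overline{R_T^G(\chi)})=1$. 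By Frobenius reciprocity this is $\dim\overline{R_T^G(\chi)}^T$. Since $|T|$ is divisible by $\ell$, this reduction step is not automatic. I would compute it by the same Mackey analysis mod $\varpi$: the free $T$-orbit contributes $\F$, while the fixed points contribute $\bar\chi^{\pm1}$, which are nontrivial on $T$ since $\bar\chi^2\ne1$, so they give nothing. Thus the dimension is $1$ and $X$ is cyclic; a cyclic module of $\cO$-rank $|\Delta|$ is then free of rank one, so $e\Ind_T^G\cO\cong P$, which is projective and indecomposable.

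The main obstacle is the integral step: making sure the cosocle computation of $e\F[G/T]$ is legitimate, in particular that $\Hom$ into the unique simple module detects the number of generators, and that reduction commutes with the block idempotent. The Mackey analysis for $\overline{R_T^G(\chi)}=\Ind_B^G\bar\chi$ is the concrete point I would check carefully.
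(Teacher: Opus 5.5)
Your proposal is correct and is essentially the paper's argument. Both use the Mackey computation $\dim_\F\Hom_{\F[G]}(e\F[G/T],\overline{R_T^G(\chi)})=\dim_\F(\Ind_B^G\bar\chi)^T=1$ to get a surjection $P\onto e\Ind_T^G\cO$ from the projective indecomposable; you phrase this, via $e\cO[G]\cong\mathrm{M}_{p+1}(\cO[\Delta])$, as cyclicity of $X$ over the local ring $\cO[\Delta]$. Both then use the characteristic-zero comparison $e\Ind_T^G F\cong P\otimes_\cO F$ (a rank count) to conclude the surjection is an isomorphism, and your abandoned detours through $\Ind_{T^{(\ell)}}^G\cO$ can simply be deleted.
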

\begin{proof}
Since $\Ind_B^G\cO=\cO \oplus \St$ and neither of these representations is in the block, the first statement is obvious.

    For the second statement, let $Q=e\Ind_T^G\cO$, and let $\bar P$ be the projective cover of $R_{T}^G(\bar\chi)$ (as an $e\F[G]$-module), and $P$ a projective $e\cO[G]$-module lifting $\bar P$. Mackey's formula implies that $\Hom_{e\F[G]}(\bar Q,R_T^G(\bar\chi))\cong \F$, so there is a surjection $P \onto Q$. Mackey's formula also shows that $\Hom_{eF[G]}(Q \otimes_\cO F,R_T^G(\chi\xi))\cong F$, so that $Q\otimes_\cO F \cong P\otimes_\cO F$. Hence the surjection $P \onto Q$ must be an isomorphism. 
\end{proof}

\begin{proof}[Proof of \cref{thm:X^T}\eqref{part:p=1 nilp}]
    Since $M^B=\Hom_{e\cO[G]}(e\Ind_B^G\cO,M)$ and $e\Ind_B^G\cO=0$ by \cref{lem:Ind_T^G in nilpotent p=1}, $M^B=0$. Since $M$ is projective and there is only one projective indecomposable $P$, it must be that $M=P^a$ for some integer $a$. Since $P=e\Ind_T^G\cO$ by \cref{lem:Ind_T^G in nilpotent p=1} and $M^T=\Hom_{e\cO[G]}(e\Ind_T^G\cO,M)$, it follows that $M^T=\End_{e\cO[G]}(P)^a$. The result follows from the fact that $\End_{e\cO[G]}(P)=\cO[\Delta]$.
\end{proof}

\subsection{Block algebras for \texorpdfstring{$p \equiv -1 \pmod{\ell}$}{}}
We now assume $p\equiv -1 \pmod{\ell}$ and prove \cref{thm:X^T}(2) by considering the two block types separately.

\subsubsection{The principal block}
For this section, let $\mathcal{B}$ be the principal block of $\cO[G]$, and let $e=e_\mathcal{B}$. Let $\xi: \Delta \to F^\times$ be a nontrivial character, and let $\pi=\overline{R_{T'}^G(\xi)}$, so that $1$ and $\pi$ are the simple $\F[G]$-modules in the principal block. There is a theorem similar to \cref{thm:mortia broue} in this case, but it only asserts an equivalence of the \emph{derived} categories of $e\cO[G]$ and of the principal block of $\cO[N']$. (This is a special case of a general conjecture of Brou\'{e}, see \cite[Theorem 4.6]{BR2006}.) This derived equivalence is not enough for our purposes, so we have to compute directly.

Let $P_1=\Ind_B^G \cO$.

\begin{lemma}
\label{lem:end of P_1}
    There is a presentation 
    \[
    \End_{\cO[G]}(P_1) \cong \frac{\cO[x]}{(x^2-[G:B]x)},
    \]
    and $P_1$ is a projective cover of $\cO$. Moreover, there is a decomposition $e\Ind_T^G \cO = P_1 \oplus P_\pi$, where $P_\pi$ is a projective cover of $\pi$.
\end{lemma}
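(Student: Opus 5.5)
We work with $p \equiv -1 \pmod{\ell}$, so that $\ell \nmid [G:B]=p+1$ fails is false: in fact $[G:B]=p+1 \equiv 0 \pmod{\ell}$. However $|B| = p(p-1)$ is prime to $\ell$, since $p-1\equiv -2 \pmod{\ell}$ and $\ell$ is odd. The plan is to exploit this in four steps.

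\emph{Projectivity of $P_1$.} Since $\ell\nmid |B|$, the trivial module $\cO$ is projective over $\cO[B]$. Hence $P_1=\Ind_B^G\cO$ is a projective $\cO[G]$-module.

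\emph{The endomorphism ring.} By Mackey/Frobenius, $\End_{\cO[G]}(P_1)\cong \cO[B\backslash G/B]$. This has rank $2$ by the Bruhat decomposition, with basis the identity and the double-coset operator $x$ attached to $BwB$.
\begin{itemize}
\item The operator $x$ is $f\mapsto \sum_{g\in G/B,\,g\ne 1} $ (translation to the other cosets). Concretely, $x = J - 1$, where $J$ is the all-ones operator on $\cO[G/B]$, namely $v\mapsto (\sum v)\cdot \sum_{gB}[gB]$.
\item Then $J^2=[G:B]J$, and $J$ generates the algebra together with $1$.
\end{itemize}
It is cleaner to take the generator $x:=J$ (the composition $P_1\xrightarrow{\text{aug}}\cO\xrightarrow{\text{trace}}P_1$). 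This gives $\End(P_1)\cong\cO[x]/(x^2-[G:B]x)$, since $1,J$ is an $\cO$-basis of $\cO[B\backslash G/B]$.

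\emph{$P_1$ is a projective cover of $\cO$.} Reduce mod $\varpi$: $\End_{\F[G]}(\bar P_1)=\F[x]/(x^2)$, because $[G:B]\equiv0$. This ring is local, so $\bar P_1$ is indecomposable projective. We have $\Hom_{\F[G]}(\bar P_1,1)=\F$ by Frobenius reciprocity, so its cosocle contains $1$. An indecomposable projective has simple cosocle, so $\bar P_1$ is the projective cover of $1$. Lifting idempotents then shows that $P_1$ is the projective cover of $\cO$.

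\emph{The decomposition of $e\Ind_T^G\cO$.}
\begin{itemize}
\item The inclusion $\cO[G/B]\to\cO[G/T]$ splits, since $[B:T]=p$ is prime to $\ell$. So $\Ind_T^G\cO=P_1\oplus M$, with $M$ projective because $\ell\nmid|T|=p-1$. Also $P_1=eP_1$, because its constituents $1,\St$ lie in the principal block.
\item By Mackey, $|T\backslash G/B|=3$, so $\Hom_{\F}(\F[G/T],\F[G/B])$ is $3$-dimensional. Subtracting $\dim\End(\bar P_1)=2$ gives $\Hom(e\bar M,\bar P_1)$ of dimension $1$.
\item Over $F$, $e\Ind_T^G F = 1\oplus \St \oplus \bigoplus_\xi R_{T'}^G(\xi)$ (one copy for each inversion pair of nontrivial $\xi$). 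This is a character computation: $\langle \Ind_T^G1,R_{T'}^G(\theta)\rangle=1$ exactly when $\theta|_{\{\pm\}}$ is trivial, up to the usual parity. Hence $eM\otimes F=\bigoplus_\xi R_{T'}^G(\xi)$, and so $\Hom(eM,\cO)=0$.
\item Consequently $\Hom_{\F}(e\bar M,1)=0$, and the cosocle of $e\bar M$ consists only of copies of $\pi$.
\item Next, $\dim\Hom_{\F}(e\bar M,\pi)$ can be computed from $\Hom_{\F[T]}(1,\pi)$. We have $\pi=\overline{R_{T'}^G(\xi)}$, and the restriction of $R_{T'}^G(\xi)$ to $T$ has the trivial character with multiplicity $1$. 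Reducing, together with projectivity of $e\bar M$ (so Hom into it is exact in the second variable), gives dimension $1$.
\end{itemize}
Thus $eM$ is the projective cover $P_\pi$ of $\pi$.

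The main obstacle is this last multiplicity count, that $\Hom(e\bar M,\pi)$ is exactly one-dimensional. I would handle it by the rank count: $\rank eM=\frac{\ell^{v}-1}{2}(p-1)$, which should equal the rank of the projective cover of $\pi$. That rank can be read from Brauer reciprocity, the decomposition numbers of the cuspidal constituents being $1$.
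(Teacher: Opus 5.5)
Your overall plan works and proves the lemma, but one computation in the last step is wrong, and the rank-count fallback you propose for the ``main obstacle'' depends on it.

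\textbf{The error.} The Steinberg representation occurs \emph{twice} in $\Ind_T^G F$, not once. Indeed $\dim \St^T=|T\backslash G/B|-1=2$, because $T$ has the three orbits $\{0\},\{\infty\},\F_p^\times$ on $G/B=\mathbb{P}^1(\F_p)$. Hence $e\Ind_T^G F\cong 1\oplus \St^2\oplus\bigoplus_\xi R_{T'}^G(\xi)$ and $eM\otimes_\cO F\cong \St\oplus\bigoplus_\xi R_{T'}^G(\xi)$, which is exactly the paper's description of $P_\pi\otimes_\cO F$.

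Your own Mackey count already rules out your version. If $eM\otimes F$ were purely cuspidal, the torsion-free module $\Hom_{\cO[G]}(eM,P_1)$ would vanish. By projectivity of $eM$, so would its reduction $\Hom_{\F[G]}(e\bar M,\bar P_1)$, which you computed to be one-dimensional. Also, there is no parity condition: every cuspidal representation of $G$ restricts to the regular representation of $T$.

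\textbf{What survives and what does not.} The consequence you actually use, $\Hom_{\cO[G]}(eM,\cO)=0$, still holds because $\St\neq 1$. The rank count in your final paragraph does not. In fact $\rank_\cO eM=\rank_\cO P_\pi=p+\frac{|\Delta|-1}{2}(p-1)$, since the decomposition number $d_{\St,\pi}$ equals $1$ ($\overline{\St}$ has composition factors $1$ and $\pi$). Your two numbers agree only because $\St$ was dropped on both sides.

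\textbf{The fix.} Discard that paragraph; your preceding bullet already settles the multiplicity once stated precisely.
\begin{itemize}
\item $\dim\Hom_{\F[G]}(\F[G/T],\pi)=\dim\pi^T=\dim R_{T'}^G(\xi)^T=1$, where the middle equality uses exactness of $T$-invariants ($\ell\nmid|T|$).
\item $\Hom_{\F[G]}(\bar P_1,\pi)=\pi^B=0$.
\item Hence $\dim\Hom(e\bar M,\pi)=1$. Together with $\Hom(e\bar M,1)=0$, the projective module $e\bar M$ has head $\pi$, so $e\bar M\cong\bar P_\pi$ and $eM\cong P_\pi$.
\end{itemize}

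\textbf{Comparison with the paper.} With this repair, your route differs from the paper's in two modest ways.
\begin{itemize}
\item For $\End(P_1)$, you use the integral double-coset basis $\{1,T_w=J-1\}$, which gives the presentation at once. The paper maps $\cO[x]/(x^2-[G:B]x)$ in via the same element $x=J=[G:B]e_1$. It gets surjectivity from saturation of the image ($\bar x\neq 0$) together with $\rank_\cO\End(P_1)=2$.
\item For the decomposition, the paper writes $e\Ind_T^G\cO=P_1^a\oplus P_\pi^b$ and reads off $a=b=1$ from $\Hom$'s over $F$ into $1$ and into a cuspidal. This implicitly uses the $F$-constituents of $P_\pi$, i.e.\ decomposition numbers. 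You instead split off $P_1$ explicitly using $[B:T]=p$, and identify the complement by its cosocle modulo $\varpi$. That needs only $\dim\pi^T=1$ and the head of $\bar P_1$, with no prior knowledge of $P_\pi\otimes F$. It is the same method the paper uses for $p\equiv 1\pmod{\ell}$ in \cref{lem:O[G/T]}.
\end{itemize}
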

\begin{proof}
    Write $P_1=\cO[G/B]$ as the module of $B$-invariant functions, and note that $P_1 \otimes F=1\oplus \St$ so $eP_1=P_1$. Consider the endomorphism $x:P_1 \to P_1$ sending a function $f$ to the constant function $\sum_{g \in G/B} f(g)$; in other words, $x=[G:B]e_1$. Then clearly $x^2=[G:B]x$, so this defines a homomorphism 
    \begin{equation}\label{eq:rk2map}
    \frac{\cO[x]}{(x^2-[G:B]x)} \to \End_{\cO[G]}(P_1).
    \end{equation}
    Since $x$ induces a nontrivial map $x: \bar P_1 \to \bar P_1$, it is a split injection of $\cO$-modules. Since $\End_{\cO[G]}(P_1) \otimes F =\End_{\cO[G]}(1 \oplus \St) \cong F \oplus F$, it follows that $\End_{\cO[G]}(P_1)$ is free of rank two as an $\cO$-module, and hence \eqref{eq:rk2map}  is an isomorphism.
    
    To see that $P_1$ is a projective cover of $\cO$, note that $P_1$ is projective since $\ell \nmid |B|$.  Moreover $P_1$ is indecomposable since $\End_{\cO[G]}(P_1)$ is local.  The map $x$ lands in the copy of $\cO$ in $P_1$, which gives a surjection $P_1 \onto \cO$, so $P_1$ must be the projective cover of $\cO$.

    Similarly, $e\Ind_T^G \cO$ is projective. Let $P_\pi$ denote a projective indecomposable $e\cO[G]$-module that lifts a projective cover of $\bar\pi$.  Since $P_1$ and $P_\pi$ are the only indecomposable projective modules, $e\Ind_T^G \cO=P_1^a \oplus {P_\pi}^b$ for some nonnegative integers $a$ and $b$. Then $a=\dim_{F}\Hom_{F[G]}(\Ind_T^G(F),F)=1$ and $b=\dim_{F}\Hom_{F[G]}(\Ind_T^G(F),\pi)$, which a simple computation reveals to also be $1$.
\end{proof}
The module $P_\pi$ is also isomorphic to the following:
\begin{itemize}
    \item $e \Ind_T^G \chi$ for a nontrivial character $\chi$ of $T$;
    \item $e \Ind_U^G\psi$ for a nontrivial character $\psi$ of $U$.
\end{itemize}
Indeed, since $T$ and $U$ are prime-to-$\ell$ subgroups of $G$, these modules are projective, so it is enough to show that they are isomorphic to $P_\pi$ over $F$.  This is easily verified using Frobenius reciprocity and Mackey's formula.
The endomorphism ring of $P_\pi$ has been computed \cite[Theorem 4.11]{Paige}.

\begin{lemma}[Paige]
\label{lem: end of P_pi}
    There is an isomorphism $\cO[\Delta]^+ \isoto \End_{\cO[G]}(P_\pi)$.
\end{lemma}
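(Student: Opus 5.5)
Since $P_\pi \cong e\Ind_T^G\chi$ for a nontrivial character $\chi$ of $T$ of $\ell$-power order (as noted just before the statement; this needs $T_\ell$ nontrivial, so for $p\equiv -1 \pmod{\ell}$ I will instead use $P_\pi \cong e\Ind_U^G\psi$, or equivalently work with $e\Ind_T^G\cO$), the plan is to compute $\End_{\cO[G]}(P_\pi)$ inside $\End_{\cO[G]}(e\Ind_T^G\cO)$. That ring is $e$ applied to the Hecke algebra $\cO[T\backslash G/T]$. By \cref{lem:end of P_1} we have $e\Ind_T^G\cO = P_1\oplus P_\pi$. So $\End(P_\pi)$ is the corner $\epsilon\,\End(e\Ind_T^G\cO)\,\epsilon$, where $\epsilon$ is the idempotent projecting onto $P_\pi = \ker(e\Ind_T^G\cO\to P_1)$. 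The first step is to check that this kernel is indeed a direct summand, which holds because $[B:T]$ is a unit. Then $\End(P_\pi)$ is $\cO$-free, and its rank is $\dim_F \End_{F[G]}(P_\pi\otimes F)$. Since $P_\pi\otimes F \cong \St\oplus\bigoplus R_{T'}^G(\xi)$, with $\xi$ running over nontrivial characters of $\Delta$ modulo $\xi\sim\xi^{-1}=\xi^p$, each constituent occurring once, this rank is $1+(|\Delta|-1)/2=\rank_\cO \cO[\Delta]^+$.

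Next I construct the map $\cO[\Delta]^+\to Z(e\cO[G])\to\End(P_\pi)$, following \eqref{eq:f_alpha principal p=1}. For $\alpha\in\Delta$, I send $[\alpha]+[\alpha]^{-1}$ to
\[
f(\alpha)=2e_1+2e_\St+\sum_{\xi}(\xi(\alpha)+\xi^{-1}(\alpha))\,e_{R_{T'}^G(\xi)}.
\]
Because the elements $e_\rho$ are orthogonal idempotents, this is a ring homomorphism into $Z(eF[G])$. On $P_\pi\otimes F$ it acts through the characters $\cO[\Delta]^+\to F$ given by $\xi$ and by the augmentation. These characters are exactly the distinct $F$-points of $\cO[\Delta]^+$, so the map is injective after inverting $\ell$, with $F$-dimensions matching. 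The key integrality point is that $f(\alpha)\in\cO[G]$. I will check this with the central character formula $\omega_\rho(c)=|c|\tr\rho(x)/\dim\rho$. Equivalently, I will exhibit $f(\alpha)$ as $e$ times an explicit integral central element, namely a class sum over the conjugacy class of $\alpha\in T'$, suitably normalized and combined with the trivial class. The character values of cuspidal representations on elliptic elements are $-(\xi(\alpha)+\xi^{-1}(\alpha))$, and $\St$ takes the value $-1$ there.

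The remaining, and main, obstacle is surjectivity onto $\End(P_\pi)$, that is, saturation of the image. Since both sides are free $\cO$-modules of the same rank, it suffices to show that the image is surjective mod $\varpi$ onto $\End(\bar P_\pi)$. For this I will compute $\End(P_\pi)$ directly as a lattice in $\prod F$. An endomorphism is a tuple $(c_\St,(c_\xi))$ and lies in $\End_{\cO[G]}(P_\pi)$ exactly when it preserves the lattice. I would determine this lattice by realizing $P_\pi=e\Ind_U^G\psi$, so that $\End(P_\pi)=e\,\cO[\text{Hecke algebra } \mathcal{H}(G,U,\psi)]e$. The Gelfand–Graev Hecke algebra is commutative with an explicit $\cO$-basis indexed by $T$ and by the Bruhat cell $UwTU$, via Kloosterman-type elements. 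Cutting down by $e$ should identify the result with the image of $\cO[\Delta]^+$ through the elliptic Kloosterman/Gauss sum eigenvalues. If this bookkeeping becomes unwieldy, a fallback is to compare discriminants: compute the discriminant of the trace form of $\End(P_\pi)$ from the Cartan matrix (decomposition numbers give $\ell$-valuations) and match it with the discriminant of $\cO[\Delta]^+$.
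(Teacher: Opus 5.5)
Your preliminary steps are fine. The rank count $1+(|\Delta|-1)/2$ is right, and so is the map $\cO[\Delta]^+\to Z(eF[G])$ together with its bijectivity onto $\End_{F[G]}(P_\pi\otimes_\cO F)$. (The paper gives no argument for this lemma; it cites Paige.)

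On integrality there is one small fix. $-C_\alpha/p$ acts by $\xi(\alpha)+\xi^{-1}(\alpha)$ on $R_{T'}^G(\xi)$, but by $(p-1)/p$ on $\St$ and by $1-p$ on $1$. Adding a multiple of the identity class would spoil the cuspidal eigenvalues. The correct partner is the class sum $C_t$ of some $t\in T$ with $t^2\neq 1$, which acts by $p(p+1)$, $p+1$, $0$ on $1$, $\St$ and the cuspidals. This gives $f(\alpha)=e\,(C_t-C_\alpha)/p\in Z(e\cO[G])$.

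The genuine gap is the step you yourself flag: you never show that the image of $\cO[\Delta]^+$ is saturated in $\End_{\cO[G]}(P_\pi)$. Equal ranks and an isomorphism over $F$ still allow $\End_{\cO[G]}(P_\pi)$ to be a strictly larger order in $F^n$, where $n=(|\Delta|+1)/2$. Excluding that is the whole content of Paige's theorem.

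Neither of your routes closes this. The Gelfand--Graev route is only announced. It would need the explicit Bessel (Kloosterman-type) eigenvalues of every basis element of $\mathcal{H}(G,U,\psi)$ on $\St$ and on each $R_{T'}^G(\xi)$, followed by a Fourier-inversion check over $\Delta$ that they lie in the image. The fallback fails as stated. The Cartan invariant $c_{\pi\pi}=\dim_\F\End_{\F[G]}(\bar P_\pi)$ is just $n$, which you already know, and it says nothing about the index of the order $\End_{\cO[G]}(P_\pi)$.

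The discriminant idea does work if the input is symmetry rather than the Cartan matrix.
\begin{itemize}
\item Write $P_\pi=\cO[G]\epsilon$ for a primitive idempotent $\epsilon$. Then $\End_{\cO[G]}(P_\pi)\cong(\epsilon\cO[G]\epsilon)^{\mathrm{op}}$ is a symmetric $\cO$-algebra for the restriction of $t(\sum_g a_g[g])=a_1=\frac{1}{|G|}\sum_\rho\dim\rho\,\tr\rho$. So its Gram determinant for $t$ is a unit.
\item In the idempotent coordinates of $\End_{\cO[G]}(P_\pi)\otimes_\cO F\cong F^n$, the form $t$ is diagonal, with entry $p/|G|$ at $\St$ and $(p-1)/|G|$ at each $R_{T'}^G(\xi)$.
\item For the basis $[1]$, $[\delta^i]+[\delta^{-i}]$ ($1\le i\le(|\Delta|-1)/2$) of $\cO[\Delta]^+$, the form $|\Delta|^{-1}\mathrm{Tr}_{F[\Delta]/F}$ has Gram matrix $\mathrm{diag}(1,2,\dots,2)$. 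In the same coordinates this form is diagonal with entries $|\Delta|^{-1}$ and $2|\Delta|^{-1}$.
\item Since $|G|/|\Delta|$, $p$, $p-1$ and $2$ are all units, the two forms differ by a diagonal unit rescaling. Hence the image of $\cO[\Delta]^+$ also has unit $t$-discriminant.
\item The image lies inside $\End_{\cO[G]}(P_\pi)$, and the two discriminants differ by the square of the index. So the index is $1$, which is the surjectivity you need.
\end{itemize}
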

The isomorphism can be described explicitly as follows. Note that $P_\pi \otimes_\cO F = \St \oplus \bigoplus_{\xi} R_{T'}^G(\xi)$, where $\xi$ ranges over a set of equivalence classes of nontrivial characters of $\Delta$ modulo inversion. Then for $\alpha \in \Delta$, the element $[\alpha]+[\alpha^{-1}] \in \cO[\Delta]^+ $ acts by
\begin{equation}
\label{eq: f alpha}
    2e_1 + 2e_\St + \sum_\xi (\xi(\alpha)+\xi^{-1}(\alpha))e_{R_{T'}^G(\xi)}.
\end{equation}

\begin{lemma}
    \label{lem:hom P1 P_pi}
    There are isomorphisms $\Hom_{\cO[G]}(P_1,P_\pi) \cong \cO$ and  $\Hom_{\cO[G]}(P_\pi,P_1) \cong \cO$ as $\End_{\cO[G]}(P_\pi)$- and $\End_{\cO[G]}(P_1)$-modules, where both rings act on $\cO$ through their augmentations.
\end{lemma}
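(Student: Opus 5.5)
The plan is to compute both hom-spaces after inverting $\ell$, to get their $\cO$-ranks, and then use saturation to see that the integral lattices are copies of $\cO$ on which the two endomorphism rings act by scalars.

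\textbf{Step 1: ranks.} We have $P_1\otimes_\cO F = 1\oplus\St$ and $P_\pi\otimes_\cO F = \St\oplus\bigoplus_\xi R_{T'}^G(\xi)$. The only common constituent is $\St$, and it occurs with multiplicity one in each. Hence
\[
\Hom_{F[G]}(P_1\otimes F,P_\pi\otimes F)\cong F \quad\text{and}\quad \Hom_{F[G]}(P_\pi\otimes F,P_1\otimes F)\cong F .
\]
Both integral hom-spaces are $\cO$-lattices in these lines: each is a saturated submodule of $\Hom_\cO(P_1,P_\pi)$ or $\Hom_\cO(P_\pi,P_1)$, so it is torsion-free and finitely generated. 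Therefore each is free of rank one, that is, isomorphic to $\cO$.

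\textbf{Step 2: the $\End(P_\pi)$-action.} By \eqref{eq: f alpha}, the generator $[\alpha]+[\alpha^{-1}]$ of $\cO[\Delta]^+\cong\End_{\cO[G]}(P_\pi)$ acts as a central element of $F[G]$. On the $\St$-isotypic part of $P_\pi\otimes F$ it acts by $2$, which is the augmentation of $[\alpha]+[\alpha^{-1}]$. Any nonzero $\phi\colon P_1\to P_\pi$ has image, after tensoring with $F$, inside the $\St$-isotypic part. Post-composing with this element therefore multiplies $\phi$ by $2$, so $\cO[\Delta]^+$ acts on $\Hom(P_1,P_\pi)$ through the augmentation. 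For $\Hom(P_\pi,P_1)$ the argument is the same: such a map factors through the $\St$-component of $P_\pi\otimes F$, and precomposition with $[\alpha]+[\alpha^{-1}]$ again gives multiplication by $2$. This is legitimate because $\cO$-linearity lets us check the identity after tensoring with $F$, and centrality means the element commutes with every $G$-map.

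\textbf{Step 3: the $\End(P_1)$-action.} By \cref{lem:end of P_1}, $\End(P_1)=\cO[x]/(x^2-[G:B]x)$ with $x=[G:B]e_1$. The element $x$ kills the $\St$-component of $P_1\otimes F$. Consequently $x\circ\psi=0$ for $\psi\in\Hom(P_\pi,P_1)$, since the image of $\psi$ lies in the $\St$-part. Likewise $\phi\circ x=0$ for $\phi\in\Hom(P_1,P_\pi)$, since $\phi$ kills the trivial summand. So $x$ acts by $0$ and $1$ acts by $1$.

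\textbf{Step 4: identifying the augmentation.} Here ``the augmentation'' of $\End(P_1)$ should be read as the homomorphism to $\cO$ with $x\mapsto 0$. This is the character of $\End(P_1)$ on the $\St$-component and is congruent to the other character $x\mapsto[G:B]$ modulo $\ell$, since $[G:B]=p+1\equiv 0$. With that reading, the actions found in Steps 2 and 3 are exactly the claimed ones, and the two rank-one lattices give the stated module isomorphisms.

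The main obstacle is the bookkeeping about which ring acts on which side, and the interpretation of ``augmentation'' for $\End(P_1)$. The substance is only the multiplicity-one statement for $\St$ and the fact that both endomorphism rings act on $\St$ by the stated scalars.
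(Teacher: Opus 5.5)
Your proposal is correct and follows essentially the same route as the paper: you get rank one over $F$ from the multiplicity-one occurrence of $\St$, use that $x=[G:B]e_1$ kills $\St$ and that \eqref{eq: f alpha} acts by $2$ on $\St$, and use torsion-freeness to pass from $F$ to $\cO$. Your extra care about pre- versus post-composition, and your reading of the augmentation of $\End_{\cO[G]}(P_1)$ as $x\mapsto 0$, agree with the paper's convention, since the paper takes $x$ to generate the augmentation ideal.
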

\begin{proof}
    Since the only irreducible representation that $P_1 \otimes F$ and $P_\pi \otimes F$ have in common is $\St$ with multiplicity one, it follows that $\Hom_{\cO[G]}(P_1,P_\pi) \otimes F \cong F$ and $\Hom_{\cO[G]}(P_\pi,P_1) \otimes F \cong F$ as $F$-vector spaces. 
    As in the proof of \Cref{lem:end of P_1}, a generator of the augmentation ideal of $\End_{\cO[G]}(P_1)$ is $x=[G:B]e_1$, which acts as zero on $\St$, so the above isomorphisms are $\End_{\cO[G]}(P_1) \otimes F$-equivariant if $F$ is given the augmentation action. Similarly, \eqref{eq: f alpha} implies the same for the $\End_{\cO[G]}(P_\pi)\otimes F$-action.
    Since $\Hom_{\cO[G]}(P_1,P_\pi)$ and $\Hom_{\cO[G]}(P_\pi,P_1)$ are torsion-free, it follows that they are isomorphic to $\cO$ with the claimed actions.
\end{proof}
Although we do not require them, explicit generators of $\Hom_{\cO[G]}(P_1,P_\pi)$ and $\Hom_{\cO[G]}(P_\pi,P_1)$ can be described as follows. There is an endomorphism $w: \Ind_T^G \cO \to \Ind_T^G \cO$ defined by a generator $w$ of $W$. The maps
\[
P_1 \hookrightarrow e\Ind_T^G \cO \xrightarrow{w} e\Ind_T^G \cO \onto P_\pi, \ P_\pi \hookrightarrow e\Ind_T^G \cO \xrightarrow{w} e\Ind_T^G \cO \onto P_1
\]
generate $\Hom_{\cO[G]}(P_1,P_\pi)$ and $\Hom_{\cO[G]}(P_\pi,P_1)$, respectively. To see this, it is enough to show that they are nonzero modulo $\varpi$. A computation shows that the element of $\End_{\cO[G]}(P_1)$ obtained by composing them is, in the notation of \cref{lem:end of P_1},
\[
\frac{1-p}{p}(x-[G:B]).
\]
Since this composition is nonzero modulo $\varpi$, both of the maps are also nonzero.
\begin{proof}[Proof of \cref{thm:X^T}\eqref{part:p=-1 principal}]
Since $P_1$ and $P_\pi$ are the only projective indecomposable modules in the block, $M=P_1^a \oplus P_\pi^b$ for some integers $a$ and $b$. 
    Since $e \Ind_T^G \cO = P_1 \oplus P_\pi$ by \cref{lem:end of P_1}, it follows that 
    \[
    \ker(M^T \to M^B)=\Hom_{\cO[G]}(P_\pi,M) \cong \Hom_{\cO[G]}(P_\pi,P_1)^a \oplus \End_{\cO[G]}(P_\pi)^b.
    \]
    The formula for $\ker(M^T \to M^B)$ then follows from \cref{lem:hom P1 P_pi,lem: end of P_pi}.

    To describe the constants $a$ and $b$, we have
    $$
    P_1 \otimes_{\cO} F = F \oplus \St \quad \text{and} \quad
    P_\pi \otimes_\cO F = \St \oplus \bigoplus_{\xi} R_{T'}^G(\xi).
    $$
Thus, $P_1^G \cong \cO$, $P_\pi^G = 0$, and $\rank_{\cO} M^G = a$.
Further, since
$M^B \cong \Hom_{\cO[G]}(\cO[G/B],M)$ and $F[G/B] \cong F \oplus (\St \otimes_\cO F)$, we see that $\rank_{\cO} M^B = 2a+b$.
\end{proof}

We can now describe the algebra $e\cO[G]$ explicitly. Note that 
\[
e\cO[G] = e\Ind_{1}^G \cO=e\Ind_{T}^G(\Ind_{1}^T \cO) = e\Ind_T^G \cO \oplus \bigoplus_\chi e\Ind_T^G \chi \cong P_1 \oplus P_\pi^{p-1},
\]
where $\chi$ ranges over nontrivial characters of $T$, and the last isomorphism follows from \Cref{lem:end of P_1} and the comment following it. Then, since $e\cO[G] \cong \End_{e\cO[G]}(e\cO[G])$, applying \cref{lem:end of P_1,lem: end of P_pi,lem:hom P1 P_pi}, 
there is an isomorphism of rings
\[
e\cO[G] \cong \left(\begin{matrix}
    \cO[x]/(x^2-[G:B]x) & \cO & \cdots &\cO \\
    \cO & \cO[\Delta]^+ & \cdots &\cO[\Delta]^+ \\
     \vdots & \vdots & & \vdots \\
     \cO& \cO[\Delta]^+ & \cdots &\cO[\Delta]^+
\end{matrix}\right),
\]
where:
\begin{itemize}
    \item the matrix ring has $p$ rows and columns;
    \item the action of $\cO[x]/(x^2-[G:B]x)$ and $\cO[\Delta]^+$ on $\cO$ is by the augmentation;
    \item the implicit maps 
    \[
    \cO \times \cO \to \cO[\Delta]^+, \ \cO \times \cO \to \cO[x]/(x^2-[G:B]x) 
    \]
    send $(1,1)$ to an element of the ring whose exact annihilator is the augmentation ideal.
\end{itemize}
\subsubsection{Nilpotent blocks}
\label{sec:nilpotent_blocks2}
Let $\theta: T' \to F^\times$ be a character of prime-to-$\ell$ order such that $\bar \theta^2 \ne 1$.  Let $\mathcal{B}$ be the block containing $R_{T'}^G(\theta)$, and let $e=e_\mathcal{B}$. Just as in \Cref{sec:Nil blocks p=1}, the block $\B$ does not determine the character $\theta$, and we refer to the choice of $\theta$ as a \emph{pinning character}.

An argument just as in \Cref{sec:Nil blocks p=1} shows that $\mathcal{B}$ is nilpotent with defect group $\Delta$ and that $e\cO[G] \cong \mathrm{M}_{p-1}(\cO[\Delta])$. A similar argument to \Cref{lem:Ind_T^G in nilpotent p=1} shows that $P=e\Ind_T^G\cO$ is projective and indecomposable and that $e\Ind_B^G\cO=0$; in particular $\End_{\cO[G]}(P)\cong \cO[\Delta]$.
This isomorphism can be made explicit just as in \eqref{eq:p=1 falpha}: an element $\alpha \in \Delta$ acts on $P$ by the central element 
\begin{equation}
\label{eq:p=-1 falpha}
    f_\theta(\alpha) = \sum_{\xi \in \Delta^\vee} \xi(\alpha) e_{R_{T'}^G(\theta\xi)} = \sum_{\xi \in \Delta^\vee} (\theta\xi)(\alpha)e_{R_{T'}^G(\theta\xi)} \in Z(\cO[G]),
\end{equation}
where the last equality follows since $\alpha$ has $\ell$-power order and $\theta$ has order prime to $\ell$.  Once again, this identification depends on the choice of pinning character $\theta$. Indeed, one could equally well define 
\[
f_{\theta^p}(\alpha) = \sum_{\xi \in \Delta^\vee} \xi(\alpha)e_{R_{T'}^G(\theta^p\xi)} = \sum_{\xi \in \Delta^\vee} \xi^p(\alpha)e_{R_{T'}^G(\theta\xi)}.
\]
Now the proof of \Cref{thm:X^T}\eqref{part:vexing} is exactly analogous to that of \eqref{part:p=1 nilp}.
\begin{proof}[Proof of \Cref{thm:X^T}\eqref{part:vexing}]
Since $e\Ind_B^G\cO=0$ and $M^B=\Hom_{\cO[G]}(e\Ind_B^G\cO,M)$, it follows that $M^B=0$. Since $P=e\Ind_T^G\cO$ is the only projective indecomposable module, $M=P^a$ for some integer $a$. Hence
\[
M^T = \Hom_{\cO[G]}(e\Ind_T^G\cO,M) \cong \End_{\cO[G]}(P)^a \cong \cO[\Delta]^a.\qedhere
\]
\end{proof}

This completes the proof of \Cref{thm:X^T}.

\section{Projective modules of modular forms}
\label{sec:projective}

In this section, we review and slightly generalize a theorem of Serre stating that certain spaces of modular forms
with full level at $p$ are projective $\cO[G]$-modules. We restate the arguments here because, as far as we are aware, these results have only appeared in Serre's course notes \cite{serrecoursenotes} and not in any published articles. (In fact, we independently arrived at the same theorems, with essentially the same proofs, before we were aware of Serre's notes.) Although we focus on coherent cohomology, similar techniques work for other avatars of modular forms, such as modular symbols and \'etale cohomology of modular curves.

\subsection{Galois-module structure of cohomology of tamely ramified covers} 
The projectivity statement we require is derived from general results about the Galois-module structure of coherent cohomology of ramified covers of schemes (see \cite{Nakajima84,Nakajima,ChinburgErez}). 
The following result, which is \cite[Theorem 1]{Nakajima}, suffices for our purposes. 

\begin{theorem}[Nakajima]
\label{thm:nakajima}
    Let $\F$ be a field of characteristic $\ell$, and let $f:X \to Y$ be a finite Galois covering of projective varieties over $\F$ with Galois group $G$. Let $\cF$ be a coherent $G$-sheaf on $X$. Assume that $f$ is \emph{tame} in the sense that, for every geometric point $x \in X$, the order of the stabilizer of $x$ in $G$ is prime to~$\ell$.

    There is a perfect complex $C^\bullet$ of $\F[G]$-modules such that $H^*(X,\cF) \cong H^*(C^\bullet)$ as $\F[G]$-modules. In particular, if $H^*(X,\cF)$ is concentrated in degree $n$, then $H^n(X,\cF)$ is a projective $\F[G]$-module.
\end{theorem}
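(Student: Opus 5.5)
Although the paper cites this result from Nakajima, here is how I would prove it directly. The plan is to compute $H^*(X,\cF)$ by a Čech complex built from a $G$-stable affine cover, and then show that each term of the resulting complex is a projective $\F[G]$-module by using tameness.

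\textbf{Cover.} Since $Y$ is projective, I would choose a finite affine open cover $\{V_i\}$ of $Y$ and set $U_i=f^{-1}(V_i)$. Because $f$ is finite, each $U_i$ is affine, and each $U_i$ is $G$-stable because it is a preimage. Every finite intersection $U_I=f^{-1}(V_I)$ is also a $G$-stable affine open, since $Y$ is separated. The Čech complex $\check C^\bullet$ with terms $\prod_I \Gamma(U_I,\cF)$ then computes $H^*(X,\cF)$ $G$-equivariantly. Its terms are not finite-dimensional, so the perfect complex will be produced from it at the end.

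\textbf{Key local step.} I want to show that for an affine $G$-stable $U=f^{-1}(V)$, the module $\Gamma(U,\cF)$ is a projective $\F[G]$-module. It suffices to check this after base change to $\bar\F$ and Zariski-locally on $V$. I would work at a point $y$ of $V$ and a point $x$ above it, with stabilizer $G_x$ of order prime to $\ell$. Over the strict henselization of $\cO_{Y,y}$, the semilocal ring of $X$ above $y$ decomposes as $\Ind_{G_x}^G$ of the local piece at $x$. The stalk of $\cF$ is therefore $\Ind_{G_x}^G$ of a $G_x$-module, and since $\ell\nmid|G_x|$, every $\F[G_x]$-module is projective. Hence these stalks are projective $\F[G]$-modules, and in fact free over $\cO_{Y,y}^{sh}$ in the appropriate equivariant sense. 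Descending this to $\Gamma(U,\cF)$ uses the criterion that a $\Gamma(V,\cO_Y)[G]$-module is projective if it is so locally. I expect this descent to be the main obstacle, because $\cF$ need not be locally free over $Y$. What I would try first is to replace $\cF$ by a finite resolution by locally free $G$-sheaves: because $f$ is flat at tame points only after suitable hypotheses, I would instead use the criterion that $\Gamma(U,\cF)^{H}$ behaves well, that is, that $\hat H^*(H,\Gamma(U,\cF))=0$ for all $\ell$-subgroups $H$. This vanishing follows because the $\ell$-subgroups act with no fixed points, so $\Gamma(U,\cF)$ is locally induced from trivial groups for each $H$.

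\textbf{Perfect complex.} With each Čech term $\F[G]$-projective (as an infinite-dimensional module), the complex has finite-dimensional cohomology. A standard argument of Mumford type (as in the proof of semicontinuity) then replaces it by a quasi-isomorphic bounded complex of finitely generated projective $\F[G]$-modules. This can be done by truncating from the top, using that projective modules over the self-injective algebra $\F[G]$ are also injective, so that successive choices of finitely generated projective approximations split off.

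\textbf{Final claim.} If the cohomology is concentrated in degree $n$, then the perfect complex $C^\bullet$ is quasi-isomorphic to $H^n[-n]$. A module of finite projective dimension over the self-injective ring $\F[G]$ is projective, so $H^n(X,\cF)$ is a projective $\F[G]$-module.
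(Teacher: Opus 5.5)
The paper does not prove this statement; it cites Nakajima. Your outer framework is the standard one and is sound: the \v{C}ech complex of the $G$-stable affine cover $\{f^{-1}(V_i)\}$ computes $H^*(X,\cF)$ equivariantly. Mumford's descending construction over the noetherian ring $\F[G]$ then turns a bounded complex of projectives with finite-dimensional cohomology into a perfect complex, and over the self-injective ring $\F[G]$ finite projective dimension forces projectivity.

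\textbf{The gap.} It lies in the one step with real content: the $\F[G]$-projectivity of $\Gamma(U,\cF)$ for $U=f^{-1}(V)$ with $V$ affine.

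Your first route correctly shows that the strictly henselized stalks are $\F[G]$-projective, since they are $\Ind_{G_x}^G$ of something. But you then descend with a locality criterion for projectivity over $\Gamma(V,\cO_Y)[G]$, which is the wrong ring. $\Gamma(U,\cF)$ is usually not projective over $\Gamma(V,\cO_Y)[G]$: take $\cF$ torsion. For the same reason, the stalks are not free over $\cO_{Y,y}^{\mathrm{sh}}$ unless $\cF$ is locally free. Moreover, $\F[G]$-projectivity of an infinite-dimensional module is not Zariski-local on $V$ without further argument.

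Your fallback, vanishing of $\hat H^*(H,\Gamma(U,\cF))$ for $\ell$-subgroups $H$, is the right idea. However, ``locally induced from trivial groups'' is only an assertion. To make it a proof you would need:
\begin{enumerate}
\item $H$ acts $\Gamma(V,\cO_Y)$-linearly, so $\hat H^i(H,-)$ commutes with the flat base change to $\cO_{Y,y}^{\mathrm{sh}}$, and vanishing can be checked there by faithful flatness;
\item at the strictly henselian level, Mackey's formula together with $H\cap gG_xg^{-1}=1$ gives $\F[H]$-freeness;
\item an arbitrary, not necessarily finitely generated, $\F[P]$-module with vanishing Tate cohomology is free, for $P$ a Sylow $\ell$-subgroup;
\item averaging over $G/P$ then upgrades this to $\F[G]$-projectivity.
\end{enumerate}

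\textbf{A shorter fix.} You can close the gap without any localization. Tameness makes the trace $\tr_G\colon f_*\cO_X\to\cO_Y$ surjective. To see this, pass to the strict henselization at $y$: there $G$ permutes the local factors transitively, and the trace of the idempotent of the factor at $\bar x$ is $|G_{\bar x}|$, a unit. Since $V$ is affine, there is $c\in\Gamma(U,\cO_X)$ with $\sum_{g\in G}g(c)=1$.

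Now let $M=\Gamma(U,\cF)$ with its semilinear $G$-action, and let $G$ act on $\F[G]\otimes_\F M$ through the left factor only. The map $s(m)=\sum_{g}g\otimes c\,g^{-1}(m)$ is a $G$-equivariant section of $\F[G]\otimes_\F M\to M$, $g\otimes m\mapsto gm$. Hence $M$ is a direct summand of a free $\F[G]$-module. This applies to every $\Gamma(U_I,\cF)$, for any quasi-coherent $G$-sheaf, with no base change needed.

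Alternatively, a Sylow $\ell$-subgroup $P$ acts freely by tameness, so $X\to X/P$ is \'etale. Galois descent together with the normal basis theorem for $\Gamma(U,\cO_X)$ over its $P$-invariants then gives $\F[P]$-freeness directly.
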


\subsection{Modular curves and modular forms}
We recall modular curves as (coarse) moduli spaces of elliptic curves with level structure, as in \cite{katzmazur}, but we always consider the case of prime-to-$\ell$ levels and schemes over $\Z_{(\ell)}$ so that the moduli spaces are smooth. We also recall the geometric definition of modular forms with coefficients in a ring.

\subsubsection{Level structures}
Let $a$ and $b$ be integers, and let $S$ be a $\Z[1/ab]$-scheme. For an elliptic curve $E$ over $S$, we consider two types of level structures on $E$:
\begin{description}
    \item[$\Gamma(a,b)$-level structure] an injection $\Z/a\Z \times \Z/b\Z \to E$ of group schemes over $S$.
    \item[$\Gamma_0(a,b)$-level structure] a pair $(A,B)$ of subgroup schemes $A,B \subset E$ such that $A$ and $B$ are \'etale-locally isomorphic to $\Z/a\Z$ and $\Z/b\Z$, respectively, and $A \cap B = 0$.
\end{description}
We use the abbreviations $\Gamma(a)=\Gamma(a,a)$, $\Gamma_1(a)=\Gamma(a,1)$, and $\Gamma_0(a)=\Gamma_0(a,1)$. There is a congruence subgroup of $\SL_2(\Z)$ associated with a level structure in the usual way, and the subgroups associated with $\Gamma(a)$, $\Gamma_1(a)$, and $\Gamma_0(a)$ are the usual ones. The group $\GL_2(\Z/a\Z)$ acts transitively on the set of $\Gamma(a)$-level structures on $E$.

Let $\Gamma$ be one of these level structures; the \emph{level} of $\Gamma$ is the integer $\mathrm{lcm}(a,b)$. If the level of $\Gamma$ is a product $nm$ with $\gcd(n,m)=1$, then a $\Gamma$-level structure is equivalent to a pair of level structures $\Gamma_n$ and $\Gamma_m$ of level $n$ and of level $m$, respectively \cite[Section (3.5), pg.~101]{katzmazur}.  The corresponding congruence subgroups are related by $\Gamma=\Gamma_n \cap \Gamma_m$.

Following \cite[Section (4.4), pg.~109]{katzmazur}, a $\Gamma$-level structure is called \emph{rigid} if, for all $E/S$, the group $\Aut(E/S)$ acts freely on the set of $\Gamma$-level structures on $E$. For example, the level structures $\Gamma(a)$ for $a>2$ and $\Gamma_1(a)$ for $a>3$ are rigid by \cite[Section 2.7, pg.~85]{katzmazur}.

\subsubsection{Modular curves} Let $\Gamma$ be a level structure of level $N$. Define the modular curve $Y(\Gamma)$, which is smooth over $\Z[1/N]$, to be the coarse moduli space of isomorphism classes of pairs $(E,\alpha)$ of an elliptic curve $E/S$ and a $\Gamma$-level structure $\alpha$. If $\Gamma$ is rigid, then $Y(\Gamma)$ is the fine moduli space (\cite[Corollary 4.7.1,pg.~116]{katzmazur}). 
Otherwise choose an integer $a>2$ with $\gcd(N,a)=1$ and define a rigid level structure $\Gamma'=\Gamma\cap \Gamma(a)$. Then $Y(\Gamma)$ can be constructed as the quotient scheme $Y(\Gamma')/\GL_2(\Z/a\Z)$, which is independent of the choice of $a$ \cite[Section (8.1), pg.~224]{katzmazur}.

The scheme $Y(\Gamma)$ has a compactification by adding a finite set of points (called `cusps') $X(\Gamma)=Y(\Gamma) \cup C(\Gamma)$ \cite[Section 8.6]{katzmazur}, which gives a smooth projective curve over $\Z[1/N]$. Note that the $\GL_2(\Z/N\Z)$-action on $\Gamma(N)$-level structures induces a $\GL_2(\Z/N\Z)$-action on $Y(\Gamma(N))$, which extends to an action on $X(\Gamma(N))$.

\subsubsection{Canonical level structures} Let $n$ be an integer, $S$ be a $\Z[1/n]$-scheme, and $E$ be an elliptic curve over $S$. Then a $\Gamma(n)$-level structure $(P,Q)$ on $E$ defines an $n$-th root of unity $e_n(P,Q) \in S$ via the Weil pairing \cite[Section 2.8, pg.~87]{katzmazur}. This defines a morphism $X(\Gamma \cap \Gamma(n)) \to \mu_n$.

Fix a primitive $n$-th root of unity $\zeta_n$, which corresponds to a morphism $\Spec \Z[\zeta_n] \to \mu_n$. Suppose that $S$ is a $\Z[1/n,\zeta_n]$-scheme. A $\Gamma(n)$-level structure $(P,Q)$ on $E$ is \emph{canonical} if $e_n(P,Q)=\zeta_n$.  Define $X(\Gamma \cap \Gamma(n)^\mathrm{can})$ to be the fiber product of $X(\Gamma \cap \Gamma(n)) \otimes_{\Z[1/n]} \Z[1/n, \zeta_n]$ with $\Spec(\Z[1/n, \zeta_n])$ over $\mu_n \otimes_{\Z[1/n]} \Z[1/n, \zeta_n]$; it can be thought of as the compactified moduli space of elliptic curves with $\Gamma$-structure and canonical $\Gamma(n)$-structure. If $K$ is a field extension of $\Q(\zeta_n)$, then $X(\Gamma \cap \Gamma(n)) \otimes_{\Z[1/n]} K$ is isomorphic to a disjoint union of copies of $X(\Gamma \cap \Gamma(n)^\mathrm{can}) \otimes_{\Z[1/n, \zeta_n]} K$ over the embeddings of $\Q(\zeta_n)$ in $K$. In particular, over the complex numbers, $X(\Gamma(n))(\C)$ is a disjoint union of copies of $X(\Gamma(n)^\mathrm{can})(\C)$.  It is $X(\Gamma(n)^\mathrm{can})(\C)$ that can be identified with a classical modular curve given by a quotient of the upper half-plane.

Note that the action of $\SL_2(\Z/n\Z)$ on $X(\Gamma \cap \Gamma(n))$ preserves the subscheme $X(\Gamma \cap \Gamma(n)^\mathrm{can})$, but the action of $\GL_2(\Z/n\Z)$ does not.

\subsubsection{Modular forms} For a commutative $\Z[1/N]$-algebra $A$, let $X(\Gamma)_{/A}$ be the base change of $X(\Gamma)$ to $A$. We define the spaces of modular forms and cusp forms of level $\Gamma$ with coefficients in $A$ in terms of sections of appropriate line bundles on $X(\Gamma)_{/ A}$, as in \cite{katz} (see also \cite[Section 10.13]{katzmazur}). 

First assume that $\Gamma$ is rigid.
Let  $\pi: E \to Y(\Gamma)$ be the universal elliptic curve, and let $\omega$ be the line bundle on $Y(\Gamma)$ defined by $\omega=\mathrm{coLie}(E)=\pi_* \Omega_{\pi}^1$. There is an extension, still denoted $\omega$, of $\omega$ to $X(\Gamma)$.  We have an isomorphism
\[
\omega^{\otimes 2} \isoto \Omega^1_{X(\Gamma)} (\log),
\]
where $\Omega^1_{X(\Gamma)}(\log)$ denotes the sheaf of differential forms on $X(\Gamma)$ with log poles along $C(\Gamma)$ \cite[Theorem 10.13.11, pg.~335]{katzmazur}. Define
\[
M_k(\Gamma,A) \coloneqq H^0(X(\Gamma)_{/A}, \omega^{\otimes k}) = H^0(X(\Gamma)_{/A}, \omega^{{\otimes k-2}} \otimes \Omega^1_{X(\Gamma)}(\log))
\]
and
\[
S_k(\Gamma,A) \coloneqq H^0(X(\Gamma)_{/A}, \omega^{\otimes k}(-\log))=H^0(X(\Gamma)_{/A}, \omega^{{\otimes k-2}} \otimes \Omega^1_{X(\Gamma)}).
\]

If $\Gamma$ is not rigid, choose an integer $a>2$ with $\gcd(a,N)=1$, and let $\Gamma'=\Gamma \cap \Gamma(a)$, which is rigid, and define
\[
M_k(\Gamma,A) = M_k(\Gamma',A)^{\GL_2(\Z/a\Z)},\ S_k(\Gamma,A) = S_k(\Gamma',A)^{\GL_2(\Z/a\Z)}.
\]
\subsubsection{Comparison with canonical modular forms}
If $A$ is a field that contains $\Q(\zeta_n)$, then 
\[
M_k(\Gamma \cap \Gamma(n),A) = \prod_{\Q(\zeta_n) \to A} M_k(\Gamma \cap \Gamma(n)^\mathrm{can},A).
\]
By examining how the action of $\GL_2(\Z/n\Z)$ permutes the factors, one can show that there is an isomorphism of $A[\GL_2(\Z/n\Z)]$-modules
\begin{equation}
\label{eq:modular forms induced from canonical}
    M_k(\Gamma \cap \Gamma(n),A) \cong \Ind_{\SL_2(\Z/n\Z)}^{\GL_2(\Z/n\Z)} M_k(\Gamma \cap \Gamma(n)^\mathrm{can},A).
\end{equation}

Over the complex numbers, just as it is the canonical spaces that are identified with quotients of the upper half-plane, it is the canonical modular forms that can be identified with classical modular forms (namely, holomorphic functions on the upper half-plane satisfying certain conditions).
\subsection{Projectivity of modular forms}\label{subsec:projectivity of mod forms} Fix a level structure $\Gamma$ of level $N$ and an integer $n$ with $\gcd(N,\ell n)=1$ and $\ell \nmid n$. Let $H=H(\Gamma,n)=\Gal(X(\Gamma \cap \Gamma(n))/X(\Gamma))$ be the Galois group of the cover; we have $H=\GL_2(\Z/n\Z)/\{\pm 1\}$ if $-1 \in \Gamma$ and $H=\GL_2(\Z/n\Z)$ otherwise.

\begin{lemma}
\label{lem:tameness}
    Assume that $\ell \ge 3$ and that $\Gamma$ is rigid if $\ell = 3$. Then the $H$-cover ${X(\Gamma \cap \Gamma(n))}_{/ \F} \to {X(\Gamma)}_{/ \F}$ is tame in the sense of \cref{thm:nakajima}.
\end{lemma}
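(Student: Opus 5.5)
The plan is to compute the stabilizer in $H$ of every geometric point of $X(\Gamma\cap\Gamma(n))_{/\F}$ and show that its order is prime to $\ell$. This splits into two cases: non-cuspidal points, where the stabilizer comes from automorphisms of an elliptic curve, and cusps, where it comes from the structure of the Tate curve.

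\textbf{Non-cuspidal points.} Let $x$ be a geometric point of $Y(\Gamma\cap\Gamma(n))_{/\F}$, corresponding to $(E,\alpha,\beta)$ with $\alpha$ a $\Gamma$-structure and $\beta$ a $\Gamma(n)$-structure. We work with the rigidified level $\Gamma\cap\Gamma(n)\cap\Gamma(a)$, or directly with the moduli description if $\Gamma$ is rigid. An element $h\in H$ fixes $x$ exactly when there is an automorphism $u\in\Aut(E)$ with $u^*\alpha=\alpha$ and $u\circ\beta=\beta\circ h$.

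\begin{itemize}
\item Since $n\ge 3$ and $\Gamma(n)$-structures are rigid, $u\mapsto$ (its action on $E[n]$) is injective on $\Aut(E)$. Hence the stabilizer injects into $\Aut(E,\alpha)$, modulo $\{\pm1\}$ when $-1\in\Gamma$, matching the definition of $H$.
\item If $\Gamma$ is rigid, $\Aut(E,\alpha)$ is trivial, so the stabilizer is trivial. This already handles $\ell=3$.
\item In general the stabilizer is a subgroup of $\Aut(E)$ or of $\Aut(E)/\{\pm1\}$. Over a field of characteristic $\ell\ge 5$, $\Aut(E)$ is cyclic of order $2$, $4$, or $6$. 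Its order is therefore prime to $\ell$; the large automorphism groups occur only in characteristics $2$ and $3$.
\end{itemize}

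\textbf{Cusps.} At a cusp, the stabilizer is computed via the Tate curve $\mathrm{Tate}(q^{e})$ over $\F((q^{1/e}))$. The stabilizer in $H$ of a cusp of $X(\Gamma\cap\Gamma(n))$ lies in a conjugate of $\{\pm1\}\cdot\{\sm{1}{*}{0}{1}\}\subset \GL_2(\Z/n\Z)$. Concretely, it consists of the automorphisms $\pm1$ together with translations $q^{1/n}\mapsto\zeta q^{1/n}$; these are exactly the automorphisms of the level-$n$ Tate curve that preserve the cusp. Its order therefore divides $2n$. Since $\ell\nmid n$ and $\ell$ is odd, this order is prime to $\ell$.

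\textbf{Main obstacle.} The hardest step is making the cusp computation rigorous, and at non-rigid $\Gamma$ justifying the passage through the auxiliary rigid level $\Gamma(a)$. Stabilizers on the coarse quotient are images of stabilizers upstairs for the $H\times\GL_2(\Z/a\Z)$-action, so an order prime to $\ell$ is inherited. I would cite Katz--Mazur's description of the formal neighborhood of the cusps (Chapter 10) for the unipotent structure. Everything else reduces to the classical list of automorphism groups of elliptic curves.
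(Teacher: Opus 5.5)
Your proof is correct and follows the same route as the paper. At non-cuspidal points the stabilizer is the image of $\Aut(E,\alpha)$, which is trivial for rigid $\Gamma$ and has order prime to $\ell$ when $\ell\ge5$ because $|\Aut(E)|\in\{2,4,6\}$; at the cusps the stabilizer has order prime to $\ell$ because $\ell$ is odd and $\ell\nmid n$. Your cusp bound $2n$ is in fact the accurate one, since at irregular cusps such as the one over $1/2$ on $X_1(4)$ the ramification index is $2n$ rather than a divisor of $n$ as the paper writes, and you do not need $n\ge 3$, because the stabilizer is in any case the image of $\Aut(E,\alpha)$ in $H$.
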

\begin{proof}
     If $\Gamma$ is rigid, then ${Y(\Gamma \cap \Gamma(n))}_{/ \F} \to {Y(\Gamma)}_{/ \F}$ is \'etale and the stabilizers of points in ${Y(\Gamma)}_{/ \F}$ are all trivial. In general, the stabilizers of these points have order dividing $6$, and hence are prime-to-$\ell$ if $\ell>3$ \cite[Corollary 2.7.1, pg.~85]{katzmazur}. Even if $\Gamma$ is rigid, the map $X(\Gamma \cap \Gamma(n)) \to X(\Gamma)$ is ramified at the cusps, but the ramification degree at the cusps divides $n$ , so the map is tame by the assumption that $\ell \nmid n$ \cite[Theorem 10.9.1, pg.~301]{katzmazur}.
\end{proof}

We can now complete the proof of Serre's theorem \cite[Theorem 1, pg.~53]{serrecoursenotes}.
\begin{theorem}[Serre]
\label{prop:projectivity of modular forms}
    Assume that $\ell \ge 3$ and that $\Gamma$ is rigid if $\ell = 3$. Then $M_2(\Gamma \cap \Gamma(n),\cO)$, and, for all $k > 2$,  $M_k(\Gamma \cap \Gamma(n),\cO)$ and $S_k(\Gamma \cap \Gamma(n),\cO)$, are projective $\cO[H]$-modules. 
\end{theorem}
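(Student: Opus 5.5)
The plan is to reduce projectivity over $\cO[H]$ to projectivity over $\F[H]$ of the mod-$\varpi$ reduction, then use \cref{thm:nakajima} together with the tameness of \cref{lem:tameness}. We use the standard fact that a finitely generated $\cO$-free $\cO[H]$-module $L$ is $\cO[H]$-projective if and only if $L/\varpi L$ is $\F[H]$-projective. One direction lifts a projective cover over $\F[H]$ to a projective $\cO[H]$-module $P$ with a map $P \to L$ that is surjective by Nakayama. The kernel is then $\cO$-free, and the resulting sequence splits because it splits mod $\varpi$ and $\operatorname{Ext}^1_{\cO[H]}(L,-)$ is controlled mod $\varpi$; alternatively, a rank comparison makes $P\to L$ an isomorphism.

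First we treat the rigid case. Here $X = X(\Gamma\cap\Gamma(n))$ and $Y=X(\Gamma)$ are smooth projective curves over $\cO$ (after base change from $\Z_{(\ell)}$), and $\cF$ is the sheaf $\omega^{\otimes k}$ or $\omega^{\otimes k}(-\log)$, which is an $H$-equivariant line bundle on $X$. Since $X$ is flat and proper over $\cO$ with $\cO$-flat $\cF$, cohomology and base change applies. If $H^1(X_{/\F},\cF)=0$, then $H^0(X_{/\cO},\cF)$ is $\cO$-free and reduces to $H^0(X_{/\F},\cF)$. So the key step is the vanishing of $H^1$ on the special fiber. Via $\omega^{\otimes 2}\cong\Omega^1(\log)$, we write $\cF=\omega^{\otimes k-2}\otimes\Omega^1(\log)$, respectively $\omega^{\otimes k-2}\otimes\Omega^1$. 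By Serre duality, $H^1$ is dual to $H^0$ of $\omega^{\otimes 2-k}(-\text{cusps})$, respectively $\omega^{\otimes 2-k}$. For $k>2$ these have negative degree, since $\omega$ has positive degree, so both vanish. For $k=2$ and $M_2$, the line bundle $\cO(-\text{cusps})$ has no sections because the set of cusps is nonempty. For $S_2$, however, $H^1(\Omega^1)$ is one-dimensional, which is exactly why $S_2$ is excluded from the statement. Hence $H^*(X_{/\F},\cF)$ is concentrated in degree $0$, and \cref{thm:nakajima} with \cref{lem:tameness} shows that $H^0(X_{/\F},\cF)$ is $\F[H]$-projective. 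The reduction lemma then gives the result over $\cO$.

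For non-rigid $\Gamma$, where $\ell>3$, there are two routes. One is to apply the same argument directly, noting that \cref{lem:tameness} still gives tameness. The difficulty is that the forms are defined as invariants $M_k(\Gamma\cap\Gamma(a)\cap\Gamma(n),\cO)^{\GL_2(\Z/a\Z)}$ rather than as coherent cohomology on the coarse space. The cleaner route is to choose $a>2$ prime to $N n \ell$ and set $\Gamma'=\Gamma\cap\Gamma(a)$, which is rigid. By the rigid case applied to the cover with group $H'=H(\Gamma',n)$, or more directly to the full $\GL_2(\Z/an)$-cover of $X(\Gamma)$, which is tame since $\ell\nmid a$ and stabilizers have order dividing $6$, the module $M_k(\Gamma'\cap\Gamma(n),\cO)$ is projective over $\cO[\GL_2(\Z/a\Z)\times H]$. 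Taking invariants under the prime-to-$\ell$ group $\GL_2(\Z/a\Z)$ is exact and is given by an idempotent, so it preserves projectivity over $\cO[H]$. Handling $\pm1$ requires the quotient $H=\GL_2/\{\pm1\}$, and the analogous idempotent argument with $\{\pm1\}$, which has order prime to $\ell$, handles it.

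The main obstacle is this non-rigid case. There we must make the Nakajima input apply to the larger cover and check that the invariants really compute the stated spaces compatibly with the $H$-action. The vanishing of $H^1$ in the $k=2$, $M_2$ case, which relies on the cusps, also needs care.
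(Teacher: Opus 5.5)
Your core argument is the paper's proof: vanishing of $H^1$ on the special fiber, base change to get $\cO$-freeness and reduction to $H^0(X_{/\F},\cF)$, then \cref{thm:nakajima} with \cref{lem:tameness}, then lifting projectivity from $\F[H]$ to $\cO[H]$. (The paper cites Katz for the $H^1$ computation that you do by Serre duality.)

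However, the ``main obstacle'' you identify for non-rigid $\Gamma$ is not real. For $M_k(\Gamma\cap\Gamma(n),\cO)$ to be $H^0$ of an $H$-equivariant line bundle, what you need is rigidity of $\Gamma\cap\Gamma(n)$, not of $\Gamma$. This is automatic once $n\ge 3$, in particular for $n=p$. Rigidity of $\Gamma$ enters only through tameness of $X(\Gamma\cap\Gamma(n))\to X(\Gamma)$, and \cref{lem:tameness} already supplies this for $\ell>3$. The fact that $X(\Gamma)$ is only a coarse quotient is harmless. So your first route, which you set aside, is exactly the paper's argument. If $n\le 2$, then $|H|$ divides $6$, which is prime to $\ell>3$, so the statement is trivial there.

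Your second route contains a false claim. $\GL_2(\Z/a\Z)$ need not have order prime to $\ell$ when $a$ is merely prime to $Nn\ell$: for example, $|\GL_2(\F_q)|=q(q-1)^2(q+1)$ is divisible by $\ell$ whenever $q\equiv\pm1\pmod{\ell}$. So the idempotent argument only works if you choose $a$ accordingly, which is possible for $\ell>3$. Alternatively, you do not need this condition at all once you have projectivity over the full Galois group $\tilde H$ of the big cover. For any normal subgroup $K$ one has $\cO[\tilde H]^K\cong\cO[\tilde H/K]$, so $K$-invariants of projective modules are projective even when $\ell\mid|K|$. The paper uses this fact at the start of \cref{sec:Hecke ops}.

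A minor point: $H^1(X_{/\F},\Omega^1)$ has dimension equal to the number $d$ of geometric connected components, not one.
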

\begin{proof}
    Let $A$ denote either ring $\cO$ or $\F$, and let $\cF$ be either sheaf $\omega^{\otimes k}$ or $\omega^{\otimes k}(-\log)$. Let $d$ denote the number of geometric connected components of $X(\Gamma\cap \Gamma(n))$. Just as in the proof of \cite[Theorem 1.7.1]{katz}, we have, for all $k \ge 2$,
    \[
    H^1(X(\Gamma \cap \Gamma(n))_{/A}, \cF) = \begin{cases}
    A^d & \text{if }k=2 \text{ and }\cF \cong \omega^2(-\log) \cong \Omega^1, \\
    0 & \text{otherwise}.
    \end{cases}
    \]
    In particular, $H^1(X(\Gamma \cap \Gamma(n))_{/\cO}, \cF)$ is torsion-free in every case, so there are base-change isomorphisms
    \[
    H^0(X(\Gamma \cap \Gamma(n))_{/\cO}, \cF) \otimes_\cO \F \cong H^0(X(\Gamma \cap \Gamma(n))_{/\F}, \cF).
    \]
    This implies that $M_k(\Gamma \cap \Gamma(n),\cO) \otimes_\cO \F$ and $S_k(\Gamma \cap \Gamma(n),\cO) \otimes_\cO \F$ are isomorphic to $H^0(X(\Gamma \cap \Gamma(n))_{/\F}, \cF)$ for the appropriate choice of $\cF$. Moreover, except in the case $k=2$ and $\cF=\omega^2(-\log)$, the cohomology $H^*(X(\Gamma \cap \Gamma(n))_{/\F}, \cF)$ is concentrated in degree $0$.
    
    By \Cref{lem:tameness}, we may apply \Cref{thm:nakajima}. This implies that, for $k>2$, $M_2(\Gamma \cap \Gamma(n),\F)$, $M_k(\Gamma \cap \Gamma(n),\F)$ and $S_k(\Gamma \cap \Gamma(n),\F)$ are projective $\F[H]$-modules. Since the corresponding modules with $\cO$-coefficients are $\cO$-free, this implies that they are $\cO[H]$-projective. 
\end{proof}

\begin{remark}
    \label{rem:projectivity for p=3}
    As Serre points out \cite[Contre-exemple, pg.~54]{serrecoursenotes}, the assumption that $\Gamma$ is rigid if $\ell=3$ is necessary:\ for $\ell=3$, $n=13$, and $\Gamma=\SL_2(\Z)$, the space $M_k(\Gamma(13),\cO)$ is not $\cO[H]$-projective. The issue is the fact that the map $Y(\Gamma \cap \Gamma(n))_{/\F} \to Y(\Gamma)_{/\F}$ may be wildly ramified at points of $Y(\Gamma)_{/\F}$ that correspond to elliptic curves with complex multiplication by $\Q(\zeta_3)$. Serre shows that this is the only issue, in the sense that, if $\ell=3$ and $\m$ is a maximal ideal in the Hecke algebra with $T_q \not\in \m$ for some prime $q$ not dividing $n$ or the level of $\Gamma$ with $q \equiv -1 \pmod{3}$, then the localization $M_k(\Gamma \cap \Gamma(n),\cO)_\m$ is $\cO[H]$-projective. In other words, the failure of projectivity comes entirely from maximal ideals that correspond to mod-$3$ Galois representations that are induced from $\Q(\zeta_3)$. 
\end{remark}

\begin{remark}
We focus primarily on the depth-$0$ situation of \cref{prop:projectivity of modular forms} when $n = p$ is prime.  This seems to be the most interesting case from an $\ell$-adic perspective since the kernel of the natural map $\GL_2(\Z/p^m\Z) \to \GL_2(\F_p)$ is pro-$p$.  However, there is no obstruction to applying our methods in higher depth situations since \cref{prop:projectivity of modular forms} is the main input needed.
\end{remark}

\section{Hecke operators and familiar spaces of modular forms}\label{sec:Hecke ops}

Although \cref{prop:projectivity of modular forms} with $n = p$ allows us to apply \cref{thm:X^T} to blocks in spaces of modular forms, that does not help us study congruences between modular forms.  For that, we need to localize our space of modular forms at a maximal ideal of the Hecke algebra.  In this section we introduce the Hecke operators and use them to relate the objects appearing in \cref{thm:X^T} to more familiar spaces of modular forms.  

Let us make a few preliminary observations and establish notation.  Strictly speaking, in order to apply \cref{thm:X^T} to the spaces of modular forms considered in \cref{prop:projectivity of modular forms}, we first need to move from $H$-modules to $G$-modules, where $G=\PGL_2(\F_p)$ is the quotient of $H$ by its center. Let $Z \subset \tilde{T} \subset \tilde{B} \subset H$ be the center, the diagonal torus, and the upper-triangular Borel in $H$, respectively, so that $G=H/Z$, $B=\tilde{B}/Z$, and $T=\tilde{T}/Z$. Let $\tilde{M} = M_k(\Gamma \cap \Gamma(p),\cO)$, and let $M=\tilde{M}^Z$, so that $M$ is an $\cO[G]$-module. The relevant fixed point spaces are
\begin{align*}
    M^T & = \tilde{M}^{\tilde{T}} = M_k(\Gamma \cap \Gamma_0(p,p),\cO), \\
    M^B &= \tilde{M}^{\tilde{B}} = M_k(\Gamma \cap \Gamma_0(p),\cO).
\end{align*}
Since taking fixed parts under a normal subgroup sends projective modules to projective modules,  \cref{prop:projectivity of modular forms} (when its hypotheses are satisfied) implies that $M$ is a projective $\cO[G]$-module. Then \cref{thm:X^T} describes the structure of $\ker(M^T \to M^B)$. In this section we show that $\ker(M^T \to M^B)$ --- and an analogous kernel obtained after localizing at a maximal ideal of the anemic Hecke algebra --- can be described as a more familiar space of modular forms using the $U_p$ Hecke operator.

\subsection{Hecke operators} 
We briefly recall the geometric definition of Hecke operators $T_q$ when $\Gamma = \Gamma_0(N)$ for some $N$ such that $(Nq, \ell) = 1$. The Hecke operators for other choices of $\Gamma$ can be defined similarly, and the Hecke operator $T_\ell$ can be defined in the same way over $\Z[1/N\ell]$ or in a similar way over $\Z[1/N]$ by using Drinfeld level structures to define $X(\Gamma \cap \Gamma_0(\ell))$; see, for instance, \cite[Chapter~2]{mazurwiles}.

Let $q$ be a prime number, and write $N = q^aN'$ for some $a \geq 0$ and $q \nmid N'$.  Set $\Gamma^{(q)} = \Gamma_0(N')$.  There is an isomorphism
\[
\iota: X(\Gamma^{(q)} \cap \Gamma_0(q^{a+1})) \isoto X(\Gamma^{(q)} \cap \Gamma_0(q^a,q))
\]
defined in terms of the moduli problem as follows. For an elliptic curve $E$, a $\Gamma^{(q)}$-level structure $\alpha$ on $E$, and a cyclic subgroup $C \subset E$ of order $q^{a+1}$, let $\pi: E \to E'$ be the isogeny whose kernel is the $q$-torsion subgroup of $C$. Then $\pi(C)$ and $\pi(E[q])$ are cyclic subgroups of $E'$ of order $q^a$ and $q$, respectively, and hence give a $\Gamma_0(q^a,q)$-structure on $E'$. The map $\iota$ sends $(E,\alpha,C)$ to $(E',\pi \circ \alpha,(\pi(C),\pi(E[q])))$.

Using $\iota$, there are two degeneracy maps
\[
\pi_q,\psi_q: X(\Gamma^{(q)} \cap \Gamma_0(q^{a+1})) \to X(\Gamma^{(q)} \cap \Gamma_0(q^{a})) = X(\Gamma_0(N))
\]
given by $\pi_q(E,\alpha,C)=(E,\alpha,qC)$ and $\psi_q(E,\alpha,C)=(E',\pi \circ \alpha,\pi(C))$; that is, $\pi_q$ is the forgetful map, and $\psi_q$ is the composition of $\iota$ with the forgetful map. The Hecke operator $T_q$ is defined by $T_q=\psi_{q,*}\pi_q^*$ as an endomorphism of $M_k(\Gamma_0(N),\cO)$ or $S_k(\Gamma_0(N),\cO)$. If $a>0$, then $T_q$ is also traditionally written as $U_q$.

For a level structure $\Gamma$, let $\T(\Gamma)$ denote the $\cO$-subalgebra of $\End_{\cO}(M_k(\Gamma,\cO))$ generated by the Hecke operators $T_q$ for all primes $q$, and let $\T'(\Gamma) \subseteq \T(\Gamma)$ be the anemic subalgebra generated by $T_q$ for all $q \ne p$.  We also abuse this notation slightly and write $\T(\Gamma \cap \Gamma(p))$ for the $\cO$-subalgebra of $\End_{\cO}(M_k(\Gamma \cap \Gamma(p), \cO)^Z)$ and similarly for $\T'(\Gamma \cap \Gamma(p))$.  We now recall some basic properties of Hecke operators that are useful in what follows.

\begin{lemma}\label{lem:Hecke alg properties}\hfill
\begin{enumerate}[(i)]
\item\label{item: commute}  For two primes $q_1 \neq q_2$, the operators $T_{q_1}$ and $T_{q_2}$ commute.
\item\label{item: cartesian} With notation as above, there is a commutative diagram
\begin{equation}
\label{eq:cart}
\xymatrix{
X(\Gamma^{(q)} \cap \Gamma_0(q^{a+2})) \ar[r]^-{\pi_q} \ar[d]^-{\psi_q} & X(\Gamma^{(q)} \cap \Gamma_0(q^{a+1})) \ar[d]^-{\psi_q} \\
X(\Gamma^{(q)} \cap \Gamma_0(q^{a+1})) \ar[r]^-{\pi_q}  & X(\Gamma^{(q)} \cap \Gamma_0(q^{a})),
}    
\end{equation}
that is Cartesian if and only if $a>0$. 
\item\label{item: G-commute} If $q \neq p$ is prime, then $T_q$ commutes with the $G$-action on $M = M_k(\Gamma \cap \Gamma(p), \cO)^Z.$
\end{enumerate}
\end{lemma}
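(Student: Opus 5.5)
We treat the three parts separately, working throughout with the moduli-theoretic description of Hecke correspondences $T_q=\psi_{q,*}\pi_q^*$ from the construction above.

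For \eqref{item: commute}, we realize both composites $T_{q_1}T_{q_2}$ and $T_{q_2}T_{q_1}$ as the push-pull along a single correspondence. Since $q_1\ne q_2$, a $\Gamma_0(q_1^{a_1+1})$-structure and a $\Gamma_0(q_2^{a_2+1})$-structure are independent: by the decomposition of level structures of coprime level, we have $X(\Gamma^{(q_1q_2)}\cap\Gamma_0(q_1^{a_1+1})\cap\Gamma_0(q_2^{a_2+1}))$ as a common cover. The square formed by the $q_1$-degeneracy maps and the $q_2$-degeneracy maps is Cartesian, because the two level structures can be varied independently. Base change for pushforward and pullback along finite flat maps, $\pi^*\psi_*=\psi'_*\pi'^*$ in a Cartesian square, then shows that both composites equal $(\psi_{q_1}\psi_{q_2})_*(\pi_{q_1}\pi_{q_2})^*$. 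Concretely, both correspond to quotienting $E$ by a cyclic subgroup of order $q_1q_2$, taken in either order.

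For \eqref{item: cartesian}, commutativity is a moduli computation. Both paths send $(E,\alpha,C)$ with $C$ cyclic of order $q^{a+2}$ to $(E/C[q],\;\pi\circ\alpha,\;\pi(C)[q^a])$; one checks that $\pi(qC)=q\pi(C)$ and that the images of the order-$q^a$ parts agree. For the Cartesian claim, I would compare degrees and then check a bijection on geometric points.

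First take $a>0$. A point of the fiber product is a cyclic $C_1\supset$ of order $q^{a+1}$ on $E$ together with a cyclic $C_2$ of order $q^{a+1}$ on $E'=E/C_1[q]$ with $qC_2=\pi(C_1)$. The condition $a>0$ forces $C_2\supset \pi(C_1)[q]\neq\pi(E[q])$. Hence $\pi^{-1}(C_2)$ is cyclic of order $q^{a+2}$ and recovers a unique point upstairs, which gives a bijection. Both maps have degree $q$ when $a>0$, so the fiber product is reduced and finite flat of the same degree, and the canonical map is an isomorphism. Smoothness of everything over $\Z[1/N]$ makes the pointwise check sufficient.

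Now take $a=0$. The fiber product has degree $q+1$ over the base, whereas the upper left corner has degree $q$ over each side. In the fiber product the extra component is $C_2=\pi(E[q])$, corresponding to the dual isogeny, so the square is not Cartesian.

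For \eqref{item: G-commute}, note that $G$ acts on $X(\Gamma\cap\Gamma(p))$ through its action on the $\Gamma(p)$-level structure. Since $q\ne p$, the maps $\pi_q$ and $\psi_q$ are defined on $X(\Gamma\cap\Gamma(p)\cap\Gamma_0(q^{a+1}))$, and the isogeny $\pi$ has degree prime to $p$. Therefore $\pi$ carries the $\Gamma(p)$-structure $\beta$ to $\pi\circ\beta$, and $\pi\circ(\beta g)=(\pi\circ\beta)g$. It follows that $\pi_q$ and $\psi_q$ are $G$-equivariant, and so is $T_q=\psi_{q,*}\pi_q^*$, as is the restriction to $Z$-invariants.

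The main obstacle is the Cartesian statement in \eqref{item: cartesian}. The care needed there is in matching the degree $q$ against $q+1$ and in justifying that a bijection on points suffices, for instance via normality of the fiber product, or by working with the coarse spaces after passing to a rigid auxiliary level.
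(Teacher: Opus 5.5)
Your approach is the standard one the paper's one-line proof has in mind, and the moduli computations are right: the commutativity of the square, the inverse $C=\pi^{-1}(C_2)$ when $a>0$, the degree count $q$ versus $q+1$ at $a=0$, and the equivariance argument in (iii).

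The gap is the step from the pointwise bijection to an isomorphism of schemes. Smoothness of the three curves does not make the fiber product smooth, or even normal. A finite map of the same degree that is bijective on geometric points need not be an isomorphism onto a non-normal target; think of the normalization of a cuspidal curve.

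On the compactified curves this really happens. Over a cusp where both maps ramify, with indices $e_1,e_2>1$, the fiber product has local equation $u^{e_1}=v^{e_2}$ and is not normal.
\begin{itemize}
\item In (i), take $N=1$: $\pi_{q_1}\colon X(\Gamma_0(q_1))\to X(\Gamma_0(1))$ ramifies at the cusp $0$, and $\psi_{q_2}\colon X(\Gamma_0(q_2))\to X(\Gamma_0(1))$ ramifies at $\infty$. This gives a point $u^{q_1}=v^{q_2}$ in their fiber product.
\item In (ii), take $a=2$: both $\pi_q$ and $\psi_q\colon X(\Gamma_0(q^3))\to X(\Gamma_0(q^2))$ have ramification index $q$ over each of the $q-1$ cusps of denominator exactly $q$. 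For $q=2$, already over $\C$, the fiber product has five points over the cusps, whereas $X(\Gamma_0(16))$ has six cusps.
\end{itemize}
So the square in (ii) is not Cartesian on the compactified curves for every $a>0$. Neither your smoothness remark nor the normality argument you suggest can prove that it is.

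What is true, and is all the paper uses, is the following. The squares are Cartesian over the open curves $Y$. For $a=1$ they are also Cartesian over $X$, since over each cusp of $X(\Gamma_0(q))$ one of $\pi_q,\psi_q$ is unramified at every point above it.

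To prove the statement over $Y$, pass to a rigid auxiliary level $\Gamma(m)$ with $m>2$ prime to $q\ell$ and to the level. There the open curves are fine moduli spaces, and since $q$ is invertible the degeneracy maps are finite \'etale. It then suffices to make your bijection functorial in $S$-points; cyclicity of $\pi^{-1}(C_2)$ can be checked on geometric fibers because these subgroup schemes are \'etale.

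Alternatively, follow the paper's hint and apply $\iota$ to the top row. All four maps become forgetful, and the top-left corner parametrizes pairs $(C,B)$ with $C$ cyclic of order $q^{a+1}$, $|B|=q$, and $B\cap C=0$. The claim then reduces to $B\cap C=0\iff B\cap qC=0$, which holds exactly when $qC\supseteq C[q]$, that is, when $a>0$.

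Cartesianness over $Y$ is enough for the base-change identities $\pi^*\psi_*=\psi'_*\pi'^*$ in (i) and $U_p=\pi_p^*\psi_{p,*}$. Both sides are sections over $X$, restriction of sections of $\omega^{\otimes k}$ from $X$ to $Y$ is injective, and taking $\GL_2(\Z/m\Z)$-invariants returns you to the original level. For $q_i=\ell$ the maps are not \'etale in characteristic $\ell$, so check (i) after extending scalars to $F$, using $M_k(\Gamma,\cO)\subseteq M_k(\Gamma,F)$.
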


\begin{proof}
These are all quite standard and can be easily checked.  Note that \eqref{item: cartesian} can be checked by first applying $\iota$ to the top row of \eqref{eq:cart}.
\end{proof}

\subsection{Familiar spaces of modular forms}\label{subsec:familiar mod forms} Let $\Gamma$ be a level structure of level $N$ with $p \nmid N$, and let $M=M_k(\Gamma \cap \Gamma(p),\cO)^Z$. The map $\iota$ induces an isomorphism of anemic Hecke modules
\begin{equation} \label{eq:Gamma0(p,p) isoto Gamma0(p^2)}
  M^T = M_k(\Gamma \cap \Gamma_0(p,p),\cO) \isoto M_k(\Gamma \cap \Gamma_0(p^2),\cO),
\end{equation}
and we can identify the image of $\ker(M^T \to M^B)$ under this map.

\begin{lemma}
\label{lem:ker trace = ker Up}
    Under the isomorphism \eqref{eq:Gamma0(p,p) isoto Gamma0(p^2)}, $\ker(M^T \stackrel{\tr}{\to} M^B)$ is sent to the kernel of $U_p$ on $M_k(\Gamma \cap \Gamma_0(p^2),\cO)$.
\end{lemma}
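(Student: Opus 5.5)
The plan is to express both the trace and $U_p$ geometrically, as pushforwards and pullbacks along degeneracy maps, and then compare them using the Cartesian square of \cref{lem:Hecke alg properties}\eqref{item: cartesian} with $a=1$. First, I will reduce to the case where $\Gamma$ is rigid. If $\Gamma$ is not rigid, I replace it by $\Gamma\cap\Gamma(a)$ for some auxiliary $a>2$ prime to $Np\ell$. All of the maps below (trace, $\iota$, $\pi_p$, $\psi_p$) are $\GL_2(\Z/a\Z)$-equivariant. Taking $\GL_2(\Z/a\Z)$-invariants is left exact, and an invariant vector lies in the kernel of an equivariant map on the big space exactly when it lies in the kernel on the invariants. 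So the statement for $\Gamma$ follows from the statement for $\Gamma\cap\Gamma(a)$.

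Step 1 is to identify the trace with a pushforward. The forgetful map $X(\Gamma\cap\Gamma_0(p,p))\to X(\Gamma\cap\Gamma_0(p))$, $(E,\alpha,(A,A'))\mapsto(E,\alpha,A)$, is the quotient of $X(\Gamma\cap\Gamma(p))/\tilde T$ by $\tilde B/\tilde T$. For a finite quotient map by a group, the pushforward of sections of $\omega^{\otimes k}$, read through the identification of sections downstairs with invariants upstairs, is the sum over coset representatives. Hence $\tr\colon M^T\to M^B$ is the pushforward $f_*$ along this forgetful map $f$.

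Next I compose $f$ with $\iota$. The map $\iota$ sends $(E,\alpha,C)$ to $(E',\pi\circ\alpha,(\pi(C),\pi(E[p])))$, and forgetting the second subgroup gives $(E',\pi\circ\alpha,\pi(C))$, which is exactly $\psi_p(E,\alpha,C)$. Therefore $f\circ\iota=\psi_p\colon X(\Gamma\cap\Gamma_0(p^2))\to X(\Gamma\cap\Gamma_0(p))$. Consequently, under \eqref{eq:Gamma0(p,p) isoto Gamma0(p^2)}, $\ker(\tr)$ corresponds to $\ker(\psi_{p,*})$ on $M_k(\Gamma\cap\Gamma_0(p^2),\cO)$.

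Step 2 is to factor $U_p$ through $\psi_{p,*}$. On level $p^2$ we have $U_p=\psi_{p,*}\pi_p^*$, where now $\pi_p,\psi_p\colon X(\Gamma\cap\Gamma_0(p^3))\to X(\Gamma\cap\Gamma_0(p^2))$. By \cref{lem:Hecke alg properties}\eqref{item: cartesian} with $a=1$, the square \eqref{eq:cart} is Cartesian. Since the maps are finite and flat, cohomology and base change gives $\psi_{p,*}\pi_p^*=\pi_p^*\psi_{p,*}$. Here the right-hand $\psi_{p,*}$ goes from level $p^2$ to level $p$, and the right-hand $\pi_p^*$ goes from level $p$ back to level $p^2$. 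This yields the factorization
\[
U_p=\pi_p^*\circ\psi_{p,*}\colon M_k(\Gamma\cap\Gamma_0(p^2),\cO)\to M_k(\Gamma\cap\Gamma_0(p),\cO)\to M_k(\Gamma\cap\Gamma_0(p^2),\cO).
\]

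Step 3 is to check that $\pi_p^*$ is injective. This holds because $\pi_p$ is a finite surjective map of flat curves over $\cO$, and pullback of sections of $\omega^{\otimes k}$ along it is injective; one can see this, for instance, by checking on the generic fiber, since the modules are $\cO$-torsion-free. Hence $\ker U_p=\ker\psi_{p,*}$, which equals the image of $\ker(\tr)$ by Step 1.

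I expect the main obstacle to be the base-change identity $\psi_{p,*}\pi_p^*=\pi_p^*\psi_{p,*}$, together with keeping track of the normalizations. I must make sure that the trace, the pushforward $f_*$, and the $\psi_{p,*}$ used to define $U_p$ all carry the same normalizing factors, since no factor of $p$ can be introduced: such a factor would not affect the kernel, but the identifications must be literally compatible. For the base change itself, I would argue as follows. Because the square is Cartesian and the vertical maps are finite flat, the trace of sections along the left column, restricted to the pullback, agrees with the pullback of the trace along the right column; this is the standard base-change compatibility of trace for finite flat morphisms.
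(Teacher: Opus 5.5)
Your proposal is correct and follows the paper's proof essentially step for step. The paper likewise identifies the trace with the pushforward along the forgetful map, uses $\psi_p=\pi'\circ\iota$, rewrites $U_p=\pi_p^*\psi_{p,*}$ via the Cartesian square \eqref{eq:cart} with $a=1$, and concludes from the injectivity of $\pi_p^*$. Your reduction to rigid level and your base-change discussion are harmless additions that the paper omits. One small slip: $\tilde T$ is not normal in $\tilde B$, so the forgetful map is the natural map $X(\Gamma\cap\Gamma(p))/\tilde T\to X(\Gamma\cap\Gamma(p))/\tilde B$ rather than a quotient by a group $\tilde B/\tilde T$. The pushforward is nonetheless still the sum over coset representatives of $\tilde B/\tilde T$, so the argument is unaffected.
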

\begin{proof}
    The trace map $M^T \to M^B$ is identified with the pushforward map $\pi'_*:M_k(\Gamma \cap \Gamma_0(p,p),\cO) \to M_k(\Gamma \cap \Gamma_0(p),\cO)$, where
    \[
    \pi': X(\Gamma \cap \Gamma_0(p,p)) \to X(\Gamma \cap \Gamma_0(p))
    \]
    is the forgetful map. Since $\psi_p=\pi' \circ \iota$ by definition, this shows that $\iota$ sends $\ker(M^T \to M^B)$ to $\ker(\psi_{p,*})$. Since \eqref{eq:cart} is Cartesian for $q=p$ and $a=1$, we have
    \[
    U_p=\psi_{p,*}\pi_p^* = \pi_p^*\psi_{p,*}:M_k(\Gamma \cap \Gamma_0(p^2),\cO) \to M_k(\Gamma \cap \Gamma_0(p^2),\cO).
    \]
    Since $\pi_p^*$ is injective, this implies $\ker(U_p)=\ker(\psi_{p,*})$.
\end{proof}

For the remainder of this section, let $\T=\T(\Gamma \cap \Gamma_0(p^2))$ and $\T'=\T'(\Gamma \cap \Gamma_0(p^2))$.  The next lemma shows that, after localizing at a maximal ideal of $\T'$, the kernel of $U_p$ on $M_k(\Gamma \cap \Gamma_0(p^2), \cO)_{\m'}$ is given by localizing at the maximal ideal $\m$ of $\T$ generated by $\m'$ and $U_p$.  This allows us to describe the output of \cref{thm:X^T} for modular forms in terms of the familiar space $M_k(\Gamma \cap \Gamma_0(p^2), \cO)_\m$ in \cref{sec:main thms}.

\begin{lemma}\label{lem:anemic to full}
Let $\m' \subseteq \T'$ be a maximal ideal, and let $\m \subseteq \T$ be the ideal generated by $\m'$ and $U_p$. Then
\label{lem:indecomposble}\hfill
    \begin{enumerate}[(i)]
        \item \label{part:m max} $\m$ is a maximal ideal of $\T$;
        \item \label{part:Up=0} $U_p$ is zero in $\T_\m$;
        \item \label{part:Up ker=loc} the inclusion $
M_k(\Gamma \cap \Gamma_0(p^2),\cO)_{\m'}^{U_p=0} \isoto M_k(\Gamma \cap \Gamma_0(p^2),\cO)_{\m}$ is an isomorphism;
\item there is an action of $Z(\cO[G]) \times \T_\m$ on $M_k(\Gamma \cap \Gamma_0(p^2),\cO)_{\m}$.
    \end{enumerate}
\end{lemma}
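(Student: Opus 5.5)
The key point is that $U_p$ is nilpotent on a suitable piece of $M_k(\Gamma\cap\Gamma_0(p^2),\cO)$, and this follows from the Cartesian square in \cref{lem:Hecke alg properties}\eqref{item: cartesian}. Write $V=M_k(\Gamma\cap\Gamma_0(p^2),\cO)$. By the proof of \cref{lem:ker trace = ker Up}, $U_p=\pi_p^*\psi_{p,*}$ on $V$, where $\psi_{p,*}\colon V\to M_k(\Gamma\cap\Gamma_0(p),\cO)$.

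Now use the square with $a=1$ (Cartesian) and with $a=0$ (not Cartesian).

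\begin{itemize}
\item At level $p$, we still have $\psi_{p,*}\pi_p^*=U_p$, read on $M_k(\Gamma\cap\Gamma_0(p),\cO)$.
\item Hence $U_p^2=\pi_p^*U_p^{(p)}\psi_{p,*}$, where $U_p^{(p)}$ is the level-$p$ operator.
\item By the standard relation $(U_p^{(p)})^2=\pm p^{k-2}\cdot$ on the $p$-new part and $U_p^{(p)}$ satisfying $x^2-T_px+p^{k-1}$ on the $p$-old part, $U_p$ satisfies a polynomial on $V$ of the form $x\cdot g(x)$.
\end{itemize}

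I would avoid these explicit relations and argue more abstractly. $\T$ is finite over $\T'$, and $\T_{\m'}=\T\otimes_{\T'}\T'_{\m'}$ is a finite local-product algebra. Its maximal ideals over $\m'$ are $(\m',U_p-\lambda)$, with $\lambda$ ranging over the roots in $\F$ of the characteristic polynomial of $U_p$ on $V/\m'$; enlarge $\cO$ if needed so these roots lie in $\F$. Since $\lambda=0$ is one such value, this gives (i), provided $\m\neq\T$. If $\m=\T$, then (ii)--(iv) are vacuous: the localization is zero, and so is $V_{\m'}^{U_p=0}$, since $U_p$ is then invertible on $V_{\m'}$.

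For (ii), the component $\T_\m$ is cut out by the generalized $0$-eigenspace of $U_p$. The key claim is that $U_p^2=0$ on that generalized eigenspace, or more precisely $U_p=0$ in $\T_\m$. From $U_p^2=\pi_p^*U_p^{(p)}\psi_{p,*}$, the operator $U_p^n$ factors through $\psi_{p,*}$ and $\pi_p^*$, with $(U_p^{(p)})^{n-1}$ in between. On the generalized $0$-eigenspace, $U_p^{(p)}$ acts topologically nilpotently through this factorization. I expect the rest to come from the fact that $U_p^{(p)}$ has unit-or-$p$-power eigenvalue relations: $(U^{(p)}_p)^2=p^{k-2}\langle\cdot\rangle$ on newforms, and $U^{(p)}_p$ is a root of $x^2-T_px+p^{k-1}$ on old forms.

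A cleaner route, which I would try first, uses $\iota$ to identify $V$ with $M^T$. Then $U_p=\pi_p^*\circ\tr$, and $\tr\circ\pi_p^*$ on $M^B$ is an operator $S$ that is semisimple over $F$ with known eigenvalues. The minimal polynomial of $U_p$ is then $x$ times that of $S$. If $0$ is not a root of the minimal polynomial of $S$ mod $\varpi$ on the relevant localized piece, we get $V_{\m'}=\ker U_p\oplus\operatorname{im}$, with $U_p$ invertible on the image. This gives (ii), and (iii) follows because localizing at $\m$ is projecting to $\ker U_p$.

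The main obstacle is exactly this point: showing that $U_p$ is semisimple at eigenvalue $0$ integrally, so that the generalized kernel equals the kernel. The case where $S$ is not invertible mod $\varpi$ is delicate; I would handle it using $\ker\tr=\Hom(P,M)$ and the direct-summand decomposition $e\cO[G/T]=e\cO[G/B]\oplus P$ from \cref{sec:block algebras}, which shows $\ker\tr$ is a $\T'$-stable direct summand.

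For (iv), $Z(\cO[G])$ acts on $M$ commuting with $\T'$ by \cref{lem:Hecke alg properties}\eqref{item: G-commute}, so it preserves $M^T$ and $M^B$ and commutes with $\tr$. Therefore it preserves $\ker\tr\cong V^{U_p=0}$ and its localization at $\m'$. Combined with (iii), this gives an action on $M_k(\Gamma\cap\Gamma_0(p^2),\cO)_\m$ commuting with $\T_\m$, which is generated by $\T'_{\m'}$ since $U_p=0$ there.
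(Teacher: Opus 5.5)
Your proposal has two genuine gaps, at exactly the points that carry the content of the lemma.

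First, (i) is not proved. The claim that $\lambda=0$ is an eigenvalue of $U_p$ on $V/\m'V$ is equivalent to $\m\neq\T$, which is (i) itself, and nothing in your argument supplies it. The remark that (ii)--(iv) are vacuous if $\m=\T$ does not help, since (i) asserts that this case cannot occur. What is needed is a nonzero element of $\ker U_p$ carrying the anemic eigensystem of $\m'$. The paper gets this from newform theory. A $p^2$-newform has $a_p=0$. If instead the eigensystem comes from an eigenform $g$ of level $\Gamma\cap\Gamma_0(p)$, then $f=a_p(g)\psi^*g-p\pi^*g$ satisfies $U_pf=0$. This uses $U_p=\pi^*\psi_*$, $\psi_*\psi^*=p$ and $\psi_*\pi^*=U_p^{(p)}$, and $f\neq 0$ because $\psi^*g$ and $\pi^*g$ are independent.

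Second, for (ii) and (iii) you correctly isolate what is needed. That is the invertibility of $S=\psi_{p,*}\pi_p^*=U_p^{(p)}$ on $V_p=M_k(\Gamma\cap\Gamma_0(p),\cO)$. But you leave it as the ``main obstacle,'' and your fallback does not resolve it.

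The splitting coming from $e\cO[G/T]=e\cO[G/B]\oplus P$ (equivalently, from $\psi_*\psi^*=p$ being a unit) is $V=\ker\psi_*\oplus\psi^*V_p$. It is $\T'$-stable but not $U_p$-stable, since $U_p\psi^*g=p\pi^*g$. Moreover, $U_p$ acts on $V/\ker U_p\cong V_p$ through $S$. So if $S$ had a non-unit eigenvalue, the generalized $0$-eigenspace of $U_p$ would be strictly larger than $\ker U_p$, and (ii) would simply be false. Being a $\T'$-direct summand cannot rescue this.

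The point is that the ``delicate case'' never occurs, and you already wrote down why. On $p$-new forms $(U_p^{(p)})^2$ is $p^{k-2}$ times a unit, and on $p$-old forms the two eigenvalues multiply to $p^{k-1}$ times a unit. Hence every eigenvalue of $S$ is an $\ell$-adic unit, and $S$ is an automorphism of the lattice $V_p$. This is precisely the input the paper uses.

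With it, your argument closes cleanly and integrally. You get $V=\ker U_p\oplus\pi^*V_p$ as $\T$-modules, with $U_p$ zero on the first summand and conjugate to $S$ on the second. This gives (ii) and (iii) at once. It also makes transparent why $U_p$ is actually zero, not merely nilpotent, on the generalized $0$-eigenspace. Your argument for (iv) matches the paper's and is fine once (ii) and (iii) are in place.
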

\begin{proof} \hfill
\begin{enumerate}[(i)]
    \item We have to show that $\m$ is a proper ideal. Since $\m' \subset \T'$ is a maximal ideal, there is a $\T'$-eigenform $f' \in M_k(\Gamma \cap \Gamma_0(p^2),\cO)$ such that $T_q-a_q(f') \in \m'$ for all $q \ne p$. To show that $\m$ is proper, we have to show that there is a $\T$-eigenform $f$ with $a_q(f)=a_q(f')$ for all $q \ne p$ and with $a_p(f)=0$. If $f'$ is new at level $p^2$, then it is well known that $a_p(f')=0$ \cite[Theorem 4.6.17]{Miyakebook}. If $f'$ is old at level $p^2$, then there is an eigenform $g \in M_k(\Gamma \cap \Gamma_0(p),\cO)$ with $a_q(f')=a_q(g)$ for all $q \ne p$  \cite[Proposition 5.8.4, pg.~198]{DiamondShurman}. With the notation of \eqref{eq:cart} with $a=1$, abbreviate $\psi_p$ and $\pi_p$ to $\psi$ and $\pi$. Since $T_q$ for $q \ne p$ commutes with $\psi^*$ and $\pi^*$, the forms $\psi^*g$ and $\pi^*g$ have the same $T_q$-eigenvalues as $g$ (and $f'$), so it suffices to find a linear combination of $\psi^*g$ and $\pi^*g$ that is in the kernel of $U_p$. Since \eqref{eq:cart} with $a=1$ is Cartesian, we have $U_p=\pi^*\psi_{*}$.  We compute
    \begin{align*}
            U_p \psi^*g& =\pi^*\psi_{*}\psi^*g = p\pi^*g, \text{ and }\\
            U_p \pi^*g &=\pi^*\psi_{*}\pi^*g =\pi^*U_pg=a_p(g)\pi^*g.
    \end{align*}
    Letting $f=a_p(g)\psi^*g-p\pi^*g$, it follows that $U_pf=0$. 
    \item We have to show that if $f \in M_k(\Gamma \cap \Gamma_0(p^2))_{\m'}$ is an eigenform and $a_p(f) \equiv 0 \pmod{\varpi}$, then $a_p(f)=0$. As in the proof of \eqref{part:m max}, this is clear if $f$ is new, so we may assume that there is an eigenform $g \in M_k(\Gamma \cap \Gamma_0(p), \cO)_{\m'}$ such that $f$ is an $\cO$-linear combination of $\psi^*g$ and $\pi^*g$. The computation of the $U_p$-action in \eqref{part:m max} shows that the eigenvalues of $U_p$ are $0$ and $a_p(g)$, so either $a_p(f)=0$ or $a_p(f)=a_p(g)$. But $U_p$ is invertible on $M_k(\Gamma \cap \Gamma_0(p),\cO)$:\ the eigenvalues on $p$-newforms are $\pm p^{(k-2)/2}$, and the eigenvalues on $p$-oldforms multiply to $p^{k-1}$ and hence are $\ell$-adic units.  So we must have $a_p(f) = 0$.
    
    \item Since $M_k(\Gamma \cap \Gamma_0(p^2),\cO)_{\m}$ is the part of $M_k(\Gamma \cap \Gamma_0(p^2),\cO)_{\m'}$ on which $U_p$ acts topologically nilpotently, this is clear from \eqref{part:Up=0}.
    
    \item Abusing notation, we also write $\m'$ for the pullback of $\m'$ to $\T'(\Gamma \cap \Gamma(p))$ via \eqref{eq:Gamma0(p,p) isoto Gamma0(p^2)} and the natural projection map.   Let $P=\coker(\cO[G/B] \to \cO[G/T])$.  By \cref{lem:ker trace = ker Up}, there is an isomorphism of anemic Hecke modules
    \[
    M_k(\Gamma \cap \Gamma_0(p^2),\cO)_{\m'}^{U_p=0} \cong \ker(M_{\m'}^T \to M_{\m'}^B) = \Hom_{\cO[G]}(P,M_{\m'}).
    \]
   The right-hand side has an action of $Z(\cO[G])$, and this action commutes with the action of $\T'$ by \cref{lem:Hecke alg properties}\eqref{item: G-commute}. Since $\T'$ generates $\T_\m$ by \eqref{part:Up=0}, this implies that there is an action of $\T_\m \times Z(\cO[G])$ on the right-hand side. The left-hand side is identified with $M_k(\Gamma \cap \Gamma_0(p^2),\cO)_{\m}$ by \eqref{part:Up ker=loc}.\qedhere
\end{enumerate}
\end{proof}

The next lemma, which is certainly well known, is the key input needed in \cref{sec:hecke} to establish that there is a unique block associated with a maximal ideal of the anemic Hecke algebra.

\begin{lemma}
\label{lem:indecomp2}
Assume that $\Gamma = \Gamma_1(N)$ or $\Gamma_0(N)$ for some $N$ such that $\gcd(N, p\ell) = 1$.  The $\T_\m$-module $M_k(\Gamma \cap \Gamma_0(p^2),\cO)_\m$ is indecomposable.
\end{lemma}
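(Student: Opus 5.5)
The approach is to show that $M_k(\Gamma \cap \Gamma_0(p^2),\cO)_\m$ is $\cO$-dual to $\T_\m$ via the $q$-expansion pairing. Its $\T_\m$-endomorphism ring is then $\T_\m$ itself, which is local, and a module over a local ring whose endomorphism ring is local is indecomposable. Write $M_0 = M_k(\Gamma \cap \Gamma_0(p^2),\cO)$ and $\T=\T(\Gamma\cap\Gamma_0(p^2))$. Since $\Gamma=\Gamma_1(N)$ or $\Gamma_0(N)$, the level structure $\Gamma\cap\Gamma_0(p^2)$ has a cusp $\infty$ defined over $\Z[1/Np]$. Hence every $f \in M_0$ has a $q$-expansion $\sum_{n\ge 0} a_n(f)q^n$, and by Katz's $q$-expansion principle a form $f\in M_0\otimes_\cO F$ lies in $M_0$ if and only if all $a_n(f)\in\cO$. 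For $\Gamma=\Gamma_1(N)$ we let $\T$ also contain the diamond operators, which lie in the algebra generated by the $T_q$ anyway. Then $\T$ contains the operators $T_n$ for all $n\ge1$, and the standard identity $a_1(T_nf)=a_n(f)$ holds.

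Step one: define the pairing $\T\times M_0\to\cO$ by $(t,f)\mapsto a_1(tf)$, and show that it is perfect.
\begin{itemize}
\item Suppose $a_1(tf)=0$ for all $t$. Then $a_n(f)=a_1(T_nf)=0$ for all $n\ge1$, so $f$ is a constant. Since $k\ge2$, a constant is not a modular form of weight $k$, so $f=0$.
\item Suppose $a_1(tf)=0$ for all $f$. Using commutativity, $a_n(tf)=a_1(tT_nf)=0$ for all $n\ge 1$, so $tf=0$ for every $f$ and hence $t=0$.
\end{itemize}
Thus $M_0\otimes F\to\Hom_F(\T\otimes F,F)$ is injective from both sides, hence an isomorphism by dimension count.

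Step two: integrality. Given $\phi\in\Hom_\cO(\T,\cO)$, let $f\in M_0\otimes F$ correspond to $\phi\otimes F$. Then $a_n(f)=\phi(T_n)\in\cO$ for all $n\ge1$. The constant term $a_0(f)$ also needs to be handled; I would check integrality there via the $q$-expansion principle, for example by noting that the $\cO$-saturation of $M_0$ inside its $F$-span is detected by the coefficients $a_n$ with $n\ge1$ in weight $k\ge2$. Granting this, $f\in M_0$, so $M_0\cong\Hom_\cO(\T,\cO)$ as $\T$-modules, the module structure coming from commutativity of $\T$.

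Step three: localize. Since $\T$ is finite over $\cO$ and $\cO$ is complete, we have $\T=\prod_{\m}\T_\m$. Localization then gives $M_{0,\m}\cong\Hom_\cO(\T_\m,\cO)$, and consequently
\[
\End_{\T_\m}(M_{0,\m})\cong\End_{\T_\m}(\Hom_\cO(\T_\m,\cO))\cong\End_{\T_\m}(\T_\m)^{\mathrm{op}}\cong\T_\m .
\]
This ring is local, so it has no nontrivial idempotents, and therefore $M_{0,\m}$ is indecomposable.

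The main obstacle is step two: the perfectness of the pairing over $\cO$ rather than over $F$. In particular one has to control the constant term and justify the $q$-expansion principle for the geometrically defined spaces of \cref{sec:projective}, including the non-rigid case $\Gamma_0(N)$. There $M_k$ is defined as invariants under an auxiliary $\GL_2(\Z/a\Z)$, so one must check that $q$-expansions at $\infty$ remain injective and detect integrality for these invariants.
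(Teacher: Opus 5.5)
\textbf{Gap in Step Two.} Your overall strategy matches the paper's: duality with $\T$ via $(t,f)\mapsto a_1(tf)$, and then $\End_{\T_\m}(M_{0,\m})\cong\T_\m$ is local. The gap is Step Two, and the statement you propose to ``grant'' there is false. The $\cO$-module that is dual to $\T$ under this pairing is
\[
M'=\{f\in M_0\otimes_\cO F \;:\; a_n(f)\in\cO \text{ for all } n\ge 1\},
\]
and in general $M_0\subsetneq M'$. Integrality of the constant term is \emph{not} detected by the coefficients $a_n$ with $n\ge 1$.

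For instance, suppose $(\ell-1)\mid k$, and take the level-one Eisenstein series normalized so that $a_1=1$. It has $a_n=\sigma_{k-1}(n)\in\Z$ for $n\ge 1$. Its constant term is $-B_k/2k$, which has $\ell$ in the denominator by von Staudt--Clausen. This form lies in $M_0\otimes_\cO F$ by inclusion from level one. Hence $M_0\to\Hom_\cO(\T,\cO)$ is not surjective, and no appeal to the $q$-expansion principle can make it surjective before localizing. Localizing at an arbitrary maximal ideal would not fix this either.

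\textbf{Missing idea.} You need to use that $U_p\in\m$, a hypothesis your argument never touches. The constant term gives a $\T$-equivariant injection $M'/M_0\hookrightarrow F/\cO$, and $U_p$ acts on it by a unit (classically, $a_0(U_pf)=a_0(f)$). Since $M'/M_0$ has finite length and $U_p\in\m$ acts nilpotently on its $\m$-part, it follows that $(M'/M_0)_\m=0$.

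Equivalently, $U_p=0$ in $\T_\m$ by \cref{lem:anemic to full}\eqref{part:Up=0}. So every $f\in M'_\m$ satisfies $a_0(f)=a_0(U_pf)=0$, hence $f\in M_0$.

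Either way, $M_{0,\m}=M'_\m\cong\Hom_\cO(\T_\m,\cO)$, and your Step Three then goes through verbatim. This is exactly the paper's proof: it works with $M'$ and kills $M'/M$ at $\m$ using $U_p$.
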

\begin{proof}
Let $M=M_k(\Gamma \cap \Gamma_0(p^2),\cO)$, and let 
\[
M'=\{f \in M_k(\Gamma \cap \Gamma_0(p^2),F) \ | \ a_n(f) \in \cO \text{ for all }n>0\}.
\]
There is a perfect pairing
\[
M' \times \T \to \cO
\]
given by $(f,t)\mapsto a_1(tf)$ \cite[Theorem (2.2)]{ribetmodp}. In particular, $M'$ is isomorphic to the dual of $\T$ as a $\T$-module, and hence $\End_{\T}(M')=\T$. By definition, there is an exact sequence
\[
0 \to M \to M' \xrightarrow{a_0} F/\cO,
\]
which is equivariant for the action of $\T$ if $F/\cO$ is given the action where each Hecke operator acts by its degree. In particular $U_p$ acts by $p$ on $M'/M$, so $(M'/M)_\m=0$. Then 
\[
\End_{\T_\m}(M_\m)=\End_{\T_\m}(M'_\m) = \T_\m,
\]
which is a local ring, so $M_\m$ is indecomposable.
\end{proof}

\section{The block associated with a maximal anemic-Hecke ideal}
\label{sec:hecke}
In this section we show that for any maximal ideal $\m'$ of $\T'(\Gamma \cap \Gamma(p))$, the $\cO[G]$-module $M \coloneqq M_k(\Gamma \cap \Gamma(p), \cO)^Z_{\m'}$ belongs to a unique block $\B_{\m'}$.  This is the final step needed to apply \cref{thm:X^T} to $M$, which is done in \cref{sec:main thms}.

We briefly describe the strategy.  Since $\T'(\Gamma \cap \Gamma(p))$ commutes with the $G$-action on $M_k(\Gamma \cap \Gamma(p), F)^Z$, we can decompose this space into $G$-representations according to its anemic eigensystems.  That is,
\begin{equation}
M_k(\Gamma \cap \Gamma(p), F)^Z \cong \oplus_\lambda M(\lambda),
\end{equation}
where $\lambda$ runs over $F$-algebra morphisms $\T'(\Gamma \cap \Gamma(p)) \otimes_{\cO} F \to F$ and $M(\lambda)$ is the $\lambda$-isotypic component.  Thus we can associate a $G$-representation, namely $M(\lambda)$, with any anemic Hecke eigensystem $\lambda$ that appears in $M_k(\Gamma \cap \Gamma(p), F)^Z$.  We find a description of $M(\lambda)$ --- using an explicit argument when $\lambda$ is Eisenstein (\cref{subsec:G-rep for Eis series}) and using automorphic representations when $\lambda$ is a cuspidal (\cref{subsec:reduce to automorphic,subsec:classify Kp-fixed reps}) --- which shows that each $M(\lambda)$ belongs to a unique block $\B_\lambda$.  Moreover, our description shows that $\ker(M(\lambda)^T \to M(\lambda)^B)$ is always $1$-dimensional.  Thus every anemic Hecke eigensystem in $M$ appears in $M^T$, which we saw in \cref{lem:ker trace = ker Up,lem:anemic to full} can be identified with $M_k(\Gamma \cap \Gamma_0(p^2), \cO)_\m$.  The fact that all of the $\B_\lambda$'s are equal when $\lambda$ runs over the eigensystems of $M$ follows from the indecomposability given by \cref{lem:indecomp2}.

Throughout this section, fix a weight $k \ge 2$ and a level structure $\Gamma = \Gamma_1(N)$ or $\Gamma_0(N)$ with $\gcd(N,\ell p)=1$.

\subsection{The \texorpdfstring{$G$}{}-representation associated with an Eisenstein series}\label{subsec:G-rep for Eis series} 

For any level $\Gamma'$, let $\mathcal{E}_k(\Gamma',F)$ denote the subspace of $M_k(\Gamma',F)$ spanned by Eisenstein series. 
Recall that the anemic eigensystems of $\mathcal{E}_k(\Gamma_0(p^2), F)$ are well known \cite[\S 4.5, p. 126]{DiamondShurman}.  Via the isomorphism \eqref{eq:Gamma0(p,p) isoto Gamma0(p^2)}, these eigensystems also appear in $\mathcal{E}_k(\Gamma(p), F)^Z$. For a character $\chi = (\psi, \psi^{-1})$ of $T$, write $\lambda_\chi$ for the eigensystem with $T_q$-eigenvalue $\psi(q) + \psi^{-1}(q)q^{k-1}$.

\begin{lemma}\label{lem:block for Eisenstein series}
The anemic Hecke eigensystems on $\mathcal{E}_k(\Gamma \cap \Gamma(p), F)^Z$ are given by $\lambda_\chi$ as $\chi$ runs over characters of $T$.  Let $d = \dim \mathcal{E}_k(\Gamma, F)$.  For each character $\chi$ of $T$,
\[
M(\lambda_\chi) \cong \begin{cases} (\Ind_B^G 1)^{d-1} \oplus \St & \chi = 1 \text{ and } k = 2,\\
(\Ind_B^G \chi)^d &\text{else.}
\end{cases}
\]
In particular, each $M(\lambda_\chi)$ belongs to a single block of $\cO[G]$.
\end{lemma}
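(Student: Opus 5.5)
Write $\tilde M_E=\mathcal{E}_k(\Gamma\cap\Gamma(p),F)$ and $M_E=\tilde M_E^Z$. The plan is to describe $M_E$ as a $G\times\T'$-module by computing its invariants under the subgroups $B_\chi$-twisted. More precisely, for every character $\chi$ of $T$ we compute the $(B,\chi)$-isotypic part
\[
\Hom_{F[G]}(\Ind_B^G\chi,M_E)\cong \Hom_{F[B]}(\chi,M_E),
\]
together with its Hecke eigensystems, and then deduce the $G$-structure from the character-zero representation theory of \cref{subsec:char0 rep thy}.

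The first step is an explicit model of the Eisenstein part at full level $p$. Over $\C$, the Eisenstein space of level $\Gamma(p)$ can be identified with functions on the cusps of $X(\Gamma\cap\Gamma(p))$. The one exception is weight $2$, where it is the functions of total residue zero, by the residue sequence. The cusps of $\Gamma(p)$, fibered over those of $\Gamma$, are $G$-equivariantly a union of copies of $G/U$ (up to the center and $\pm1$). Taking $Z$-invariants, the cusp space at level $p$ should therefore be
\[
\Ind_{ZU}^{H}\bigl(\text{cusp data of }\Gamma\bigr)^Z \cong \bigoplus_{\chi}\bigl(\Ind_B^G\chi\bigr)^{d},
\]
since $\Ind_U^G 1$, as a $Z$-trivial representation of $G$, decomposes over $T$ as $\bigoplus_\chi \Ind_B^G\chi$. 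In weight $2$, the residue-zero condition removes exactly one copy of the trivial representation, coming from $\Ind_B^G1=1\oplus\St$. This accounts for the $(\Ind_B^G1)^{d-1}\oplus \St$ term.

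The second step matches each summand with its eigensystem. The $(B,\chi)$-isotypic vectors are Eisenstein series of level $\Gamma\cap\Gamma_0(p)$ with nebentypus determined by $\chi=(\psi,\psi^{-1})$. Via \eqref{eq:Gamma0(p,p) isoto Gamma0(p^2)} and \cite[\S4.5]{DiamondShurman}, these are the series $E_k^{\psi,\psi^{-1}}$, possibly twisted by the level-$N$ characters. Their anemic eigenvalues are $\psi(q)+\psi^{-1}(q)q^{k-1}$. Hence $\Hom_G(\Ind_B^G\chi,M_E)$ lies in $M(\lambda_\chi)$ and has dimension $d$, or $d-1$ for the trivial character in weight $2$. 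Since $\lambda_\chi=\lambda_{\chi'}$ exactly when $\chi'=\chi^{\pm1}$, matching dimensions gives the stated decomposition. Every constituent of $M_E$ is a constituent of some $\Ind_B^G\chi$, so these $\lambda_\chi$ exhaust the eigensystems. The main obstacle is keeping track of the multiplicity $d$ when $\Gamma=\Gamma_1(N)$. For this I would let the level-$N$ characters ride along as extra labels on the cusps of $\Gamma$, so that $d$ counts exactly the Eisenstein series of level $\Gamma$ in each eigenspace. I would also check that eigensystems for distinct $\chi$ are not accidentally equal; any coincidence only merges summands and does not affect the block conclusion.

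Finally, the block statement. Each $\Ind_B^G\chi$ is $\cO$-integral and is a single block's module. This follows from the classification in \cref{sec: blocks}: for $\chi^2\neq1$, $\Ind_B^G\chi=R_T^G(\chi)$ is irreducible. For $\chi=1$ we have $1\oplus\St$, both in the principal block, and for $\chi=\eta$ we have $\eta\oplus\St_\eta$, both in the $\eta$-block. Therefore each $M(\lambda_\chi)$ is a sum of representations in one block.
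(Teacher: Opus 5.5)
Your Step 1 agrees with the paper: $M_E\cong\bigoplus_{\chi}(\Ind_B^G\chi)^d$, where $M_E=\mathcal{E}_k(\Gamma\cap\Gamma(p),F)^Z$. Step 2, which attaches eigensystems, has a genuine gap.

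The isomorphism \eqref{eq:Gamma0(p,p) isoto Gamma0(p^2)} identifies $T$-\emph{invariants} with level $\Gamma\cap\Gamma_0(p^2)$. It says nothing about $(B,\chi)$-semi-invariants. On a canonical component, those are forms of level $\Gamma\cap\Gamma_1(p)$ with $p$-nebentypus $\psi^{\pm2}$. On them the geometric $T_q$ differs from the classical one by a twist, because $T_q$ permutes the components. So they are not the series $E_k^{\psi,\psi^{-1}}$, and your counts are wrong:
\begin{itemize}
\item For $k>2$, $\dim\Hom_G(\Ind_B^G\chi,M_E)=2d$ for every $\chi$, not $d$. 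For $\chi=1$ this space is $M_E^B=\mathcal{E}_k(\Gamma\cap\Gamma_0(p),F)$. For $\chi^2\neq1$, both the $\chi$- and $\chi^{-1}$-summands contribute, since $\Ind_B^G\chi\cong\Ind_B^G\chi^{-1}$.
\item When $\chi^2\neq1$ you have $\lambda_\chi\neq\lambda_{\chi^{-1}}$, because their values at $T_q$ differ by $(\psi(q)-\psi^{-1}(q))(1-q^{k-1})$. Hence $\Hom_B(\chi,M_E)$ does not lie in $M(\lambda_\chi)$; half of it carries $\lambda_{\chi^{-1}}$.
\item Your conclusion that $R_T^G(\chi)$ occurs $d$ times in $M(\lambda_\chi)=M(\lambda_{\chi^{-1}})$ contradicts the $2d$ copies you found in Step 1.
\end{itemize}
Your fallback, that an accidental coincidence of eigensystems ``only merges summands,'' is not harmless either. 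Merging $\Ind_B^G\chi$ with $\Ind_B^G\chi'$ for $\chi,\chi'$ in different blocks is exactly what would break the final claim.

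The step that really needs an argument is $\chi=\eta$. Since $\eta|_T\neq1$, the constituent $\eta$ of $\Ind_B^G\eta=\eta\oplus\St_\eta$ has no $T$-fixed vectors, so it is invisible at level $\Gamma_0(p^2)$. For example, with $\Gamma=\SL_2(\Z)$ and $k>2$, $\Hom_B(\eta,M_E)$ is $2$-dimensional, while the $\lambda_\eta$-eigenspace of $\mathcal{E}_k(\Gamma_0(p^2),F)$ is spanned by $E_k^{\eta,\eta}$ alone. So nothing in your route through $E_k^{\psi,\psi^{-1}}$ determines the eigensystem on the $\eta$-line. If it were $\lambda_1$, then $M(\lambda_1)$ would contain $1$, $\St$ and $\eta$, which lie in two different blocks.

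The paper handles this in two steps. First it matches $(\Ind_B^G\chi)^T$ with the $\lambda_\chi$-eigenspace of $\mathcal{E}_k(\Gamma_0(p^2),F)$. This gives $\Ind_B^G\chi\subseteq M(\lambda_\chi)$ for $\chi\neq\eta$, but only $\St_\eta\subseteq M(\lambda_\eta)$. Then it treats the $\eta$-line by hand. It writes $Z$-invariant forms as pairs $(f_1,f_a)$ indexed by $\F_p^\times/(\F_p^\times)^2$ and notes that the $\eta$-line is spanned by $(E_k,-E_k)$. The geometric $T_q$ swaps or preserves the two components according to $\eta(q)$, so it acts on this line by $\eta(q)(1+q^{k-1})=\lambda_\eta(T_q)$.

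Your $(B,\chi)$ framework could handle $\eta$ uniformly, since the $\eta$-line is itself $(B,\eta)$-semi-invariant. But that only works once you actually compute the geometric Hecke action on semi-invariants, which is essentially this same component-swapping computation.

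Separately, in weight $2$ the residue condition on $X(\Gamma\cap\Gamma(p))/Z$, which has two geometric components, removes $1\oplus\eta$ rather than a single copy of $1$. This affects the $\eta$-multiplicity but not the block conclusion.
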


\begin{proof}
We give the proof when $k > 2$ and $\Gamma = \SL_2(\Z)$.  The proof in the more general case is entirely analogous.  When $k = 2$ there is one fewer copy of the trivial representation (see \cite[Section 4.6, pg.~130]{DiamondShurman}).

We first describe $\mathcal{E}_k(\Gamma(p), F)^Z$ as a $G$-representation.  The space $\mathcal{E}_k(\Gamma(p)^\mathrm{can},F)$ is isomorphic to $F[(\F_p^2 -\{0\})/\{\pm 1\}]$ as a $F[\SL_2(\F_p)]$-module \cite[Section 4.2, pg.~111]{DiamondShurman}. Since $\SL_2(\F_p)/\{\pm 1\}$ acts transitively on $(\F_p^2 -\{0\})/\{\pm 1\}$ with isotropy group $\tilde{U}=\sm{1}{*}{0}{1} /
\{\pm1\}$, this implies that
\[
\mathcal{E}_k(\Gamma(p)^\mathrm{can},F) \cong \Ind_{\tilde{U}}^{\SL_2(\F_p)/\{ \pm 1\}} F.
\]
Hence by \eqref{eq:modular forms induced from canonical},
\[
\mathcal{E}_k(\Gamma(p),F) \cong \Ind_{\tilde{U}}^{\GL_2(\F_p)/\{\pm 1\}} F.
\]
This shows that $\mathcal{E}_k(\Gamma(p),F)^Z$ is, as a $G$-representation, a direct sum of $\Ind_B^G \chi$ over the characters $\chi$ of $T$. 

For each such $\chi$, the set of $T$-invariant vectors $(\Ind_B^G \chi)^T$ corresponds, via the isomorphism \eqref{eq:Gamma0(p,p) isoto Gamma0(p^2)}, to the $\lambda_\chi$-eigenspace in $\mathcal{E}_k(\Gamma_0(p^2), F)$.  Using \cref{subsec:char0 rep thy}, it follows that $\Ind_B^G \chi \subseteq M(\lambda_\chi)$ for all $\chi \neq \eta$, where $\eta$ is the quadratic character of $T$ (or $G$), and $\St_\eta \subseteq M(\lambda_\eta)$.  Thus it remains to show that $\lambda_\eta$ is the anemic eigensystem of the 1-dimensional subspace $V \subset \mathcal{E}_k(\Gamma(p), F)^Z$ on which $G$ acts by $\eta$.  

We can think of elements of $\mathcal{E}_k(\Gamma(p), F)^Z$ as pairs of elements of $\mathcal{E}_k(\Gamma(p)^\mathrm{can}, F)$ indexed by $\F_p^\times/(\F_p^\times)^2$. Let $a \in \F_p^\times \setminus (\F_p^\times)^2$.  Each $g \in G$ can be uniquely written as $g = \bigl(\begin{smallmatrix} x & 0\\ 
0 & 1
\end{smallmatrix}\bigr)g'$ with $x \in \{1, a\}$ and $g' \in \PSL_2(\F_p)$.  The $G$-action on a pair $(f_1,f_a)$ is given by $g(f_1, f_a) = (g'f_{\det(g)}, g'f_{\det(g)\cdot a})$.  In particular, $V$ is spanned by $(E_k, -E_k)$.  View $\eta$ as a character on $\F_p^\times$ by embedding $\F_p^\times \hookrightarrow T$ by sending $x \to \bigl(\begin{smallmatrix}
    x & 0\\
    0 & 1
\end{smallmatrix} \bigr)$.  Then $T_q(f_1, f_a) = (\eta(q)f_{\eta(q)}, \eta(q)f_{\eta(q)a})$, which shows that $\lambda_\eta$ is indeed the anemic eigensystem for $V$.

The last statement is immediate from the description of blocks in \cref{sec: blocks}.
\end{proof}

\subsection{Reduction to an automorphic calculation}
\label{subsec:reduce to automorphic}
Now let $\lambda$ be an anemic Hecke eigensystem on $S_k(\Gamma \cap \Gamma(p), \cO)^Z$.  We describe the $\lambda$-eigenspace in terms of the automorphic representation $\pi$ associated with any eigenform with eigensystem $\lambda$, or, more specifically, its $p$-component $\pi_p$.

Recall that with a cusp form $f \in S_k(\Gamma \cap \Gamma(p), F)^Z$ one associates an automorphic form $\phi_f$ on $\GL_2$, which generates an automorphic representation $\pi_f = \otimes_q' \pi_{f,q}$ (see \cite[Section 3.6, pg.~341]{Bumpbook}, for instance).  The representation $\pi_f$ carries an action of $\GL_2(\hat{\Z})$, and $\phi_f$ is invariant under an open compact subgroup $K = \prod_q K_q$ determined by $\Gamma$. In particular, the level $\Gamma(p)$ implies that $K_p$ can be taken to be
\[
K_p \coloneqq \ker(\GL_2(\Z_p) \to \PGL_2(\F_p)).
\]

In the next lemma we show that the $G$-subrepresentation generated by any $0 \neq f \in M(\lambda)$ is isomorphic to a $G$-subrepresentation of $\pi_{f,p}^{K_p}$.  Now $\pi_f$ (and hence $\pi_{f,p}^{K_p}$) depends only on the anemic Hecke eigensystem $\lambda$ \cite[Theorem 3.3.6]{Bumpbook}.  Thus to establish that $M(\lambda)$ belongs to a single block of $\cO[G]$, by \cref{lem:reduce to Kp-fixed rep} it suffices to show that $\pi_{f,p}^{K_p}$ belongs to a single block.  This is done in \cref{subsec:classify Kp-fixed reps}.

\begin{lemma}\label{lem:reduce to Kp-fixed rep}
Let $0 \neq f \in S_k(\Gamma \cap \Gamma(p), F)^Z$ be an anemic eigenform with associated automorphic representation $\pi_f$.  The $G$-subrepresentation of $S_k(\Gamma \cap \Gamma(p), F)^Z$ generated by $f$ is isomorphic to a $G$-subrepresentation of $\pi_{f,p}^{K_p}$, which depends only on the anemic Hecke eigensystem of $f$.
\end{lemma}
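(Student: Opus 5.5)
The plan is to pass from the classical form $f$ to its adelic avatar $\phi_f$ and realize the $G$-action on modular forms of level $\Gamma\cap\Gamma(p)$ as the action of $\GL_2(\Z_p)/K_p\cong \PGL_2(\F_p)$ on $K$-fixed vectors; here the passage to $Z$-invariants accounts for the center.

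\textbf{Step 1: the adelic map.} Using strong approximation and the standard dictionary (as in \cite[Section 3.6]{Bumpbook}), construct an injective linear map
\[
\Phi: S_k(\Gamma\cap\Gamma(p),F)^Z \to \mathcal{A}_0(\GL_2(\Q)\backslash \GL_2(\mathbb{A}))^{K},
\]
where $K=K^{(p)}K_p$ and the target is the space of cuspidal automorphic forms of weight $k$ at infinity. The subtlety is that $\Gamma(p)$-forms live on several connected components, indexed by the determinant. Via \eqref{eq:modular forms induced from canonical}, a form in $S_k(\Gamma\cap\Gamma(p),F)$ is a tuple of canonical forms indexed by $\F_p^\times$, and these correspond exactly to the components of $\GL_2(\Q)\backslash\GL_2(\mathbb{A})/K\GL_2(\R)^+$. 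Since $\det K_p=1+p\Z_p$, these components are indexed by $\F_p^\times$. Taking $Z$-invariants matches the requirement that the central character be trivial on $\Z_p^\times\cap\ker$, which is automatic for $K_p$ as defined.

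\textbf{Step 2: equivariance.} Check that $\Phi$ is equivariant for $\GL_2(\Z_p)/K_p=G$, acting on the target by right translation. On the source, $G$ acts through the moduli action on level structures. Check also that $\Phi$ intertwines $T_q$ for $q\nmid Np$ with the corresponding unramified Hecke operators. This is a bookkeeping verification on the moduli interpretation. I expect it, especially matching the $\GL_2(\F_p)$ moduli action with right translation, including the action permuting the components, to be the main obstacle; I would handle it by reducing to generators: diagonal matrices, which permute components via the Weil pairing, and $\SL_2(\F_p)$, which acts by the usual slash action through strong approximation.

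\textbf{Step 3: decompose the $G$-span.} Take $f$ an anemic eigenform with eigensystem $\lambda$. Then $\Phi(f)$ lies in the $\lambda$-isotypic part of the cuspidal spectrum with weight-$k$ lowest-weight vector at infinity. By strong multiplicity one \cite[Theorem 3.3.6]{Bumpbook}, this part is a single irreducible $\pi=\otimes'\pi_q$, independent of the choice of $f$ within the eigensystem. Its $K$-fixed weight-$k$ subspace is
\[
\pi_\infty^{(k)}\otimes\bigotimes_{q\neq p}\pi_q^{K_q}\otimes \pi_p^{K_p}.
\]
As a $G$-representation this is a direct sum of copies of $\pi_p^{K_p}$, since $G$ acts only on the last factor. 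The $G$-span of $\Phi(f)$ is therefore a subrepresentation of a direct sum of copies of $\pi_p^{K_p}$.

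Because $F$ has characteristic $0$, $\pi_p^{K_p}$ is semisimple. The $G$-span of $f$ is generated by a single vector $v=\sum_i w_i\otimes u_i$, with the $w_i$ linearly independent and $u_i\in\pi_p^{K_p}$. Projecting onto the span of the $u_i$ gives a $G$-equivariant injection of $F[G]v$ into $\pi_p^{K_p}$: apply a linear functional that picks out one tensor factor, and note that $F[G]v$ embeds diagonally. Hence the $G$-span of $f$ is isomorphic to a subrepresentation of $\pi_{f,p}^{K_p}$. Finally, $\pi_{f,p}$ depends only on $\lambda$ by strong multiplicity one.
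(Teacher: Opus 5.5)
Your route is the same as the paper's, which simply cites this compatibility: transport $f$ to $\phi_f$, identify the moduli action of $G$ with right translation by $\GL_2(\Z_p)/K_p$, and invoke strong multiplicity one. Your intermediate conclusion, that the $G$-span of $\Phi(f)$ lies in a direct sum of copies of $\pi_{f,p}^{K_p}$, is correct.

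The gap is the last step. Write $W=\pi_\infty^{(k)}\otimes\bigotimes_{q\neq p}\pi_{f,q}^{K_q}$, with trivial $G$-action, and $v=\sum_i w_i\otimes u_i$ with the $w_i$ independent. Then $F[G]v\cong F[G](u_1,\dots,u_n)\subseteq(\pi_{f,p}^{K_p})^n$. A functional $\lambda$ on $W$ induces $xv\mapsto x\sum_i\lambda(w_i)u_i$. This is injective only when $\sum_i\lambda(w_i)u_i$ and the tuple $(u_1,\dots,u_n)$ have the same annihilator in $F[G]$, and that fails in general. For instance, if $\pi_{f,p}$ is depth-zero supercuspidal, then $\pi_{f,p}^{K_p}=\sigma=R_{T'}^G(\theta)$ is irreducible of dimension $p-1$. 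For $u_1,u_2$ independent, Burnside's theorem gives $F[G]v\cong\sigma^2$, of dimension $2(p-1)$, so no embedding into $\sigma$ exists. Being an eigenvector for the $T_q$ with $q\nmid Np$ (the only operators you match in Step 2) does not exclude this. For $\Gamma=\Gamma_0(q)$ and $\pi_{f,q}$ unramified, $\dim W=2$ and every vector of $W\otimes\sigma$ has the same such eigensystem.

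What makes the statement true is that $\Phi(f)$ is a pure tensor. The paper's anemic algebra $\T'$ is generated by all $T_q$ with $q\neq p$, so $f$ is also a $U_q$-eigenvector for $q\mid N$; you would need to add $U_q$ to the intertwining check in Step 2. Since $U_q$ shifts $q$-oldforms, $U_q\iota_q^j g=\iota_q^{j-1}g$, each $\pi_{f,q}^{K_q}$ is a cyclic $F[U_q]$-module, so its $U_q$-eigenspaces are lines. Together with $\dim\pi_\infty^{(k)}=1$, this forces $\Phi(f)=w\otimes u$ with $u\in\pi_{f,p}^{K_p}$. Then $F[G]\Phi(f)=w\otimes F[G]u\cong F[G]u$ is the required subrepresentation. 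For the block argument that follows, your direct-sum-of-copies statement would already suffice.

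A smaller slip in Step 1: $K_p$ contains the scalars $\Z_p^\times$, so $\det K_p$ is not $1+p\Z_p$. It is the index-two subgroup of $\Z_p^\times$ of elements that are squares modulo $p$. After taking $Z$-invariants there are two components, indexed by $\F_p^\times/(\F_p^\times)^2$. The $\F_p^\times$-indexing you describe belongs to $\ker(\GL_2(\Z_p)\to\GL_2(\F_p))$, before passing to $Z$-invariants.
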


\begin{proof}
This follows from the standard fact that the geometric $G$-action on $S_k(\Gamma \cap \Gamma(p), F)^Z$ is compatible, under the map $f \mapsto \phi_f$, with the automorphic action of $\GL_2(\Z_p) \subset \GL_2(\hat{\Z})$ on the image (remembering that $K_p$ necessarily acts trivially on this image).  That $\pi_f$, and thus $\pi_{f,p}^{K_p}$, depends only on the anemic eigensystem of $f$ follows from \cite[Theorem 3.3.6]{Bumpbook}.
\end{proof}

\subsection{A classification of \texorpdfstring{$K_p$}{}-fixed vectors}\label{subsec:classify Kp-fixed reps}

Let $\pi$ be an infinite-dimensional irreducible admissible representation of $\GL_2(\Q_p)$ such that $\pi^{K_p}$ is nonzero.  
We now describe the classification of such representations $\pi$ and use it to describe the representation $\pi^{K_p}$ of $G$.  The classification shows that $\pi^{K_p}$ always belongs to a single $G$-block, which finishes the proof that $M(\lambda)$ belongs to a single $G$-block for any cuspidal anemic Hecke eigensystem $\lambda$. 

Let $\mathbf{G}=\GL_2(\Q_p)$ and $\mathbf{T} \subset \mathbf{B} \subset \mathbf{G}$ be the diagonal torus and upper-triangular Borel subgroups.  Let $\mathbf{K}= \GL_2(\Z_p)$ and $\mathbf{T}' \subset \mathbf{G}$ denote a nonsplit torus, which is isomorphic to $ \Q_{p^2}^\times$, such that $\mathbf{T}' \cap \mathbf{K}$ maps to $T' \subset G$ modulo $K_p$. We use the following definition and notational convention for characters of the tori $\mathbf{T}$ and $\mathbf{T}'$.

\begin{definition}
\label{def:level Kp characters}
A character $\tilde{\chi}: \mathbf{T} \to F^\times$ (resp.~$\tilde{\theta}: \mathbf{T}' \to F^\times$) \emph{has level $K_p$} if $\tilde{\chi}|_{\mathbf{T} \cap \mathbf{K}}$ (resp.~$\tilde{\theta}|_{\mathbf{T'} \cap \mathbf{K}}$) factors through a character $\chi:T \to F^\times$ (resp.~$\theta:T'\to F^\times$). We follow this notation convention: the character of level $K_p$ is denoted with a tilde and the corresponding character of the torus of $G$ is denoted by the same letter without a tilde.
\end{definition}

By the classification of infinite-dimensional, irreducible, admissible representations of $\mathbf{G}$ (see \cite[Classification Theorem 9.11, pg.~69, Proposition 12.6, pg.~91, and Proposition 19.1, pg.~126]{BushnellHenniart}), the ones that have nonzero $K_p$-fixed vectors are:
\begin{itemize}
    \item principal series representations $I(\tilde{\chi})$,
    \item  special representations $\sigma(\tilde{\chi})$, and
    \item depth-zero supercuspidal representations $\pi_{\tilde{\theta}}$,
\end{itemize}
where $\tilde{\chi}$ and $\tilde{\theta}$ have level $K_p$, and $\theta^{p+1} \ne 1$. Recall that these representations can be defined as follows. If the smooth induction $\Ind_\mathbf{B}^\mathbf{G} \tilde{\chi}$ is irreducible, then it is denoted $I(\tilde{\chi})$.  Otherwise it has a unique infinite-dimensional irreducible constituent, denoted $\sigma(\tilde{\chi})$.  Inflating $R_{T'}^G(\theta)$ to a representation of $\mathbf{K}$ and extending it to $\Q_p^\times\mathbf{K}$ via $\tilde{\theta}$ gives a $\Q_p^\times\mathbf{K}$-representation whose compact induction to $\mathbf{G}$ is denoted $\pi_{\tilde{\theta}}$.  

If $\pi$ is one of the representations $I(\tilde{\chi})$, $\sigma(\tilde{\chi})$, or $\pi_{\tilde{\theta}}$, then the resulting representation $\pi^{K_p}$ of $G$ can be computed easily using Mackey's formula. (See, for example, 
\cite[Lemme III.3.14, pg.~178]{Vignerasbook}, where this is done more generally for $\GL_n(\Q_p)$.) The result is summarized in the following lemma.

\begin{lemma}\label{lem:invariants of Qp reps} 
For characters $\tilde{\chi}:\mathbf{T}\to F^\times$ and $\tilde{\theta}: \mathbf{T}' \to F^\times$ of level $K_p$, there are isomorphisms of $G$-representations:
\begin{itemize}
    \item $I(\tilde{\chi})^{K_p} \cong R_T^G(\chi)$,
    \item $\sigma(\tilde{\chi})^{K_p} \cong \St_\chi$, 
    \item $\pi_{\tilde{\theta}}^{K_p} \cong R_{T'}^G(\theta)$.
\end{itemize}
In particular, if $\pi$ is an infinite-dimensional irreducible admissible representation of $\GL_2(\Q_p)$ such that $\pi^{K_p} \neq 0$, then $\pi^{K_p}$ belongs to a single $\cO[G]$-block.
\end{lemma}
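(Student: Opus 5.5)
The plan is to compute the $K_p$-invariants type by type, using the Iwasawa decomposition and Mackey's formula, and then read off the block membership from \cref{sec: blocks}.

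\emph{Principal series.} Since $\mathbf{G}=\mathbf{B}\mathbf{K}$, restricting functions to $\mathbf{K}$ identifies $\Ind_\mathbf{B}^\mathbf{G}\tilde\chi|_\mathbf{K}$ with $\Ind_{\mathbf{B}\cap\mathbf{K}}^{\mathbf{K}}\tilde\chi$. Because $K_p$ is normal in $\mathbf{K}$, the $K_p$-invariants are then $\Ind_{(\mathbf{B}\cap\mathbf{K})K_p}^{\mathbf{K}}\tilde\chi$, provided $\tilde\chi$ is trivial on $\mathbf{B}\cap K_p$. That triviality is exactly the level-$K_p$ hypothesis, because the unipotent part of $\mathbf{B}\cap K_p$ acts trivially and its torus part lies in $\mathbf{T}\cap K_p$. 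The image of $(\mathbf{B}\cap\mathbf{K})K_p$ in $G$ is $B$, so $(\Ind_\mathbf{B}^\mathbf{G}\tilde\chi)^{K_p}\cong \Ind_B^G\chi = R_T^G(\chi)$. This gives $I(\tilde\chi)^{K_p}\cong R_T^G(\chi)$ immediately.

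\emph{Special representations.} In the reducible case, $\Ind_\mathbf{B}^\mathbf{G}\tilde\chi$ is an extension of $\sigma(\tilde\chi)$ by a character $\tilde\chi'\circ\det$, with the order depending on the twist. Taking $K_p$-invariants is exact, since $K_p$ is pro-$p$ and we work in characteristic $0$. The character has nonzero $K_p$-invariants, and $G$ acts on them by $\chi'$, where $\chi'\in\{1,\eta\}$ is the character whose inflation to $T$ is $\chi$. Hence $\sigma(\tilde\chi)^{K_p}\cong R_T^G(\chi)-\chi'=\St_\chi$. Here I write $\St_\chi$ for $\St$ or $\St_\eta$, as $\chi$ is trivial or quadratic, as in \cref{subsec:char0 rep thy}.

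\emph{Depth-zero supercuspidals.} This is the main computation. Write $\pi_{\tilde\theta}=\mathrm{c\text{-}Ind}_{\Q_p^\times\mathbf{K}}^{\mathbf{G}}\Lambda$, with $\Lambda$ the inflation of $R_{T'}^G(\theta)$. Mackey's formula gives
\[
\pi_{\tilde\theta}|_{\mathbf{K}}=\bigoplus_{g}\Ind_{\mathbf{K}\cap g\Q_p^\times\mathbf{K}g^{-1}}^{\mathbf{K}}\Lambda^g,
\]
with $g$ running over the double cosets $\mathbf{K}\backslash\mathbf{G}/\Q_p^\times\mathbf{K}$, represented by $\mathrm{diag}(p^n,1)$ with $n\ge0$. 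For $n=0$ the summand is $\Lambda$ itself, whose $K_p$-invariants are $R_{T'}^G(\theta)$.

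For $n\ge1$, the group $\mathbf{K}\cap g\mathbf{K}g^{-1}$ contains a conjugate of a unipotent subgroup lying in $K_p$ up to conjugation. More precisely, I will show that the $K_p$-invariants of the $n$-th summand are computed by invariants of $\Lambda$ under the full unipotent radical of a Borel of $G$. Cuspidality of $R_{T'}^G(\theta)$ means those invariants are zero, and so these summands contribute nothing. This is the step I expect to require the most care: one must check precisely which unipotent group appears after intersecting with $K_p$ and conjugating. I would do it via the standard argument for vanishing of Jacquet modules in depth zero, as in \cite[Lemme III.3.14]{Vignerasbook}.

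\emph{Block membership.} Any infinite-dimensional irreducible $\pi$ with $\pi^{K_p}\neq0$ is of one of the three types by the classification recalled above. The computation then shows that $\pi^{K_p}$ is an irreducible $F[G]$-representation: $R_T^G(\chi)$ with $\chi^2\ne1$, $\St$ or $\St_\eta$, or $R_{T'}^G(\theta)$. When $\chi^2=1$ the induction is reducible, and the irreducible $\pi$ is then either $\sigma$ or a generic constituent handled likewise. An irreducible representation lies in exactly one block, so $\pi^{K_p}$ belongs to a single $\cO[G]$-block.
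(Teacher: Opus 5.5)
Your computation of the three invariant spaces takes the same route as the paper, which just invokes Mackey's formula and \cite[Lemme III.3.14]{Vignerasbook}. The step you flag as delicate does go through. For $g=\mathrm{diag}(p^n,1)$ with $n\ge1$ one has $g^{-1}\sm{1}{p^nb}{0}{1}g=\sm{1}{b}{0}{1}$, so $g^{-1}K_pg\cap\mathbf{K}$ maps onto $U$. Hence the $K_p$-invariants of the $n$-th Mackey summand are copies of $R_{T'}^G(\theta)^U=0$.

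In the special case you also assert without proof that $\chi\in\{1,\eta\}$ and that the one-dimensional constituent is $K_p$-fixed. This needs a line of justification:
\begin{itemize}
\item $K_p$ contains the scalars $\Z_p^\times$, so level $K_p$ forces $\chi_1\chi_2|_{\Z_p^\times}=1$.
\item Reducibility makes $\chi_1/\chi_2$ unramified, so $\chi_1|_{\Z_p^\times}^2=1$.
\item $\det K_p\subseteq(\Z_p^\times)^2$.
\end{itemize}

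The genuine error is in your final paragraph. You claim that $\pi^{K_p}$ is always irreducible and that $\Ind_\mathbf{B}^\mathbf{G}\tilde\chi$ is reducible when $\chi^2=1$. Both claims are false. You are conflating two different inductions:
\begin{itemize}
\item the finite induction $\Ind_B^G\chi$, which is reducible exactly when $\chi\in\{1,\eta\}$;
\item the $p$-adic induction, which is reducible only when $\chi_1\chi_2^{-1}=|\cdot|^{\pm1}$.
\end{itemize}
For example, an unramified principal series $I(\tilde\chi)$ is irreducible and has $\chi=1$. It is the $p$-component of every $p$-oldform, the central case of the principal block. Your own first computation gives $I(\tilde\chi)^{K_p}\cong\Ind_B^G 1\cong 1\oplus\St$, and likewise $\chi=\eta$ gives $\eta\oplus\St_\eta$. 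So ``an irreducible representation lies in a single block'' does not prove the final assertion.

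The missing step is to check that $1$ and $\St$ lie in a common block. The descriptions in \cref{sec: blocks} give this: both lie in the principal block for either sign of $p \bmod \ell$. Twisting by $\eta$ then puts $\eta$ and $\St_\eta$ in a common block. For the remaining types your irreducibility argument is fine: $R_T^G(\chi)$ with $\chi^2\ne1$, $\St_\chi$, and $R_{T'}^G(\theta)$.
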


Combining \cref{lem:reduce to Kp-fixed rep,lem:invariants of Qp reps} and recalling that any $0 \neq f \in S_k(\Gamma \cap \Gamma(p), F)^Z$ gives rise to an infinite-dimensional admissible representation $\pi_{f,p}$ of $\GL_2(\Q_p)$ with $\pi_{f,p}^{K_p} \neq 0$ gives the following corollary.

\begin{corollary}\label{cor:block for cusp form}
For any cuspidal anemic Hecke eigensystem $\lambda$ supported on $S_k(\Gamma \cap \Gamma(p), F)^Z$, the representation $M(\lambda)$ belongs to a single $\cO[G]$-block.
\end{corollary}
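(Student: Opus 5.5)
The plan is to decompose $M(\lambda)$ into cyclic pieces and then bound the $p$-components of all the eigenforms involved. Fix a cuspidal anemic eigensystem $\lambda$ appearing in $S_k(\Gamma \cap \Gamma(p), F)^Z$. Since $M(\lambda)$ is a finite-dimensional $F[G]$-module and $F[G]$ is semisimple, $M(\lambda)$ is a direct sum of irreducible subrepresentations. Each of these is generated by a single nonzero vector $f\in M(\lambda)$, and $f$ is an anemic eigenform with eigensystem $\lambda$. It therefore suffices to show that every irreducible constituent of $M(\lambda)$ lies in one fixed block, independent of the choice of $f$.

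For such an $f$, \cref{lem:reduce to Kp-fixed rep} embeds the $G$-subrepresentation generated by $f$ into $\pi_{f,p}^{K_p}$. By the same lemma (strong multiplicity one), the local representation $\pi_{f,p}$ depends only on $\lambda$; call it $\pi_p$. So every irreducible constituent of $M(\lambda)$ is a constituent of the single $G$-representation $\pi_p^{K_p}$.

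Next I invoke \cref{lem:invariants of Qp reps}. The representation $\pi_p$ is infinite-dimensional and irreducible admissible, because it is the local component of a cuspidal automorphic representation. It has nonzero $K_p$-fixed vectors because $f\neq 0$ is fixed by $K_p$. Hence $\pi_p^{K_p}$ is isomorphic to $R_T^G(\chi)$, $\St_\chi$, or $R_{T'}^G(\theta)$. In every case the constituents of $\pi_p^{K_p}$ lie in a single $\cO[G]$-block. Here $R_T^G(\chi)$ is irreducible if $\chi^2\ne1$; if $\chi^2=1$ it is $1\oplus\St$ or $\eta\oplus\St_\eta$, and these pairs lie in the principal and $\eta$-blocks respectively. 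Therefore all constituents of $M(\lambda)$ lie in that block, and $e_\B M(\lambda)=M(\lambda)$.

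The only step needing care is the irreducible-but-not-cyclic point: a sum of several copies is handled by the fact that all the generating vectors share the same $\lambda$, and hence the same $\pi_p$. The genuine content, namely the classification of $\pi_p$ and the compatibility of the geometric and automorphic actions, is already contained in \cref{lem:reduce to Kp-fixed rep,lem:invariants of Qp reps}.
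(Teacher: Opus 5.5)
Your proposal is correct and follows the paper's own argument, which just combines \cref{lem:reduce to Kp-fixed rep} (the $G$-span of an anemic eigenform embeds in $\pi_{f,p}^{K_p}$, which depends only on $\lambda$) with \cref{lem:invariants of Qp reps} (each such $\pi_{f,p}^{K_p}$ lies in one block); your decomposition into cyclic summands and your check that $R_T^G(1)=1\oplus\St$ and $R_T^G(\eta)=\eta\oplus\St_\eta$ each lie in one block only spell out what the paper leaves implicit. One point needs rephrasing: if $M(\lambda)$ means a generalized eigenspace, not every nonzero vector of $M(\lambda)$ is an eigenform (the anemic algebra contains $U_q$ for $q\mid N$, which need not act semisimply); but since the Hecke and $G$-actions commute, filtering $M(\lambda)$ by the submodules killed by $(\ker\lambda)^i$ shows that every $G$-constituent already occurs in the honest $\lambda$-eigenspace, where your argument applies verbatim.
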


\subsection{The block associated with an anemic maximal ideal}\label{subsec:block of m'}
Now fix a maximal ideal $\m' \subset \T'(\Gamma \cap \Gamma(p))$.  For an anemic Hecke eigensystem $\lambda$ supported on $M_k(\Gamma \cap \Gamma(p),\cO)^Z_{\m'}$, let $\B_\lambda$ denote the unique block that contains $M(\lambda)$, which exists by \cref{lem:block for Eisenstein series,cor:block for cusp form}. In this section we show that $\B_\lambda$ is independent of $\lambda$ and depends only on $\m'$.

Recall that $\T'(\Gamma \cap \Gamma_0(p,p)) \cong \T'(\Gamma \cap \Gamma_0(p^2))$ via \eqref{eq:Gamma0(p,p) isoto Gamma0(p^2)}.  We abuse notation slightly and use $\m'$ to denote its image in $\T'(\Gamma \cap \Gamma_0(p,p))$ under the natural projection map or its further image in $\T'(\Gamma \cap \Gamma_0(p^2))$ under this isomorphism.  As in \cref{subsec:familiar mod forms}, let $\m$ denote the maximal ideal of $\T(\Gamma \cap \Gamma_0(p^2))$ generated by $\m'$ and $U_p$.  Recall from \cref{lem:anemic to full} that $M_k(\Gamma \cap \Gamma_0(p^2), \cO)_\m$ carries a natural action of $Z(\cO[G])$.

\begin{proposition}
\label{prop:block of m}
    There is a unique block idempotent $e \in Z(\cO[G])$ that acts as the identity on $M_k(\Gamma \cap \Gamma_0(p^2),\cO)_\m$, and all of the other block idempotents act as zero.
\end{proposition}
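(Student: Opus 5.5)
We exploit the two structures on $X\coloneqq M_k(\Gamma \cap \Gamma_0(p^2),\cO)_\m$ provided by \cref{lem:anemic to full}: the commuting action of $Z(\cO[G]) \times \T_\m$, and the indecomposability of $X$ as a $\T_\m$-module from \cref{lem:indecomp2}.

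\textbf{Step 1: a decomposition of $X$.} The block idempotents $e_\B \in Z(\cO[G])$ are orthogonal and sum to $1$. They act on $X$ through $Z(\cO[G])$, and this action commutes with $\T_\m$. Hence
\[
X = \bigoplus_\B e_\B X
\]
is a decomposition of $X$ as a $\T_\m$-module.

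\textbf{Step 2: using indecomposability.} By \cref{lem:indecomp2}, $X$ is indecomposable as a $\T_\m$-module. So at most one summand $e_\B X$ is nonzero.

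\textbf{Step 3: at least one summand is nonzero.} It remains to see that $X \neq 0$. By \cref{lem:anemic to full}\eqref{part:m max}, $\m$ is a maximal ideal of $\T$, hence lies in the support of $M_k(\Gamma\cap\Gamma_0(p^2),\cO)$. Therefore $X \ne 0$. Set $e=e_\B$ for the unique $\B$ with $e_\B X \ne 0$. Then $e$ acts as the identity on $X$ and every other block idempotent acts as zero.

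\textbf{Main obstacle: compatibility of the $Z(\cO[G])$-action.} The delicate point is that the $Z(\cO[G])$-action on $X$ is genuinely transported from $\Hom_{\cO[G]}(P,M_{\m'})$, with $P=\coker(\cO[G/B]\to\cO[G/T])$. This is what makes the block decomposition compatible with Hecke operators. It holds by the construction in \cref{lem:anemic to full}(iv), using \cref{lem:ker trace = ker Up} and \cref{lem:Hecke alg properties}\eqref{item: G-commute}.

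If one also wants to connect $e$ with the blocks $\B_\lambda$, a further step is needed. Each eigensystem $\lambda$ occurring in $X\otimes F$ contributes $\ker(M(\lambda)^T\to M(\lambda)^B)\ne 0$. On this piece $Z(\cO[G])$ acts through the block $\B_\lambda$, by \cref{lem:block for Eisenstein series,cor:block for cusp form}. Combined with uniqueness, this forces $\B_\lambda=\B$ for every $\lambda$. That identification is not needed for the statement itself.
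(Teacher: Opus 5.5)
Your proof is correct and follows the paper's argument. The commuting $Z(\cO[G])\times\T_\m$-action from \cref{lem:anemic to full} splits $X$ into block summands as a $\T_\m$-module, and the indecomposability in \cref{lem:indecomp2} leaves exactly one nonzero summand; your explicit check that $X\neq 0$, via faithfulness of $\T$ and maximality of $\m$, just spells out what the paper leaves implicit in the word ``indecomposable.''
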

\begin{proof}
    This follows immediately from \cref{lem:indecomposble} and \cref{lem:indecomp2}. Indeed, since $M_k(\Gamma \cap \Gamma_0(p^2),\cO)_\m$ is a $\T_\m \times Z(\cO[G])$-module, as a $\T_\m$-module, it is the direct sum over the blocks $Z(\cO[G])$. Because it is an indecomposable $\T_\m$-module, exactly one of those blocks is nonzero.
\end{proof}

Let $e_{\m}$ denote the idempotent associated with $\m$ by \cref{prop:block of m}, and let $\B_\m$ be the corresponding block. 

\begin{corollary}\label{cor:block of m'}
The $G$-representation $M_k(\Gamma \cap \Gamma(p), \cO)_{\m'}^Z$ belongs to the block $\B_\m$.
\end{corollary}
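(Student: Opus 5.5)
Write $X=M_k(\Gamma \cap \Gamma(p),\cO)^Z_{\m'}$. The plan is to split $X$ by block idempotents and use the identification of its $P$-homs with $M_k(\Gamma\cap\Gamma_0(p^2),\cO)_\m$, where $P=\coker(\cO[G/B]\to\cO[G/T])$ as in the proof of \cref{lem:anemic to full}. Since $Z(\cO[G])$ commutes with the anemic Hecke action, $X=\bigoplus_{\B} e_\B X$. Each summand is again $\cO$-free and stable under $\T'$. By \cref{lem:ker trace = ker Up} and \cref{lem:anemic to full} we have
\[
M_k(\Gamma\cap\Gamma_0(p^2),\cO)_\m \cong \Hom_{\cO[G]}(P,X)=\bigoplus_\B \Hom_{\cO[G]}(P,e_\B X),
\]
and this decomposition is compatible with the action of the block idempotents on the left-hand side. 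By \cref{prop:block of m}, every block idempotent other than $e_\m$ kills the left-hand side. Hence $\Hom_{\cO[G]}(P,e_\B X)=0$ for all $\B\ne\B_\m$. It therefore suffices to show that a $\T'$-stable summand $Y=e_\B X$ with $\Hom_{\cO[G]}(P,Y)=0$ must itself be zero.

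For this I would work after inverting $\ell$. Since $Y$ is $\cO$-free, it suffices to show $Y\otimes F=0$. Suppose not. Then $Y\otimes F$ is a nonzero $\T'$-stable subspace of $X\otimes F$, so it decomposes into pieces $Y(\lambda)\subseteq M(\lambda)$ indexed by anemic eigensystems $\lambda$ occurring in it, with some $Y(\lambda)\neq 0$. Every such $\lambda$ reduces to $\m'$.

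The key input is the fact stated at the beginning of \cref{sec:hecke}: for every eigensystem $\lambda$, and more precisely for every irreducible $G$-constituent of $M(\lambda)$, the space $\ker(V^T\to V^B)$ is nonzero (indeed one-dimensional). I would verify this by listing the irreducible constituents that can occur. In the cuspidal case these are $R_T^G(\chi)$, $\St_\chi$ and $R_{T'}^G(\theta)$, by \cref{lem:invariants of Qp reps} together with \cref{lem:reduce to Kp-fixed rep}. In the Eisenstein case they are $\Ind_B^G\chi$ and $\St$, by \cref{lem:block for Eisenstein series}, where $\Ind_B^G 1$ contributes $\St$ after splitting off the trivial constituent. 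The check is a Mackey/Frobenius computation: $\dim V^T-\dim V^B=1$ for each of these constituents. For instance, $R_T^G(\chi)$ has $T$-invariants of dimension $2$ and $B$-invariants of dimension $1$.

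There is a caveat in the Eisenstein case. The trivial representation does occur inside $M(\lambda_1)$, and for it $V^T=V^B$. However, in the block where $1$ lives the Steinberg representation also occurs, and $\St$ appears in $M(\lambda_1)$ with positive multiplicity (as $\St\subset\Ind_B^G 1$). I would therefore argue with the whole isotypic piece $Y(\lambda)$, which is a $G$-direct summand of $M(\lambda)$ lying in block $\B$, rather than with an arbitrary constituent. The remaining point is that $Y(\lambda)$ cannot consist only of copies of $1$, which needs an argument. My approach: $Y(\lambda)=e_\B M(\lambda)$, and $e_\B M(\lambda)$ is a full sum of the $\B$-isotypic parts. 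By the explicit descriptions, whenever $1$ appears in $e_\B M(\lambda)$, $\St$ does too.

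Granting this, $\Hom_{G}(P\otimes F,Y(\lambda))=\ker(Y(\lambda)^T\to Y(\lambda)^B)\ne 0$. That contradicts $\Hom_{\cO[G]}(P,Y)\otimes F=0$. Hence $Y=0$ for $\B\ne\B_\m$, and $X=e_\m X$, which is the claim of \cref{cor:block of m'}. The main obstacle is exactly this Eisenstein bookkeeping with the trivial constituent. The cuspidal case is immediate from the constituent list.
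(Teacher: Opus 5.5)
Your reduction is correct and is essentially the paper's. \cref{prop:block of m} kills $\Hom_{\cO[G]}(P,e_\B X)$ for $\B\neq\B_\m$, and each $M(\lambda)$ lies in a single block, so $e_\B M(\lambda)\in\{0,M(\lambda)\}$. What remains is to show that every eigensystem $\lambda$ of $X\otimes F$ gives a nonzero vector in $\ker(M(\lambda)^T\to M(\lambda)^B)$, i.e.\ a $U_p=0$ eigenform.

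\textbf{The gap.} You claim the cuspidal case is \emph{immediate from the constituent list}, and that is not justified. \Cref{lem:reduce to Kp-fixed rep} only embeds the $G$-span of each eigenform into $\pi_{f,p}^{K_p}$; it does not say that $M(\lambda)$ contains all of $\pi_{f,p}^{K_p}$. This is harmless when $\pi_{f,p}^{K_p}$ is irreducible, but it is reducible in two cases:
\begin{itemize}
\item for unramified principal series, $\pi_{f,p}^{K_p}=R_T^G(1)=1\oplus\St$;
\item for ramified principal series with $\chi=\eta$, it is $\eta\oplus\St_\eta$.
\end{itemize}
From the local information alone, $M(\lambda)$ could be a sum of copies of $1$. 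Then $\tr\colon V^T\to V^B$ is multiplication by $[B:T]=p$, so its kernel is $0$. It could also be a sum of copies of $\eta$, and then $V^T=0$. In either case your argument gives $\Hom(P,Y(\lambda))=0$ and no contradiction. This is the same problem you flagged for Eisenstein series. There, however, \cref{lem:block for Eisenstein series} gives the full $G$-structure of $M(\lambda)$, whereas in the cuspidal case nothing in your proposal supplies it. The unramified case is not marginal: it contains every form coming from level $\Gamma$.

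\textbf{The fix.} The missing input is global, and it is exactly what the paper uses.
\begin{itemize}
\item Since $\pi_{f,p}$ has conductor dividing $p^2$, the eigensystem $\lambda$ occurs at level $\Gamma\cap\Gamma_0(p^2)$. Via \eqref{eq:Gamma0(p,p) isoto Gamma0(p^2)}, this gives $M(\lambda)^T\neq 0$.
\item The construction in the proof of \cref{lem:anemic to full}(i) then produces an eigenform with eigensystem $\lambda$ and $U_p=0$: either a form of conductor $p^2$ at $p$, or $a_p(g)\psi^*g-p\pi^*g$ for an oldform coming from $g$.
\item That form is a nonzero element of $\ker(M(\lambda)^T\to M(\lambda)^B)$.
\end{itemize}
With this in hand, your dimension count is no longer needed. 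Alternatively, you could quote that $M(\lambda)$ is a multiple of $\pi_{f,p}^{K_p}$ as a $G$-representation, but that is again a global multiplicity-one input.

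\textbf{Two smaller slips.}
\begin{itemize}
\item For $R_T^G(\chi)$ with $\chi^2\neq 1$, one has $\dim V^T=1$ and $\dim V^B=0$, not $2$ and $1$, because $V^B\cong\Hom_G(1\oplus\St,V)=0$. The difference is still $1$, so the argument is unaffected.
\item In the Eisenstein case you overlooked the constituent $\eta$ of $\Ind_B^G\eta=\eta\oplus\St_\eta$, which has $V^T=0$. This does no harm if you count with $\Ind_B^G\chi$ as a whole, as \cref{lem:block for Eisenstein series} allows.
\end{itemize}
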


\begin{proof}
By \cref{prop:block of m} and the isomorphism \eqref{eq:Gamma0(p,p) isoto Gamma0(p^2)}, a block idempotent $e$ acts on $f \in M_k(\Gamma \cap \Gamma(p), \cO)_{\m'}^T$ by $0$ unless $e = e_\m$, in which case it acts by $1$.  Since each $M(\lambda)$ belongs to a single block by \cref{lem:block for Eisenstein series,cor:block for cusp form}, it suffices to show that $M(\lambda)^T \neq 0$ for every anemic eigensystem $\lambda$.  When $\lambda$ is Eisenstein, this is immediate from the explicit description in \cref{lem:block for Eisenstein series}.  When $\lambda$ is cuspidal, its associated automorphic representation is one of $I(\tilde{\chi}), \sigma(\tilde{\chi}), \pi_{\tilde{\theta}}$ described in \cref{subsec:classify Kp-fixed reps}, all of which have conductor dividing $p^2$ (where conductor is defined as in \cite[Theorem 1]{Casselman}).  Therefore the same automorphic representation appears at level $\Gamma \cap \Gamma_0(p^2)$.  Once again by \eqref{eq:Gamma0(p,p) isoto Gamma0(p^2)}, it follows that $M(\lambda)^T \neq 0$, as desired.
\end{proof}

We now summarize how to determine the block $\B_\m$ explicitly starting from a Hecke eigenform in $M_k(\Gamma \cap \Gamma(p), \cO)_{\m'}^Z$.  In the statement of the following lemma, recall our notational convention in \cref{def:level Kp characters} for characters of level $K_p$.

\begin{lemma}
\label{lem:determining the block Bm}
Let $\m' \subset \T'(\Gamma \cap \Gamma(p))$ be a maximal ideal, let $0 \neq f \in M_k(\Gamma \cap \Gamma(p),\cO)^Z_{\m'}$ be a $\T'(\Gamma \cap \Gamma(p))$-eigenform.  If $f$ is cuspidal, let $\pi_{f,p}$ be the $p$-component of the automorphic representation of $f$.  If $f$ is Eisenstein, let $\chi_f$ be the associated character of $T$ as in \cref{subsec:G-rep for Eis series}.

The block $\B_\m$ is principal if and only if $\chi_f$ has $\ell$-power order or $\pi_{f,p}$ is one of the following:
\begin{itemize}
    \item unramified principal series,
    \item the special representation $\sigma(\mathbbm{1})$,
    \item ramified principal series $I(\tilde{\chi})$, where $\chi$ has $\ell$-power order,
    \item depth-zero supercuspidal $\pi_{\tilde{\theta}}$, where $\theta$ has $\ell$-power order.
\end{itemize}
The block $\B_\m$ is nilpotent if and only if we are in one of the following two cases:
\begin{itemize}
    \item $p \equiv 1 \pmod{\ell}$ and either $\chi_f^2$ does not have $\ell$-power order or $\pi_{f,p} \cong I(\tilde{\chi})$, where $\chi^2$ is not of $\ell$-power order,
    \item $p \equiv -1 \pmod{\ell}$ and $\pi_{f,p} \cong \pi_{\tilde{\theta}}$, where $\theta^2$ is not of $\ell$-power order.
\end{itemize}
\end{lemma}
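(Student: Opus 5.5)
By \cref{cor:block of m'} and \cref{prop:block of m}, the block $\B_\m$ is the unique block containing $M(\lambda)$, where $\lambda$ is the anemic eigensystem of $f$. So the plan is to identify the block of $M(\lambda)$ from the explicit descriptions already in hand. We then read off which block that is from the lists in \cref{sec: blocks}.

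\emph{Cuspidal $f$.} By \cref{lem:reduce to Kp-fixed rep}, $M(\lambda)$ is built from copies of $\pi_{f,p}^{K_p}$. \cref{lem:invariants of Qp reps} gives this space as one of $R_T^G(\chi)$, $\St_\chi$, or $R_{T'}^G(\theta)$, according to the type of $\pi_{f,p}$. We then split into cases.
\begin{enumerate}
\item Unramified principal series, where $\chi=1$: we get $R_T^G(1)=1\oplus\St$, which lies in the principal block.
\item $\sigma(\mathbbm{1})$: we get $\St$, which is principal.
\item $\sigma(\tilde\chi)$ with $\chi=\eta$: we get $\St_\eta$, which lies in the $\eta$-block. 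This block is neither principal nor nilpotent, consistent with the dichotomy in the statement.
\item $I(\tilde\chi)$: the representation $R_T^G(\chi)$ is principal exactly when $\chi$ has $\ell$-power order. Here we use that, for $p\equiv -1\pmod\ell$, $\ell\nmid|T|$, so only $\chi=1$ qualifies, which is the unramified case.
\item $I(\tilde\chi)$ with $p\equiv1\pmod\ell$ and $\bar\chi^2\ne1$: the block is nilpotent. Writing $\chi=\chi'\xi$ with $\chi'$ prime-to-$\ell$ and $\xi$ of $\ell$-power order, the condition $\bar\chi^2\neq 1$ is equivalent to $\chi^2$ not having $\ell$-power order.
\item $I(\tilde\chi)$ with $\chi^2$ of $\ell$-power order but $\chi$ not: the prime-to-$\ell$ part of $\chi$ is $\eta$, which gives the $\eta$-block.
\item $\pi_{\tilde\theta}$: the same analysis applies to $R_{T'}^G(\theta)$. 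This block is principal iff $\theta$ has $\ell$-power order, which forces $p\equiv-1\pmod\ell$ unless $\theta$ is trivial, and $\theta$ is excluded from being trivial by $\theta^{p+1}\neq1$. It is nilpotent iff $p\equiv-1\pmod\ell$ and $\bar\theta^2\ne1$. For $p\equiv1\pmod\ell$, $\ell\nmid|T'|$, so $R_{T'}^G(\theta)$ has defect zero and is neither principal nor nilpotent.
\end{enumerate}

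\emph{Eisenstein $f$.} \cref{lem:block for Eisenstein series} gives $M(\lambda)$ as copies of $\Ind_B^G\chi_f$, plus possibly $\St$. The constituents of $\Ind_B^G\chi_f$ are $R_T^G(\chi_f)$ if $\chi_f^2\ne1$, and $1\oplus\St$ or $\eta\oplus\St_\eta$ otherwise. The same character analysis then applies. The block is principal iff $\chi_f$ has $\ell$-power order. It is nilpotent iff $p\equiv1\pmod\ell$ and $\chi_f^2$ is not of $\ell$-power order. If $p\equiv-1\pmod\ell$ and $\chi_f\neq1,\eta$, then $R_T^G(\chi_f)$ is of defect zero, since its dimension $p+1$ is divisible by the full $\ell$-part of $|G|$, so it is not nilpotent in the paper's convention.

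The main obstacle is bookkeeping rather than any single hard step. We must check each case against the block lists of \cref{sec: blocks}, including the translation between ``$\bar\chi^2\neq1$'' and ``$\chi^2$ not of $\ell$-power order''. We must also correctly place the defect-zero and $\eta$-block cases outside both classes, so that both ``if and only if'' directions hold.
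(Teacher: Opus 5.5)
Your proposal is correct and follows the paper's proof. The paper likewise identifies $\B_\m$ with the block containing $M(\lambda_f)$ via \cref{prop:block of m}, then reads that block off from \cref{lem:block for Eisenstein series,lem:invariants of Qp reps} and the lists in \cref{sec: blocks}; your separate treatment of the $\eta$-block and defect-zero cases just spells out the bookkeeping behind both directions of the ``if and only if.'' One small fix: every character of $T'$ satisfies $\theta^{p+1}=1$ because $|T'|=p+1$, so the nontriviality of $\theta$ comes instead from the regularity condition $\theta^2\neq 1$ needed for $R_{T'}^G(\theta)$, and hence $\pi_{\tilde\theta}$, to be cuspidal.
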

\begin{proof}
By \cref{prop:block of m}, $\B_\m=\B_{\lambda_f}$ is the block containing the $G$-representation $M(\lambda_f)$ associated with the anemic eigensystem $\lambda_f$ of $f$. These representations are given by \cref{lem:block for Eisenstein series,lem:invariants of Qp reps}, and the corresponding blocks are described in \cref{sec: blocks}.
\end{proof}

\section{Module structure for level-raising congruences}\label{sec:main thms}
In this section we deduce our main theorem statements about the module structure of $M_k(\Gamma \cap \Gamma_0(p^2), \cO)_\m$ over a subring of $\cO[\Delta]$, where $\m$ is a maximal ideal of the full Hecke algebra that contains $U_p$.  The relevant subring is $\cO[\Delta]^+$ when $\B_\m$ is principal and $\cO[\Delta]$ when $\B_\m$ is nilpotent.  These structure results follow immediately from applying \cref{thm:X^T} to $M_k(\Gamma \cap \Gamma(p), \cO)_{\m'}^Z$, which is possible thanks to \cref{prop:projectivity of modular forms} and \cref{cor:block of m'}.  In \cref{subsec:eigenforms} we establish that $M_k(\Gamma \cap \Gamma_0(p^2), \cO)_\m$ is, in fact, a module over the relevant subring of $\cO[\Delta]$ and describe this action explicitly on eigenforms.  The next two sections give the module structure, first in the generic case and then in the special ``automorphically trivial'' case.

Throughout this section, fix $\Gamma = \Gamma_0(N)$ or $\Gamma_1(N)$ for some $\gcd(N, p\ell) = 1$ and $k \geq 2$.  Let $\m'$ be a maximal ideal of $\T'(\Gamma \cap \Gamma_0(p^2))$, which we also think of as a maximal ideal of $\T'(\Gamma \cap \Gamma(p))$ via the isomorphism \eqref{eq:Gamma0(p,p) isoto Gamma0(p^2)} and the natural projection map.  Let $\m$ be the maximal ideal of $\T(\Gamma \cap \Gamma_0(p^2))$ generated by $\m'$ and $U_p$.

\subsection{Action on eigenforms}
\label{subsec:eigenforms}

The next proposition establishes the existence of the action of the relevant subring of $\cO[\Delta]$ on $M_k(\Gamma \cap \Gamma_0(p^2), \cO)_\m$.  We then make this action explicit on eigenforms.

\begin{proposition}\label{prop:action existence}
The space $M_k(\Gamma \cap \Gamma_0(p^2), \cO)_\m$ carries a natural action of 
\[
\begin{cases}
    \cO[\Delta]^+ &\text{if } \B_\m \text{ is principal,}\\
    \cO[\Delta] &\text{if } \B_\m \text{ is nilpotent.}
\end{cases}
\]
\end{proposition}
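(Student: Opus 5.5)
The plan is to realize $M_k(\Gamma\cap\Gamma_0(p^2),\cO)_\m$ as a representable functor evaluated on a projective module, and to let the endomorphism ring of the representing object act. Set $\tilde M=M_k(\Gamma\cap\Gamma(p),\cO)$ and $M=\tilde M^Z_{\m'}$. By \cref{prop:projectivity of modular forms} (taking $Z$-fixed parts and localizing at $\m'$, both of which preserve projectivity), $M$ is a projective $\cO[G]$-module. By \cref{cor:block of m'}, $M=e_\m M$ is a module over the single block algebra $e_\m\cO[G]$. Combining \cref{lem:ker trace = ker Up} with \cref{lem:anemic to full}\eqref{part:Up ker=loc} gives an identification, compatible with the anemic Hecke action,
\[
M_k(\Gamma\cap\Gamma_0(p^2),\cO)_\m \cong \ker(M^T\xrightarrow{\tr}M^B)\cong \Hom_{\cO[G]}(e_\m P,M),
\]
where $P=\coker(\cO[G/B]\to\cO[G/T])$. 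Here one uses Frobenius reciprocity $M^H=\Hom_{\cO[G]}(\cO[G/H],M)$, together with the fact that $\cO[G/B]\to\cO[G/T]$ is split, since $\ell\nmid[B:T]$.

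With this identification, $\End_{\cO[G]}(e_\m P)$ acts on $\Hom_{\cO[G]}(e_\m P,M)$ by precomposition, and the argument splits according to the type of the block.

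\emph{Principal block.} For $p\equiv1\pmod\ell$, \cref{lem:O[G/T]} gives $e_\m P\cong P_\St$, and \eqref{eq:Homs P_1 and P_st} gives $\End(P_\St)\cong\cO[\Delta]^+$. For $p\equiv-1\pmod\ell$, \cref{lem:end of P_1} gives $e_\m P\cong P_\pi$, and Paige's \cref{lem: end of P_pi} gives $\End(P_\pi)\cong\cO[\Delta]^+$.

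\emph{Nilpotent block.} Here $e_\m\Ind_B^G\cO=0$, so $e_\m P=e_\m\Ind_T^G\cO$. This module is the unique projective indecomposable, with endomorphism ring $\cO[\Delta]$ via $f_\chi$ or $f_\theta$ (see \cref{lem:Ind_T^G in nilpotent p=1} and \cref{sec:nilpotent_blocks2}).

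Since precomposition commutes with postcomposition by $\T'$, which acts on $M$ through $\cO[G]$-linear maps by \cref{lem:Hecke alg properties}\eqref{item: G-commute}, the resulting action commutes with the anemic Hecke algebra. By \cref{lem:anemic to full}\eqref{part:Up=0}, $\T'$ generates $\T_\m$, so the action commutes with all of $\T_\m$. This is the sense of "natural": the action is functorial in $M$.

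To make the action concrete, observe that in every case the endomorphisms involved are induced by central elements of $e_\m\cO[G]$. These are the elements in \eqref{eq:f_alpha principal p=1}, \eqref{eq: f alpha}, \eqref{eq:p=1 falpha} and \eqref{eq:p=-1 falpha}. The action is therefore just restriction of the $Z(\cO[G])$-action of \cref{lem:anemic to full}(iv) along a ring map $\cO[\Delta]^{(+)}\to Z(e_\m\cO[G])$. This also shows it is well defined independently of the splitting chosen for $P$.

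The main obstacle is the principal-block case with $p\equiv-1\pmod\ell$. There one must check that the isomorphism of \cref{lem: end of P_pi} really comes from central elements, or at least that it is canonical, so that the action does not depend on the choice of $P_\pi$ inside $e_\m\Ind_T^G\cO$. I would handle this by verifying integrality of \eqref{eq: f alpha} in $Z(\cO[G])$ directly: write it as a combination of class sums using the character values of $1$, $\St$ and $R_{T'}^G(\xi)$, and check that the resulting coefficients lie in $\cO$. Any residual dependence on choices in the nilpotent case is only the choice of pinning character, which I would fix once and for all.
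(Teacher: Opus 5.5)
Your proposal is correct and follows the paper's proof. Both identify $M_k(\Gamma\cap\Gamma_0(p^2),\cO)_\m$ with $\ker(M^T\to M^B)\cong\Hom_{\cO[G]}(e_\m P,M)$ and let $\End_{\cO[G]}(e_\m P)$ act by precomposition, where this ring is $\cO[\Delta]^+$ or $\cO[\Delta]$ by the block computations. That identification uses only left exactness of $\Hom$ and \cref{cor:block of m'}, not projectivity of $M$, so your appeal to \cref{prop:projectivity of modular forms} (which the paper invokes only for $\ell>3$, to match the action with that of \cref{thm:X^T}) and your integrality check on central elements are not needed for the existence of the action.
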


\begin{proof}
Just as in the proof of \cref{thm:X^T}, we have an identification
\[
\ker(M^T \to M^B) \cong \Hom_{\cO[G]}(P,M)
\]
for a projective $e_\m\cO[G]$-module $P$ with $\End_{\cO[G]}(P)$ the relevant subring of $\cO[\Delta]$.
Then $M_k(\Gamma \cap \Gamma_0(p^2), \cO)_\m$ inherits this action via the isomorphism \eqref{eq:Gamma0(p,p) isoto Gamma0(p^2)} by \cref{lem:ker trace = ker Up}. Note that if $\ell>3$, then, by \cref{cor:block of m'} and \cref{prop:projectivity of modular forms}, $M$ is a finitely generated projective $e_\m\cO[G]$-module, and the action of $\cO[\Delta]$ or $\cO[\Delta]^+$ is the same as the action described in \cref{thm:X^T}. 
\end{proof}

Note that in the case of a nilpotent block, the action in \cref{prop:action existence} involves a choice of pinning character, as in \cref{sec:Nil blocks p=1,sec:nilpotent_blocks2}. Indeed, as noted in those sections, the identification of $\cO[\Delta]$ as the center of the block involves the choice of pinning character.

We now describe the action of $\cO[\Delta]^+$ or $\cO[\Delta]$ on eigenforms.  To do so, we establish some notation.  Linearizing a character $\chi$ of $\Delta$ defines an $\cO$-algebra homomorphism $\cO[\Delta] \to \cO$ that we continue to denote by $\chi$. Similarly, a pair $\{\chi,\chi^{-1}\}$ defines an $\cO$-algebra homomorphism $\cO[\Delta]^+\to \cO$ that we denote by $\chi^+$, given by 
\[
\chi^{+}([\delta]+[\delta^{-1}])= \chi(\delta)+\chi(\delta^{-1})
\]
for $\delta\in\Delta$. In either case, if $\chi=\mathbbm{1}$ is the trivial character, then this homomorphism is the augmentation. Recall \cref{def:level Kp characters} for the definition and notational convention for characters of level $K_p$.

\begin{proposition}\label{prop:action on eigenforms}
    Let $f \in M_k(\Gamma \cap \Gamma_0(p^2),\cO)_\m$ be an eigenform.  If $f$ is cuspidal, let $\pi_{f,p}$ be the $p$-component of its automorphic representation.  If $f$ is Eisenstein, let $\chi_f$ be the associated character of $T$ as in \cref{subsec:G-rep for Eis series}.  Let $\alpha \in \cO[\Delta]^+$ if $\B_\m$ is principal and $\alpha \in \cO[\Delta]$ if $\B_\m$ is nilpotent.  The following describes the action of $\alpha$ on $f$ given by \cref{prop:action existence}.
    \begin{enumerate}
        \item Assume $p \equiv 1 \pmod{\ell}$.
        \begin{enumerate}
            \item If $\B_\m$ is the principal block, then $\pi_{f,p}$ is either principal series for a level-$K_p$ character $\tilde{\chi}$ or special. Then
            \[
            \alpha f = \begin{cases}
                \chi^+(\alpha)f & \text{if }\pi_{f,p} \cong I(\tilde{\chi}), \\
                \mathbbm{1}^+(\alpha)f &\text{if }\pi_{f,p} \text{ is special,}\\
                \chi_f^+(\alpha)f & \text{if }f \text{ is Eisenstein.}
            \end{cases}
            \]
            \item If $\B_\m$ is nilpotent with pinning character $\chi$, then $\pi_{f,p}\cong I(\tilde{\psi})$ is principal series for a level-$K_p$ character $\tilde{\psi}$ such that $\psi \equiv \chi \pmod \varpi$.  Similarly, when $f$ is Eisenstein there is $\psi \in \{\chi_f, \chi_f^{-1}\}$ such that $\psi \equiv \chi \pmod \varpi$.  Then
            \[
            \alpha f = \psi(\alpha) f.
            \]
        \end{enumerate}
        \item Assume $p \equiv -1 \pmod{\ell}$.
        \begin{enumerate}
            \item If $\B_\m$ is the principal block, then $\pi_{f,p}$ is either unramified principal series, special, or depth-zero supercuspidal for a level-$K_p$ character $\tilde{\theta}$. Then
            \[
            \alpha f = \begin{cases}
                \theta^+(\alpha)f & \text{if }\pi_{f,p} \cong \pi_{\tilde{\theta}}, \\
                \mathbbm{1}^+(\alpha)f &\text{if }\pi_{f,p} \text{ is not supercuspidal,}\\
                \chi_f^+(\alpha)f & \text{if } f \text{ is Eisenstein.}
                \end{cases}
            \]
            \item\label{item:vexing action} If $\B_\m$ is nilpotent with pinning character $\theta$, then $f$ is necessarily cuspidal and $\pi_{f,p}\cong \pi_{\tilde{\psi}}$ is depth-zero supercuspidal for a level-$K_p$ character $\tilde{\psi}$ such that $\psi \equiv \theta \pmod \varpi$. Then
            \[
            \alpha f = \psi(\alpha) f.
            \]
        \end{enumerate}
    \end{enumerate}
\end{proposition}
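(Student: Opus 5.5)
The plan is to reduce everything to the explicit central elements \eqref{eq:f_alpha principal p=1}, \eqref{eq:p=1 falpha}, \eqref{eq: f alpha}, \eqref{eq:p=-1 falpha} through which $\cO[\Delta]^+$ or $\cO[\Delta]$ acts. In each case the action in \cref{prop:action existence} comes from $\End_{\cO[G]}(P)$, where $P$ is $P_\St$, $P_\pi$ or $e\Ind_T^G\cO$. In every case the isomorphism from $\cO[\Delta]^{(+)}$ to $\End(P)$ factors through a map to $Z(\cO[G])$. So the action on $\Hom_{\cO[G]}(P,M)$, given by precomposition, agrees with the action of a central element $z_\alpha=\sum_\rho c_\rho(\alpha)e_\rho\in Z(\cO[G])$ acting on $M$. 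It therefore suffices to show two things:
\begin{itemize}
\item the identification $\Hom(P,M)\cong \ker(M^T\to M^B)$ is $Z(\cO[G])$-equivariant, which is clear since $Z(\cO[G])$ acts compatibly on $M$ and on $e\Ind_T^G\cO$;
\item $z_\alpha$ acts on an eigenform $f$ by the scalar $c_\rho(\alpha)$, where $\rho$ is the irreducible $G$-representation generated by $f$ inside $M\otimes F$.
\end{itemize}

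Next I determine $\rho$ for an eigenform $f$ in $M_k(\Gamma\cap\Gamma_0(p^2),\cO)_\m$, viewed via \eqref{eq:Gamma0(p,p) isoto Gamma0(p^2)} and \cref{lem:ker trace = ker Up} as a $T$-fixed vector killed by the trace to $B$.

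\emph{Cuspidal $f$.} By \cref{lem:reduce to Kp-fixed rep} and \cref{lem:invariants of Qp reps}, $f$ lies in $M(\lambda)$, which is a sum of copies of $\pi_{f,p}^{K_p}$. This is $R_T^G(\chi)$, $\St_\chi$ or $R_{T'}^G(\theta)$ according to the type of $\pi_{f,p}$. Since $f$ is an eigenform and each of these spaces is irreducible, $f$ generates a single $\rho$ of that isomorphism type. The one exception is the unramified case: there $\pi_{f,p}^{K_p}=\Ind_B^G 1=1\oplus\St$. The kernel of the trace meets the $1$-isotypic part trivially, so $f$ lies in the $\St$-part, and $e_\St$ acts by $1$.

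\emph{Eisenstein $f$.} Use \cref{lem:block for Eisenstein series} in the same way. The trace-zero $T$-invariants of $\Ind_B^G\chi_f$ lie in $R_T^G(\chi_f)$ when $\chi_f^2\ne1$, and in $\St$ when $\chi_f=1$.

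Finally I read off the coefficients. In the principal block for $p\equiv 1$, \eqref{eq:f_alpha principal p=1} gives $c_\rho([\alpha]+[\alpha^{-1}])$ equal to $\chi(\alpha)+\chi^{-1}(\alpha)$ on $R_T^G(\chi)$ and to $2=\mathbbm{1}^+$ on $\St$. Here I must use that $\chi|_\Delta$ is what matters, so I would write $\chi$ for its restriction to $\Delta$. The principal block for $p\equiv-1$ uses \eqref{eq: f alpha} in the same way. In the nilpotent cases, \eqref{eq:p=1 falpha} and \eqref{eq:p=-1 falpha} give $f_\chi(\alpha)=\sum_\xi(\chi\xi)(\alpha)e_{R_T^G(\chi\xi)}$. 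For $\rho=R_T^G(\psi)$ with $\psi=\chi\xi$, the scalar is $\psi(\alpha)$. The representative $\psi\in\{\psi,\psi^{-1}\}$ with $\psi\equiv\chi\pmod\varpi$ is singled out because $\bar\chi^2\neq 1$.

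The block-type claims in each case, such as ``$\pi_{f,p}$ is principal series or special'', follow from \cref{lem:determining the block Bm} together with \cref{sec: blocks}. For instance, supercuspidals with $p\equiv1$ have $\ell'$-dimension-type blocks different from the principal and nilpotent ones, and Eisenstein series never lie in the vexing blocks.

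The main obstacle I expect is the first step: checking that the abstract isomorphism $\cO[\Delta]^{(+)}\cong\End(P)$ from Paige and Broué--Puig is indeed induced by these central elements, compatibly with the identification of \cref{prop:action existence}. I would rely on the explicit formulas stated after \cref{lem: end of P_pi} and in \eqref{eq:f_alpha principal p=1} as the definition of the action. If needed, I would verify equivariance over $F$, where everything decomposes into isotypic pieces.
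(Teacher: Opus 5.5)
Your proposal is correct and follows essentially the same route as the paper: you identify the irreducible $G$-constituent attached to $f$ via \cref{lem:block for Eisenstein series} and \cref{lem:invariants of Qp reps}, and then read off the scalar from the explicit central elements \eqref{eq:f_alpha principal p=1}, \eqref{eq:p=1 falpha}, \eqref{eq: f alpha}, \eqref{eq:p=-1 falpha}. Two small remarks: the trace argument in the unramified case is harmless but unnecessary, since $e_1$ and $e_\St$ both carry coefficient $2$ in the principal-block formulas; and for $p\equiv 1\pmod{\ell}$ the supercuspidals are excluded because $R_{T'}^G(\theta)$ lies in a defect-zero block (its dimension $p-1$ contains the full $\ell$-part of $|G|$), not because of an ``$\ell'$-dimension'' property.
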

\begin{proof}
The description of the blocks is given by \cref{lem:determining the block Bm}, and the determination of the representation of $G$ associated with the eigensystem of $f$ is given by \cref{lem:block for Eisenstein series} in the Eisenstein case and \cref{lem:invariants of Qp reps} in the cuspidal case. The result follows from the explicit description of the maps $\cO[\Delta]^+ \to Z(e\cO[G])$ for the principal blocks, given by \eqref{eq:f_alpha principal p=1} and \eqref{eq: f alpha}, and of the maps $\cO[\Delta] \to Z(e\cO[G])$ for the nilpotent blocks, given by \eqref{eq:p=1 falpha} and \eqref{eq:p=-1 falpha}.
\end{proof}

\subsection{Exceptional maximal ideals}\label{sec:exceptional max}
\cref{prop:projectivity of modular forms} shows that $S_k(\Gamma \cap \Gamma(p),\cO)^Z$ is a projective $\cO[G]$-module for all $k>2$. For $k=2$, the obstruction in the proof comes from the fact that $H^*(X(\Gamma \cap \Gamma(p)),\Omega^1)^Z$ is not concentrated in degree zero. This obstruction vanishes locally at most maximal ideals in the Hecke algebra.

\begin{lemma}
\label{lem:generic projectivity}
    Let $\m' \subset \T'(\Gamma \cap \Gamma(p))$ be such that $H^1(X(\Gamma \cap \Gamma(p))/Z,\Omega^1)_{\m'}=0$. Then $S_2(\Gamma \cap \Gamma(p),\cO)^Z_{\m'}$ is a projective $\cO[G]$-module.
\end{lemma}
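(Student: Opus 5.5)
We run the argument of \cref{prop:projectivity of modular forms} again, this time keeping track of the Hecke action and localizing at $\m'$. Set $X=X(\Gamma\cap\Gamma(p))$ and $\cF=\Omega^1$. By \cref{lem:tameness}, Nakajima's theorem (\cref{thm:nakajima}) gives a perfect complex $C^\bullet$ of $\F[H]$-modules computing $H^*(X_{/\F},\Omega^1)$. The difficulty is that the anemic Hecke operators are correspondences. They act on the cohomology, but not obviously on $C^\bullet$, so we cannot localize the complex directly. To get around this, we first pass to $Z$-invariants and then use a finiteness argument.

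The first step is to replace $H$ by $G$. Since $|Z|$ is prime to $\ell$, taking $Z$-invariants is exact and sends projective $\F[H]$-modules to projective $\F[G]$-modules. Hence $(C^\bullet)^Z$ is a perfect complex of $\F[G]$-modules computing $H^*(X,\Omega^1)^Z=H^*(X/Z,\Omega^1)$. The second step is to handle $H^1$ integrally. The base-change computation in the proof of \cref{prop:projectivity of modular forms} shows that $H^1(X_{/\cO},\Omega^1)$ is $\cO$-free, so $H^0$ commutes with reduction mod $\varpi$. All of these identifications are compatible with the operators $T_q$ for $q\neq p$, since these commute with $G$ by \cref{lem:Hecke alg properties}. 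Consequently, $S_2(\Gamma\cap\Gamma(p),\cO)^Z_{\m'}\otimes\F\cong H^0(X_{/\F},\Omega^1)^Z_{\m'}$. By the hypothesis together with freeness, $H^1(X_{/\F},\Omega^1)^Z_{\m'}=0$.

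The key step is to obtain $\F[G]$-projectivity of $H^0(X_{/\F},\Omega^1)^Z_{\m'}$. The plan is as follows. The anemic Hecke algebra $\T'_\F$ acting on the finite-dimensional space $H^*$ is semilocal and commutes with $G$. Therefore $H^*=\bigoplus_{\n}H^*_{\n}$ is a decomposition of $\F[G]$-modules, indexed by the maximal ideals $\n$ of $\T'_\F$, and $H^0_{\m'}$ is an $\F[G]$-direct summand of $H^0$. Projectivity of a summand follows from projectivity of the whole, but $H^0$ itself need not be projective, so this alone is not enough. Instead we use the criterion that a finitely generated $\F[G]$-module $V$ is projective exactly when $\hat H^i(Q,V)=0$ for the Sylow subgroup $Q=\Delta$ (equivalently, when $V|_\Delta$ is free). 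For $V=H^0_{\m'}$, Tate cohomology is computed by the perfect complex. Concretely, the hypercohomology spectral sequence for $\hat H^*(\Delta,(C^\bullet)^Z)$, which vanishes because the complex is perfect, has only the two rows $\hat H^*(\Delta,H^0)$ and $\hat H^*(\Delta,H^1)$. It therefore yields isomorphisms $\hat H^i(\Delta,H^0)\cong\hat H^{i+2}(\Delta,H^1)$. These isomorphisms are natural in the coefficients, so they commute with the Hecke operators, which act on Tate cohomology through their action on $H^0$ and $H^1$. Granting this Hecke-equivariance, we localize at $\m'$ and get $\hat H^i(\Delta,H^0_{\m'})\cong\hat H^{i+2}(\Delta,H^1_{\m'})=0$. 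Hence $H^0_{\m'}$ is $\F[G]$-projective.

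Finally, $S_2(\Gamma\cap\Gamma(p),\cO)^Z_{\m'}$ is $\cO$-free and its reduction mod $\varpi$ is $\F[G]$-projective, so it is $\cO[G]$-projective, as in the last line of the proof of \cref{prop:projectivity of modular forms}.

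The main obstacle is the Hecke-equivariance of the connecting isomorphism in the key step. The differential $d_2\colon \hat H^i(\Delta,H^1)\to\hat H^{i+2}(\Delta,H^0)$ comes from the extension class of $C^\bullet$ in $\mathrm{Ext}^2_{\F[G]}(H^1,H^0)$. We need this class to be compatible with Hecke correspondences. If that compatibility cannot be verified directly, the fallback is to realize the Hecke operators as endomorphisms of $C^\bullet$ in the derived category of $\F[G]$-modules, via functoriality of Nakajima's construction for equivariant correspondences on the tame cover. Then $d_2$ is Hecke-equivariant, and the localization argument goes through.
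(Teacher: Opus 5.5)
Your route is genuinely different from the paper's, and it can be completed. As written, though, it contains one false step and leaves its central step unproved.

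\textbf{The false step.} You pass to $Z$-invariants on the grounds that $|Z|$ is prime to $\ell$. But $Z$ is $\F_p^\times$ or $\F_p^\times/\{\pm1\}$, so $\ell$ divides $|Z|$ whenever $p\equiv 1\pmod{\ell}$, and then $(-)^Z$ is not exact.

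The terms of $(C^\bullet)^Z$ are still $\F[G]$-projective, since fixed points of a projective module under a normal subgroup are projective. However, $(C^\bullet)^Z$ computes derived $Z$-invariants. To identify its cohomology with $H^*(X,\Omega^1)^Z$, and to get $(S_2\otimes\F)^Z=S_2^Z\otimes\F$, you need $Z$-acyclicity.

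That acyclicity does hold. Via its effect on the Weil pairing, $a\in Z$ permutes the geometric components of $X$ by $a\mapsto a^2$, and the stabilizer of a component is $\{\pm1\}$. So, after enlarging scalars, every module in question is induced from $\{\pm1\}$ as a $Z$-module, hence $Z$-acyclic.

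\textbf{The unproved step.} This is the Hecke-equivariance of $d_2$, which you correctly flag. An action of $T_q$ on $H^0$ and on $H^1$ separately says nothing about the class $\xi\in\mathrm{Ext}^2_{\F[G]}(H^1,H^0)$. Moreover, \cref{thm:nakajima} as quoted only gives an isomorphism of cohomology groups.

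Your fallback is the right fix. Made precise, it runs as follows.
\begin{enumerate}
\item You need the derived form of Nakajima's theorem: $R\Gamma(X_{/\F},\Omega^1)$ is isomorphic in $D^b(\F[H])$ to a perfect complex. This is what a \v{C}ech-complex proof produces.
\item The correspondence $T_q=\psi_*\pi^*$ is pullback followed by the trace $\psi_*\Omega^1\to\Omega^1$. Both maps are $H$-equivariant since $q\ne p$, so $T_q$ is an endomorphism of this object, and hence of its derived $Z$-invariants.
\item The truncation triangle $H^0\to R\Gamma\to H^1[-1]\xrightarrow{\xi}H^0[1]$ is functorial on the derived category. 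Because $\Hom_D(H^0[1],H^1[-1])=0$, the map on third terms completing a morphism of triangles is forced to be $H^1(t)[-1]$, so the square involving $\xi$ commutes.
\end{enumerate}
Hence $\xi$ intertwines the Hecke actions, and localizing Tate cohomology at $\m'$ goes through. A minor point: $d_2$ gives $\hat H^{i+2}(\Delta,H^0)\cong\hat H^{i}(\Delta,H^1)$, so your indexing is off, harmlessly by periodicity.

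\textbf{Comparison with the paper.} The paper never touches the complex. It uses the residue sequence on $X=X(\Gamma\cap\Gamma(p))/Z$:
\[
0\to S_2(\Gamma\cap\Gamma(p),\cO)^Z\to M_2(\Gamma\cap\Gamma(p),\cO)^Z\to H^0(X,\Omega_C)\to H^1(X,\Omega^1_X)\to0.
\]
Localizing at $\m'$ kills the last term. The middle term is projective by \cref{prop:projectivity of modular forms}, because the cohomology of $\Omega^1(\log)$ is concentrated in degree $0$. The cusp module is a permutation module with prime-to-$\ell$ stabilizers. So $S_2(\Gamma\cap\Gamma(p),\cO)^Z_{\m'}$ is the kernel of a surjection of projectives, hence projective.

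That argument works directly over $\cO$ and needs only the Hecke-equivariance of an ordinary long exact sequence in sheaf cohomology. It is also reused in \cref{lem:S_2 injective res} to build the explicit resolution needed in the automorphically trivial case. Your method cannot supply that resolution, since it determines $H^0_{\m'}$ only up to projective summands, as a second syzygy of $H^1_{\m'}$.

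In exchange, your argument is more general. For any coherent $G$-sheaf on a tame cover with cohomology in degrees $0$ and $1$, it shows that $H^0_{\m'}$ is projective if and only if $H^1_{\m'}$ is. It does so without needing a companion sheaf like $\Omega^1(\log)$ whose $H^1$ vanishes.
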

\begin{proof}
    To simplify notation, let $X=X(\Gamma \cap \Gamma(p))/Z$ and let $C\subset X$ be the cusps. Consider the exact sequence of sheaves on $X$
    \[
    0 \to \Omega_X^1 \to \Omega_X^1(\log) \to \Omega_{C} \to 0,
    \]
    where $\Omega_{C}$ is a skyscraper sheaf on $C$. Taking the long exact sequence in cohomology yields an exact sequence
    \[
    0 \to H^0(X,\Omega^1_X) \to H^0(X,\Omega^1_X(\log)) \to H^0(X,\Omega_C) \to H^1(X,\Omega_X^1) \to H^1(X,\Omega^1_X(\log))
    \]
    Since invariant global sections are the same as global sections on the quotient, we have
    \[
    H^0(X,\Omega^1_X)=H^0(X(\Gamma \cap \Gamma(p)),\Omega^1)^Z=S_2(\Gamma \cap \Gamma(p),\cO)^Z,
    \]
    and similarly for modular forms. Moreover, $H^1(X,\Omega^1_X(\log))=0$ (just as in the proof of \cref{prop:projectivity of modular forms}), so the above sequence simplifies to the residue sequence
    \begin{equation}
        \label{eq:residue sequence}
            0 \to S_2(\Gamma \cap \Gamma(p),\cO)^Z \to M_2(\Gamma \cap \Gamma(p),\cO)^Z \to H^0(X,\Omega_C) \to H^1(X,\Omega^1_X) \to 0.
    \end{equation}
    By assumption, localizing at $\m'$ annihilates the last term. 
    Since $M_2(\Gamma \cap \Gamma(p),\cO)_{\m'}$ is $\cO[G]$-projective by \cref{prop:projectivity of modular forms} and $H^0(X,\Omega_C)^Z_{\m'}$ is $\cO[G]$-projective by \cref{thm:nakajima} (or by inspection), this implies that $S_2(\Gamma \cap \Gamma(p),\cO)^Z_{\m'}$ is $\cO[G]$-projective.
\end{proof}

We call a maximal ideal $\m' \subset \T'(\Gamma \cap \Gamma(p))$ \emph{exceptional} if it is in the support of $H^1(X(\Gamma \cap \Gamma(p))/Z,\Omega^1)$, and \emph{generic} otherwise. The module $H^1(X(\Gamma \cap \Gamma(p))/Z,\Omega^1)$ has rank two, corresponding to the two geometric connected components of $X(\Gamma \cap \Gamma(p))/Z$. Computing the Hecke action, one finds that there are two exceptional maximal ideals:
\begin{itemize}
    \item the \emph{automorphically trivial} maximal ideal $\m'$, with $T_q-(q+1) \in \m'$ for all $q \nmid Np$ and $U_q-q \in \m'$ for all $q \mid N$, and
    \item the \emph{automorphically quadratic} maximal ideal $\m'$, with $T_q-\eta(q)(q+1) \in \m'$ for all $q \nmid Np$ and $U_q-\eta(q)q \in \m'$ for all $q \mid N$, where $\eta:(\Z/p\Z)^\times \to \cO^\times$ is the nontrivial quadratic character.
\end{itemize}
The localization $H^1(X(\Gamma \cap \Gamma(p))/Z,\Omega^1)_{\m'}$ has rank $1$, with $G$ acting trivially or by $\eta$ in the respective cases.
The automorphically trivial case belongs to the principal block, and we discuss it in detail in \cref{sec:exceptional case} below. The automorphically quadratic case belongs to the block containing $\eta$ and can be treated similarly, but we do not discuss it further.

\subsection{Module structure in the generic case} 
We now use \cref{thm:X^T} to describe $M_k(\Gamma \cap \Gamma_0(p^2), \cO)_\m$ as a module over either $\cO[\Delta]^+$ or $\cO[\Delta]$, with the action given by \cref{prop:action existence}. We also describe the structure of the space of cuspforms $S_k(\Gamma \cap \Gamma_0(p^2), \cO)_\m$, except when $k=2$ and $\m'$ is exceptional. The case where $k=2$ and $\m'$ is automorphically trivial is treated in the next section.

\begin{theorem}\label{thm:modular forms count}
    Let $k \ge 2$ and assume that $\ell>3$ or that $\ell=3$ and $\Gamma$ is rigid. Let $\m' \subset \T'(\Gamma \cap \Gamma_0(p^2))$ be a maximal ideal and $\m \subset \T(\Gamma \cap \Gamma_0(p^2))$ be the maximal ideal containing $\m'$ and $U_p$. 
    Define integers $a_0$ and $a_1$ by
    \begin{align*}
            a_0 &=\rank_\cO M_k(\Gamma,\cO)_{\m'}, \\
            a_1 & =\rank_\cO M_k(\Gamma \cap \Gamma_0(p),\cO)_{\m'} - 2 a_0.
    \end{align*}
    \begin{enumerate}
        \item Assume $p \equiv 1 \pmod{\ell}$.
        \begin{enumerate}
            \item \label{part:modforms p=1 principal} If $\B_\m$ is principal, then there is an isomorphism of $\cO[\Delta]^+$-modules
            \[
            M_k(\Gamma \cap \Gamma_0(p^2),\cO)_\m \cong \cO[\Delta]^{a_0} \oplus (\cO[\Delta]^+)^{a_1}.
            \]
            Moreover, if $a_1=0$, then this is an isomorphism of $\cO[\Delta]$-modules.
            \item\label{part:modforms p=1 nilpotent} If $\B_\m$ is nilpotent, then $M_k(\Gamma \cap \Gamma_0(p^2),\cO)_\m$ is a free $\cO[\Delta]$-module.
        \end{enumerate}
        \item Assume $p \equiv -1 \pmod{\ell}$.
        \begin{enumerate}
            \item\label{part:modforms p=-1 principal} If $\B_\m$ is principal, then there is an isomorphism of $\cO[\Delta]^+$-modules
            \[
            M_k(\Gamma \cap \Gamma_0(p^2),\cO)_\m \cong \cO^{a_0} \oplus (\cO[\Delta]^+)^{a_1}.
            \]
            \item\label{item:vexing} If $\B_\m$ is nilpotent, then $M_k(\Gamma \cap \Gamma_0(p^2),\cO)_\m$ is a free $\cO[\Delta]$-module.
        \end{enumerate}
    \end{enumerate}
Moreover, if $k>2$, or if $k=2$ and $\m'$ is generic, then exactly analogous statements hold with modular forms replaced by cuspforms.
\end{theorem}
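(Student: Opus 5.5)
The plan is to apply \cref{thm:X^T} to $M \coloneqq M_k(\Gamma \cap \Gamma(p),\cO)^Z_{\m'}$ and then translate every term into classical spaces of modular forms.

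First, projectivity and the block. By \cref{prop:projectivity of modular forms}, $M_k(\Gamma\cap\Gamma(p),\cO)$ is $\cO[H]$-projective. Taking $Z$-invariants preserves projectivity. Since the anemic Hecke algebra commutes with $G$, localizing at $\m'$ cuts out a direct summand, which is still projective. By \cref{cor:block of m'}, $M$ is a finitely generated projective $e_\m\cO[G]$-module.

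Second, identify the three fixed spaces.
\begin{itemize}
\item \cref{lem:ker trace = ker Up} and \cref{lem:anemic to full} identify $\ker(M^T\to M^B)$ with $M_k(\Gamma\cap\Gamma_0(p^2),\cO)_\m$, compatibly with the action of \cref{prop:action existence}.
\item Because localization commutes with invariants, $M^B = M_k(\Gamma\cap\Gamma_0(p),\cO)_{\m'}$ and $M^G = M_k(\Gamma,\cO)_{\m'}$.
\end{itemize}
Hence $a_0=\rank_\cO M^G$, and $a_1+2a_0=\rank_\cO M^B$.

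Third, read off each case of \cref{thm:X^T}.
\begin{itemize}
\item For $p\equiv -1$ with principal block, \cref{thm:X^T}\eqref{part:p=-1 principal} gives $\cO^{a}\oplus(\cO[\Delta]^+)^b$ with $a=a_0$ and $b=a_1$.
\item The nilpotent cases are immediate from parts \eqref{part:p=1 nilp} and \eqref{part:vexing}.
\item For $p\equiv 1$ with principal block, \cref{thm:X^T}\eqref{part:p=1 principal} gives $(\cO[\Delta]^-)^{a}\oplus(\cO[\Delta]^+)^{b}$ with $a=a_0$ and $a+b=\rank M^B=a_1+2a_0$, so $b=a_0+a_1$. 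Rewrite this using the decomposition $\cO[\Delta]\cong\cO[\Delta]^+\oplus\cO[\Delta]^-$ as $\cO[\Delta]^+$-modules, which holds because $2$ is invertible. This gives $\cO[\Delta]^{a_0}\oplus(\cO[\Delta]^+)^{a_1}$. If $a_1=0$ then $a=b$, and the $\cO[\Delta]$-upgrade in \cref{thm:X^T}\eqref{part:p=1 principal} gives freeness over $\cO[\Delta]$.
\end{itemize}

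Finally, repeat the argument for cuspforms. When $k>2$, \cref{prop:projectivity of modular forms} gives projectivity of $S_k$ directly. When $k=2$ and $\m'$ is generic, \cref{lem:generic projectivity} does. The identifications of $M^T$ and $M^B$ and the trace/$U_p$ comparison go through verbatim for $S_k$, since all the maps involved preserve cuspidality. The block statement also transfers, because $S_k(\cdots)_{\m'}^Z$ is a $G$-stable direct summand of a module in the block $\B_\m$.

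The main subtlety I expect is the ranks: checking that the rank formulas in \cref{thm:X^T} (stated for $M^G$ and $M^B$) match $a_0$ and $a_1$ exactly. For $S_k$ this means the invariant spaces must be the localized cuspidal spaces, and there I would verify compatibility of localization with invariants using that $\T'$ commutes with $G$.
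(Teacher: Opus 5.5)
Your proposal is correct and follows the paper's proof essentially step for step. The steps are: projectivity from \cref{prop:projectivity of modular forms} (or \cref{lem:generic projectivity} for $S_2$ with $\m'$ generic), the block from \cref{cor:block of m'}, the identifications of $M^G$, $M^B$ and $\ker(M^T\to M^B)$ with the classical spaces, and a case-by-case reading of \cref{thm:X^T}. The one detail the paper states explicitly that you leave implicit is that rewriting $(\cO[\Delta]^-)^{a_0}\oplus(\cO[\Delta]^+)^{a_0+a_1}$ as $\cO[\Delta]^{a_0}\oplus(\cO[\Delta]^+)^{a_1}$ in case (1)(a) requires $a_1\ge 0$. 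This does not follow from \cref{thm:X^T}; it comes from the two $p$-stabilizations (the two degeneracy maps), which embed two copies of $M_k(\Gamma,\cO)_{\m'}$ into $M_k(\Gamma\cap\Gamma_0(p),\cO)_{\m'}$.
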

\begin{proof}
By \cref{prop:projectivity of modular forms}, $M_k(\Gamma \cap \Gamma(p),\cO)^Z$ is a projective $\cO[G]$-module. Since the actions of $\cO[G]$ and $\T'(\Gamma \cap \Gamma(p))$ on it commute by \cref{lem:Hecke alg properties}\eqref{item: G-commute}, the $\T'(\Gamma \cap \Gamma(p))$-direct summand $M_k(\Gamma \cap \Gamma(p),\cO)^Z_{\m'}$ is also $\cO[G]$-projective, and it is a module over $e_\m\cO[G]$ by \cref{cor:block of m'}. The theorem now follows by applying \cref{thm:X^T} to $M=M_k(\Gamma \cap \Gamma(p),\cO)^Z_{\m'}$ and identifying the terms $M^G$, $M^B$ and $\ker(M^T \xrightarrow{\tr} M^B)$ with the appropriate spaces of modular forms. We have $M^G=M_k(\Gamma,\cO)_{\m'}, M^B=M_k(\Gamma \cap \Gamma_0(p),\cO)_{\m'}$, and there is an isomorphism
\[
\ker(M^T \xrightarrow{\tr} M^B) \cong M_k(\Gamma \cap \Gamma_0(p^2),\cO)_{\m}
\]
by \cref{lem:ker trace = ker Up,lem:indecomposble}. Therefore, \cref{thm:X^T} implies \eqref{part:modforms p=1 nilpotent} and \eqref{item:vexing} directly. In the setting of \eqref{part:modforms p=1 principal}, \cref{thm:X^T} implies
\[
M_k(\Gamma \cap \Gamma_0(p^2),\cO)_{\m} \cong (\cO[\Delta]^-)^a \oplus (\cO[\Delta]^+)^b,
\]
where $a= \rank M^G =a_0$ and $a+b=\rank M^B=2a_0+a_1$. Since there are two stabilizations of the $p$-oldforms in $M^B$, we have $a_1 \ge 0$, so this implies
\[
M_k(\Gamma \cap \Gamma_0(p^2),\cO)_{\m} \cong \cO[\Delta]^a \oplus (\cO[\Delta]^+)^{b-a} = \cO[\Delta]^{a_0} \oplus (\cO[\Delta]^+)^{a_1},
\]
proving \eqref{part:modforms p=1 principal}. In the setting of \eqref{part:modforms p=-1 principal}, \cref{thm:X^T} implies
\[
M_k(\Gamma \cap \Gamma_0(p^2),\cO)_{\m} \cong \cO^a \oplus (\cO[\Delta]^+)^b,
\]
where $a=\rank M^G=a_0$ and $2a+b=\rank M^B=2a_0+a_1$, so $b=a_1$, proving \eqref{part:modforms p=-1 principal}.

For the final sentence, the same argument works for cuspforms because $S_k(\Gamma \cap \Gamma(p),\cO)^Z_{\m'}$ is a projective $\cO[G]$-module under the stated hypothesis by \cref{prop:projectivity of modular forms} and \cref{lem:generic projectivity}.
\end{proof}
\begin{remark}
    By \cref{rem:projectivity for p=3}, the theorem also holds for $\ell=3$ if $\Gamma$ is not rigid, so long as there is a prime $q \nmid Np$ with $q \equiv -1 \pmod3$ with $T_q \notin \m'$ (that is, so long as the mod-$3$ Galois representation associated with $\m'$ is not induced from $\Q(\zeta_3)$).
\end{remark}

\subsection{The exceptional automorphically-trivial case}
\label{sec:exceptional case}
We now treat the special case where $k=2$ and $\m'$ is the automorphically trivial maximal ideal with $T_q -q-1 \in \m'$ for all $q \nmid Np$ and $U_q-q \in \m'$ for $q \mid N$. 
\begin{theorem}
\label{thm:cuspforms count eis case}
Let $k=2$ and assume that $\ell>3$ or that $\ell=3$ and $\Gamma$ is rigid. Let $\m' \subset \T'(\Gamma \cap \Gamma_0(p^2))$ be the automorphically-trivial maximal ideal, and let $\m \subset \T(\Gamma \cap \Gamma_0(p^2))$ be the maximal ideal containing $\m'$ and $U_p$. Let  
\begin{align*}
    a_0 &=\rank S_2(\Gamma,\cO)_{\m'}, \\
    a_1 &=\rank S_2(\Gamma \cap \Gamma_0(p),\cO)_{\m'}-2a_0.
\end{align*}

Then $S_2(\Gamma \cap \Gamma_0(p^2), \cO)_\m$ has a natural action of $\cO[\Delta]^+$, and, moreover,
\begin{enumerate}
    \item if $p \equiv 1 \pmod{\ell}$, then there is an exact sequence of $\cO[\Delta]^+$-modules
    \[
    0 \to S_2(\Gamma \cap \Gamma_0(p^2), \cO)_\m \to \cO[\Delta]^{a_0} \oplus (\cO[\Delta]^+)^{a_1} \to \cO[\Delta]^- \to 0.
    \]
    \item if $p \equiv -1 \pmod{\ell}$, then there is an exact sequence of $\cO[\Delta]^+$-modules
    \[
    0 \to S_2(\Gamma \cap \Gamma_0(p^2), \cO)_\m \to \cO^{a_0} \oplus (\cO[\Delta]^+)^{a_1+1} \to \cO \to 0.
    \]
\end{enumerate}
\end{theorem}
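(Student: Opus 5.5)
The plan is to use the residue sequence \eqref{eq:residue sequence}, localized at $\m'$, and to take $\Hom_{\cO[G]}(P,-)$, where $P$ is the representing object of \cref{thm:X^T}: $P=P_\St$ if $p\equiv 1$ and $P=P_\pi$ if $p\equiv -1 \pmod{\ell}$.

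Write $S=S_2(\Gamma\cap\Gamma(p),\cO)^Z_{\m'}$ and $\tilde M=M_2(\Gamma\cap\Gamma(p),\cO)^Z_{\m'}$. Let $C=H^0(X,\Omega_C)_{\m'}$ and $H=H^1(X,\Omega^1_X)_{\m'}$, which is $\cO$ with trivial $G$-action. The localized sequence is then
\[
0\to S\to \tilde M\to C\to \cO\to 0 .
\]
Both $\tilde M$ and $C$ are projective $e\cO[G]$-modules, by \cref{prop:projectivity of modular forms} and \cref{thm:nakajima}; here $e$ is the principal block idempotent, by \cref{cor:block of m'}.

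First, let $K=\ker(C\to\cO)$. Since $P$ is projective, $\Hom(P,-)$ is exact, and this gives
\[
0\to \Hom(P,S)\to\Hom(P,\tilde M)\to\Hom(P,K)\to 0
\]
together with
\[
0\to\Hom(P,K)\to\Hom(P,C)\to\Hom(P,\cO)\to 0 .
\]
By \cref{lem:ker trace = ker Up} and \cref{lem:anemic to full} (applied to cuspforms), $\Hom(P,S)$ is $S_2(\Gamma\cap\Gamma_0(p^2),\cO)_\m$. The natural $\cO[\Delta]^+$-action then comes from $\End(P)$, exactly as in \cref{prop:action existence}.

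Next, I will identify $\Hom(P,\cO)$. For $p\equiv -1$ we have $\Hom(P_\pi,\cO)=0$, since $P_\pi\otimes F$ does not contain the trivial representation. For $p\equiv1$ we have $\Hom(P_\St,\cO)=0$ as well. So $\Hom(P,K)\cong\Hom(P,C)$.

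Then I will decompose the projective modules. Write $C\cong P_1^{c}\oplus P^{d}$, and $\tilde M\cong P_1^{a'}\oplus P^{b'}$ with $a'=\rank\tilde M^G$ and $b'$ determined by $\rank\tilde M^B$. The surjection $C\to\cO$ is nonzero and $P_1$ is the projective cover of $\cO$, so $C$ contains at least one copy of $P_1$.

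I expect this next step to be the main obstacle. It requires showing that $K$ itself is projective of the form $P_1^{c-1}\oplus P^{d}\oplus(\text{correction})$, or else computing $\Hom(P,K)$ directly. The key point is that $\ker(P_1\to\cO)$ is the lattice $\St\cap P_1$, and $\Hom(P,\ker(P_1\to\cO))$ contributes $\Hom(P,P_1)$ but not through the trivial constituent. Concretely:
\begin{itemize}
\item For $p\equiv -1$, $\Hom(P_\pi,P_1)\cong\cO$ with augmentation action, by \cref{lem:hom P1 P_pi}.
\item For $p\equiv 1$, $\Hom(P_\St,P_1)\cong\cO[\Delta]^-$, by \eqref{eq:Homs P_1 and P_st}.
\end{itemize}

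Finally, I will match ranks. Applying \cref{thm:X^T} to $\tilde M$ gives
\[
\Hom(P,\tilde M)\cong
\begin{cases}
\cO[\Delta]^{a_0'}\oplus(\cO[\Delta]^+)^{a_1'}, & p\equiv 1 \pmod{\ell},\\[2pt]
\cO^{a_0'}\oplus(\cO[\Delta]^+)^{a_1'}, & p\equiv -1 \pmod{\ell},
\end{cases}
\]
where $a_0',a_1'$ are the modular-form ranks. These must be rewritten in terms of the cuspidal $a_0,a_1$ of the statement. To do this I will use the level-$\Gamma$ and level-$\Gamma\cap\Gamma_0(p)$ residue sequences, which are obtained by taking $G$- and $B$-invariants of the sequence above.

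Rather than tracking $C$ in full, I plan a cleaner route: take $\Hom(P,-)$ of the short exact sequence $0\to S\to \tilde M\to K\to0$ and compare it with $\Hom(P_1\oplus P,-)$ via the fixed-point ranks of $K$. The aim is to show that, after cancelling the projective summands common to $\tilde M$ and $C$, the cokernel is exactly $\Hom(P,\ker(P_1\to\cO))$. This cokernel is $\cO[\Delta]^-$ for $p\equiv1$ and $\cO$ for $p\equiv-1$.

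The shift from $a_1$ to $a_1+1$ in the $p\equiv -1$ case should come from exactly this bookkeeping. When passing from modular forms to cuspforms, $\rank S^G$ and $\rank S^B$ change relative to $\rank\tilde M^G$ and $\rank\tilde M^B$ by the $G$- and $B$-invariants of $K$, and these drop by one copy of the trivial representation.
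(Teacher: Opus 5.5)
Your strategy is the paper's: localize the residue sequence \eqref{eq:residue sequence} at $\m'$, then apply the exact functor $\Hom_{\cO[G]}(P,-)$ and use $\Hom_{\cO[G]}(P,\cO)=0$. The gap is that the step you flag as the main obstacle is the real content of the proof, and you only state it as an aim. Applied directly, your argument yields $0\to S_2(\Gamma\cap\Gamma_0(p^2),\cO)_\m\to\Hom(P,\tilde M)\to\Hom(P,C)\to 0$. Its middle and right terms are not those in the statement. You cannot cancel the surplus summands $\Hom(P,P_1)^{c-1}\oplus\End(P)^{d}$ afterwards, because that requires the map onto them to split. Since $\Hom(P,P_1)$ ($\cong\cO$ or $\cO[\Delta]^-$) is not a free $\cO[\Delta]^+$-module, this splitting is not automatic.

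The cancellation has to be done with $e\cO[G]$-modules, before applying the functor. Let $g\colon P_1\onto\cO$ be the projective cover, so $\Hom(P_1,\cO)=\cO g$. Since also $\Hom(P,\cO)=0$, after an automorphism of $P_1^c$ the map $C\to\cO$ is $g$ on one summand $P_1$ and zero on a complement $Q\cong P_1^{c-1}\oplus P^d$. Hence $K=\ker(g)\oplus Q$. The composite $\tilde M\to K\to Q$ is a surjection onto a projective, so it splits: $\tilde M\cong\tilde M'\oplus Q$. Restricting gives $0\to S\to\tilde M'\to P_1\to\cO\to0$ with $\tilde M'\cong P_1^a\oplus P^b$; this is \cref{lem:S_2 injective res}. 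Only then does $\Hom(P,-)$ produce a sequence of the stated shape, with cokernel $\Hom(P,P_1)$.

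Your rank bookkeeping is also aimed at the wrong module. \cref{thm:X^T} applied to $\tilde M$ computes $\Hom(P,\tilde M)\cong\Hom(P,\tilde M')\oplus\Hom(P,Q)$. Translating the modular-form ranks would therefore also require $c$ and $d$, which you never determine. It is easier to read $a,b$ off the reduced sequence over $F$, where in the Grothendieck group
\[
[S\otimes F]=(a-1)[P_1\otimes F]+b[P\otimes F]+[1].
\]
Now take $G$- and $B$-invariants. For $p\equiv1\pmod{\ell}$ use \eqref{eqn:constituents}; for $p\equiv-1\pmod{\ell}$ use $P_1\otimes F=1\oplus\St$ and $P_\pi\otimes F=\St\oplus\bigoplus_\xi R_{T'}^G(\xi)$. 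This gives $a=a_0$, together with $b=a_0+a_1$ if $p\equiv1$ and $b=a_1+1$ if $p\equiv-1\pmod{\ell}$. Combined with $\cO[\Delta]^-\oplus\cO[\Delta]^+\cong\cO[\Delta]$, this yields both sequences. The paper obtains the same values of $a$ and $b$ by integral arguments in \cref{lem:S_2 determine a and b}.

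Finally, your claim that $\ker(P_1\to\cO)$ is a lattice in $\St$ is false for $p\equiv1\pmod{\ell}$, where $P_1\otimes F=1\oplus\bigoplus_\chi R_T^G(\chi)$. It is harmless: all you use is $\Hom(P,\ker(P_1\to\cO))=\Hom(P,P_1)$, which follows from exactness and $\Hom(P,\cO)=0$.
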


The rest of the section is devoted to the proof of this theorem. The proof is similar to the proof of \cref{thm:modular forms count}, in that it relies on \cref{prop:projectivity of modular forms} applied to $S_2(\Gamma \cap \Gamma(p),\cO)^Z$, but the difference is that, since this module is not $\cO[G]$-projective, we cannot apply \cref{thm:X^T}.  Instead we use ideas from \cref{sec:block algebras} similar to those used in the proof of \cref{thm:X^T}. If $e$ is the principal-block idempotent, then the ring $e \cO[G]$ has two isomorphism classes of indecomposable  projective modules that we denote by $P_1$ and $P$.  As in \cref{sec:block algebras}, $P_1$ is (a lift of) the projective envelope of the trivial representation, and $P$ is $P_\St$ if $p \equiv 1 \pmod{\ell}$ or $P_\pi$ if $p \equiv -1 \pmod{\ell}$, where $\pi$ is a cuspidal representation in the principal block.

\begin{lemma}
\label{lem:S_2 injective res}
    There is an exact sequence of $e\cO[G]$-modules
    \[
    0 \to S_2(\Gamma \cap \Gamma(p),\cO)_{\m'}^Z \to P_1^a \oplus P^b \to P_1 \to \cO \to 0
    \]
    for some nonnegative integers $a$ and $b$.
\end{lemma}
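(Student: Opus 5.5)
We will localize the residue sequence \eqref{eq:residue sequence} at the automorphically trivial $\m'$ and read off the terms. Write $X=X(\Gamma \cap \Gamma(p))/Z$ and $C$ for its cusps. Localizing \eqref{eq:residue sequence} at $\m'$ gives an exact sequence of $\cO[G]$-modules
\[
0 \to S_2(\Gamma \cap \Gamma(p),\cO)^Z_{\m'} \to M_2(\Gamma \cap \Gamma(p),\cO)^Z_{\m'} \to H^0(X,\Omega_C)_{\m'} \to H^1(X,\Omega^1_X)_{\m'} \to 0.
\]
By \cref{cor:block of m'}, and since $\m'$ lies in the principal block (\cref{sec:exceptional max}), all terms are $e\cO[G]$-modules. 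The plan is to identify the three right-hand terms as follows:
\begin{itemize}
\item the second term as $P_1^a \oplus P^b$;
\item the third term as $P_1$;
\item the fourth term as $\cO$.
\end{itemize}

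\textbf{Step 1.} The second term is $\cO[G]$-projective by \cref{prop:projectivity of modular forms}. Every finitely generated projective $e\cO[G]$-module is a sum of copies of the two indecomposable projectives $P_1$ and $P$, so it has the form $P_1^a\oplus P^b$.

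\textbf{Step 2.} The fourth term has rank one with trivial $G$-action, as noted in \cref{sec:exceptional max}. It is $\cO$-torsion-free, being a localization of $H^1(X,\Omega^1_X)\cong \cO^2$ by the computation in the proof of \cref{prop:projectivity of modular forms}. Hence it is $\cO$ with trivial action.

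\textbf{Step 3.} This is the step requiring real input. The module $H^0(X,\Omega_C)$ is, via residues, the module of $\cO$-valued functions on the cusps of $X$. We describe it as a $G$-module by the same transitivity argument used in \cref{lem:block for Eisenstein series}. The cusps of $X(\Gamma(p)^{\mathrm{can}})$ are indexed by $(\F_p^2-\{0\})/\{\pm1\}$, and after inducing to $\GL_2$ and taking $Z$-invariants, the cusps of $X$ over each cusp of $X(\Gamma)$ form $G/\bar U$ with $\bar U \supseteq U$ the image of $\pm U$. Therefore
\[
H^0(X,\Omega_C) \cong \bigoplus_{c \in C(\Gamma)} \bigoplus_\chi \Ind_B^G \chi ,
\]
with $\chi$ running over characters of $T$. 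Since $|T|$ is prime to $\ell$, each summand $\Ind_B^G\tilde\chi$ is projective. The anemic Hecke action permutes these summands according to the eigensystems $\lambda_\chi$ and the cusp data. Localizing at the automorphically trivial $\m'$ should then select exactly one copy of $e\Ind_B^G\cO$, matching the rank-one term $H^1(X,\Omega^1_X)_{\m'}$. By \cref{lem:end of P_1} in the case $p\equiv -1\pmod{\ell}$, and by \cref{lem:O[G/T]} together with the structure of $e\cO[G/B]$ in the case $p\equiv 1\pmod{\ell}$, this copy is $P_1$.

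More carefully, I expect the components with $\chi$ of $\ell$-power order to survive the localization as well. One should check which summands survive, perhaps using that the Hecke eigenvalues on cusp residues are those of Eisenstein series, and then reassemble. The summands $\Ind_B^G\chi$ with $\chi\neq 1$ of $\ell$-power order, which lie in the principal block, would otherwise contribute copies of $P$. If they do, they can be absorbed into the middle term by splitting off a projective direct summand, since the surjection onto a projective splits. So the robust formulation is: the third term is projective in the principal block and surjects onto $\cO$. Replace it by $P_1$ plus a projective complement, and move that complement into the middle term via the splitting, which changes only $a$ and $b$.

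\textbf{Main obstacle.} The main obstacle is Step 3: determining the cusp module's localization and confirming that the map to $\cO$ can be arranged to be the projective cover $P_1\onto \cO$, with all remaining projective pieces split off. The fallback is the following. Any surjection from a projective $e\cO[G]$-module $Q$ onto $\cO$ factors through the projective cover $P_1 \onto \cO$, so $Q\cong P_1\oplus Q'$ compatibly with the map to $\cO$. Then $\ker(M_2 \to Q')$ modifies the middle term into another projective of the form $P_1^{a}\oplus P^{b}$, using that $M_2\to Q'$ is a surjection onto a projective and hence splits.
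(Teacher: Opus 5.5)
Your route is the paper's. You localize the residue sequence \eqref{eq:residue sequence} at $\m'$, use Serre's theorem for the middle term and $H^1(X,\Omega^1_X)_{\m'}\cong\cO$ for the last term, and then split off projective summands. Your fallback is a correct, more explicit version of the paper's ``easy homological-algebra argument'': lift $Q\onto\cO$ through the projective cover $P_1\onto\cO$, write $Q\cong P_1\oplus Q'$, and replace the middle term by the kernel of the split surjection onto $Q'$. With it, the explicit identification of which cusp summands survive localization is unnecessary.

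The gap is in the one input the fallback needs, namely projectivity of the cusp term $H^0(X,\Omega_C)_{\m'}$. You justify it by writing the cusp module as $\bigoplus_\chi \Ind_B^G\chi$ and saying each summand is projective because $|T|$ is prime to $\ell$. That fails when $p\equiv 1\pmod{\ell}$, since then $\ell\mid p-1=|T|$:
\begin{itemize}
\item $\cO[T]$ does not split integrally into characters, so the decomposition is not valid over $\cO$.
\item No $\Ind_B^G\chi$ is projective. By Mackey, its restriction to the Sylow subgroup $\Delta\subseteq T$ contains the rank-one, non-free summand $\chi|_\Delta$. For example, $\Ind_B^G\cO\cong\cO\oplus\St$ has the trivial module as a direct summand.
\item For the same reason, your identification of a copy of $e\Ind_B^G\cO$ with $P_1$ is false in this case.
\end{itemize}
The fix is to stop one step earlier. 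The stabilizer in $G$ of a cusp of $X$ is a conjugate of $U$, which has order $p$. Via residues, the cusp module is therefore a sum of permutation modules $\cO[G/U]=\Ind_U^G\cO$, and these are projective because $\ell\nmid p$. Its localization at $\m'$ is a direct summand, since the Hecke operators commute with $G$, hence is projective. Alternatively, apply Nakajima's theorem to the tame zero-dimensional $G$-scheme $C$, as the paper does in \cref{lem:generic projectivity}. With that replacement your argument works for both signs of $p$ modulo $\ell$.
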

\begin{proof}
    It is enough to show that there is an exact sequence
     \[
    0 \to S_2(\Gamma \cap \Gamma(p),\cO)_{\m'}^Z \to \tilde C^0 \to \tilde C^1 \to \cO \to 0
    \]
    of $e\cO[G]$-modules, where $\tilde C^i$ are projective. Indeed, since every projective $e\cO[G]$-module is a direct sum of copies of $P$ and $P_1$, and since $\Hom_{\cO[G]}(P,\cO)=0$, an easy homological-algebra argument reduces this sequence to a sequence of the required form.

    Since $H^1(X(\Gamma \cap \Gamma(p))/Z,\Omega^1)_{\m'} \cong \cO$, the required sequence is obtained by localizing the residue sequence \eqref{eq:residue sequence} at $\m'$.
\end{proof}

\begin{lemma}
\label{lem:S_2 determine a and b}
    Let $a$ and $b$ be integers as in \cref{lem:S_2 injective res}. Then $a=\rank S_2(\Gamma,\cO)_{\m'}$ and
    \[
    b = \begin{cases}
        \rank  S_2(\Gamma \cap \Gamma_0(p),\cO)_{\m'}-a & \text{if }p \equiv 1 \pmod{\ell} \\
        \rank S_2(\Gamma \cap \Gamma_0(p),\cO)_{\m'} -2a+1 & \text{if }p \equiv -1 \pmod{\ell}.
    \end{cases}
    \]
\end{lemma}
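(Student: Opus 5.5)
Apply the left-exact functors $\Hom_{\cO[G]}(\cO,-)$ and $\Hom_{\cO[G]}(\cO[G/B],-)$ to the exact sequence of \cref{lem:S_2 injective res}, and compare $\cO$-ranks. Since these functors are exact after tensoring with $F$ (everything is semisimple there), ranks are additive along the four-term sequence. With $S=S_2(\Gamma\cap\Gamma(p),\cO)^Z_{\m'}$ this gives
\[
\rank S^H = a\,\rank P_1^H + b\,\rank P^H - \rank P_1^H + \rank \cO^H
\]
for $H\in\{G,B\}$, where we use $\Hom_{\cO[G]}(\cO[G/H],M)=M^H$. We then identify $S^G=S_2(\Gamma,\cO)_{\m'}$ and $S^B=S_2(\Gamma\cap\Gamma_0(p),\cO)_{\m'}$ exactly as in the proof of \cref{thm:modular forms count}.

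The needed fixed-point ranks come from the decompositions over $F$ recorded in \cref{sec:block algebras}. For $H=G$, we have $P_1^G\cong\cO$ and $P^G=0$ in both cases, by \eqref{eqn:constituents} and by $P_\pi\otimes F=\St\oplus\bigoplus R_{T'}^G(\xi)$. Hence
\[
\rank S_2(\Gamma,\cO)_{\m'} = a - 1 + 1 = a.
\]

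For $H=B$, write $F[G/B]=1\oplus\St$, so $\rank M^B$ is the sum of the multiplicities of $1$ and $\St$ in $M\otimes F$.
\begin{itemize}
\item If $p\equiv1\pmod\ell$, then $P_1\otimes F=1\oplus\bigoplus R_T^G(\chi)$ gives $\rank P_1^B=1$. Also $\rank P_\St^B=1$ and $\rank\cO^B=1$. Therefore $\rank S^B=a+b-1+1=a+b$, so $b=\rank S^B-a$.
\item If $p\equiv-1\pmod\ell$, then $P_1\otimes F=1\oplus\St$ gives $\rank P_1^B=2$, while $\rank P_\pi^B=1$ and $\rank \cO^B=1$. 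Therefore $\rank S^B=2a+b-2+1=2a+b-1$, so $b=\rank S^B-2a+1$.
\end{itemize}

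The main point needing care is the additivity of ranks. It holds because the sequence stays exact after $\otimes_\cO F$, and taking $H$-invariants is exact on $F[G]$-modules. A second point is that the decompositions of $P_1$ and $P$ over $F$ must be the ones for the principal block. These are available because $S$ lies in $e\cO[G]$: the automorphically trivial $\m'$ lies in the principal block by \cref{lem:determining the block Bm} and \cref{cor:block of m'}.
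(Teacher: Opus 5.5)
Your argument is correct and is essentially the paper's: apply $G$- and $B$-invariants to the sequence of \cref{lem:S_2 injective res} and read off ranks from the $F$-decompositions of $P_1$ and $P$. The only difference is bookkeeping. The paper splits the sequence at $K=\ker(P_1\to\cO)$ and argues integrally: it shows $K^G=0$, and $K^B=0$ when $p\equiv 1\pmod{\ell}$, and it uses exactness of $\Hom_{\cO[G]}(P_1,-)$ when $p\equiv -1\pmod{\ell}$. You instead tensor with $F$ and use additivity of ranks along the whole four-term sequence, which treats all cases uniformly.
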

\begin{proof}
Let $K=\ker(P_1 \to \cO)$, so that, by \cref{lem:S_2 injective res}, there is an exact sequence
\begin{equation}
\label{eq:S_2 injective res}
  0 \to S_2(\Gamma \cap \Gamma(p),\cO)_{\m'}^Z \to P_1^a \oplus P^b \to K \to 0.  
\end{equation}
Applying $G$-invariants and noting that $(S_2(\Gamma \cap \Gamma(p),\cO)_{\m'}^Z)^G=S_2(\Gamma,\cO)_{\m'}$, $P_1^G=\cO$ and $P^G=0$, there is an exact sequence
\[
0 \to S_2(\Gamma,\cO)_{\m'} \to \cO^a \to K^G.
\]
Applying $G$-invariants to the exact sequence $0 \to K \to P_1 \to \cO \to 0$ yields
\[
0 \to K^G \to \cO \to \cO \to H^1(G,K) \to 0.
\]
Since $H^1(G,K)$ is finite, this implies $K^G=0$, so there is an isomorphism $S_2(\Gamma,\cO)_{\m'} \cong \cO^a$, giving the desired formula for $a$.

Now assume that $p \equiv 1 \pmod{\ell}$. Note that $(S_2(\Gamma \cap \Gamma(p),\cO)_{\m'}^Z)^B=S_2(\Gamma \cap \Gamma_0(p),\cO)_{\m'}$, and that, since $(-)^B=\Hom_{\cO[G]}(1 \oplus \St,-)$, we have $P_1^B=\cO$ and $P^B=\cO$. Therefore, applying $B$-invariants to \eqref{eq:S_2 injective res} yields an exact sequence
\[
0 \to S_2(\Gamma \cap \Gamma_0(p),\cO)_{\m'} \to \cO^{a+b} \to K^B.
\]
A similar argument to the above proof that $K^G=0$ shows that $K^B=0$ in this case, so this gives the desired formula for $b$.

Finally, assume that $p \equiv -1 \pmod{\ell}$. In this case $(-)^B=\Hom_{e\cO[G]}(P_1,-)$ is an exact functor, and applying it to  \eqref{eq:S_2 injective res} yields an exact sequence
\[
0 \to S_2(\Gamma \cap \Gamma_0(p),\cO)_{\m'} \to \End_{\cO[G]}(P_1)^a \oplus \Hom_{\cO[G]}(P_1,P)^b \to \Hom_{\cO[G]}(P_1,K) \to 0.
\]
Since $\rank \End_{\cO[G]}(P_1)=2$, this implies that
\[
2a+b = \rank S_2(\Gamma \cap \Gamma_0(p),\cO)_{\m'} + \rank \Hom_{\cO[G]}(P_1,K).
\]
It remains to show $\rank \Hom_{\cO[G]}(P_1,K) =1$.
Applying the same functor to the exact sequence $0 \to K \to P_1 \to \cO \to 0$ yields the exact sequence
\[
0 \to \Hom_{\cO[G]}(P_1,K) \to \End_{\cO[G]}(P_1) \to \cO \to 0.
\]
Since $\rank \End_{\cO[G]}(P_1)=2$, this implies $\rank \Hom_{\cO[G]}(P_1,K) =1$, as desired.
\end{proof}

\begin{proof}[Proof of \cref{thm:cuspforms count eis case}]
Let $S=S_2(\Gamma \cap \Gamma_0(p^2),\cO)_\m$. Just as in the proofs of \cref{thm:X^T,thm:modular forms count}, the space $S$ can be identified with $\Hom_{\cO[G]}(P,S_2(\Gamma \cap \Gamma(p),\cO)_{\m'}^Z)$. Since $\End_{\cO[G]}(P) \cong \cO[\Delta]^+$, this provides the claimed $\cO[\Delta]^+$-module structure on $S$.
Applying the exact functor $\Hom_{\cO[G]}(P,-)$ to the exact sequence in \cref{lem:S_2 injective res}, and noting that $\Hom_{\cO[G]}(P,\cO) =0$, yields an exact sequence
\[
0 \to S \to \Hom_{\cO[G]}(P,P_1)^a \oplus \End_{\cO[G]}(P)^b \to \Hom_{\cO[G]}(P,P_1) \to 0.
\]
The hom groups are identified with $\cO[\Delta]^\pm$ if $p \equiv 1 \pmod{\ell}$ by \eqref{eq:Homs P_1 and P_st} and with $\cO$ or $\cO[\Delta]^+$ if $p \equiv -1 \pmod{\ell}$ by \cref{lem:end of P_1,lem: end of P_pi,lem:hom P1 P_pi}. Then \cref{lem:S_2 determine a and b} gives the formula for $a$ and $b$, completing the proof.
\end{proof}

\section{Counting newforms}\label{sec:counting newforms}

In this section, we use \cref{thm:modular forms count,thm:cuspforms count eis case} to count the dimension of the space of $p$-newforms of level $\Gamma \cap \Gamma_0(p^2)$. In the case that $\Gamma=\Gamma_0(N)$ for an integer $N$, we use this count inductively, together with an inclusion-exclusion-type argument, to count the dimension of the space of newforms. 

We retain the same notation as in the previous section.  In particular, $k \ge 2$ is an integer, $\Gamma$ is a level structure of the form $\Gamma_0(N)$ or $\Gamma_1(N)$ with $\gcd(N,\ell p)=1$, and we assume that either $\ell > 3$ or $\Gamma$ is rigid.  As usual, $\m' \subset \T'(\Gamma \cap \Gamma_0(p^2))$ is a maximal ideal in the anemic Hecke algebra, $\m \subset \T(\Gamma \cap \Gamma_0(p^2))$ is the maximal ideal containing $\m'$ and $U_p$, and $\B_\m$ is the block associated with $\m'$. 

In this section, we focus on maximal ideals $\m'$ such that $\B_\m$ is the principal block, so that $\m'$ is either generic or automorphically trivial. Define an integer $\delta$ by
\[
\delta =\begin{cases}
    1 & \text{if }k=2 \text{ and }\m'\text{ is automorphically trivial}, \\
    0 & \text{otherwise}.
\end{cases}
\]
In other words, $\delta$ is a Kronecker delta symbol that detects whether or not we are in the exceptional case. The formulae for the dimension of the space of $p$-new cuspforms depend on $\delta$, which reflects the difference between \cref{thm:modular forms count} (for $\delta=0$) and \cref{thm:cuspforms count eis case} (for $\delta=1$). We exploit this dependence on $\delta$ to prove the existence of newforms in some cases when $\delta=1$ and use Hida theory to deduce related results for $k>2$ with $k \equiv 2 \pmod{\ell-1}$.

\subsection{Counting \texorpdfstring{$p$}{}-newforms}
\cref{prop:action on eigenforms} describes the part of $M_k(\Gamma \cap \Gamma_0(p^2),\cO)_\m$ on which $\cO[\Delta]$ or $\cO[\Delta]^+$ acts trivially. For instance, in the nilpotent case, the part of $M_k(\Gamma \cap \Gamma_0(p^2),\cO)_\m$ on which the $\cO[\Delta]$-action is trivial is exactly the part that is minimally ramified at $p$, and the rank of $M_k(\Gamma \cap \Gamma_0(p^2), \cO)_\m$ as an $\cO[\Delta]$-module is the $\cO$-rank of this $p$-minimal part.

In the principal case, the part on which $\cO[\Delta]^+$ acts trivially is the $p$-old part. Let $M_k(\Gamma \cap \Gamma_0(p^2),\cO)^{p\dnew}$ be the saturation of the span of the cuspidal eigenforms that have conductor $p^2$ at $p$ (that is, those that are either supercuspidal or ramified principal series at $p$) together with the Eisenstein series whose prime-to-$p$ eigensystems do not arise in lower level.

\begin{corollary}\label{cor:newform count}
    Suppose that $\B_\m$ is the principal block. Then $M_k(\Gamma \cap \Gamma_0(p^2),\cO)_\m^{p\dnew}$ is the kernel of the augmentation map
    \[
    M_k(\Gamma \cap \Gamma_0(p^2),\cO)_\m \to M_k(\Gamma \cap \Gamma_0(p^2), \cO)_\m \otimes_{\cO[\Delta]^+} \cO,
    \]
    and the analogous statement holds for cuspforms.
    
    Moreover,
    \[
    \frac{2}{|\Delta|-1} \rank M_k(\Gamma \cap \Gamma_0(p^2),\cO)_\m^{p\dnew} = \begin{cases}
        \rank M_k(\Gamma \cap \Gamma_0(p),\cO)_{\m'} & \text{if }p \equiv 1 \pmod{\ell} \\
        \rank M_k(\Gamma \cap \Gamma_0(p),\cO)_{\m'}^{p\dnew} & \text{if }p \equiv -1 \pmod{\ell}.
    \end{cases}
    \]
and
    \[
    \frac{2}{|\Delta|-1} \rank S_k(\Gamma \cap \Gamma_0(p^2),\cO)_\m^{p\dnew} = \begin{cases}
        \rank S_k(\Gamma \cap \Gamma_0(p),\cO)_{\m'}-\delta & \text{if }p \equiv 1 \pmod{\ell} \\
        \rank S_k(\Gamma \cap \Gamma_0(p),\cO)_{\m'}^{p\dnew}+\delta & \text{if }p \equiv -1 \pmod{\ell}.
    \end{cases}
    \]
\end{corollary}
\begin{proof}
For both modular forms and cuspforms, the kernel of the augmentation and the space of $p$-newforms contain the same eigenforms by \cref{prop:action on eigenforms}. Since they are both $\cO$-saturated in the full space, they are both equal to the saturation of the span of those eigenforms.

Now assume that $p \equiv 1 \pmod{\ell}$. By \cref{thm:modular forms count},
\[
M_k(\Gamma \cap \Gamma_0(p^2),\cO)_\m \cong (\cO[\Delta])^{a_0} \oplus (\cO[\Delta]^+)^{a_1},
\]
and the kernel of the augmentation has $\cO$-rank equal to
\[
a_0(|\Delta|-1) + a_1 \frac{|\Delta|-1}{2} = (a_1+2a_0)  \frac{|\Delta|-1}{2} = \rank M_k(\Gamma \cap \Gamma_0(p),\cO)_{\m'} \cdot \frac{|\Delta|-1}{2}.
\]
If $p \equiv -1 \pmod{\ell}$, then
\[
M_k(\Gamma \cap \Gamma_0(p^2),\cO)_\m \cong \cO^{a_0} \oplus (\cO[\Delta]^+)^{a_1},
\]
and the kernel of the augmentation has $\cO$-rank equal to $a_1 \frac{|\Delta|-1}{2}$, where $a_1 = \rank M_k(\Gamma \cap \Gamma_0(p),\cO)_{\m'}^{p\dnew}$.

For cuspforms, if $\delta =0$, then the proof is exactly the same, so assume $\delta=1$. If $p \equiv 1 \pmod{\ell}$, then by \cref{thm:cuspforms count eis case}, 
as a virtual $\cO[\Delta]^+$-module, $S_2(\Gamma \cap \Gamma_0(p^2),\cO)_\m$ is isomorphic to
\[
\cO[\Delta]^{a_0-1} \oplus (\cO[\Delta]^+)^{a_1+1}.
\]
Thus the kernel of the augmentation has $\cO$-rank
\[
(2(a_0-1)+(a_1+1)) \frac{|\Delta|-1}{2} = (2a_0+a_1-1)\frac{|\Delta|-1}{2}.
\]
Since $2a_0+a_1=\rank S_k(\Gamma \cap \Gamma_0(p),\cO)_{\m'}$, this gives the desired result. If $p \equiv -1 \pmod{\ell}$, then by \cref{thm:cuspforms count eis case},
as a virtual $\cO[\Delta]^+$-module, $S_2(\Gamma \cap \Gamma_0(p^2),\cO)_\m$ is isomorphic to
\[
\cO^{a_0-1} \oplus (\cO[\Delta]^+)^{a_1+1}.
\]
Thus the kernel of the augmentation has $\cO$-rank
\[
(a_1+1) \frac{|\Delta|-1}{2} = (\rank S_k(\Gamma \cap \Gamma_0(p),\cO)_{\m'}^{p\dnew}+1) \frac{|\Delta|-1}{2}.\qedhere
\]
\end{proof}

\subsection{Counting newforms}
We now specialize to the case when $\Gamma=\Gamma_0(N)$ for an integer $N$.  Since this is never rigid, we now assume that $\ell > 3$. Since we are only concerned with this type of level, we simplify the notation as follows. For an integer $M$, let $M_k(M) = M_k(\Gamma_0(M), \cO)$ and $S_k(M)=S_k(\Gamma_0(M),\cO)$.   For a divisor $d \mid M$, let $\T(M)^{(d)}$ be the algebra of endomorphisms of $M_k(M)$ generated by $T_n$ for $\gcd(d,n)=1$. Note that for a maximal ideal $\n' \subset \T(Np^2)^{(Np)}$, the localization $\T(Np^2)^{(p)}_{\n'}$ is the product of the local rings $\T(Np^2)^{(p)}_{\m'}$, where $\m'$ runs over maximal ideals $\m'$ of $\T(Np^2)^{(p)}$ that contain $\n'$. Hence we may apply the results of the previous section to each of the ideals $\m'$ separately in order to obtain an analogous result for the $\n'$.  We use this fact repeatedly in this section without further comment.  Moreover, we have the following.

\begin{lemma}\label{lem:block indep of m' over n'}
Let $\m_1', \m_2' \subset \T(Np^2)^{(p)}$ be maximal ideals containing a maximal ideal $\n'$ of $\T(Np^2)^{(Np)}$.  Then $\B_{\m_1} = \B_{\m_2}$.
\end{lemma}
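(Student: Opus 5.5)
The plan is to reduce to the single-block statement \cref{prop:block of m}, using the prime-to-$N$ part of the Galois/automorphic data. The difficulty is that $\m_1'$ and $\m_2'$ agree on $T_q$ for $q\nmid Np$ but may differ on the operators $U_q$ for $q\mid N$. So \cref{lem:indecomp2} cannot be applied directly to the localization at $\n'$; the module $M_k(Np^2)_{\n'}$ is in general decomposable as a Hecke module.

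Instead, I will show that the block $\B_{\m_i}$ can be read off from any eigenform in $M_k(\Gamma_0(Np^2),\cO)_{\m_i}$, and that it depends only on the $T_q$-eigenvalues for $q\nmid Np$. By \cref{lem:determining the block Bm}, $\B_{\m_i}$ is determined by the $p$-component $\pi_{f,p}$, or by $\chi_f$ in the Eisenstein case, of any eigenform $f$ in the $\m_i$-part. More precisely, it is determined by the mod-$\varpi$ reduction of the central character of the $G$-representation $M(\lambda_f)$, via \cref{lem:block for Eisenstein series} and \cref{lem:invariants of Qp reps}.

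Choose eigenforms $f_1,f_2$ in the $\m_1$- and $\m_2$-parts. Their anemic eigenvalues away from $Np$ are congruent mod $\varpi$, so the associated $\ell$-adic Galois representations have isomorphic semisimplified reductions $\rhobar$ by Chebotarev and Brauer--Nesbitt. The next step is to identify $\B_{\m_i}$ in terms of $\rhobar$; this is the main obstacle. Here I would proceed as follows.

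First, the nilpotent versus principal dichotomy in \cref{lem:determining the block Bm} is governed by the restriction of $\rhobar$ to inertia at $p$. By local--global compatibility, $\rho_{f}|_{I_p}$ factors through tame inertia. It is given by $\chi\oplus\chi^{-1}$ (up to the twist by the cyclotomic character, which is unramified at $p$) for $I(\tilde\chi)$, by a unipotent representation for $\sigma(\mathbbm 1)$, and by $\theta\oplus\theta^p$ for $\pi_{\tilde\theta}$. Reducing mod $\varpi$ kills exactly the $\ell$-power parts.

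Second, from $\rhobar|_{I_p}$ one recovers the unordered pair $\{\bar\chi,\bar\chi^{-1}\}$ or $\{\bar\theta,\bar\theta^p\}$, and hence the pinning character class. By \cref{sec: blocks}, this class determines the block. The principal block corresponds to a trivial pair. The only overlap to check is that $\bar\chi^{\pm1}$ trivial and $\bar\theta$ trivial both give the principal block, which is consistent with the lists in \cref{sec: blocks}.

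In the Eisenstein case I would instead use the explicit eigenvalues $\psi(q)+\psi^{-1}(q)q^{k-1}$, which determine $\{\bar\psi,\bar\psi^{-1}\}$ restricted to $\F_p^\times$. The same conclusion then follows.

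If one wants to avoid Galois representations, an alternative for the hardest step is to work with the $\cO[G]$-action directly. Set $M=M_k(\Gamma_0(N)\cap\Gamma(p),\cO)^Z_{\n'}$ with $\n'$ the image in the anemic algebra away from $Np$. The operators $U_q$ for $q\mid N$ commute with $G$, so $M$ splits over the $\m_i'$. It then suffices to show that the $Z(\cO[G])$-action on the $\n'$-part factors through a local ring. I would attempt this through a level-lowering at $N$: $U_q$-eigen-decompositions arise from degeneracy maps, which are $G$-equivariant and commute with $Z(\cO[G])$. Each $\m_i$-part therefore embeds $Z(\cO[G])$-equivariantly into a sum of copies of $M_k(\Gamma_0(N')\cap\Gamma_0(p^2))$ for the newform level $N'\mid N$. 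On these the eigensystem away from $Np$ determines the block by the same indecomposability argument as \cref{lem:indecomp2}, applied at level $N'p^2$ with the full Hecke algebra at primes dividing $N'$ replaced by newform multiplicity one.
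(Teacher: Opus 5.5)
Your main argument is correct, but it follows a different route from the paper's. The two proofs differ in what they require of the eigenforms $f_1,f_2$:
\begin{itemize}
\item \emph{The paper} stays on the automorphic side. It takes a minimal prime of $\T(Np^2)^{(Np)}_{\n'}$ and uses going-up/lying-over to produce eigenforms $f_i\in M_k(Np^2)_{\m_i'}$ with the \emph{same} characteristic-zero eigensystem away from $Np$. The two forms then have the same automorphic representation, hence the same $\pi_p$ (or $\chi_f$), and \cref{lem:determining the block Bm} gives $\B_{\m_1}=\B_{\m_2}$ with no Galois input.
\item \emph{You} let $f_1,f_2$ be merely congruent and show that the block is a function of $\rhobar|_{I_p}^{ss}$. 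For this you use Chebotarev and Brauer--Nesbitt, Carayol's local--global compatibility, and the explicit local Langlands correspondence on inertia. This is a sharpening of \cref{lem:block and rhobar}, which the paper proves only later, in \cref{sec:Galois}.
\end{itemize}

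What each buys: the paper's version is shorter and fits its automorphic philosophy. However, it needs a characteristic-zero eigensystem occurring in both $M_k(Np^2)_{\m_1'}$ and $M_k(Np^2)_{\m_2'}$. Lying over for both $i$ requires the chosen minimal prime to contain the kernels of both maps $\T(Np^2)^{(Np)}_{\n'}\to\T(Np^2)^{(p)}_{\m_i'}$, and these maps need not be injective. Your argument uses only congruences, so it never needs a common eigensystem. The price is the Galois-theoretic input.

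Two points to tighten.

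First, the lemma concerns arbitrary $\m_i'$, so you must check that ``the residual inertial pair determines the block'' on \emph{all} blocks, not only the principal and nilpotent ones listed in \cref{sec: blocks}. This includes the $\eta$-block and the defect-zero blocks: $R_{T'}^G(\theta)$ when $p\equiv 1\pmod{\ell}$, and $R_T^G(\chi)$ with $\chi^2\neq 1$ when $p\equiv -1\pmod{\ell}$. The key observation is about when a residual principal-series pair $\{\bar\chi,\bar\chi^{-1}\}$ can equal a residual supercuspidal pair $\{\bar\theta,\bar\theta^{p}\}$. The first pair is pulled back to tame inertia through the norm to $\F_p^\times$; the second is trivial on $\F_p^\times$. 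They can therefore coincide only if $\bar\chi^2=1$. So besides the trivial overlap you noted, there is a quadratic one. It is consistent: $\St_\eta$, the $R_T^G(\chi)$ with $\bar\chi=\eta$, and (for $p\equiv-1\pmod{\ell}$) the $R_{T'}^G(\theta)$ with $\bar\theta$ quadratic all lie in the $\eta$-block.

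Second, your closing non-Galois alternative is unnecessary, and as sketched it does not work, for three reasons:
\begin{itemize}
\item The old/new decomposition exists only after inverting $\ell$.
\item Forms that are new at $N$ do not embed into any level $N'p^2$ with $N'\neq N$, so for them you are forced to take $N'=N$.
\item The indecomposability in \cref{lem:indecomp2} rests on duality with the full Hecke algebra, including $U_q$ for $q\mid N$. So at $N'=N$ the original difficulty returns unchanged.
\end{itemize}
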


\begin{proof}
Let $I \subset \T(Np^2)^{(Np)}_{\n'}$ be a height-one prime ideal. Then, by the going-up theorem, for $i=1,2$, there are height-one prime ideals $I_i \subset \T(Np^2)^{(p)}_{\m_i'}$ lying over $I$. These ideals correspond to eigenforms $f_i \in M_k(Np^2)_{\m_i'}$ that have the same $\T(Np^2)^{(Np)}$-eigensystem. By \cref{lem:determining the block Bm}, the blocks $\B_{\m_1}$ and $\B_{\m_2}$ are determined by the $\T(Np^2)^{(Np)}$-eigensystems of $f_1$ and $f_2$, respectively, and hence are equal.
\end{proof}

In light of \cref{lem:block indep of m' over n'}, for any maximal ideal $\n' \subset \T(Np^2)^{(Np)}$ we write $\B_{\n'} \coloneqq \B_{\m'}$ for any maximal ideal $\m' \subset \T(Np^2)^{(p)}$ containing $\n'$. We say that $\n'$ is \emph{automorphically trivial} if it is contained in the automorphically-trivial maximal ideal, and we define $\delta$ analogously by
\[
\delta = \begin{cases}
    1 & \text{if }k=2\text{ and }\n'\text{ is automorphically trivial,} \\
    0 & \text{otherwise.}
\end{cases}
\]

Before proceeding, we require some notation and counting results for newforms. Let $S_k(N)^{\new}$ denote the space of newforms. For each pair of integers $n$, $M$ such that $M \mid N$ and $n \mid \frac{N}{M}$, let 
\[
\iota_n: S_k(M) \to S_k(N)
\]
be the map given by $f(z) \mapsto f(nz)$. Recall from the theory of newforms (see \cite[Theorem 5.8.3]{DiamondShurman}) that
\[
S_k(N,F) = \bigoplus_{M \mid N} \bigoplus_{n \mid \frac{N}{M}} \iota_n(S_k(M,F)^{\new}).
\]
If $N=N'd$ with $\gcd(N',d)=1$, let 
\begin{equation}
\label{eq:definition of dnew}
S_k(N,F)^{d\dnew} = \bigoplus_{M \mid N'}\bigoplus_{n \mid \frac{N'}{M}} \iota_n(S_k(Md,F)^{\new}).
\end{equation}

\begin{lemma}\label{lem:Rob's variant}
Let $a, b, c$ be positive integers such that $\gcd(a, bc) = 1 = \gcd(b, c)$, and let $\n' \subset \T(abc)^{(abcp)}$ be a maximal ideal.  Then
\[
\rank S_k(abc)^{c\dnew}_{\n'} = \sum_{d | a} \tau(a/d)\rank S_k(dbc)^{dc\dnew}_{\n'},
\]
where $\tau(x)$ denotes the number of positive divisors of $x$.  
\end{lemma}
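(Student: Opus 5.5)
This follows by direct bookkeeping from the decomposition \eqref{eq:definition of dnew} and the newform theory of \cite[Theorem 5.8.3]{DiamondShurman}. Apply \eqref{eq:definition of dnew} with $N=abc$, $d=c$ and $N'=ab$:
\[
S_k(abc,F)^{c\dnew}=\bigoplus_{M\mid ab}\bigoplus_{n\mid \frac{ab}{M}}\iota_n\bigl(S_k(Mc,F)^{\new}\bigr).
\]
Each map $\iota_n$ commutes with $T_q$ for $q\nmid abcp$ and is injective. Localizing at $\n'$ (equivalently, taking the generalized eigenspaces over $F$ for the eigensystems reducing to $\n'$) therefore gives
\[
\rank S_k(abc)^{c\dnew}_{\n'}=\sum_{M\mid ab}\tau\!\left(\tfrac{ab}{M}\right)\rank S_k(Mc)^{\new}_{\n'}.
\]
Ranks over $\cO$ of the saturated lattices agree with $F$-dimensions, so this is legitimate.

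Since $\gcd(a,b)=1$, every divisor $M\mid ab$ factors uniquely as $M=de$ with $d\mid a$ and $e\mid b$. Multiplicativity of $\tau$ gives $\tau(ab/M)=\tau(a/d)\,\tau(b/e)$. Regrouping the sum by $d$ yields
\[
\sum_{d\mid a}\tau(a/d)\sum_{e\mid b}\tau(b/e)\rank S_k(dec)^{\new}_{\n'}.
\]
The inner sum is exactly the rank of $S_k(dbc)^{dc\dnew}_{\n'}$. To see this, apply \eqref{eq:definition of dnew} with $N=dbc$, ``$d$'' equal to $dc$, and $N'=b$, which is allowed since $\gcd(b,dc)=1$. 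That decomposition is $\bigoplus_{e\mid b}\bigoplus_{n\mid b/e}\iota_n(S_k(edc)^{\new})$, and localizing it at $\n'$ gives the inner sum. The lemma follows.

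The only point needing care is the localization step. We must know that $\iota_n$ is Hecke-equivariant for the prime-to-$abcp$ operators, and that the direct sum decomposition is stable under $\T(abc)^{(abcp)}$, so that it survives localization at $\n'$. Both facts are standard: each summand $\iota_n(S_k(Mc,F)^{\new})$ is a sum of eigenspaces for the anemic operators. No real obstacle is expected beyond this.
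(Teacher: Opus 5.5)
Your proof is correct and is essentially the paper's argument. The only difference is cosmetic: the paper regroups at the level of the direct-sum decomposition, using $\iota_{uv}=\iota_u\circ\iota_v$ to obtain $S_k(abc)^{c\text{-}\mathrm{new}}=\bigoplus_{d\mid a}\bigoplus_{u\mid a/d}\iota_u\bigl(S_k(dbc)^{dc\text{-}\mathrm{new}}\bigr)$ before taking ranks, whereas you take ranks first and then regroup numerically using multiplicativity of $\tau$.
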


\begin{proof}
Since the spaces of modular forms have no $\ell$-torsion, it suffices to prove the formula over $F$.  We work over $F$ but suppress it from the notation.  

Note that $\iota_{y_1y_2} = \iota_{y_1} \circ \iota_{y_2}$.  Using \eqref{eq:definition of dnew} and the assumptions about greatest common divisors, we have
\begin{align*}
S_k(abc)^{c\dnew} &= \bigoplus_{x | ab} \bigoplus_{y | \frac{ab}{x}} \iota_y(S_k(xc)^{\new}) = \bigoplus_{d|a} \bigoplus_{e|b} \bigoplus_{u | \frac{a}{d}} \bigoplus_{v | \frac{b}{e}} \iota_{uv}(S_k(dec)^{\new})\\
&= \bigoplus_{d | a} \bigoplus_{u | \frac{a}{d}} \iota_u(\bigoplus_{e | b} \bigoplus_{v | \frac{b}{e}} \iota_v(S_k(dec)^{\new})) = \bigoplus_{d | a} \bigoplus_{u | \frac{a}{d}} \iota_u(S_k(dbc)^{dc\dnew}).
\end{align*}
The direct sums above are direct sums as $\T(abc)^{(abcp)}$-modules, so we get a similar decomposition after localizing at $\n'$.  Since $\iota_u$ is injective, taking ranks of both sides gives the desired formula.
\end{proof}

Let $\mu(N)$ be the Möbius function, defined by
\[
\mu(N) =\begin{cases}
    (-1)^{\omega(N)} & \text{if }N\text{ is squarefree} \\
    0 & \text{otherwise},
\end{cases}
\]
where $\omega(N)$ is the number of primes dividing $N$.

\begin{corollary}
\label{cor:eis_gen}
Let $\n' \subset \T(Np^2)^{(Np)}$ be a maximal ideal such that $\B_{\n'}$ is principal. Then
\begin{equation}
\label{eq:new eis count}
\frac{2}{|\Delta|-1} \rank S_k(Np^2)^{\new}_{\n'}
= \begin{cases}
\rank S_k(Np)^{N\dnew}_{\n'}  -\mu(N) \delta  & \text{if }   p \equiv 1 \pmod{\ell}\\
\rank S_k(Np)^{\new}_{\n'}  + \mu(N) \delta & \text{if }  p \equiv -1 \pmod{\ell}.
\end{cases}
\end{equation}
\end{corollary}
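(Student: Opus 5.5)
We prove \eqref{eq:new eis count} by strong induction on the number of prime divisors of $N$, combining \cref{cor:newform count} at each level $\Gamma_0(d)$ with $d \mid N$ and \cref{lem:Rob's variant}. The starting point is that the $p$-new part of $S_k(Np^2)$ decomposes by prime-to-$p$ level:
\[
S_k(Np^2)^{p\dnew} = \bigoplus_{d \mid N}\bigoplus_{u \mid N/d} \iota_u\bigl(S_k(dp^2)^{\new}\bigr).
\]
This is the $(a,b,c)=(N,1,p^2)$ case of the decomposition in the proof of \cref{lem:Rob's variant}. For level $\Gamma_0(Np^2)$ with $\B_{\n'}$ principal, the space $S_k(Np^2)^{p\dnew}$ consists of forms of conductor exactly $p^2$ at $p$, so it agrees with the space in \cref{cor:newform count}. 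After localizing at $\n'$ (and summing over the $\m'\supset\n'$, which all have the same principal block by \cref{lem:block indep of m' over n'}), \cref{lem:Rob's variant} gives
\[
\rank S_k(Np^2)^{p\dnew}_{\n'}=\sum_{d\mid N}\tau(N/d)\rank S_k(dp^2)^{\new}_{\n'}.
\]
Similarly, $S_k(Np)^{p\dnew}_{\n'}$ expands as $\sum_{d\mid N}\tau(N/d)\rank S_k(dp)^{\new}_{\n'}$, and $S_k(Np)_{\n'}$ expands as $\sum_{d\mid N}\tau(N/d)\rank S_k(dp)^{d\dnew}_{\n'}$.

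Apply \cref{cor:newform count} with $\Gamma=\Gamma_0(N)$. For $p\equiv 1$ it reads
\[
\frac{2}{|\Delta|-1}\rank S_k(Np^2)^{p\dnew}_{\n'}=\rank S_k(Np)_{\n'}-\delta_N,
\]
where $\delta_N$ counts the automorphically trivial maximal ideals of level $N$ above $\n'$, weighted suitably. Here lies the subtlety: the exceptional correction in \cref{thm:cuspforms count eis case} is one copy of $\cO[\Delta]^-$ per automorphically trivial $\m'$ of the anemic algebra $\T'(\Gamma_0(N)\cap\Gamma_0(p^2))$, which retains the $U_q$ for $q\mid N$. Localizing at $\n'$ therefore sums over all $\m'\supset\n'$. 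I expect exactly one automorphically trivial $\m'$ lies over $\n'$ (the one with $U_q-q\in\m'$), contributing $\delta$. For $p\equiv-1$, the analogous identity uses $S_k(Np)^{p\dnew}$ and $+\delta$.

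Both sides are then Dirichlet convolutions with $\tau=1*1$ over divisors of $N$. Writing $A(d)=\frac{2}{|\Delta|-1}\rank S_k(dp^2)^{\new}_{\n'}$ and $B(d)$ for the corresponding right-hand newform ranks, we get $\tau*A=\tau*B\mp\delta\cdot\mathbf 1$, where the constant $\delta$ appears for every $d\mid N$. Inverting $\tau$ via $\tau^{-1}=\mu*\mu$ gives $A=B\mp\delta(\mu*\mu*1)=B\mp\delta\,\mu$, evaluated at $N$. This is exactly the $\mp\mu(N)\delta$ in \eqref{eq:new eis count}.

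One bookkeeping point needs care. For $p\equiv1$ the expansion of $S_k(Np)_{\n'}$ produces $S_k(dp)^{d\dnew}$, and the claimed formula has $S_k(Np)^{N\dnew}$, which matches $B(N)$ if we define $B(d)=\rank S_k(dp)^{d\dnew}_{\n'}$. For $p\equiv-1$ we get $S_k(dp)^{\new}$. The main obstacle is confirming that the $\delta$-correction in \cref{cor:newform count} at level $\Gamma_0(d)$, localized at $\n'$, is exactly $\delta$ (one exceptional ideal) for every $d\mid N$, rather than a multiplicity depending on $d$. I would check this via the rank-one statement for $H^1(X(\Gamma\cap\Gamma(p))/Z,\Omega^1)_{\m'}$ in \cref{sec:exceptional max}: $H^1$ has rank equal to the number of geometric components, which is two, with only one component's worth of classes being automorphically trivial, independent of $d$.
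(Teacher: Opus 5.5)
Your proposal is correct and is essentially the paper's argument. The paper also combines \cref{cor:newform count} with \cref{lem:Rob's variant} for $(a,b,c)=(N,1,p^2)$ and for $(N,p,1)$ (resp.\ $(N,1,p)$), but it solves the resulting triangular system by induction on $\tau(N)$ using $\sum_{d\mid N}\tau(N/d)\mu(d)=1$, which is the same computation as your inversion $\tau^{-1}*\mathbf{1}=\mu$; your check that the exceptional correction equals a single $\delta$ at every level $d\mid N$ is something the paper uses only implicitly, through its definition of $\delta$ for $\n'$.
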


\begin{proof}
To simplify notation, let $C=\frac{2}{|\Delta|-1}$, and let $s_k(M)^{d\dnew}=\rank S_k(M)_{\n'}^{d\dnew}$. 
We proceed by induction on $\tau(N)$.  The base case follows from \cref{cor:newform count}. 
Using \cref{lem:Rob's variant} with $(a,b,c)=(N,1,p^2)$ gives
\[
s_k(Np^2)^{p\dnew} = \sum_{d \mid N} \tau(N/d) s_k(dp^2)^{\new},
\]
which rearranges to
\begin{equation}
    \label{eq:sk(Np2)}
    s_k(Np^2)^{\new}= s_k(Np^2)^{p\dnew} - \sum_{d \mid N, d\ne N} \tau(N/d) s_k(dp^2)^{\new}.
\end{equation}

Assume that $p \equiv 1 \pmod{\ell}$. Starting with \eqref{eq:sk(Np2)} and substituting for $s_k(Np^2)^{p\dnew}$ using \cref{cor:newform count} and for $s_k(dp^2)^{\new}$ using the induction hypothesis, and multiplying both sides by $C$, yields 
\begin{align*}
  C \cdot {s_k(Np^2)^{\new}} &= s_k(Np)-\delta  - \sum_{d \mid N, d\ne N} \tau(N/d)\left(s_k(dp)^{d\dnew}-\mu(d)\delta\right) \\
  &=\sum_{d \mid N} \tau(N/d)s_k(dp)^{d\dnew} - \delta -\sum_{d \mid N, d\ne N} \tau(N/d)\left(s_k(dp)^{d\dnew}-\mu(d)\delta\right) \\
  &=s_k(Np)^{N\dnew}-\delta+\sum_{d \mid N, d\ne N} \tau(N/d)\mu(d)\delta \\
  &=s_k(Np)^{N\dnew}-\mu(N)\delta.
\end{align*}
The second line uses \cref{lem:Rob's variant} with $(a,b,c)=(N,p,1)$, and the last line uses the M\"obius inversion formula $\sum_{d|N} \tau(N/d)\mu(d)=1$.

The proof for $p \equiv -1 \pmod{\ell}$ is very similar. From \eqref{eq:sk(Np2)}, using \cref{cor:newform count} and the induction hypothesis gives
\begin{align*}
    C \cdot {s_k(Np^2)^{\new}} &= s_k(Np)^{p\dnew}+\delta  - \sum_{d \mid N, d\ne N} \tau(N/d)\left(s_k(dp)^{\new}+\mu(d)\delta\right) \\
    &=\sum_{d \mid N} \tau(N/d) s_k(dp)^{\new} + \delta  - \sum_{d \mid N, d\ne N} \tau(N/d)\left(s_k(dp)^{\new}+\mu(d)\delta\right) \\
    &=s_k(Np)^{\new} + \delta - \sum_{d \mid N, d\ne N} \tau(N/d)\mu(d)\delta \\
    &=s_k(Np)^{\new} + \mu(N)\delta,
\end{align*}
where the second line uses \cref{lem:Rob's variant} with $(a,b,c)=(N,1,p)$.
\end{proof}

\subsection{More on automorphically-trivial Eisenstein congruences}\label{subsec:aut triv Eis congs}
Now assume that $\delta=1$. To illustrate how the dependence on $\delta$ in \cref{cor:newform count} can be used to prove results on the existence of congruences, we first
specialize this discussion to the most classical case $N=1$. If $p \equiv 1 \pmod{\ell}$, then \cref{cor:newform count} implies that
\[
\rank S_2(p^2)^{\new}_{\m'} = \frac{|\Delta|-1}{2}(\rank S_2(p)^{\new}_{\m'}-1),
\]
which is \cref{thm:Eisp=1modell} of the introduction. In particular, it recovers the famous result of Mazur \cite{mazur} that $S_2(p)^{\new}_{\m'} \ne 0$.

If $p \equiv -1 \pmod{\ell}$, then one knows that $S_2(p)^{\new}_{\m'}=0$ (for instance, by examining the constant term of the Eisenstein series). Using this, \cref{cor:newform count} implies
\[
\rank S_2(p^2)^{\new}_{\m'} = \frac{|\Delta|-1}{2}.
\]
In addition to this numerical result, we can prove a structure result. 
\cref{thm:modular forms count} applied in this setting shows 
\[
M_2(p^2)_{\m} \cong \cO[\Delta]^+.
\]
Indeed, in the notation of that theorem,
$a_0=\rank M_2(1)_{\m'}=0$ and $a_1=\rank M_2(p)_{\m'} =1$ (there is only the Eisenstein series).
By duality, this implies that $\T(p^2)_\m \cong \cO[\Delta]^+$, which recovers \cref{thm:LangWake} of the introduction.

More generally, using the fact that ranks are nonnegative and the term $\mu(N)\delta$ may be negative, we obtain the following.

\begin{corollary}
    \label{cor:nonzero eisen spaces}  Assume that $\mu(N)\delta \ne 0$; in other words, assume that $N$ is squarefree, that $k=2$, and that $\n'$ is automorphically trivial. \hfill
\begin{enumerate}[(i)]
    \item If $p \equiv 1 \pmod{\ell}$, then $S_2(Np)_{\n'}^{N\dnew}$ is nonzero if $\mu(N)=1$. Moreover, $S_2(Np^2)_{\n'}^{\new}$ is nonzero if and only if one of the following is true
    \begin{itemize}
        \item  $\mu(N)=-1$,
        \item  $\mu(N)=1$, and $\rank S_2(Np)_{\n'}^{N\dnew} >1$.
    \end{itemize}
    \item If $p \equiv -1 \pmod{\ell}$, then $S_2(Np)_{\n'}^{\new}$ is nonzero if $\mu(N)=-1$. Moreover, $S_2(Np^2)_{\n'}^{\new}$ is nonzero if and only if one of the following is true
    \begin{itemize}
        \item $\mu(N)=1$,
        \item $\mu(N)=-1$, and $\rank S_2(Np)_{\n'}^{\new} >1$.
    \end{itemize}
\end{enumerate}
In particular, in every case at least one of the spaces $S_2(Np^2)_{\n'}^{\new}$ or $S_2(Np)_{\n'}^{N\dnew}$ is nonzero.
\end{corollary}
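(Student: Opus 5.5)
The plan is to read everything off \cref{cor:eis_gen} with $k=2$ and $\delta=1$, using only that every rank is a nonnegative integer. Since $N$ is squarefree, $\mu(N)=\pm1$. I will treat the two congruence classes of $p$ separately. In each case I split according to the sign of $\mu(N)$.

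First suppose $p\equiv 1\pmod{\ell}$. By \eqref{eq:new eis count},
\[
\tfrac{2}{|\Delta|-1}\rank S_2(Np^2)^{\new}_{\n'} = \rank S_2(Np)^{N\dnew}_{\n'} - \mu(N).
\]
If $\mu(N)=1$, the left side is nonnegative, so $\rank S_2(Np)^{N\dnew}_{\n'}\ge 1$. Moreover, the left side is positive exactly when this rank exceeds $1$. If $\mu(N)=-1$, the right side is at least $1$, so $S_2(Np^2)^{\new}_{\n'}\neq 0$. Together these give the stated ``if and only if''. Note that $|\Delta|>1$, because $\ell$ divides $|G|$, so the factor $\frac{2}{|\Delta|-1}$ is a positive finite constant and does not affect positivity.

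The case $p\equiv -1\pmod{\ell}$ is symmetric. Here the formula is
\[
\tfrac{2}{|\Delta|-1}\rank S_2(Np^2)^{\new}_{\n'} = \rank S_2(Np)^{\new}_{\n'} + \mu(N).
\]
When $\mu(N)=-1$, nonnegativity forces $S_2(Np)^{\new}_{\n'}\neq0$, and $S_2(Np^2)^{\new}_{\n'}\ne0$ exactly when that rank exceeds $1$. When $\mu(N)=1$, the right side is positive, so $S_2(Np^2)^{\new}_{\n'}\ne 0$.

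For the final sentence I check each case. Either $S_2(Np^2)^{\new}_{\n'}\ne0$, or we have just produced a nonzero level-$Np$ space. In case (ii), that space is $S_2(Np)^{\new}_{\n'}$, which is contained in $S_2(Np)^{N\dnew}_{\n'}$ by the definition \eqref{eq:definition of dnew}: take $M=1$ there, with $N'=p$ and $d=N$. Hence $S_2(Np)^{N\dnew}_{\n'}$ is nonzero as well.

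Two hypotheses of \cref{cor:eis_gen} need checking. The first is that $\B_{\n'}$ is principal. This holds because the automorphically trivial ideal has the same eigensystem as the Eisenstein series with $\chi=1$, which lies in the principal block by \cref{lem:block for Eisenstein series} and \cref{lem:determining the block Bm}. The second is $\ell>3$, which is a standing assumption. I expect no real obstacle: the argument is pure bookkeeping once the sign and integrality observations are in place.
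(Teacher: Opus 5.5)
Your proposal is correct and follows the paper's proof: specialize \cref{cor:eis_gen} to $k=2$, $\delta=1$, split on the sign of $\mu(N)$, and use that ranks are nonnegative. One small slip in your treatment of the final sentence is that in \eqref{eq:definition of dnew} (level $Np$, $N'=p$, $d=N$) the summand equal to $S_2(Np)^{\new}$ is the one with $M=p$ and $n=1$, not $M=1$ (which gives $\iota_1(S_2(N)^{\new})\oplus\iota_p(S_2(N)^{\new})$); the containment $S_2(Np)^{\new}_{\n'}\subseteq S_2(Np)^{N\dnew}_{\n'}$ you need still holds.
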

\begin{proof}
Assume that $p \equiv 1 \pmod{\ell}$. By \cref{cor:eis_gen},
\[
\frac{2}{|\Delta|-1}\rank S_2(Np^2)_{\n'}^{\new} = \rank S_2(Np)_{\n'}^{N\dnew} - \mu(N).
\]
If $\mu(N)=-1$, the right-hand side is positive, so $\rank S_2(Np^2)_{\n'}^{\new}>0$. If $\mu(N)=1$, then
\[
\frac{2}{|\Delta|-1}\rank S_2(Np^2)_{\n'}^{\new} = \rank S_2(Np)_{\n'}^{N\dnew} -1,
\]
so $\rank S_2(Np)_{\n'}^{N\dnew} \ge 1$, with equality if and only if $\rank S_2(Np^2)_{\n'}^{\new} =0$. The proof for $p \equiv -1 \pmod{\ell}$ is similar.
\end{proof}

We use the dependence on $\delta$ in \cref{cor:eis_gen} along with Hida theory to deduce the existence of congruences with Eisenstein series in weights greater than 2.  First a lemma from Hida theory.

\begin{lemma}
\label{lemma:Hida}
For $k \equiv k' \pmod{\ell-1}$ and integers $M$ and $d$ such that $\ell \nmid M$ and $d \mid M$ with $\gcd(d, M/d) = 1$, we have
$$
   S_{k'}(M)^{d \dnew}_{\n'} \ne 0 \implies S_k(M\ell)^{d \dnew}_{\n'} \ne 0.
$$
If $k>2$, we have
$$
   S_{k'}(M)^{d \dnew}_{\n'} \ne 0 \implies S_k(M)^{d \dnew}_{\n'} \ne 0.
$$
\end{lemma}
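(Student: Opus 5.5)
The plan is to use Hida's ordinary Hecke algebra and its control theorem, working throughout with the $d$-new part. This part is cut out by Hecke operators, or equivalently by local conditions at the primes dividing $d$, which are away from $\ell$. So it is compatible with Hida families, which preserve the automorphic type at primes $q \ne \ell$.

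First I treat the $\ell$-ordinary case. Suppose $0 \ne f \in S_{k'}(M)^{d\dnew}_{\n'}$ is an eigenform.
\begin{enumerate}
\item If $f$ is ordinary at $\ell$, meaning $a_\ell(f)$ is an $\ell$-adic unit, take its ordinary $\ell$-stabilization. This is an ordinary eigenform of level $M\ell$ and weight $k'$.
\item It lies in a Hida family $\mathcal{F}$ of tame level $M$, with coefficients in a finite flat extension of $\Lambda = \cO[[1+\ell\Z_\ell]]$.
\item Specialize $\mathcal{F}$ at an arithmetic point of weight $k$ and trivial $\ell$-power nebentypus; this is possible since $k \equiv k' \pmod{\ell-1}$. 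By Hida's control theorem, valid for $k \ge 2$, this gives an ordinary cuspidal eigenform of level $M\ell$ and weight $k$.
\item The specialization has the same residual anemic eigensystem as $f$, so it lies in the localization at $\n'$.
\item Its local type at each $q \mid d$ is the same as that of $f$. The reason is that the $q$-component is constant in the family: Carayol's theorem says the restriction of the big Galois representation to $I_q$ has finite image for $q \ne \ell$, so the inertial type, and hence the conductor at $q$, is locally constant on the family.
\item Hence the specialization is $d$-new, giving $S_k(M\ell)^{d\dnew}_{\n'} \ne 0$.
\end{enumerate}
For $k>2$, the ordinary weight-$k$ forms of level $M\ell$ with trivial character are $\ell$-old. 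Their $U_\ell$-eigenvalue is a unit, whereas an $\ell$-new form of weight $k$ would have $U_\ell$-eigenvalue $\pm\ell^{(k-2)/2}$. So they come from level $M$, which gives the second statement.

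Second, I have to handle the case where $f$ is not ordinary at $\ell$; I expect this to be the main obstacle. The fix is to replace $f$ by a form in the ordinary part with the same residual eigensystem. Here I would use that $\n'$ is anemic and does not involve $T_\ell$ (the lemma is applied with $\n'$ automorphically trivial), so I may choose any eigenform in $S_{k'}(M)^{d\dnew}_{\n'}$. Residually, $\rhobar_{\n'} = 1 \oplus \omega^{k'-1}$ is reducible, and I would aim to show that the ordinary part of $S_{k'}(M\ell)^{d\dnew}_{\n'}$ is nonzero. The first thing I would try is the theta/Hasse-invariant argument of Serre and Katz on mod-$\ell$ forms. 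Multiplying by the Hasse invariant $A$ moves mod-$\ell$ eigensystems between weights congruent mod $\ell-1$, and it preserves level and hence $d$-newness. An eigensystem of the form $1 \oplus \omega^{k-1}$ arises in the ordinary part because, for such reducible $\rhobar$, the $T_\ell$-eigenvalue is residually $1 + \ell^{k-1} \equiv 1$ for the relevant lift. Concretely, I would lift a mod-$\ell$ eigenform with $a_\ell \equiv 1$ via the Deligne--Serre lifting lemma, which yields an ordinary characteristic-zero eigenform.

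Alternatively, since the paper only needs the lemma for Eisenstein $\n'$, I can avoid the nonordinary issue altogether. In that case $\T_{\n'}$ acts on a space where $T_\ell \equiv 1 + \ell^{k-1}$ is a unit modulo $\n'$, so every eigenform in the localization is automatically ordinary, and step one applies directly.
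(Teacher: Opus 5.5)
Your final route (the last paragraph, together with your step one and the $U_\ell$-eigenvalue argument for $k>2$) is essentially the paper's proof. There, $T_\ell-\ell-1\in\n'$ makes every form $\ell$-ordinary. The form is first replaced by a newform of some level $M'$ with $d\mid M'\mid M$. The cited result of Hida then supplies a weight-$k$ form of level $M'\ell$ that is $M'$-new. Finally, Gouv\^ea's result that there are no ordinary $\ell$-new forms of weight $k>2$ gives the second implication.

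Your non-ordinary detour is unnecessary and comes from a misreading. The anemic algebra in this paper omits only the operators at $p$ (and at $q\mid N$), so $T_\ell$ lies in it and $T_\ell-\ell-1\in\n'$. Note also that multiplying by the Hasse invariant only raises the weight, whereas the lemma is later used with $k=2<k'=\ell+1$.

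The one step whose justification is wrong is step 5. It is not true that the family has finite image on $I_q$ when $f$ is special at $q$, and this is the typical case here. In the application $N$ is squarefree and the character is trivial, so every form that is new at a prime $q\mid N$ is special at $q$. Inertia at $q$ then acts through an infinite unipotent group. Constancy of the finite part of the inertial type does not control the conductor at $q$, which depends on whether the monodromy vanishes.

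The conclusion still holds. The monodromy operator of the family at $q$ specializes to that of $f$, so it is generically nonzero. It cannot vanish at a classical cuspidal point of weight $\ge 2$: the Frobenius eigenvalues at $q$ have ratio $q$ throughout the family, and a cusp form with zero monodromy and such Frobenius eigenvalues at $q$ would violate the Ramanujan--Petersson bound (after twisting to make it unramified at $q$, if necessary). Alternatively, you can simply quote Hida's theorem that the arithmetic specializations of a primitive family of tame level $M'$ are new of tame level $M'$, which is what the paper does.
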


\begin{proof}
As $S_{k'}(M)^{d \dnew}_{\n'} \ne 0$, there is some $M'$ with $d|M'|M$ such that $S_{k'}(M')^{\new}_{\n'} \ne 0$.
As $T_\ell - \ell -1 \in \n'$, all forms in this space are $\ell$-ordinary, and thus, by Hida theory \cite[Corollary 3.7]{Hida-Congruence_modules}, we have $S_{k}(M'\ell)^{M'\dnew}_{\n'} \ne 0$, and hence $S_k(M\ell)^{d \dnew}_{\n'} \ne 0$. If $k>2$, then there are no $\ell$-new ordinary eigenforms (see \cite[Proposition 5]{gouvea}, for instance), so $S_k(M\ell)^{d \dnew}_{\n'} \ne 0$ implies $S_k(M)^{d \dnew}_{\n'} \ne 0$.
\end{proof}

\begin{corollary}
\label{cor:Hida consequences}
Assume that $k \equiv 2 \pmod{\ell-1}$, that $N$ is squarefree, and that $T_q-q-1 \in \n'$ for all $q \nmid Np$. 
Then the spaces $S_k(Np^2\ell)^{Np^2\dnew}_{\n'}$ and $S_k(Np\ell)^{N\dnew}_{\n'}$ are nonzero. If $k>2$, then $S_k(Np^2)^{\new}_{\n'}$ and $S_k(Np)^{N\dnew}_{\n'}$ are nonzero. 

If $p \equiv -1 \pmod{\ell}$, we additionally have that $S_2(Np\ell)^{Np\dnew}_{\n'}$ and, if $k>2$, $S_k(Np)^{\new}_{\n'}$  are nonzero.
\end{corollary}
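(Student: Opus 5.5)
The plan is to combine \cref{cor:nonzero eisen spaces} in weight $2$ with \cref{lemma:Hida} to move to weight $k$. Since $N$ is squarefree and $T_q-q-1\in\n'$ for all $q\nmid Np$, the ideal $\n'$ (viewed in weight $2$, where the Eisenstein eigensystem is automorphically trivial) has $\mu(N)\delta=\mu(N)\neq 0$. So \cref{cor:nonzero eisen spaces} applies in weight $2$. We use it only in its final form: at least one of $S_2(Np^2)^{\new}_{\n'}$ or $S_2(Np)^{N\dnew}_{\n'}$ is nonzero. The second assertion of the statement concerns both spaces simultaneously, however, so we need a little more.

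\textbf{Step 1: weight-$2$ nonvanishing.} We split according to $p\bmod \ell$ and the sign of $\mu(N)$.

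For $p\equiv 1\pmod\ell$: if $\mu(N)=1$, then \cref{cor:nonzero eisen spaces} gives $S_2(Np)^{N\dnew}_{\n'}\neq0$. If $\mu(N)=-1$, it gives $S_2(Np^2)^{\new}_{\n'}\ne0$.

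For $p\equiv -1\pmod\ell$: if $\mu(N)=-1$, we get $S_2(Np)^{\new}_{\n'}\neq 0$, which lies inside $S_2(Np)^{N\dnew}_{\n'}$. If $\mu(N)=1$, we get $S_2(Np^2)^{\new}_{\n'}\neq0$.

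So in each case one of the two weight-$2$ spaces is nonzero, but not obviously both. To get the other, I would use the inclusions of $d$-new spaces. A newform of level $Np^2$ in $\n'$ is $Np^2$-new. To produce an $N$-new form of level $Np\ell$ from it, I would not try to lower the level. Instead I would pass through Hida theory at a different weight.

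\textbf{Step 2: the Hida bridge.} The key trick is \cref{lemma:Hida} applied with $k'=2$ in the $\ell$-new direction, followed by the $\delta=0$ count. Once we have an $\ell$-ordinary family, we can specialize to a weight $k''\equiv 2\pmod{\ell-1}$ with $k''>2$. There $\delta=0$, and \cref{cor:eis_gen} says
\[
\tfrac{2}{|\Delta|-1}\rank S_{k''}(Np^2)^{\new}_{\n'}=\rank S_{k''}(Np)^{N\dnew}_{\n'}
\]
when $p\equiv1\pmod\ell$, with $\new$ on the right when $p\equiv-1\pmod\ell$. Thus in weight $k''$ the two spaces vanish or are nonzero together. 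Step 1 combined with the second implication of \cref{lemma:Hida} (with $M=Np^2$, $d=Np^2$, or $M=Np$, $d=N$) makes one of them nonzero in weight $k''$, hence both are nonzero. This gives the $k>2$ claims once we take $k''=k$: $S_k(Np^2)^{\new}_{\n'}\neq0$ and $S_k(Np)^{N\dnew}_{\n'}\neq0$. When $p\equiv-1\pmod\ell$ it also gives $S_k(Np)^{\new}_{\n'}\ne0$.

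\textbf{Step 3: the level-$\ell$ claims for general $k$, including $k=2$.} From weight $k''>2$, apply the first implication of \cref{lemma:Hida} with $k'=k''$ to get nonvanishing of $S_k(Np^2\ell)^{Np^2\dnew}_{\n'}$ and $S_k(Np\ell)^{N\dnew}_{\n'}$. For $p\equiv -1\pmod\ell$, do the same with $S_2(Np\ell)^{Np\dnew}_{\n'}$, using $d=Np$ and the weight-$k''$ space $S_{k''}(Np)^{\new}_{\n'}=S_{k''}(Np)^{Np\dnew}_{\n'}$.

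\textbf{Expected obstacle.} The main subtlety is bookkeeping rather than new mathematics. We must check that $\n'$ in weights $k$ and $k''$ is the ideal with $T_q-q-1$, which holds because $q^{k-1}\equiv q\pmod{\ell}$ when $k\equiv2\pmod{\ell-1}$. We must also check that $\B_{\n'}$ is still principal in those weights, so that \cref{cor:eis_gen} applies; this follows from \cref{lem:determining the block Bm}, since the block is determined by the character attached to the eigensystem at $p$. Finally, the hypotheses of \cref{lemma:Hida} ($\gcd(d,M/d)=1$, $\ell\nmid M$) hold for the chosen $(M,d)$.
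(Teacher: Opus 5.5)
Your proposal is correct and follows the paper's proof. You get weight-$2$ nonvanishing from \cref{cor:nonzero eisen spaces}, move to an auxiliary weight $k''>2$ with $k''\equiv 2 \pmod{\ell-1}$ (the paper's $k'$, taken to be $\ell+1$ if $k=2$ and $k$ otherwise) via \cref{lemma:Hida}, use the $\delta=0$ case of \cref{cor:eis_gen} to make both spaces nonzero there, and then transfer to the target weights with \cref{lemma:Hida}. One bookkeeping fix: when $p\equiv -1\pmod{\ell}$, the Step~2 transfer at level $Np$ must use $d=Np$ rather than $d=N$, as you already do in Step~3, because \cref{cor:eis_gen} then involves $S_{k''}(Np)^{\new}_{\n'}$ and not $S_{k''}(Np)^{N\dnew}_{\n'}$.
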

\begin{proof}
This follows from \cref{cor:nonzero eisen spaces,cor:eis_gen} together with \cref{lemma:Hida}. We illustrate the argument in the case $p \equiv 1 \pmod{\ell}$, with the $p \equiv -1 \pmod{\ell}$ case being similar.

If $k=2$ let $k'=\ell+1$; otherwise let $k'=k$. Assume $p \equiv 1 \pmod{\ell}$ and first assume that $\mu(N)=1$. By \cref{cor:nonzero eisen spaces}, we have $S_2(Np)_{\n'}^{N\dnew} \ne0$, and \cref{lemma:Hida} implies $S_{k'}(Np)_{\n'}^{N\dnew} \ne 0$ and $S_{k}(Np\ell)_{\n'}^{N\dnew} \ne 0$. By \cref{cor:eis_gen}, we have
 (noting that $\delta=0$ because $k'>2$)
 \[
\frac{2}{|\Delta|-1}\rank S_{k'}(Np^2)_{\n'}^{\new} =\rank S_{k'}(Np)_{\n'}^{N\dnew},
\]
so $S_{k'}(Np^2)_{\n'}^{\new} \ne 0$. Then \cref{lemma:Hida} implies $S_{k}(Np^2\ell)_{\n'}^{Np^2\dnew} \ne 0$. Now assume $\mu(N)=-1$. Then \cref{cor:nonzero eisen spaces} implies that $S_2(Np^2)_{\n'}^{\new} \ne 0$ and \cref{lemma:Hida} implies $S_{k'}(Np^2)_{\n'}^{\new} \ne 0$. Then the same equation from \cref{cor:eis_gen} implies $S_{k'}(Np)_{\n'}^{N\dnew} \ne 0$. 
\end{proof}

\begin{remark}
Let $N$ be squarefree, and let $\epsilon \in \{1,-1\}$ be such that $p \equiv \epsilon \pmod{\ell}$. Then \cref{cor:nonzero eisen spaces} implies that $S_2(Np)_{\n'}^{N\dnew} \ne 0$ if $\epsilon \mu(N)=1$. If $\epsilon \mu(N)=-1$, then we call this an \emph{ambiguous case}, and the question of whether $S_2(Np)_{\n'}^{N\dnew}$ is nonzero involves more subtle arithmetic.

For example, if $N=q$ is a prime number, then $S_2(pq)_{\n'}^{q\dnew}$ is nonzero except possibly in the case $p \equiv 1 \pmod{\ell}$ and $q \not \equiv \pm 1 \pmod{\ell}$. In this case, it is a theorem of Ribet \cite[Theorem 2.3]{yoo} that $S_2(pq)_{\n'}^{q\dnew}$ is nonzero if and only if $\log_p(q) \equiv 0 \pmod \ell$, where $\log_p:(\Z/p\Z)^\times \to \Z/(p-1)\Z$ is a discrete logarithm. Our results imply that $S_2(pqv)_{\n'}^{q\dnew} \ne 0$ for every prime $v \ne q$ (including $v \in \{\ell,p\}$), and it would be interesting to see if one could give a new proof of Ribet's result using this (for instance, by showing that $\log_p(q) \pmod{\ell}$ is an obstruction to lowering the level from $pq\ell$ to $pq$).
\end{remark} 

\section{Galois actions and the local Langlands correspondence}
\label{sec:Galois}

\Cref{thm:modular forms count} and \Cref{prop:action on eigenforms} describe an action of a subring of the group ring of a torus on the space $M_k(\Gamma \cap \Gamma_0(p^2),\cO)_\m$. In this section, we reinterpret that group-ring action in terms of Galois deformation rings. 

We continue with the notation as in \cref{sec:hecke}. In particular, we fix a maximal ideal $\m \subset \T = \T(\Gamma \cap \Gamma_0(p^2))$ in the Hecke algebra containing $U_p$. For an eigenform $f \in M_k(\Gamma \cap \Gamma_0(p^2),\cO)_{\m}$, let $\rho_f: G_\Q\to \GL_2(\cO)$ be the associated Galois representation. The semisimplification of the residual representation $\rho_f \pmod{\varpi}$ depends only on $\m$ and is denoted $\rhobar_{\m}$.

\subsection{The correspondence between blocks and residual inertial representations}\label{subsec:LLC} 
Using the local Langlands correspondence and local-global compatibility, we explain how the block $\B_\m$ is determined by the behavior of $\rhobar_{\m}$ at the inertia group $I_p$ at $p$.

We first recall the local Langlands correspondence for $\GL_2$.  Let $G_p = \Gal(\overline{\Q}_p/\Q_p)$.  We write $\Q_{p^2}$ for the unique unramified quadratic extension of $\Q_p$ and $G_{p^2}$ for its absolute Galois group. The local Langlands correspondence  associates a two-dimensional representation $\rho(\pi): G_p \to \GL_2(\bar\Q_\ell)$ of $G_p$ (or really, of the Weil group) with a smooth irreducible $\ell$-adic representation $\pi$ of $\GL_2(\Q_p)$. We recall $\rho(\pi)$ for the $\pi$ considered in \Cref{subsec:eigenforms}:
\begin{itemize}
    \item If $\pi=I(\tilde{\chi})$ for $\tilde\chi=(\chi_1,\chi_2)$, where $\chi_i:\Q_p^\times \to F^\times$, then $\rho(\pi)=\chi_1 \oplus \chi_2$.
    \item If $\pi=\sigma(\tilde{\chi})$ for $\tilde\chi=(\chi_1,\chi_1)$, then $\rho(\pi)$ is a nontrivial extension of $\chi_1$ by~$\chi_1\epsilon_\ell$, where $\epsilon_\ell$ denotes the $\ell$-adic cyclotomic character.
    \item If $\pi=\pi_{\tilde\theta}$, then $\rho(\pi)=\Ind_{G_{p^2}}^{G_p} \theta'$, where $\theta'$ is an unramified quadratic twist of $\tilde\theta$.
\end{itemize}
Here, we have identified characters of $\Q_p^\times$ or $\Q_{p^2}^\times$ with Galois characters via local class field theory.

There is a local-global compatibility, in the following sense. If $f$ is an eigenform with associated automorphic representation $\pi_f$, and $\rho_f$ is the associated $\ell$-adic Galois representation, then $\rho_f|_p = \rho(\pi_{f,p})$ \cite[Th\'eor\`eme A]{carayol86}.

\begin{lemma}
\label{lem:block and rhobar}
    If $\B_\m$ is principal, then $\rhobar_\m|_{I_p}$ is unipotent. If $\B_\m$ is nilpotent, then $\rhobar_\m|_{I_p}$ is a sum of two nontrivial characters.
\end{lemma}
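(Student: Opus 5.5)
The plan is to pick a single eigenform $f \in M_k(\Gamma \cap \Gamma_0(p^2),\cO)_\m$, whose residual representation semisimplifies to $\rhobar_\m$. We then read off $\rhobar_\m|_{I_p}$ from $\rho_f|_{G_p}$, using local-global compatibility $\rho_f|_p = \rho(\pi_{f,p})$ and the explicit list of $\rho(\pi)$ recalled above. This is legitimate because $\B_\m$ is independent of the chosen eigenform (\cref{prop:block of m}), and \cref{lem:determining the block Bm} tells us exactly which $\pi_{f,p}$ (or Eisenstein character $\chi_f$) can occur in each type of block. Since semisimplification commutes with restriction to $I_p$, it suffices to determine the semisimplified reduction of $\rho(\pi_{f,p})|_{I_p}$.

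\textbf{Principal block.} We run through the cases of \cref{lem:determining the block Bm}.
\begin{itemize}
\item If $\pi_{f,p}$ is unramified principal series, then $\rho_f|_{I_p}$ is trivial.
\item If $\pi_{f,p} = \sigma(\mathbbm{1})$ (or an unramified twist), then $\rho_f|_{I_p}$ is an extension of $\chi_1|_{I_p}$ by $\chi_1\epsilon_\ell|_{I_p}$ with $\chi_1$ unramified. Since $\epsilon_\ell$ is unramified at $p \ne \ell$, it is unipotent.
\item If $\pi_{f,p} = I(\tilde\chi)$ with $\chi$ of $\ell$-power order, then $\rho_f|_{I_p} = \chi_1|_{I_p} \oplus \chi_2|_{I_p}$. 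Via class field theory these restrictions factor through $\Z_p^\times \to \F_p^\times$, where they are given by $\chi^{\pm 1}$ (up to an unramified twist, which does not affect inertia). Characters of $\ell$-power order reduce to the trivial character mod $\varpi$.
\item If $\pi_{f,p} = \pi_{\tilde\theta}$ with $\theta$ of $\ell$-power order, then $\rho_f|_{I_p} = \theta'|_{I_p} \oplus \theta'^p|_{I_p}$. Inertially, $\theta'$ is given by $\theta$ on $\F_{p^2}^\times$, so both characters reduce to the trivial character.
\item In the Eisenstein case, $\rho_f$ is a sum of characters built from $\chi_f^{\pm1}$ and $\epsilon_\ell^{k-1}$. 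Restricted to $I_p$ these are $\chi_f^{\pm1}$, which reduce to the trivial character when $\chi_f$ has $\ell$-power order.
\end{itemize}
In every case the semisimplified reduction of $\rho_f|_{I_p}$ is trivial, i.e.\ $\rhobar_\m|_{I_p}$ is unipotent.

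\textbf{Nilpotent blocks.} Here \cref{lem:determining the block Bm} gives two possibilities: $\pi_{f,p} \cong I(\tilde\chi)$ with $\chi^2$ not of $\ell$-power order (or Eisenstein with $\chi_f^2$ not of $\ell$-power order), or $\pi_{f,p} \cong \pi_{\tilde\theta}$ with $\theta^2$ not of $\ell$-power order.

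In the principal series case, $\rhobar_\m|_{I_p} = \bar\chi \oplus \bar\chi^{-1}$ on tame inertia. Because reduction mod $\varpi$ kills exactly the $\ell$-power part of a character, $\bar\chi^2 \ne 1$, so both characters are nontrivial.

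In the supercuspidal case, $\rhobar_\m|_{I_p} = \bar\theta \oplus \bar\theta^p$, viewed on tame inertia through $\F_{p^2}^\times$. Here $\theta$ is a character of $T' \cong \F_{p^2}^\times/\F_p^\times$, so it is trivial on $\F_p^\times$, which gives $\theta^{p+1} = 1$ and hence $\theta^p = \theta^{-1}$. Again $\bar\theta^2 \ne 1$ forces $\bar\theta \ne 1$.

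The main obstacle is bookkeeping rather than any new idea. It consists of matching the paper's normalization of characters of $T$ and $T'$ (e.g.\ $\chi\sm{a}{0}{0}{b} = \chi(ab^{-1})$, which identifies the pair $(\chi_1,\chi_2)|_{\Z_p^\times}$ with $\chi^{\pm1}$ up to a common twist) with the inertial restrictions of the Langlands parameters. The one real point is to justify that a common twist is harmless. The central character of $\pi_{f,p}$ on $\Z_p^\times$ is trivial because the forms are $Z$-invariant: the level is $\Gamma_0(N)$ or $\Gamma_1(N)$ with $p\nmid N$, and this forces the nebentypus to be trivial at $p$. Therefore $\chi_1\chi_2|_{\Z_p^\times} = 1$, and similarly $\theta'|_{\Z_p^\times} = 1$, so no twist appears on inertia. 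I would handle this first, then do the case check above.
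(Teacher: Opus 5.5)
Your proposal is correct and follows essentially the same route as the paper, whose proof is the one-line observation that the lemma follows from \cref{prop:block of m} (through \cref{lem:determining the block Bm}), local-global compatibility, and the list of $\rho(\pi)$ recalled just before the lemma; your case-by-case check is that argument written out. Your point that the trivial central character on $\Z_p^\times$ excludes a common ramified twist on inertia is already built into the paper's ``level $K_p$'' convention, under which $\chi$ and $\theta$ are characters of the tori $T, T' \subset \PGL_2(\F_p)$.
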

\begin{proof}
    This is immediate from \cref{prop:block of m}, local-global compatibility, and the above description of $\rho(\pi_{f,p})$.
\end{proof}

\subsection{Inertia-type pseudodeformation rings}
For each of our blocks $\B_\m$, we now define a Galois deformation ring $R_p$ of $\rhobar_\m|_{I_p}$ and show that it is isomorphic to $\cO[\Delta]^+$ if $\B_\m$ is principal and $\cO[\Delta]$ if $\B_\m$ is nilpotent. Since $\rhobar_\m|_{I_p}$ is reducible by \cref{lem:block and rhobar}, it is convenient to work with pseudorepresentations instead of representations. Let $\bar\phi:I_p \to \F^\times$ be a character such that the semisimplification of $\rhobar_\m|_{I_p}$ is $\bar\phi \oplus \bar\phi^{-1}$, and let $\bar D_p=\bar\phi + \bar\phi^{-1}$ be the pseudorepresentation of $I_p$ associated with $\rhobar|_{I_p}$. Informally, $R_p$ is the universal ring of deformations of $\bar D_p$ that extend to pseudorepresentations of $G_p$.

Let $I_p^{(\ell)}$ denote the maximal pro-$\ell$ quotient of $I_p$, which is isomorphic to $\Z_\ell$.  Choose $\tau \in I_p$ such that its image in $I_p^{(\ell)}$ is a topological generator. Let $r=\mathrm{val}_\ell(p^2-1)$, and let $\nabla=\Gal(\Q_p^\mathrm{ur}(\sqrt[\ell^r]{p})/\Q_p^\mathrm{ur})$.  This is a cyclic group of order $\ell^r$, and the choice of $\tau$ determines a generator $\delta_\tau$ of $\nabla$. Under the Artin map, $\nabla$ is identified with the $\ell$-part of $\F_{p^2}^\times$, which is identified with $\Delta$.  Let $\alpha \in \cO$ denote the canonical (multiplicative) lift of $\bar\phi(\tau)$.

Let $\tilde R_p$ be the universal deformation ring of $\bar D_p$ over $\cO$ with trivial determinant, and let $\tilde D_p$ be the universal deformation. Since $\tau$ and $\tau^p$ are conjugate in $G_p$, if a deformation $D$ of $\bar D_p$ extends to a representation of $G_p$, then it must satisfy
\[
\tr D(\tau)=\tr D(\tau^p).
\]
Motivated by this, we define 
\[
R_p = \tilde R_p /(\tr \tilde D_p (\tau)-\tr \tilde D_p (\tau^p)),
\]
and let $D_p$ be the associated deformation.
One can think of $R_p$ as a pseudo-analog of the image of an inertia deformation ring in a local deformation ring.

\begin{lemma}
\label{lem:R_p}
If $\B_\m$ is principal, then there is an isomorphism $R_p \cong \cO[\nabla]^+$, where the deformation $D_p$ satisfies $\tr D_p(\tau) = [\delta_\tau] + [\delta_\tau^{-1}]$. 

If $\B_\m$ is nilpotent, then there is an isomorphism $R_p \cong \cO[\nabla]$, where the deformation $D_p$ satisfies $\tr D_p(\tau) = [\delta_\tau]\alpha+ [\delta_\tau^{-1}]\alpha^{-1}$.
\end{lemma}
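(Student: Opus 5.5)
The plan is to reduce everything to the structure of pseudodeformations of a character pair of the procyclic group $I_p$ that factor through a finite tame quotient. First, since $\bar D_p = \bar\phi+\bar\phi^{-1}$ with $\bar\phi$ of prime-to-$\ell$ order, I will argue that any deformation of $\bar D_p$ to an Artinian local $\cO$-algebra $A$ factors through the tame quotient of $I_p$; indeed the wild inertia is pro-$p$ and acts trivially residually, hence trivially on any deformation. The tame quotient $\prod_{q\ne p}\Z_q$ has pro-$\ell$ part $\Z_\ell$ topologically generated by the image of $\tau$. A trivial-determinant pseudorepresentation of a procyclic group topologically generated by $\tau$ (up to the prime-to-$\ell$ part, on which the deformation is determined by Teichm\"uller lifts) is determined by $t=\tr D(\tau)$. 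One checks that $\tilde R_p\cong \cO[[X]]$, with $X = \tr\tilde D_p(\tau)-(\alpha+\alpha^{-1})$: the universal pseudocharacter is $\tr$ of the ``generic'' element with characteristic polynomial $Y^2-tY+1$. Concretely I would present $\tilde R_p$ as the ring of invariants $\cO[[u]]^{+}$, where $\tilde\phi(\tau)=\alpha(1+u)$ and the involution is $1+u\mapsto (1+u)^{-1}$; in the nilpotent case the Hensel/idempotent splitting (since $\bar\phi\ne\bar\phi^{-1}$) lets one lift the ordering of the characters, so $\tilde R_p\cong\cO[[u]]$ with $\tilde D_p=\tilde\phi+\tilde\phi^{-1}$.

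Next I impose the relation $\tr D(\tau)=\tr D(\tau^p)$. Writing $z=1+u$, this becomes $\alpha z+\alpha^{-1}z^{-1}=\alpha^p z^p+\alpha^{-p}z^{-p}$. Since $\bar\phi$ extends to $G_p$-conjugation-stable data, we have $\bar\phi^p\in\{\bar\phi,\bar\phi^{-1}\}$, and hence $\alpha^p=\alpha^{\pm1}$ by multiplicativity of Teichm\"uller lifts. The identity factors as $(\alpha z-\alpha^{\pm p}z^{\pm p})(1-\alpha^{-1}\alpha^{\mp p}z^{-1}z^{\mp p})=0$. In the nilpotent case, exactly one factor is a unit, namely the one in which $\bar\phi\ne\bar\phi^{-1}$ forces the residue to be nonzero. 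The remaining factor gives $z^{p\mp1}=1$, i.e.\ $z^{\ell^r}=1$ after discarding the prime-to-$\ell$ part of $p\mp1$; the sign matches $p\equiv\pm1$, and the $\ell$-adic valuation of $p\mp1$ equals $r$. Thus $R_p\cong\cO[[u]]/((1+u)^{\ell^r}-1)\cong\cO[\nabla]$ via $z\mapsto[\delta_\tau]$, and $\tr D_p(\tau)=\alpha[\delta_\tau]+\alpha^{-1}[\delta_\tau^{-1}]$. In the principal case $\alpha=1$ and both factors vanish residually. Here I would instead work in $\cO[[u]]$ and show that the ideal generated by $z+z^{-1}-z^p-z^{-p}=-(z^{p-1}-1)(z^{p+1}-1)z^{-p}$ intersected with the invariant subring equals the ideal generated by that invariant element. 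Then $R_p\cong(\cO[[u]]/(z^{p-1}-1)(z^{p+1}-1))^+$. The $\ell$-parts give $(z^{\ell^r}-1)(z^{\ell^{0}\cdot\epsilon}-1)$ up to units, where exactly one of $p\pm1$ is divisible by $\ell$ and the other has valuation $0$ (as $\ell$ is odd). This yields the factor $(z^{\ell^r}-1)(z-1)$, which does not match $\cO[\nabla]^+$ as written.

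So the main obstacle is the principal case: the relation has a spurious factor $z-1$ (equivalently $\tr D(\tau)-2$) that vanishes at the trivial deformation. To finish, I expect one must show that $(z-1)$ is already absorbed. Since $z^{\ell^r}-1$ is divisible by $z-1$, the product is $(z-1)^2\Phi(z)$, whereas $\cO[\nabla]^+=(\cO[[u]]/(z-1)\Phi)^+$. The invariant $w=z+z^{-1}-2=z^{-1}(z-1)^2$ has $(z-1)^2\Phi = zw\Phi$, so on invariants the relation reads $w\,\Phi^+$, where $\Phi^+$ is the symmetrization of $\Phi$. I will then check directly that $\cO[[w]]/(w\Phi^+)$ and $\cO[\nabla]^+=\cO[[w]]/(\text{minimal polynomial of } w \text{ over the } \ell^r\text{-th roots of unity})$ coincide. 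Both are determined by their roots: $w=0$ (simple), together with $\zeta+\zeta^{-1}-2$ for nontrivial $\zeta$ taken up to inversion. The degrees are $1+(\ell^r-1)/2$ on both sides, so the two presentations agree. This careful bookkeeping of the invariant subring and of the multiplicity of the root $w=0$ is the step I expect to require the most care.
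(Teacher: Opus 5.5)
Your proposal is correct and follows the paper's proof. Your ring $\cO\llbracket u\rrbracket$ with the involution $z\mapsto z^{-1}$ is exactly the paper's auxiliary ring $\cO\llbracket x\rrbracket[\lambda]/(\lambda^2-(x+2)\lambda+1)$ with $\lambda=z$, and you reduce the relation, up to units, to $(\lambda-1)(\lambda^{\ell^r}-1)$ in the principal case and to $\lambda^{\ell^r}-1$ in the nilpotent case, as the paper does. Your comparison of roots and degrees between $w\Phi^+$ and the minimal polynomial of $[\delta_\tau]+[\delta_\tau^{-1}]-2$ is a concrete check of the paper's claim that the kernel of $A\onto\cO[\nabla]$ meets $R_p$ trivially, which is precisely why the extra factor $\lambda-1$ is absorbed.
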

\begin{proof}
When $\B_\m$ is principal and $p \equiv -1 \pmod{\ell}$, then this is \cite[Proposition 4.4]{LangWake2}. The proof is very similar in the remaining cases, so we only sketch the arguments; details that are left out can be filled in exactly as in \cite{LangWake2}. 

First suppose that $\B_\m$ is principal. By continuity, $\tilde D_p$ is determined by $\tr \tilde D_p(\tau)$, so there is an isomorphism $\cO \llbracket x \rrbracket \isoto \tilde R_p$ sending $x$ to $\tr\tilde D_p(\tau)-2$. To determine the image of $\tr \tilde D_p(\tau) - \tr \tilde D_p(\tau^p)$, we work over the extension ring 
\[
\frac{\cO \llbracket x \rrbracket[\lambda]}{(\lambda^2-(x+2)\lambda+1)},
\]
where the roots of the characteristic polynomial of $\tilde D_p(\tau)$ are $\lambda$ and $\lambda^{-1}$. In this ring, $\tr \tilde D_p(\tau) - \tr \tilde D_p(\tau^p)$ maps to
\[
\lambda+\lambda^{-1} -\lambda^p -\lambda^{-p}= -\lambda^{-p}(\lambda^{p-1}-1)(\lambda^{p+1}-1)
\]
Up to a unit, this can be shown to be $(\lambda-1)(\lambda^{\ell^r}-1)$. Let
\[
A = \frac{\cO \llbracket x \rrbracket[\lambda]}{(\lambda^2-(x+2)\lambda+1,(\lambda-1)(\lambda^{\ell^r}-1))},
\]
so that $R_p$ is identified with the subalgebra of $A$ generated by $x$. There is a surjective map $A \onto \cO[\nabla]$ given by
\[
x \mapsto [\delta_\tau] + [\delta_\tau^{-1}]-2, \ \lambda \mapsto [\delta_\tau],
\]
and one can show that the kernel of this map has trivial intersection with $R_p$, so this induces the desired isomorphism $R_p \isoto \cO[\nabla]^+$.

Now suppose that $\B_\m$
is a nilpotent block. In this case, by Hensel's lemma, the polynomial $X^2-\tr \tilde{D}_p(\tau)X+1$ in $\tilde{R}_p[X]$ has two roots; let $\lambda \in \tilde R_p$ be the one that is congruent to $\alpha$ modulo the maximal ideal. Then $\tr\tilde D_p(\tau)=\lambda+\lambda^{-1}$ and $\tilde D_p$ is determined by $\lambda$, so there is an isomorphism $\tilde{R}_p \isoto \cO \llbracket x \rrbracket$ sending $\lambda$ to $\lambda(x)=\alpha \cdot (1+x)$. Then, just as in the principal case, $\tr \tilde D_p(\tau) - \tr \tilde D_p(\tau^p)$ maps to
\[
-\lambda(x)^{-p}(\lambda(x)^{p-1}-1)(\lambda(x)^{p+1}-1),
\]
which equals, up to a unit power series, $\lambda(x)^{\ell^r}-1=(1+x)^{\ell^r}-1$. (Note that, in this case, $\lambda(x)-1$ is a unit power series.) Hence, the desired isomorphism is the composition
\[
R_p \cong \frac{\cO \llbracket x \rrbracket}{((1+x)^{\ell^r}-1)} \cong \cO[\nabla],
\]
where $x$ maps to $[\delta_\tau]-1$. 
\end{proof}

One should think of $\cO[\nabla]$ as the deformation ring of $\bar\phi$. Then \cref{lem:R_p} says that $R_p$ is the same as the deformation ring of $\bar\phi$, except that a deformation and its inverse give the same deformation of $\rhobar_\m$ when $\bar\phi$ is trivial.

\subsection{Action of \texorpdfstring{$R_p$}{} on modular forms}
The deformation ring $R_p$ acts on spaces of modular forms through a global deformation ring. Let $\bar D$ be the pseudorepresentation of $\rhobar_\m$.  Let $R$ be the universal deformation ring of $\bar D$ with fixed determinant $\chi$ that is unramified at $p$, and let $D$ be the universal deformation. 
(If $\bar \rho$ is absolutely irreducible, then $R$ is isomorphic to the deformation ring of $\rhobar$, and $D$ is the pseudorepresentation of the universal representation.) Restricting $D$ to $I_p$ gives a deformation of $\bar D_p$ with trivial determinant, so this defines a map $\tilde R_p \to R$. Since $\tau$ and $\tau^p$ are conjugate in $G_\Q$, this map factors through a map $R_p \to R$.

There is a deformation $D_\T$ of $\bar D$ with values in $\T$ characterized by
\[
\tr D_\T(\Frob_q)= T_q
\]
for all but finitely many primes $q$; it can be obtained by gluing the pseudorepresentations associated with eigenforms. This defines a map
$R \to \T$
and hence a map $R_p \to \T$ and an action of $R_p$ on modular forms. Since, by \Cref{lem:R_p}, $R_p$ is generated by $\tr D_p(\tau)$, this action is determined by the condition that, for an eigenform $f \in M_k(\Gamma(p)\cap \Gamma, \cO)_\m^Z$ with Galois representation $\rho_f$,
\begin{equation}
\label{eq:galois action}
    \tr D_p(\tau) \cdot f = \tr(\rho_f(\tau)) f.
\end{equation}
\subsection{Comparison of Galois and automorphic actions} Under class field theory, the ring $\cO[\nabla]$ is identified with $\cO[\Delta]$. In this way, in each case, the ring $R_p$ is identified with the ring appearing in \Cref{thm:modular forms count}. To distinguish the two actions, we refer to the action \eqref{eq:galois action} as the \emph{Galois} action and the action from \Cref{thm:modular forms count} as the \emph{automorphic} action.

\begin{proposition}
\label{prop:compatible actions}
    The Galois and automorphic actions of $\cO[\Delta]$ or $\cO[\Delta]^+$ on $M_k(\Gamma \cap \Gamma_0(p^2),\cO)_\m$ coincide.
\end{proposition}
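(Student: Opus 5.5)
Both actions on $M_k(\Gamma \cap \Gamma_0(p^2),\cO)_\m$ are by $\cO$-algebra homomorphisms into $\End_\cO$ of this space, and the automorphic one commutes with $\T_\m$ by \cref{lem:anemic to full}. The module is $\cO$-torsion-free. After inverting $\ell$ it becomes, once $F$ is enlarged if needed, a direct sum of $\T$-eigenlines. The plan is therefore to check that the two actions agree on each eigenform $f$ with coefficients in $F$; the integral statement then follows from the embedding into the rationalization. Each ring is generated over $\cO$ by one element. By \cref{lem:R_p}, $R_p$ is generated by $\tr D_p(\tau)$, which corresponds to $[\delta_\tau]+[\delta_\tau^{-1}]$ in the principal case and to $[\delta_\tau]\alpha+[\delta_\tau^{-1}]\alpha^{-1}$ in the nilpotent case. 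So we only need to compare the scalars by which this generator acts on $f$.

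On the Galois side, \eqref{eq:galois action} says the scalar is $\tr\rho_f(\tau)$. On the automorphic side, \cref{prop:action on eigenforms} gives the scalar in terms of $\pi_{f,p}$: it is $\chi^+$, $\theta^+$, $\mathbbm{1}^+$, or $\psi$ applied to the generator. We compute $\tr\rho_f(\tau)$ by local--global compatibility, $\rho_f|_{G_p}\cong\rho(\pi_{f,p})$, together with the list of $\rho(\pi)$ in \cref{subsec:LLC}. The cases go as follows.
\begin{enumerate}
\item If $\pi_{f,p}$ is unramified or special, $\rho_f(\tau)$ is unipotent, so its trace is $2=\mathbbm{1}^+([\delta_\tau]+[\delta_\tau^{-1}])$.
\item If $\pi_{f,p}=I(\tilde\chi)$, the two characters restricted to inertia are $\chi$ and $\chi^{-1}$ through class field theory (the determinant is unramified at $p$). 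The trace is then $\chi(\delta_\tau)+\chi^{-1}(\delta_\tau)$ in the principal case. In the nilpotent case it is $\psi(\delta_\tau)\alpha+\psi^{-1}(\delta_\tau)\alpha^{-1}$, where $\psi$ is the $\ell$-part determined by the pinning character and $\alpha$ lifts $\bar\chi(\tau)$.
\item If $\pi_{f,p}=\pi_{\tilde\theta}$, the representation $\Ind\theta'$ restricted to $I_p$ is $\theta\oplus\theta^p$. The unramified quadratic twist does not affect inertia, and $\theta^p=\theta^{-1}$ on the $\ell$-part because $p\equiv-1\pmod{\ell}$. This gives the same shape of trace as in the previous case.
\item Eisenstein $f$ are handled the same way using $\chi_f$.
\end{enumerate}
In each case the value matches the automorphic scalar from \cref{prop:action on eigenforms} applied to the generator.

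The main obstacle is normalization. The identification $\nabla\cong\Delta$ is made through the Artin map, and it has to be compatible with the way the characters $\chi,\theta$ of $T,T'$ correspond to the inertial characters of $\rho(\pi)$ under local Langlands and \cref{def:level Kp characters}. One must also check that this matching does not swap $\psi$ with $\psi^{-1}$ in the nilpotent case, which requires tying the pinning character to $\bar\phi$ and $\alpha$ consistently. I would fix the pinning so that $\bar\phi=\bar\chi$ (respectively $\bar\theta$) via class field theory. Then $\psi(\delta_\tau)\alpha$ is exactly the root of the characteristic polynomial of $\rho_f(\tau)$ congruent to $\alpha$. Matching the generators then forces the two algebra maps to agree, which proves the proposition.
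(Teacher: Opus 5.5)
Your proposal is correct and follows the paper's route. You reduce to eigenforms via torsion-freeness, then match the scalars from \cref{prop:action on eigenforms} against $\tr\rho_f(\tau)$ using the list of $\rho(\pi)$ and local--global compatibility; your case analysis and pinning normalization spell out what the paper calls ``readily checked.''

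One small point: in the vexing case you need $\theta^p=\theta^{-1}$ on all of $T'\cong\F_{p^2}^\times/\F_p^\times$, not just on the $\ell$-part. This holds because every element of $T'$ has order dividing $p+1$ (equivalently, because the determinant is trivial on $I_p$), and it is what gives a trace of the form $\psi(\delta_\tau)\alpha+\psi^{-1}(\delta_\tau)\alpha^{-1}$.
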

\begin{proof}
    Since $M_k(\Gamma \cap \Gamma_0(p^2),\cO)_\m$ is $\ell$-torsion free and $M_k(\Gamma \cap \Gamma_0(p^2),\cO)_\m \otimes_\cO F$ has a basis of eigenforms, it suffices to show that the actions coincide on eigenforms. By \Cref{prop:action on eigenforms}, the automorphic action is determined by the local type $\pi_{f,p}$ in the case of cusp forms and the character $\chi_f$ for Eisenstein series.  The Galois action is determined by the local Galois representation $\rho_f|_{I_p}$ by \eqref{eq:galois action}. One readily checks that the two actions correspond under the local Langlands correspondence, so the result follows from local-global compatibility \cite[Th\'eor\`eme (A)]{carayol86}.
\end{proof}

\subsection{The vexing case}\label{subsec:vexing}
By \cref{prop:compatible actions}, we can interpret \cref{thm:modular forms count} and all the results deduced from it in terms of Galois actions rather than automorphic actions. To illustrate this, we explain how to deduce \cref{thm:intro vexing}, which is phrased in terms of the Galois action.

\begin{proof}[Proof of \cref{thm:intro vexing}]
As recalled in \cref{subsec:intro vexing}, the assumption that $\rhobar$ is vexing implies that $\rhobar|_{I_p}$ is a sum of two nontrivial characters, so \cref{lem:block and rhobar} implies that $\B_\m$ is nilpotent. By \cref{thm:modular forms count}\eqref{item:vexing}, $M_k(\Gamma \cap \Gamma_0(p^2),\cO)_\m$ is a free $\cO[\Delta]$-module for the automorphic action. By \cref{prop:compatible actions}, this implies $M_k(\Gamma \cap \Gamma_0(p^2),\cO)_\m$ is a free $\cO[\Delta]$-module for the Galois action, which is the action described in the theorem statement. 
\end{proof}

\begin{remark}
Note that if $f \in S_k(\Gamma_0(p^2))$ with $k \geq 2$ has complex multiplication (necessarily by $\Q(\sqrt{-p})$), then for every prime $\ell \geq 5$ such that $p \equiv -1 \pmod \ell$, the mod-$\ell$ Galois representation of $f$ is vexing at $p$.  For $p \geq 5$, such forms exist if and only if $p \equiv 3 \pmod 4$; in fact, the number of them is equal to the class number of $\Q(\sqrt{-p})$ \cite{Gross}.  Therefore \cref{thm:intro vexing} provides quantitative information about the number of CM-nonCM congruences in $S_k(\Gamma_0(p^2))$.
\end{remark}

\bibliography{modular_bib}
\bibliographystyle{alpha}
\end{document}